\DeclareMathAlphabet{\mathcalligra}{T1}{calligra}{m}{n}
 \newcommand{\pred}{\scr{P}\text{r}}
\newcommand{\dd}{{\mathrm{d}}}
\newcommand{\comentario}[1]{}
\newcommand{\equalexpl}[2]{%
  \underset{\substack{\uparrow\\\mathrlap{\text{\hspace{-1em}#1}}}}{#2}}
\newcommand{\MR}[1]{}
\newcommand{\mbm}[1]{\textbf{\textsc{#1}}}
\newcommand{\esp}[2]{%
\ifthenelse{\equal{#2}{}}{\mathds{E}\left[ #1 \right] }{\mathds{E}^{#2}\left[ #1 \right] }%
}
\newcommand{\esptilde}[2]{%
\ifthenelse{\equal{#2}{}}{\tilde{\mathds{E}}\left[ #1 \right] }{\tilde{\mathds{E}}^{#2}\left[ #1 \right] }%
}
\newcommand{\espC}[2]{%
\ifthenelse{\equal{#2}{}}{{{E}}\left[ #1 \right] }{\tilde{\mathds{E}}^{#2}\left[ #1 \right] }%
}
\newcommand{\test}[1][]{%
\ifthenelse{\equal{#1}{}}{omitted}{given}%
}
\newcommand{\proba}[2]{%
\ifthenelse{\equal{#2}{}}{\mathds{P} \left( #1 \right) }{\mathds{P}^{#2}\left( #1 \right) }%
}
\newcommand{\probaC}[2]{%
\ifthenelse{\equal{#2}{}}{{P} \left( #1 \right) }{\mathds{P}^{#2}\left( #1 \right) }%
}
\newcommand{\scr}[1]{\mathscr{#1}}
\newcommand{\indic}[1]{\mathds{1}_{#1}}
\newcommand{\ptilde}[2]{%
\ifthenelse{\equal{#2}{}}{\tilde{\mathds{P}} \left( #1 \right) }{\mathds{P}^{#2}\left( #1 \right) }%
}
\newcommand\restr[2]{{
  \left.\kern-\nulldelimiterspace 
  #1 
  \vphantom{\big|} 
  \right|_{#2} 
  }}
\newcommand\reallywidehat[1]{%
\savestack{\tmpbox}{\stretchto{%
  \scaleto{%
    \scalerel*[\widthof{\ensuremath{#1}}]{\kern.1pt\mathchar"0362\kern.1pt}%
    {\rule{0ex}{\textheight}}
  }{\textheight}%
}{2.4ex}}%
\stackon[-6.9pt]{#1}{\tmpbox}%
}
\renewcommand{\epsilon}{\varepsilon} 
\newcommand{\ols}[1]{\mskip.5\thinmuskip\overline{\mskip-.5\thinmuskip {#1} \mskip-.5\thinmuskip}\mskip.5\thinmuskip} 
\newcommand{\olsi}[1]{\,\overline{\!{#1}}} 
\newcommand\closure[1]{
  \tctestifnum{\count@stringtoks{#1}>1} 
  {\ols{#1}} 
  {\olsi{#1}} 
}
\long\def\count@stringtoks#1{\tc@earg\count@toks{\string#1}}
\long\def\count@toks#1{\the\numexpr-1\count@@toks#1.\tc@endcnt}
\long\def\count@@toks#1#2\tc@endcnt{+1\tc@ifempty{#2}{\relax}{\count@@toks#2\tc@endcnt}}
\def\tc@ifempty#1{\tc@testxifx{\expandafter\relax\detokenize{#1}\relax}}
\long\def\tc@earg#1#2{\expandafter#1\expandafter{#2}}
\long\def\tctestifnum#1{\tctestifcon{\ifnum#1\relax}}
\long\def\tctestifcon#1{#1\expandafter\tc@exfirst\else\expandafter\tc@exsecond\fi}
\long\def\tc@testxifx{\tc@earg\tctestifx}
\long\def\tctestifx#1{\tctestifcon{\ifx#1}}
\long\def\tc@exfirst#1#2{#1}
\long\def\tc@exsecond#1#2{#2}
\newtheorem{theorem}{Theorem}
\newtheorem{remark}[theorem]{Remark}
\newtheorem{proposition}[theorem]{Proposition}
\newtheorem{lemma}[theorem]{Lemma}
\newtheorem{corollary}[theorem]{Corollary}
\newtheorem{definition}[theorem]{Definition}
\title{ Noise Reinforced Lévy Processes: \\ Lévy-Itô Decomposition and Applications}
\author{Alejandro Rosales-Ortiz\footnote{alejandro.rosalesortiz@math.uzh.ch}}
\date{Institute of Mathematics, University of Zürich \footnote{Research supported by the Swiss National Science Foundation (SNSF).}}
\newcommand\contentsSpacingBefore{-10 pt}
\newcommand\contentsSpacing{0}  
\numberwithin{equation}{section}
\numberwithin{theorem}{section}
\begin{document}
\maketitle

\begin{abstract}
    A step reinforced random walk is a discrete time process with memory such that at each time step, with fixed probability $p \in (0,1)$, it repeats a previously performed step chosen uniformly at random while with complementary probability $1-p$, it performs an independent step with fixed law. In the continuum, the main result of Bertoin in \cite{BertoinNRLP} states that the random walk constructed from the   discrete-time skeleton of a  Lévy process for a time partition of mesh-size $1/n$ converges, as $n \uparrow \infty$ in the sense of finite dimensional distributions, to a process $\hat{\xi}$ referred to as a noise reinforced Lévy process. Our first main result states that a noise reinforced Lévy processes has rcll paths and satisfies a $\textit{noise reinforced}$ Lévy Itô decomposition in terms of the $\textit{noise reinforced}$ Poisson point process of its jumps. We introduce the joint distribution of a Lévy process and its reinforced version $(\xi, \hat{\xi})$ and show that the pair, conformed by the  skeleton of the Lévy process and its step reinforced version, converge towards $(\xi, \hat{\xi})$ as the mesh size tend to $0$.  As an application, we analyse the rate of growth of $\hat{\xi}$ at the origin and identify its main features  as an infinitely divisible process. 
\end{abstract}

\section{Introduction}
\addtocontents{toc}{\vspace{\contentsSpacing pt}}

The Lévy-Itô decomposition is one of the main tools for the study of Lévy processes. In short,  any real Lévy process $\xi$ has rcll sample paths and its jump process induces a Poisson random measure -- called the jump measure $\mathcal{N}$ of $\xi$ -- whose intensity is described by its Lévy measure $\Lambda$. 
Moreover, it states that $\xi$ can be written as the sum of tree process 
\begin{equation*}
    \xi_t = \xi^{(1)}_t +  \xi^{(2)}_t + \xi^{(3)}_t , \quad \quad t \geq 0, 
\end{equation*}
of radically different nature. More precisely, the continuous  part of $\xi$ is given by   $\xi^{(1)}= (at + qB_t: t \geq 0)$ for a Brownian motion $B$ and reals $a,q$, while 
 $\xi^{(2)}$ is a compound Poisson process  with jump-sizes greater than 1 and   $\xi^{(3)}$ is a purely discontinuous martingale with jump-sizes smaller than 1. Moreover, the processes $\xi^{(2)}$, $\xi^{(3)}$ can be reconstructed from the jump measure $\mathcal{N}$. It is well known that  $\mathcal{N}$ is characterised by the two following  properties: 
for any Borel $A$ with $\Lambda(A) < \infty$, the counting process of jumps $\Delta {\xi}_s \in A$ that we denote by  ${N}_A$ is a Poisson process with rate  $\Lambda(A)$, and  for any  disjoint Borel sets $A_1, \dots , A_k$  with $\Lambda(A_i) < \infty$, the corresponding Poisson processes ${N}_{A_1}, \dots, {N}_{A_k}$ are independent.  We refer to e.g. \cite{Bertoin_LevyProcessesBook, KyprianouBook, SatoBook} for a complete account on the theory of Lévy processes.

\par  In this work, we shall give an analogous description for \textit{noise reinforced Lévy processes} (abbreviated NRLPs). This  family of processes has been recently introduced by Bertoin in \cite{BertoinNRLP} and correspond to weak limits of   step reinforced random walks of skeletons of Lévy process. In order to be more precise,  let us briefly  recall  the connection between these discrete objects and our continuous time setting.  Fix a Lévy process $\xi$  and denote, for each fixed $n$, by $X^{(n)}_k := \xi_{k/n}-\xi_{(k-1)/n}$ the $k$-th increment of ${\xi}$ for a partition of size $1/n$ of the real line. The process $S^{(n)}_k := X^{(n)}_1+ \dots + X^{(n)}_k = \xi_{k/n}$ for $k \geq 1$ is a random walk, also called the $n$-skeleton of $\xi$. 
 Now, fix a real number $p \in (0,1)$ that we call  the reinforcement  or memory   parameter and let  $\hat{S}^{(n)}_1 := X^{(n)}_1$. Then,  define recursively $\hat{S}^{(n)}_k$ for $k \geq 2$  according to the following rule:  for each $k \geq 2$,  set $\hat{S}^{(n)}_k := \hat{S}^{(n)}_{k-1} + \hat{X}^{(n)}_k$ where, with probability $1-p$, the step $\hat{X}^{(n)}_k$ is the increment $X^{(n)}_k$ with law $\xi_{1/n}$ -- and hence independent from the previously performed steps -- while with probability $p$, $\hat{X}^{(n)}_k$ is an increment chosen uniformly at random from the previous ones $\hat{X}^{(n)}_1, \dots , \hat{X}^{(n)}_k$. When the former occurs, the step is called an innovation, while in the latter case it is referred to as a reinforcement. The process  $(\hat{S}^{(n)}_k)$ is called the step-reinforced version of $(S^{(n)}_k)$. It was shown in \cite{BertoinNRLP} that, under appropriate assumptions on the memory parameter $p$, we have the following convergence  \textit{in the sense of finite dimensional distributions} as the mesh-size tends to $0$ 
 \begin{equation} \label{equation:limitNRLP}
     (\hat{S}^{(n)}_{\lfloor nt \rfloor})_{t \in [0,1]} \overset{f.d.d.}{\longrightarrow} 
     \big(\hat{\xi}_t \big)_{t \in [0,1]}, 
 \end{equation}
  towards a process $\hat{\xi}$ identified in \cite{BertoinNRLP} and called a noise reinforced Lévy process. It should be noted that the process $\hat{\xi}$ constructed in \cite{BertoinNRLP} is a  priori not even rcll, and this will be one of our first concerns.
  \par We are now in position to briefly state the main results of  this work. First,    we shall prove the existence of a rcll modification for $\hat{\xi}$.  In particular, this allow  us to consider the jump process $(\Delta \hat{\xi}_s)$; a proper understanding of its  nature  will be crucial for this work. In this direction, we introduce a new family of random measures in $\mathbb{R}^+ \times \mathbb{R}$ of independent interest under the name \textit{noise reinforced} Poisson point processes (abbreviated NRPPPs) and we study its basic properties. This lead us towards our first main result, which is a version of the Lévy-Itô decomposition in the reinforced setting. More precisely, we show that the jump measure of  $\hat{\xi}$ is a NRPPP and that $\hat{\xi}$ can be written as
  \begin{equation*}
      \hat{\xi}_t = \hat{\xi}^{(1)}_t + \hat{\xi}^{(2)}_t  + \hat{\xi}^{(3)}_t, \quad t \geq 0, 
  \end{equation*}
  where now,  $\hat{\xi}^{(1)} = (at + q \hat{B}_t: t \geq 0)$ for a  continuous Gaussian process $\hat{B}$, the process  $\hat{\xi}^{(2)}$ is a reinforced compound Poisson process with jump-sizes greater than one, while $\hat{\xi}^{(3)}$ is a purely discontinuous semimartingale. The continuous Gaussian process $\hat{B}$ is the so-called noise reinforced Brownian motion, a Gaussian process introduced in \cite{BertoinUniversality} with law singular with respect to $B$, and arising as the universal scaling limit of noise reinforced random walks when the law of the typical step is in $L_2(\mathbb{P})$  --  and hence plays the role of  Brownian motion in the reinforced setting, see also \cite{InvariancePrinciplesNRRW} for related results. Needless to say that if the starting Lévy process $\xi$ is a Brownian motion, the limit $\hat{\xi}$ obtained in (\ref{equation:limitNRLP}) is a noise reinforced Brownian motion. As in the non-reinforced case, $\hat{\xi}^{(2)}$ and $\hat{\xi}^{(3)}$ can be recovered from the jump measure $\hat{\mathcal{N}}$, but in contrast, they are not Markovian. The terminology used for the  jump measure of $\hat{\xi}$ is  justified by the following remarkable property: for any Borel $A$ with $\Lambda(A) < \infty$, the counting process of jumps $\Delta \hat{\xi}_s \in A$ that we denote by  $\hat{N}_A$ is a reinforced Poisson process and, more precisely, it has the law of the noise reinforced version of $N_A$ (hence,  the terminology $\hat{N}_A$ is consistent). Moreover, for any  disjoint Borel sets $A_1, \dots , A_k$  with $\Lambda(A_i) < \infty$, the corresponding  $\hat{N}_{A_1}, \dots, \hat{N}_{A_k}$ are independent noise reinforced Poisson processes. Informally, the reinforcement induces memory on the jumps of $\hat{\xi}$, and these are repeated at the jump times of an independent counting process. When working on the unit interval, this counting process is the so-called  \textit{Yule-Simon} process. 
  \par The second main  result of this work consists in defining pathwise, the  noise reinforced version $\hat{\xi}$ of the  Lévy process $\xi$. We always denote such a pair by $(\xi, \hat{\xi})$.  This is mainly achieved by transforming the jump measure of $\xi$ into a NRPPP, by a procedure that can be interpreted as the continuous time analogue of the reinforcement algorithm we described for random walks. More precisely, the steps $X_k^{(n)}$ of the $n$-skeleton are replaced by the jumps $\Delta \xi_s$ of the Lévy process;  each jump of $\xi$ is  shared with its reinforced version $\hat{\xi}$ with probability $1-p$, while with  probability $p$ it is  discarded and remains  independent of $\hat{\xi}$. We then proceed to justify our construction  by showing that   the  skeleton of $\xi$ and its reinforced version  $(S^{(n)}_{\lfloor n \cdot \rfloor}, \hat{S}^{(n)}_{\lfloor n \cdot \rfloor})$  converge weakly towards $(\xi, \hat{\xi})$,  strengthening \eqref{equation:limitNRLP} considerably. 
  \par Section \ref{section:applications} is devoted to applications: on the one hand, in Section \ref{section:ratesOfGrowth} we study the  rates of growth at the origin of $\hat{\xi}$ and prove that well know results established by Blumenthal and Getoor in \cite{BlumenthalGetoor_SampleFunctions} for Lévy processes still hold for NRLPs. On the other hand, in Section \ref{section:IDprocesses} we analyse NRLPs under the scope of infinitely divisible processes in the sense of \cite{RosinskiID}.  We shall give a proper description of $\hat{\xi}$ in terms of the usual terminology of infinitely divisible processes, as well as an application, by making use of the so-called  Isomorphism theorem for infinitely divisible processes. 
  \par Let us mention that in the discrete setting,  reinforcement of processes and models has been  subject of active research for a  long time, see for instance the survey by Pemantle \cite{PemantleSurvey} as well as  e.g.  \cite{BertoinCounterbalancing, Bertenghi, BaurBertoin, MaillerBravo, Bercu, GUTzerosELR}  and references therein for related work. However, reinforcement of time-continuous stochastic processes, which is the topic of this work,  remains a rather unexplored subject. 
  \smallskip \\
\noindent \textit{The rest of the work is organised as follows:} in Section \ref{section:preliminaries} we recall the basic building blocs needed for the construction of NRLPs and recall the main results that will be needed. Notably, we give a brief overview of the features of the Yule-Simon process  and  present some important examples of NRLPs. In Section \ref{section:extensionAndRegularity} we show  that a NRLP has a rcll modification.  In Section \ref{section:NRPPP}  we construct NRPPPs, study their main properties of interest,  and in Section \ref{subection:jump_LevyIto-Part1} we prove  that the jump measure of a NRLP is a NRPPP -- a result that we refer to as the "reinforced Lévy-Itô decomposition".  In Section \ref{section:jointConvergence} we  show that the pair conformed by the $n$-skeleton of a Lévy process and its reinforced version converge in distribution, as the mesh size tends to 0, towards $(\xi, \hat{\xi})$. To achieve this, first we start by proving in Section \ref{subsection:jump_LevyItoPart2}   that a  NRLP can be reconstructed from its jump measure -- a result that we refer to as the "reinforced Lévy Itô synthesis". Making use of this result in Section  \ref{section:jointlaw} we define the joint law $(\xi, \hat{\xi})$ and in  Section \ref{subsection:PruebajointConvergence} we establish our convergence result. Finally,  Section \ref{section:applications} is devoted to applications. Particular attention is given through this work at comparing, when possible and pertinent, our results for NRLPs to the classical ones for Lévy processes. \smallskip  \\

{\noindent
\begin{tcolorbox}[
    boxrule=0.7pt,
    standard jigsaw,
    opacityback=0
]

  \tableofcontents
\end{tcolorbox}
}

\addtocontents{toc}{\vspace{\contentsSpacingBefore}}
\section{Preliminaries}\label{section:preliminaries}
\addtocontents{toc}{\vspace{\contentsSpacing pt}} 

\subsection{Yule-Simon processes}

In this section, we recall several results from \cite{BertoinNRLP} concerning Yule-Simon processes needed for defining NRLPs. These  results will be used frequently in this work and are re-stated for ease of reading. \par 

A Yule-Simon process on the interval $[0,1]$ is a counting process, started from $0$, with first jump time uniformly distributed in $[0,1]$, and behaving afterwards as a (deterministically) time-changed standard Yule process. More precisely, for fixed $p \in (0,1)$, if $U$ is a uniform random variable in $[0,1]$ and $Z$ a standard Yule process,  
\begin{equation} \label{formula:RepresentacionYuleSimon}
    Y(t) := \indic{\{ U \leq t \}} Z_{p (\ln(t) - \ln(U)) },  \quad t \in [0,1], 
\end{equation}
is a Yule-Simon process with parameter $1/p$. Its law in $D[0,1]$, the space of $\mathbb{R}$-valued  rcll functions in the unit interval endowed with the Skorokhod topology,  will be denoted by $\mathbb{Q}$. It readily follows from the definition that this is a time-inhomogeneaous Markov process, with time-dependent birth rates  given at time $t$ by $\lambda_0(t) = 1/(1-t)$ and $\lambda_k(t)= pk/t$ for $k \in \{1,2,\dots  \}$. Remark as well that we have  $\proba{Y(t) \geq 1}{} = t$.  
In our work, only $p \in (0,1)$ will be used, and it always corresponds to the reinforcement parameter. The Yule-Simon process with parameter $1/p$ is  closely related  to the \textit{Yule-Simon distribution} with parameter $1/p$, i.e. the probability measure  supported on $\{1,2,\dots  \}$ with probability mass function given in terms of the Beta function B$(x,y)$ by 
\begin{equation} \label{formula:densityYuleSimon}
    p^{-1} \text{B}( k,1/p + 1 ) = p^{-1} \int_0^1  u^{p} (1-u)^{k-1} \dd u, \quad \quad \text{ for }  k \geq 1.
\end{equation}
The relation with the Yule process is simply that $Y(1)$ is distributed Yule-Simon with parameter $1/p$. In this work, we refer to $p\in (0,1)$ as a reinforcement or memory parameter, for reasons that will be explained shortly. In the following lemma we state for further use the conditional self-similarity property of the Yule-Simon process, a key feature that will be used frequently. 
\begin{lemma} \label{lemma:yuleSimonProcess} \emph{\cite[Corollary 2.3]{BertoinNRLP}}\\
Let $Y$ be a Yule-Simon process with parameter $1/p$ and fix  $t \in (0,1]$. Then, the process $(Y(rt))_{r \in [0,1]}$ conditionally on $\{ Y(t) \geq 1 \}$ has the same distribution $\mathbb{Q}$ as  $Y$.
\end{lemma}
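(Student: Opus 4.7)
The plan is to deduce the claim directly from the representation \eqref{formula:RepresentacionYuleSimon} together with a simple scaling argument for the uniform first-jump time.

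First, I would observe that the event on which we condition has a clean expression in terms of the primitive variables: since the Yule process $Z$ is non-decreasing and $Z_0 = 1$, we have
\begin{equation*}
    \{Y(t) \geq 1\} \;=\; \{U \leq t\},
\end{equation*}
which has probability $t$, consistent with the remark $\mathbb{P}(Y(t)\geq 1) = t$ stated earlier. So conditioning on $\{Y(t)\geq 1\}$ amounts to conditioning on $\{U\leq t\}$.

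Next, I would perform the natural change of variable $V := U/t$. A standard computation shows that, conditionally on $\{U \leq t\}$, the variable $V$ is uniformly distributed on $[0,1]$; moreover, since $Z$ is independent of $U$, it is also independent of $V$ and of the conditioning event. Substituting into \eqref{formula:RepresentacionYuleSimon} gives
\begin{equation*}
    Y(rt) \;=\; \mathds{1}_{\{U \leq rt\}} Z_{p(\ln(rt) - \ln U)} \;=\; \mathds{1}_{\{V \leq r\}}\, Z_{p(\ln r - \ln V)}, \qquad r \in [0,1].
\end{equation*}
The right-hand side is exactly the representation \eqref{formula:RepresentacionYuleSimon} applied to the pair $(V, Z)$ of independent uniform and Yule variables. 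Hence the conditional law of $(Y(rt))_{r\in[0,1]}$ coincides with $\mathbb{Q}$, as desired.

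There is no real obstacle here; the only minor point is to ensure that one works with the representation \eqref{formula:RepresentacionYuleSimon} pathwise so that the identity $Y(rt) = \mathds{1}_{\{V\leq r\}} Z_{p(\ln r - \ln V)}$ holds simultaneously in $r$, rather than only in finite-dimensional distributions, which then promotes the law identification to the space $D[0,1]$.
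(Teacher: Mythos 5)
Your proof is correct and is essentially the standard argument behind the cited result: rewriting $\{Y(t)\geq 1\}=\{U\leq t\}$, rescaling the uniform variable to $V=U/t$, and noting that the pathwise identity $Y(rt)=\mathds{1}_{\{V\leq r\}}Z_{p(\ln r-\ln V)}$ reproduces the defining representation \eqref{formula:RepresentacionYuleSimon} with the independent pair $(V,Z)$ having the correct conditional law. The paper itself only quotes this lemma from \cite[Corollary 2.3]{BertoinNRLP}, and your scaling argument matches the proof given there; the pathwise nature of the identity indeed upgrades the conclusion to equality of laws on $D[0,1]$, as you note.
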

 In particular, conditionally on $\{ Y(t) \geq 1 \},\,  Y(t)$ is distributed Yule-Simon with parameter $1/p$ and it follows that for every $t \in [0,1]$, $Y(t)$ has finite moments only of order $r < 1/p$. 
Moreover, by the previous lemma and the Markov property of the standard Yule process $Z$,   we deduce that  if $Y$ is a Yule-Simon process with parameter $1/p$ with $p \in (0,1)$ and $k \geq 1$,  we have 
\begin{equation} \label{formula:expectationYuleSimon}
    \esp{Y(t)}{} = (1-p)^{-1} t \quad \quad \text{ and }  \quad \quad   \esp{Y(t) | Y(s) = k }{} = k (t/s)^p \quad  \text{ for  any }   0 < s \leq t \leq 1, 
\end{equation}
while if $1/p > 2$, 
\begin{equation} \label{formula:covarYuleSimon}
   \esp{Y(s) Y(t)}{} = \frac{1}{(1-p)(1-2p)} s^{1-p}t^p.
\end{equation}
More details on these statements can be found in Section 2 of \cite{BertoinNRLP}. 
\subsection{Noise reinforced Lévy processes}\label{subsection:preliminariesNRLPs}
Now, we turn our attention to the main ingredients involved in the construction of NRLPs. For the rest of the section, fix  a real valued Lévy process $\xi$ of  characteristic triplet $(a,q^2,\Lambda)$, where $\Lambda$ is the Lévy measure, and recall that its characteristic exponent $\Psi(\lambda) := \log \esp{e^{i \lambda \xi_1}}{}$ is given by  the Lévy-Khintchine formula
\begin{equation} \label{formula:charExponentLevy}
    \Psi(\lambda) = i a \lambda - \frac{q^2}{2} \lambda^2 +  \int_{\mathbb{R}} \left( e^{i\lambda x} - 1 - i x \lambda \indic{ \{|x| \leq 1\}} \right) \Lambda(\dd x). 
\end{equation}
The constraints on the reinforcement parameter $p$ are  given in terms of the following  two indices introduced by Blumenthal and Getoor:  the  Blumenthal-Getoor (upper) index $\beta(\Lambda)$ of the Lévy measure $\Lambda$ is defined as 
\begin{equation} \label{index:BGIndexMeasure}
    \beta(\Lambda) := \inf \big\{ r >0 : \int_{[0,1]} |x|^r \Lambda(\dd x) < \infty \big\}, 
\end{equation} 
while the Blumenthal-Getoor index $\beta$ of the Lévy process $\xi$ is defined  by the relation
\begin{equation} \label{index:BGIndex}
    \beta :=
    \begin{cases}
    \beta(\Lambda) \quad \text{ if } q^2 = 0\\
    2 \hspace{9mm} \text{ if } q^2 \neq 0.
    \end{cases}
\end{equation}
 When $\xi$ has no Gaussian component, we have $\beta = \beta(\Lambda)$ and both notations will be used indifferently. We say that a memory parameter $p \in (0,1)$ is  admissible for the triplet $(a,q^2,  \Lambda)$ if $ p \beta < 1$. Now, fix $p$ an admissible memory parameter for $\xi$.  If $(S^{(n)}_k)$ is the $n$-skeleton of the Lévy process $\xi$, the sequence of reinforced versions with parameter $p$, 
 \begin{equation*}
      (\hat{S}^{(n)}_{\lfloor nt \rfloor })_{t \in [0,1]},   \hspace{8mm} n \geq 1,   
 \end{equation*}
  converge in the sense of finite dimensional distributions, as the mesh-size tends to 0, towards a process whose law was identified in \cite{BertoinNRLP} and called the noise reinforced Lévy process $\hat{\xi}$ of characteristics $(a,q^2,\Lambda,p)$. In the sequel, when considering a NRLP with parameter $p$, it will be implicitly assumed that $p$ is admissible for the corresponding triplet. For instance, when working with a memory parameter $p\geq 1/2$ it is implicitly assumed that $q = 0$. It was shown in  \cite[Corollary 2.11]{BertoinNRLP} that the finite-dimensional distributions of $\hat{\xi}$ can be expressed in terms of the Yule-Simon process  $Y$ with parameter $1/p$ and the characteristic exponent  $\Psi$ as follows: 
\begin{equation} \label{equation:fddsNRLPs01}
    \esp{ \exp \left\{ i \sum_{i=1}^k \lambda_i \hat{\xi}_{s_i} \right\}  }{}
    = 
     \exp \left\{ (1-p) \esp{\Psi\left( \sum_{i=1}^k \lambda_i Y(s_i) \right) }{}   \right\},  
\end{equation}
for $0 < s_1 < \dots < s_k \leq 1$. Now we turn our attention at defining NRLPs in $\mathbb{R}^+$. Notice that the construction given in the unit interval  in $\cite{BertoinNRLP}$ can not be directly extended to the real line since it relies on Poissonian sums of Yule-Simon processes, and these are only defined on the unit interval. 
\begin{proposition} \label{lemma:fddsExtendidas} \emph{(NRLPs in $\mathbb{R}^+$)}\\
Let $(a,q^2,\Lambda)$ be the triplet of a Lévy process of exponent $\Psi$ and consider an admissible memory parameter $p\in (0,1)$. There exists a process $\hat{\xi} = (\hat{\xi}_s)_{s \in \mathbb{R}^+}$ whose finite dimensional distributions satisfy that,  for any $0 < s_1 < \dots < s_k \leq t$, 
\begin{equation}  \label{equation:fddsNRLPs}
      \esp{ \exp \left\{ i \sum_{i=1}^k \lambda_i \hat{\xi}_{s_i} \right\}  }{} 
     = \exp \left\{ (1-p)  t \esp{\Psi\left( \sum_{i=1}^k \lambda_i Y( s_i/t) \right) }{}   \right\},  
\end{equation}
where the right-hand side does not depend on the choice of $t$. The process $\hat{\xi}$  is called a noise reinforced Lévy process with characteristics $(a,q^2,\Lambda, p)$. 
\end{proposition}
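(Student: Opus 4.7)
The plan is to define the finite-dimensional distributions of $\hat{\xi}$ through the right-hand side of \eqref{equation:fddsNRLPs} and then invoke Kolmogorov's extension theorem. Two things need verification: that the prescribed right-hand side depends only on the times $\{s_1,\dots,s_k\}$ and not on the auxiliary parameter $t\geq s_k$, and that each finite-dimensional marginal so defined is a bona fide probability measure. Granted both points, consistency under deletion of a coordinate is essentially free and Kolmogorov extension does the rest.

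For the $t$-independence I would exploit the conditional self-similarity of the Yule--Simon process. Since $Y$ is non-decreasing and $\Psi(0)=0$, on the event $\{Y(s_k/t)=0\}$ all the variables $Y(s_i/t)$ vanish, so the expectation in \eqref{equation:fddsNRLPs} reduces to its integral over $\{Y(s_k/t)\geq 1\}$. Applying Lemma \ref{lemma:yuleSimonProcess} with argument $s_k/t$ tells us that, conditionally on this event, $(Y(r s_k/t))_{r\in[0,1]}$ has the law $\mathbb{Q}$ of $Y$; combined with $\proba{Y(s_k/t)\geq 1}{}=s_k/t$ this yields the key identity $t\,\esp{\Psi(\sum_i \lambda_i Y(s_i/t))}{}=s_k\,\esp{\Psi(\sum_i \lambda_i Y(s_i/s_k))}{}$, whose right-hand side is manifestly independent of $t\geq s_k$.

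To realise the formula as a probability distribution I would transfer it from the construction already available on $[0,1]$. For any $T>0$, the Lévy process $\xi^{(T)}$ with characteristic triplet $(Ta,Tq^2,T\Lambda)$ has exponent $T\Psi$ and the same Blumenthal--Getoor index, so the admissibility of $p$ is preserved. Let $\hat{\xi}^{(T)}$ denote its NRLP on $[0,1]$ provided by \cite{BertoinNRLP}. Specialising \eqref{equation:fddsNRLPs01} to the exponent $T\Psi$ and then setting $u_i=s_i/T$ shows that the time-dilated process $(\hat{\xi}^{(T)}_{s/T})_{s\in[0,T]}$ realises precisely the marginals prescribed by the right-hand side of \eqref{equation:fddsNRLPs} at $t=T$, which by the previous step coincides with the value for any $t\geq s_k$.

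Finally, Kolmogorov consistency under removing a coordinate amounts to showing that setting $\lambda_j=0$ in \eqref{equation:fddsNRLPs} yields the formula associated to the remaining times. This is automatic for $j<k$ since the same $t$ still dominates the remaining times; for $j=k$ one chooses a new $t\in[s_{k-1},s_k]$, and it is precisely the $t$-invariance established above that guarantees the two prescriptions agree. Combined with the fact that each marginal is a genuine probability (Step 2), Kolmogorov's extension theorem delivers the process $\hat{\xi}$ on $\mathbb{R}^+$ with the required finite-dimensional distributions. The genuine work is concentrated in the $t$-invariance identity; once that is in hand everything else is a routine application of standard tools.
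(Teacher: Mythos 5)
Your proposal is correct and follows essentially the same route as the paper: the $t$-invariance is obtained from the conditional self-similarity of the Yule--Simon process together with $\Psi(0)=0$ (you normalise to $t=s_k$ where the paper compares two arbitrary horizons $t<T$, an immaterial difference), and existence is obtained by realising each marginal through the $[0,1]$ construction applied to the time-dilated Lévy process with exponent $T\Psi$ and then invoking Kolmogorov consistency. No gaps.
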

\begin{proof}
First, let us show that the right-hand side of (\ref{equation:fddsNRLPs}) does not depend on $t$. To prove this, pick another arbitrary $T>t$ and  write $r_i = s_i/t \in [0,1]$.  From conditioning on $\{ Y_{t/T} \geq 1 \}$,  an event with probability $t/T$,  by Lemma \ref{lemma:yuleSimonProcess} we get 
\begin{align} \label{equation:nodepenT}
     T \esp{\Psi\left( \sum_{i=1}^k \lambda_i Y( s_i/T) \right) }{} 
     &= t (T/t) \esp{\Psi\left( \sum_{i=1}^k \lambda_i Y( r_i \cdot (t/T)) \right) }{} \nonumber  \\
     &=  t  \esp{\Psi\left( \sum_{i=1}^k \lambda_i Y( r_i \cdot (t/T)) \right) \Big| Y(t/T) \geq 1 }{}  \nonumber \\
     &= t \esp{\Psi\left( \sum_{i=1}^k \lambda_i Y( s_i/t) \right) }{},  
\end{align}
proving our claim, and where in the second equality  we used  that $\Psi(0) = 0$. Now, let us  establish the existence of a process with finite-dimensional distributions characterised by \eqref{equation:fddsNRLPs}. Remark that by Kolmogorov's consistency theorem, it suffices to show that for arbitrary $1 \leq S < T$, there exists processes $\hat{X}^S= (\hat{X}^S_t)_{t \in [0,S]}$,  $\hat{X}^T:= (\hat{X}^T_t)_{t \in [0,T]}$ with finite dimensional distributions characterised  by the identity (\ref{equation:fddsNRLPs}) for $(s_i)$ in $[0,S]$, $t = S$ and $(s_i)$ in  $[0,T]$, $t = T$ respectively -- and hence satisfying that  $(\hat{X}^{T}_t)_{t \in [0,S]} \overset{\scr{L}}{=}(\hat{X}^{S}_t)_{t \in [0,S]}$. Write  $\hat{\xi}^S = (\hat{\xi}^S_t)_{t \in [0,1]}$  the reinforced version of the Lévy process $(\xi_{tS})_{t \in [0,1]}$, remark that the latter  has characteristic exponent $S \Psi$,  and set $(\hat{X}^S_t)_{t \in [0,S]} := (\hat{\xi}^S _{t/S})_{t \in [0,S]}$. From the identity (\ref{equation:fddsNRLPs01}), we deduce that, for any  $0 < s_1 < \dots < s_k$ in the interval $[0,S]$, we have:  
\begin{align}\label{equation:eq1-existancelawNRLP}
     \esp{ \exp \left\{ i \sum_{i=1}^k \lambda_i \hat{X}^S({s_i}) \right\}  }{} 
     = \exp \left\{ (1-p) S \esp{  \Psi\left( \sum_{i=1}^k \lambda_i Y( s_i/S) \right) }{}   \right\}. 
\end{align}
In particular $\hat{X}^S$ restricted to the interval $[0,1]$ has the same distribution as  $(\hat{\xi}_t)_{t \in  [0,1]}$ by the first part of the proof and \eqref{equation:fddsNRLPs01}.  If we consider the restriction of $(\hat{X}^T)_{t \in [0,T]}$ to the interval $[0,S]$, we obtain similarly and by applying  (\ref{equation:nodepenT})  that, for any $0 < s_1 < \dots < s_k \leq S$,
\begin{align*}
    \esp{ \exp \left\{ i \sum_{i=1}^k \lambda_i \hat{X}^T({s_i}) \right\}  }{} 
     &= \exp \left\{ (1-p) T \esp{  \Psi\left( \sum_{i=1}^k \lambda_i Y( s_i/T) \right) }{}   \right\} \\
     &= \exp \left\{ (1-p) S \esp{  \Psi\left( \sum_{i=1}^k \lambda_i Y( s_i/S) \right) }{}   \right\},  
\end{align*}
and it follows that $\hat{X}^T$ restricted to $[0,S]$ has the same distribution as $\hat{X}^S$. Since this holds for any $1 \leq S < T$, we deduce by Kolmogorov's consistency theorem the existence of a process  satisfying for any $0 < s_1 < \dots < s_k \leq t$, the identity (\ref{equation:fddsNRLPs}). In particular, from taking the value  $t = 1$, it follows that this process  satisfies that its restriction to $[0,1]$  has the same law as $\hat{\xi}$ by  (\ref{equation:fddsNRLPs01}). 
\end{proof}
For later use, notice from  (\ref{equation:fddsNRLPs}) that for any fixed  $t \in \mathbb{R}^+$, we have  the following equality in law 
\begin{equation} \label{identity:extension}
    (\hat{\xi}_{st})_{s \in [0,1]} \overset{\scr{L}}{=} (\reallywidehat{\xi_{\cdot t}})_{s \in [0,1]}
\end{equation}
where the right-hand side stands for the noise-reinforced version of the Lévy process $(\xi_{st})_{s \in [0,1]}$. In particular,  $(\hat{\xi}_{st})_{s \in [0,1]}$ is the NRLP associated to the exponent $t \Psi$ with same reinforcement parameter. 

\subsection{Building blocks: noise reinforced Brownian motion and noise reinforced compound Poisson process}\label{subsection:preliminaries,examplesNRLPs}
The characteristic exponent $\Psi$ can be naturally decomposed in tree terms, 
\begin{equation} \label{introduction:decomp_expo}
    \Psi(\lambda) = \big( i a \lambda  - q\frac{\lambda^2}{2} \big)+ \Phi^{(2)}(\lambda)  + \Phi^{(3)} (\lambda), 
\end{equation}
where respectively, we write 
\begin{equation*}
 \Phi^{(2)}(\lambda) := \int_{\{ |x| \geq  1 \}} \left( e^{i \lambda x} - 1  \right)  \Lambda(\dd x)
 \quad \text{ and } \quad 
 \Phi^{(3)}(\lambda) := \int_{\{ |x| < 1 \}} \left( e^{i \lambda x} - 1 - i \lambda x  \right) \Lambda(\dd x). 
\end{equation*}
This decomposition yields that the Lévy process $\xi$ can be written as the sum of tree independent Lévy process of radically different nature. Namely, we have   $\xi_t = (at + qB_t) +  \xi^{(2)}_t + \xi^{(3)}_t $, for $t \geq 0$,  where $B$ is a Brownian motion,     $\xi^{(2)}$ is a compound Poisson process with exponent $\Phi^{(2)}$ and $\xi^{(3)}$ is the so-called compensated sum of jumps with characteristic exponent $\Phi^{(3)}$. In the reinforced setting, it readily follows from  the identity \eqref{equation:fddsNRLPs}  that an analogous decomposition holds for NRLPs. More precisely,  the NRLP $\hat{\xi}$ of  characteristics $(a,q^2,\Lambda,p)$   can be written as a sum of three independent NRLPs,
\begin{equation} \label{descomposicionNRLP}
    \hat{\xi}_t \overset{\scr{L}}{=} (at + q  \hat{B}_t) + \hat{\xi}^{(2)}_t + \hat{\xi}^{(3)}_t, \quad t \geq 0,  
\end{equation}
 the equality holding in law, and 
where we denoted respectively by $\hat{B}$,  $\hat{\xi}^{(2)}$, $\hat{\xi}^{(3)}$,   independent  reinforced versions of the Lévy processes $B$, ${\xi}^{(2)}$, ${\xi}^{(3)}$. Notice that  their 
 respective  characteristics are given by   $(a,q^2, 0 ,p)$,  $(0,0, \indic{(-1,1)^c}\Lambda,p)$ and $(0,0, \indic{(-1,1)}\Lambda,p)$.  Let us now give  a brief description of these three building blocks separately:  \medskip \\
$\circ$ \textit{Noise reinforced Brownian motion: }Assume  $p< 1/2$, consider  a Brownian motion $B$ and set ${\xi} := {B}$. In that case, we simply have $\Psi(\lambda) = -\lambda^2/2$ and we write $\hat{B}$ for the corresponding noise reinforced Lévy process  $\hat{\xi}$.  The process $\hat{B}$ is the so-called noise  reinforced Brownian motion (abbreviated NRBM) with reinforcement parameter $p$,  a centred Gaussian process with covariance given by:  
\begin{equation} \label{formula:covarianceNRBMoriginal}
    \esp{\hat{B}_t \hat{B}_s}{} = \frac{(t \vee s)^p  (t \wedge s)^{1-p} }{1-2p}.
\end{equation}
 Indeed, recalling (\ref{formula:covarYuleSimon}),  observe first that for any  $0 \leq t,s  < T$  the covariance (\ref{formula:covarianceNRBMoriginal}) can be written in terms the Yule-Simon process $Y$ with parameter $1/p$ as follows: 
\begin{equation} \label{formula:covarianceNRBM}
    \esp{\hat{B}_t \hat{B}_s}{} = (1-p) T \cdot \esp{Y(t/T) Y(s/T)}{}.  
\end{equation}
It is now straightforward to deduce from   (\ref{equation:fddsNRLPs}) with $\Psi(\lambda) = -\lambda^2/2$  that the noise reinforced version of $B$ corresponds to the Gaussian process with covariance (\ref{formula:covarianceNRBMoriginal}). The noise reinforced Brownian motion admits a simple representation as a Wiener integral. More precisely, the  process 
\begin{equation} \label{formula:representationNRBM}
    t^{p} \int_0^t s^{-p}\dd B_s,  \quad \quad t \geq 0, 
\end{equation}
has the law of a noise reinforced Brownian motion with parameter $p$. Remark that when $p=0$, there is no reinforcement and we recover a Brownian motion in (\ref{formula:representationNRBM}). As was already mentioned, noise reinforced Brownian motion plays the role of Brownian motion in the reinforced setting, since it is the scaling limit of noise reinforced random walks under mild assumptions on the law of the typical step. We refer to \cite{BertoinUniversality, InvariancePrinciplesNRRW} for a  detailed discussion. \medskip \\
$\circ$ \textit{Noise reinforced compound Poisson process: }If $\xi$ is a compound  Poisson process with rate $c>0$ and jumps with law $P_X$, then its Lévy measure is just  $\Lambda(\dd x) = c P_X(\dd x)$, and   any $p\in (0,1)$ is admissible. When working in $[0,1]$, the noise reinforced compound Poisson process $\hat{\xi}$ admits a simple representation in terms of Poissonian sums of Yule-Simon processes. In this direction, let $\mathbb{Q}$ be the law of the Yule Simon process with parameter $1/p$ and consider a Poisson random measure $\mathcal{M}$ in $\mathbb{R}^+ \times D[0,1]$ with intensity $(1-p) \Lambda  \otimes \mathbb{Q}$. If we denote its atoms by  $(x_i,Y_i)$,  the process  
\begin{equation} \label{example:reinfPoissonProcess}
    \hat{\xi}_t = \sum_{i} x_i Y_i(t), \quad t \in [0,1], 
\end{equation}
has the law of the noise reinforced version of $\xi$ with reinforcement parameter $p$ -- as can be easily verified by Campbell's formula and was already established in  \cite[Corollary 2.11]{BertoinNRLP}. Notice that \eqref{example:reinfPoissonProcess} is a finite variation process and its jump sizes are dictated by $P_X(\dd x)$. Getting back to  \eqref{descomposicionNRLP}, it readily follows form our discussion that the NRLP $\hat{\xi}^{(2)}$ associated with the exponent  $\Phi^{(2)}$ is a reinforced compound Poisson process and  its  jumps-sizes are greater than one. Finally, notice that if $P_X = \delta_{ 1}$, the Lévy process  $\xi$ is just a Poisson process with rate $c$ and we deduce from the last display a simple representation for the reinforced Poisson process $\hat{N}$ in $[0,1]$. Observe that  it is a counting process, since the atoms $x_i$ are  then identically equal to 1.  
\medskip \\
$\circ$ \textit{Noise reinforced compensated compound Poisson process: } Let us now introduce properly $\hat{\xi}^{(3)}$, viz. the noise reinforced version of the compensated martingale ${\xi}^{(3)}$.    When working in $[0,1]$, this process  also admits a representation in terms of random  series of Yule-Simon processes. In this direction,  consider  ${ \mathcal{M} } := \sum_i \delta_{(x_i, Y_i)}$ a Poisson random measure  with intensity $(1-p)\Lambda \otimes \mathbb{Q}$ and for each $a\in [0,1]$, set  
\begin{equation} \label{prelim:explicitConstructionCompensatedSums}
 \hat{\xi}^{(3)}_{a,1}(t) := \sum_{i} \indic{\{ a \leq |x_i|  < 1\}} x_i Y_i(t) - t \int_{\{ a \leq |x| < 1 \}} x \Lambda(\dd x),   \quad t \in [0,1].
\end{equation}
In the terminology of \cite[ Section 2]{BertoinNRLP}, the process $\hat{\xi}^{(3)}_{a,1}$ is a     Yule-Simon compensated series \footnote{The notation used in \cite{BertoinNRLP} for $\hat{\xi}^{(2)}_t$ and $\hat{\xi}^{(3)}_t$ is respectively $\Sigma_{1,\infty}(t)$ and $\Sigma_{0,1}^{(c)}(t)$. These are respectively referred to as  Yule-Simon series and compensated Yule-Simon series.},  and note that $\mathbb{E}[\hat{\xi}^{(3)}_{a,1}(t)]  = 0$ for every $t \in [0,1]$.  Moreover, the following family indexed by $a \in (0,1)$,  
\begin{equation} \label{notation:nrlpFueraOrigen}
    \hat{\xi}^{(3)}_{a,1}(t),  \quad  \text{ for } t \in [0,1], 
\end{equation}
is a  collection of NRLPs with memory parameter $p$,  Lévy measure $\indic{\{a  \leq |x| < 1\}} \Lambda(\dd x)$ and the corresponding exponent writes: 
\begin{equation*}
    \Phi_a^{(3)}(\lambda) := \int_{\{a  \leq |x| < 1\}} \left( e^{i\lambda x} -1- i \lambda x \right) \Lambda(\dd x). 
\end{equation*}
Notice that for each $a > 0$, the process $\hat{\xi}^{(3)}_{a,1}$ is rcll and with    jump-sizes in  $[a,1]$. Now, the process defined  at each fixed $t$ as the pointwise and $L_1(\mathbb{P})$-limit 
\begin{equation} \label{equation:explicitCompensatedYSseries}
    \hat{\xi}^{(3)}_t := \lim_{a \downarrow 0} \hat{\xi}^{(3)}_{a,1}(t),  
\end{equation}
is a NRLP with characteristics $(0,0,\indic{\{ |x| < 1 \}} \Lambda)$.  In contrast with $\xi^{(3)}$, the noise reinforced version $\hat{\xi}^{(3)}$ is no longer a martingale, we shall discuss this point in the next section in detail. For latter use, we point out from \cite[Section 2]{BertoinNRLP} that the convergence in the previous display also holds in  $L_r(\mathbb{P})$, for $r$ chosen according to 
\begin{equation} \label{condition:choiceLq.}
    r \in 
    (\beta(\Lambda) \vee 1, 1/p), \text{ if } 1/p \leq  2 
    \hspace{5mm} \text{ and } \hspace{5mm}
    r = 2,   \text{ if } 1/p > 2.
\end{equation}
In particular, we have  $\hat{\xi}^{(3)}_t \in L_r(\mathbb{P})$ and $\mathbb{E}[\hat{\xi}^{(3)}_t] = 0$ for every $t$. We refer to \cite{BertoinNRLP} for a complete account on this construction and for a proof of the convergence in \eqref{equation:explicitCompensatedYSseries}.  The convergence in \eqref{equation:explicitCompensatedYSseries}   will be strengthen in the sequel, by showing that it holds uniformly in $[0,1]$. At this point, we have introduced the main ingredients needed for this work.

\addtocontents{toc}{\vspace{\contentsSpacingBefore}}
\section{Trajectorial regularity} \label{section:extensionAndRegularity}
\addtocontents{toc}{\vspace{0 pt}}

The purpose of this short section is  to establish the following  regularity theorem: 
\begin{theorem}\label{regularityNRLP} A noise reinforced Lévy process $\hat{\xi}$  has a rcll modification, that we still denote by $\hat{\xi}$. Moreover, if for $\epsilon \in (0,1)$, $\hat{\xi}^{(3)}_{0,\epsilon}$ denotes  a NRLP with characteristics $(0,0, \indic{\{|x| < \epsilon\}} \Lambda, p)$, then  for any $t > 0$ we have: 
\begin{equation} \label{equation:convergenceSmallJumps}
   \lim_{\epsilon  \downarrow 0} \esp{\sup_{s \leq t} |\hat{\xi}^{(3)}_{0,\epsilon}(s)|  }{} = 0.
\end{equation}
\end{theorem}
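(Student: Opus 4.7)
The plan is to use the decomposition \eqref{descomposicionNRLP} and treat each of the three independent components separately. The drift-plus-Gaussian piece $at + q\hat{B}_t$ is continuous, since the Wiener-integral representation \eqref{formula:representationNRBM} exhibits $\hat{B}$ as the continuous process $t^p \int_0^t s^{-p}\,\dd B_s$. The large-jump piece $\hat{\xi}^{(2)}$, restricted to $[0,1]$, is given by the Poissonian series \eqref{example:reinfPoissonProcess} with $\Lambda$ restricted to $\{|x|\geq 1\}$; as this is a finite measure, the sum involves almost surely only finitely many Yule--Simon atoms, each of which is rcll, hence so is $\hat{\xi}^{(2)}$. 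The scaling identity \eqref{identity:extension} lifts these properties from $[0,1]$ to $\mathbb{R}^+$.

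The nontrivial content is therefore concentrated in the compensated piece $\hat{\xi}^{(3)}$, and the argument reduces to proving the $L_1$ estimate \eqref{equation:convergenceSmallJumps}. Indeed, the approximants $\hat{\xi}^{(3)}_{a,1}$ from \eqref{prelim:explicitConstructionCompensatedSums} are rcll for every $a>0$, so if the differences $\hat{\xi}^{(3)}_{a,1}-\hat{\xi}^{(3)}_{a',1}$ --- which are themselves NRLPs with Lévy measure $\mathds{1}_{\{a'\leq|x|<a\}}\Lambda$ --- are uniformly small in $L_1$ as $a,a'\downarrow 0$, then along a sequence $a_n \downarrow 0$ the approximants converge uniformly a.s., and the limit has rcll paths and agrees with $\hat{\xi}^{(3)}$ by uniqueness of the pointwise $L_r$-limit in \eqref{equation:explicitCompensatedYSseries}.

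To establish \eqref{equation:convergenceSmallJumps} I would distinguish two regimes. When $\beta(\Lambda)<1$, one has $\int_{|x|<\epsilon}|x|\Lambda(\dd x)<\infty$, and one may split the Poissonian sum in \eqref{prelim:explicitConstructionCompensatedSums} according to the sign of $x_i$ to write $\hat{\xi}^{(3)}_{0,\epsilon}(s)$ as a difference of two nondecreasing processes plus the linear drift $-s\int_{|x|<\epsilon} x\,\Lambda(\dd x)$. Each monotone piece attains its supremum at $s=1$, so Campbell's formula combined with $\mathbb{E}[Y(1)]=1/(1-p)$ from \eqref{formula:expectationYuleSimon} yields the bound $\mathbb{E}[\sup_{s\leq 1}|\hat{\xi}^{(3)}_{0,\epsilon}(s)|] \leq 3\int_{|x|<\epsilon}|x|\Lambda(\dd x) \to 0$. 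When $\beta(\Lambda)\geq 1$ the naive split diverges, and one must instead work in $L_r$ with $r$ as in \eqref{condition:choiceLq.}; the plan is to exploit the martingale identity $\mathbb{E}[Y(t)\mid Y(s)=k] = k(t/s)^p$ from \eqref{formula:expectationYuleSimon}, which makes $s\mapsto s^{-p}Y_i(s)$ a martingale after its first jump. This should allow one to view $s^{-p}\hat{\xi}^{(3)}_{0,\epsilon}(s)$, up to an explicit time-dependent drift correction, as a Poissonian integral of martingales indexed by the PRM atoms, to which a Doob-type $L_r$-maximal inequality applies, and combining with the $L_r$-convergence in \eqref{equation:explicitCompensatedYSseries} concludes.

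The main obstacle I expect lies precisely in this second regime: the Doob argument demands a filtration in which both born atoms (with $Y_i(s)\geq 1$) and unborn atoms (with $Y_i(s)=0$) behave adequately, and the compensating drift does not scale as $s^p$, so some care is needed to separate the martingale and drift contributions and control both uniformly in $\epsilon$. Once \eqref{equation:convergenceSmallJumps} is established on $[0,1]$, the extension to arbitrary $t>0$ is immediate from the scaling identity \eqref{identity:extension}, which identifies $(\hat{\xi}^{(3)}_{0,\epsilon}(st))_{s\in[0,1]}$ with a NRLP on the unit interval built from the rescaled Lévy exponent $t\Psi^{(3)}_\epsilon$.
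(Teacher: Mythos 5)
Your overall architecture is sound and, for the hard part, essentially coincides with the paper's: the decomposition \eqref{descomposicionNRLP}, the continuity of $at+q\hat{B}_t$ via \eqref{formula:representationNRBM}, the rcll property of $\hat{\xi}^{(2)}$ from the finite Poissonian sum \eqref{example:reinfPoissonProcess}, the reduction to $[0,1]$ by \eqref{identity:extension}, and above all the idea of combining a Doob $L_r$-maximal inequality for $s^{-p}\hat{\xi}^{(3)}_{0,\epsilon}(s)$ with the $L_r$-convergence in \eqref{equation:explicitCompensatedYSseries} are exactly the paper's ingredients. Two points of divergence are worth noting. First, your case split is unnecessary: the martingale--Doob argument with $r$ as in \eqref{condition:choiceLq.} covers $\beta(\Lambda)<1$ as well, although your monotone/Campbell bound in that regime is perfectly valid and more elementary. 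Second, you obtain the rcll modification of $\hat{\xi}^{(3)}$ from a.s.\ uniform convergence of the rcll approximants \eqref{prelim:explicitConstructionCompensatedSums}, which forces you to prove \eqref{equation:convergenceSmallJumps} (or a Cauchy version of it) \emph{before} you have any regularity; the paper instead gets the rcll modification for free from general martingale theory once the martingale property of $t^{-p}\hat{\xi}^{(3)}_t$ is verified, and only then uses Doob to prove \eqref{equation:convergenceSmallJumps}. Your order of deduction works, but it makes the regularity statement hostage to the estimate rather than independent of it.

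The one step you leave genuinely open is the "main obstacle" you flag, and it is worth seeing that it dissolves: there is no residual drift correction and no delicate filtration issue, because the \emph{entire} compensated process $s^{-p}\hat{\xi}^{(3)}_{0,\epsilon}(s)$ is already a martingale (Proposition \ref{proposition:laMartingala}). The paper proves this not pathwise, atom by atom, but at the level of finite-dimensional distributions: differentiating the characteristic function \eqref{equation:fddsNRLPs} reduces the martingale identity to
$\esp{H\, Y(t_k/t)}{}\,t_k^{-p}=\esp{H\, Y(t_{k-1}/t)}{}\,t_{k-1}^{-p}$
for an $H=\Psi'(\sum_j\lambda_j Y(t_j/t))$ that vanishes on $\{Y(t_{k-1}/t)=0\}$ because $\Psi'(0)=0$ (this is exactly what neutralises the unborn atoms), after which \eqref{formula:expectationYuleSimon} gives the $(t_k/t_{k-1})^p$ factor. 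The drift is absorbed automatically because $\hat{\xi}^{(3)}_{0,\epsilon}$ is centred and the exponent $\Phi^{(3)}_{0,\epsilon}$ satisfies $\Phi^{(3)\prime}_{0,\epsilon}(0)=0$; centredness is the only role the compensator plays. If you substitute this characteristic-function verification for your tentative "Poissonian integral of martingales" step, your proof closes.
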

Before proving this result, let us explain the role of  \eqref{equation:convergenceSmallJumps}. Working in $[0,1]$ and  with the construction  \eqref{equation:explicitCompensatedYSseries} for $\hat{\xi}^{(3)}$, remark that for any $\epsilon \in (0,1)$ we can write  $\hat{\xi}^{(3)} = \hat{\xi}^{(3)}_{0,\epsilon} + \hat{\xi}^{(3)}_{\epsilon,1}$, where  $|\Delta \hat{\xi}_{\epsilon,1}^{(3)}(t)| \geq \epsilon$ for every jump-time $t \in [0,1]$ by construction. Now, the convergence \eqref{equation:convergenceSmallJumps} shows that in fact, the jumps of $\hat{\xi}$ of size greater than $\epsilon$ are precisely the jumps of $\hat{\xi}^{(2)} + \hat{\xi}^{(3)}_{\epsilon,1}$. Hence, when working in $[0,1]$, the jumps of  $\hat{\xi}^{(3)}$  are precisely the jumps of the weighted Yule-Simon processes $x_i Y_i(t)$ -- heuristically, this is  the continuous-time analogue of the dynamics described for the noise reinforced random walk. This fact will be used in Section \ref{subection:jump_LevyIto-Part1}. Moreover, \eqref{equation:convergenceSmallJumps} allow us to  improve the convergence stated in  \eqref{equation:explicitCompensatedYSseries} towards  $\hat{\xi}^{(3)}$. Namely, it follows that for some subsequence $(a_n)$ with $a_n \downarrow 0$ as $n \uparrow \infty$,  the convergence
\begin{equation*}  
    \lim_{n \rightarrow \infty}(\hat{\xi}_{a_n , 1}^{(3)}(s)  )_{s \in [0,1]} =  (\hat{\xi}^{(3)}_s  )_{s \in [0,1]}, 
\end{equation*}
 holds a.s. uniformly in $[0,1]$.  Remark that the convergence in the previous display was only stated when working in $[0,1]$ since,  so far, the only explicit construction of NRLPs is the one   in the unit interval  we  recalled  from  \cite{BertoinNRLP}.  In Section \ref{subsection:jump_LevyItoPart2} we shall address this point. 
\par 
The rest of the section is devoted to the proof of Theorem \ref{regularityNRLP}.   Recalling  the building blocks introduced in Section \ref{subsection:preliminaries,examplesNRLPs} and the identity in distribution \eqref{descomposicionNRLP},    $\hat{\xi}^{(2)}$ is a reinforced compound Poisson process  and hence has finite variation rcll trajectories, while $\hat{B}$ is continuous.  It is then clear that the only difficulty consists in  establishing the regularity of the process $\hat{\xi}^{(3)}$ and we  rely on a remarkable  martingale associated with centred NRLPs, that we now introduce. This martingale  will play a key role in this work. 
\begin{proposition} \label{proposition:laMartingala}
Consider  a Lévy process $\xi$ with  characteristic exponent $\Psi$ satisfying    $\Psi'(0) = 0$ and Lévy measure fulfilling the integrability condition  $\int_{\{ |x| \geq 1 \}}x \Lambda(\dd x ) < \infty$. Then, the process $M = (M_t)_{t \in \mathbb{R}^+}$ defined as  $M_0 = 0$ and for $t > 0$, as  $M_t = t^{-p}\hat{\xi}_t$,  is a martingale. Consequently, $M$ has a rcll modification. 
\end{proposition}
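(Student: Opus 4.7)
The plan is to derive the martingale identity directly from the finite-dimensional distribution formula \eqref{equation:fddsNRLPs} and then invoke Doob's regularization theorem to obtain a rcll modification. First I verify integrability and centering: the hypotheses $\Psi'(0)=0$ and $\int_{\{|x|\geq 1\}}|x|\Lambda(\dd x)<\infty$ together amount to $\xi_1\in L^1$ with $\mathbb{E}[\xi_1]=0$, and using the decomposition \eqref{descomposicionNRLP} I check separately that $\hat{B}_t$ (Gaussian), $\hat{\xi}^{(2)}_t$ (via Campbell's formula applied to \eqref{example:reinfPoissonProcess} together with the extension \eqref{identity:extension}), and $\hat{\xi}^{(3)}_t$ (via the integrability stated after \eqref{condition:choiceLq.}) are all in $L^1$. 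Differentiating \eqref{equation:fddsNRLPs} at $\lambda=0$ in the case $k=1$ then gives $\mathbb{E}[\hat{\xi}_t]=(1-p)t\,\mathbb{E}[Y(1)]\,\Psi'(0)=0$.

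The heart of the argument is the conditional identity
\begin{equation*}
\mathbb{E}\bigl[\hat{\xi}_t \,\bigm|\, \mathcal{F}^{\hat{\xi}}_s\bigr] = (t/s)^p\,\hat{\xi}_s, \qquad 0<s<t,
\end{equation*}
which is equivalent to $\mathbb{E}[M_t\mid \mathcal{F}^{\hat{\xi}}_s]=M_s$. By a standard density argument it suffices to show that for every $0<s_1<\cdots<s_k=s<t$ and $\lambda_1,\dots,\lambda_k\in\mathbb{R}$,
\begin{equation*}
\mathbb{E}\Bigl[\hat{\xi}_t\exp\bigl(i{\textstyle\sum_j}\lambda_j\hat{\xi}_{s_j}\bigr)\Bigr] = (t/s)^p\,\mathbb{E}\Bigl[\hat{\xi}_s\exp\bigl(i{\textstyle\sum_j}\lambda_j\hat{\xi}_{s_j}\bigr)\Bigr].
\end{equation*}
I obtain the left-hand side by applying \eqref{equation:fddsNRLPs} with an auxiliary parameter $\mu$ placed at time $t$ and reference time $t$, then differentiating once in $\mu$ at $\mu=0$; the right-hand side comes from differentiating the same formula (without the extra parameter) in $\lambda_k$. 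Writing $\sigma:=s/t$ and $V:=\sum_{j=1}^k\lambda_j Y(s_j/t)$, which is $\sigma(Y(r):r\leq\sigma)$-measurable, the claim reduces to
\begin{equation*}
\mathbb{E}\bigl[Y(1)\,\Psi'(V)\bigr] = \sigma^{-p}\,\mathbb{E}\bigl[Y(\sigma)\,\Psi'(V)\bigr].
\end{equation*}
On the event $\{Y(\sigma)=0\}$ one has $V=0$ and hence $\Psi'(V)=\Psi'(0)=0$ by assumption, so both sides are supported on $\{Y(\sigma)\geq 1\}$; there the time-inhomogeneous Markov property of the Yule process combined with the conditional mean formula \eqref{formula:expectationYuleSimon} gives $\mathbb{E}[Y(1)\mid \sigma(Y(r):r\leq\sigma)]=Y(\sigma)\sigma^{-p}$, and conditioning closes the identity.

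With the martingale property established, the rcll modification follows from Doob's regularization theorem, using that $s\mapsto \mathbb{E}[M_s]=0$ is trivially right-continuous; the modification is adapted to the right-continuous augmentation of $\mathcal{F}^{\hat{\xi}}$. The main technical obstacle is justifying the interchange of differentiation and expectation in the step above, which I expect to handle via a dominated-convergence argument using the at-most-linear growth of $\Psi'$ together with the moment bounds for Yule-Simon variables available under the admissibility condition $p\beta<1$.
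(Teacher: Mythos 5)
Your proposal is correct and follows essentially the same route as the paper: differentiate the finite-dimensional formula \eqref{equation:fddsNRLPs}, use $\Psi'(0)=0$ together with the monotonicity of $Y$ to restrict to $\{Y(\sigma)\geq 1\}$, and close the identity with the Markov/conditional-mean property \eqref{formula:expectationYuleSimon}. The only cosmetic difference is that the paper first splits off the Gaussian part (handled via the Wiener-integral representation \eqref{formula:representationNRBM}) and runs the differentiation argument on $\hat{\xi}^{(3)}$ alone, whereas you treat the full exponent $\Psi$ at once — both work for the same reason.
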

\begin{proof}  Recall from (\ref{descomposicionNRLP})  that in that case, $\hat{\xi}$ can be written as a sum  of two independent processes $\hat{\xi} = q  \hat{B} + \hat{\xi}^{(3)}$, where $\hat{B}$ is a noise reinforced Brownian motion. Recalling the representation (\ref{formula:representationNRBM}) for $\hat{B}$, it follows that $(t^{-p}\hat{B}_t)_{t \in \mathbb{R}^+}$ is a continuous martingale and we assume therefore that $q = 0$.
\par Turning our attention to $\hat{\xi}^{(3)}$, notice that $M_t$ is  in $L_r(\mathbb{P})$ for $r$ chosen according to \eqref{condition:choiceLq.} and that   $E[M_t] = 0$ since, as we discussed after \eqref{condition:choiceLq.}, we have $\hat{\xi}^{(3)}_t \in L_r(\mathbb{P})$, $\mathbb{E}[\hat{\xi}^{(3)}_t]=0$. Now, it remains to show that $(M_t)_{t \in (0,1]}$ satisfies the martingale property. In this direction it is enough to check that for any $0< t_0 < \dots < t_k<t$ and $\lambda_1, \dots , \lambda_{k-1} \in \mathbb{R}$, we have  
\begin{equation} \label{equation:igualdadAverificarMartingala}
    \esp{t_k^{-p}\hat{\xi}^{(3)}_{t_k} \exp \left\{i \sum_{i=1}^{k-1} \lambda_i \hat{\xi}^{(3)}_{t_i}  \right\}  }{} 
    = \esp{t_{k-1}^{-p}\hat{\xi}^{(3)}_{t_{k-1}} \exp \left\{i \sum_{i=1}^{k-1} \lambda_i  \hat{\xi}^{(3)}_{t_i} \right\}  }{}. 
\end{equation}
On the one hand, under our standing assumptions, the left-hand side of (\ref{equation:igualdadAverificarMartingala}) corresponds to the derivative at $\lambda_k=0$ of (\ref{equation:fddsNRLPs}) multiplied by $-i t^{-p}_k$ and hence equals:  
\begin{equation*}
-i t(1-p) \exp\left\{t (1-p) \esp{\Psi \left( \sum_{j=1}^{k-1} \lambda_j Y(t_j/t) \right) }{} \right\} \cdot  \esp{ H\left( Y(s) \, : s \leq t_{k-1}/t \right) Y(t_k/t)}{} t_k^{-p}, \end{equation*}
for $H$ defined as
\begin{equation*}
   \quad H\left( Y(s) \, : s \leq t_{k-1}/t \right) := \Psi' \left( \sum_{j=1}^{k-1} \lambda_i  Y(t_j/t)  \right).
\end{equation*}
Remark that this is a $\sigma(Y(s) : s \leq t_{k-1}/t )$-measurable random variable.
On the other hand, the right-hand side of (\ref{equation:igualdadAverificarMartingala}) corresponds to the derivative with respect to $\lambda_{k-1}$ of (\ref{equation:fddsNRLPs}) multiplied by $-i t_{k-1}^p$ for $\lambda_{k}=0$ and similarly,  we deduce that the right-hand side of (\ref{equation:igualdadAverificarMartingala}) writes:   
\begin{equation*}
-it(1-p)\exp\left\{ t(1-p) \esp{\Psi \left( \sum_{j=1}^{k-1}  \lambda_j Y(t_j/t)   \right) }{} \right\} \cdot \esp{ H\left( Y(s) \, : s \leq t_{k-1}/t \right) Y(t_{k-1}/t)   }{} t_{k-1}^{-p}. 
\end{equation*}
Now, it only remains to show that: 
\begin{equation} \label{equation:ingualdadAVerificarParteII}
    \esp{  H(Y(s) \, : s \leq t_{k-1}/t)   Y(t_k/t)   }{} t_k^{-p} 
    =\esp{ H\left( Y(s) \, : s \leq t_{k-1}/t \right) Y(t_{k-1}/t)  }{} t_{k-1}^{-p}. 
\end{equation}
 Notice that since $\Psi'(0)=0$ and $Y$ is increasing, $H\left( Y(s) \, : s \leq t_{k-1}/t \right) $ vanishes if $Y({t_{k-1}}/t) =0$. This allows us to restrict the terms inside the expectations in (\ref{equation:ingualdadAVerificarParteII})  to $\{ Y(t_{k-1}/t) \geq 1 \}$ and to apply the   Markov property (\ref{formula:expectationYuleSimon}) at time $t_{k-1}/t$ to get: 
\begin{align} \label{equation:finalpruebaMartingala}
    & \esp{ H(Y(r) : r \leq t_{k-1}/t) Y(t_{k}/t) }{} t^{-p}_{k} \nonumber \\
    & = \sum_{j=1}^{\infty} \esp{ H(Y(r) : r \leq t_{k-1}/t) \esp{Y(t_{k}/t) | Y(t_{k-1} /t) =j }{} \indic{\{ Y(t_{k-1}/t) =j \}}  }{} t^{-p}_{k}  \nonumber \\
    & = \sum_{j=1}^{\infty} \esp{ H(Y(r) : r \leq t_{k-1}/t) \cdot j (t_{k}/t_{k-1} )^p \indic{\{ Y(t_{k-1}/t) =j \}}  }{} t^{-p}_{k}  \nonumber \\
    & = \esp{ H(Y(r) : r \leq t_{k-1}/t) Y({t_{k-1}}/t)   }{} t^{-p}_{k-1}, 
\end{align}
proving the claim. 
\end{proof}

Let us now conclude the proof of Theorem \ref{regularityNRLP}.

\begin{proof}[Proof of Theorem \ref{regularityNRLP}.]
The first assertion  is now a consequence of the following simple observation:  denoting by   $\closure{M}$    the rcll modification  of the martingale $M = (t^{-p} \hat{\xi}^{(3)}_t)_{t \in \mathbb{R}^+}$,  it is then clear that the process   $\hat{J}^{(3)} := t^p \closure{M}_t$, for $t \geq 0$, is a rcll modification of $\hat{\xi}^{(3)}$.  Notice by intergrating by parts that consequently,  the process $\hat{\xi}^{(3)}$ is a semimartingale, this will be needed in Section \ref{subection:jump_LevyIto-Part1}. To prove the second claim, remark that by the observation right after \eqref{identity:extension}, it suffices to work on the time interval $[0,1]$. Moreover,  by  Proposition \ref{proposition:laMartingala},  for each $\epsilon >0$,  the process 
\begin{equation*}
    M^{(\epsilon)} := (s^{-p}\hat{\xi}_{0 , \epsilon}^{(3)}(s))_{s \in (0,1]}
\end{equation*}
with $M^{(\epsilon)}_0 = 0$, is a $L_r(\mathbb{P})$ rcll martingale in $[0,1]$,  for $r$ chosen according to (\ref{condition:choiceLq.}).   
Since $r > 1$, by Doob's inequality at time $t = 1$ we have
\begin{align*}
    \esp{\sup_{s \leq t} |\hat{\xi}^{(3)}_{0,\epsilon}(s)|^r }{} 
     \leq \esp{\sup_{s \leq t} \big| s^{-p} \hat{\xi}^{(3)}_{0,\epsilon}(s) \big|^r}{} 
    \leq C_r  \esp{ |\hat{\xi}^{(3)}_{0,\epsilon}(1)|^r }{}, 
\end{align*}
for some constant $C_r >0 $, and it remains to show that the right-hand side converges to $0$ as $\epsilon \downarrow 0$. However, this is a consequence of \eqref{equation:explicitCompensatedYSseries}. More precisely, recalling the construction detailed in \eqref{prelim:explicitConstructionCompensatedSums}, note that    $\hat{\xi}^{(3)}_t - \hat{\xi}^{(3)}_{\epsilon,1}(t)$ has the same distribution as $\hat{\xi}^{(3)}_{0, \epsilon}(t)$ for every $t \in [0,1]$ and $\epsilon > 0$. Since the convergence  \eqref{equation:explicitCompensatedYSseries} still holds in $L_r(\mathbb{P})$,  the result follows by taking the limit as $\epsilon \downarrow 0$.  
\end{proof}

 \par Now that we have established that a NRLP is a rcll process, in the next section we  study the structure of its jump process $(\Delta \hat{\xi}_t)$.  Since it will share striking similarities with the jump process of a  Lévy process, before concluding the section  we recall well known results on $(\Delta \xi_t)$. Namely, if $\xi$ is a Lévy process with Lévy measure $\Lambda$, its jump measure 
\begin{equation} \label{ppp-Levy}
    \mu ( \dd t , \dd x) = \sum_s \indic{\{ \Delta {\xi}_s \neq 0  \}} \delta_{(s ,  \Delta \xi_s  )} ( \dd t ,  \dd x), 
\end{equation}
is a homogeneous Poisson point process (abbreviated PPP) with characteristic measure $\Lambda(\dd x)$. Such a PPP can be constructed by decorating the point process of jumps of a Poisson process, and  it is classic that \eqref{ppp-Levy} is determined by the following two properties:
\begin{enumerate}
    \item[{(i)}] For any Borelian $A$ with $\Lambda(A) < \infty$,  the counting process of jumps  $\Delta \xi_s \in  A$ occurring until time $t$, defined as 
\begin{equation*}
    N_A(t)  = \# \big\{  (s, \Delta  \xi_s) \in [0,t]\times A \big\}, \quad \quad t \geq 0, 
\end{equation*}
is a Poisson process with rate $\Lambda(A)$. 
    \item[{(ii)}]   If $A_1, \dots A_k$ are disjoint Borelians with $\Lambda(A_i)< \infty$ for all $i \in \{1 , \dots, k\}$,  the processes ${N}_{A_1}, \dots , {N}_{A_k}$ are independent.
\end{enumerate}
In particular, from (i), it follows that  $(N_A(t) - \Lambda(A)t)_{t \in \mathbb{R}^+}$ is a martingale.

\addtocontents{toc}{\vspace{\contentsSpacingBefore}}
\section{Reinforced Lévy-Itô decomposition}\label{section:NRPPP}

This section is devoted to the study of  the jump process $(\Delta \hat{\xi}_s)_{s \in \mathbb{R}^+}$ and the associated jump measure in $\mathbb{R}^+\times \mathbb{R}$, viz. 
\begin{equation} \label{definition:reinforcedPoissonPointProcess}
    \hat{\mu}( \dd t, \dd x) := \sum_s \indic{\{ \Delta \hat{\xi}_s \neq 0  \}} \delta_{(s ,  \Delta \hat{\xi}_s  )} ( \dd t ,  \dd x).
\end{equation} 
In this direction, we  shall  introduce in Definition \ref{proposition:reinforcedPPPbyMarking}  below  a family of random measures in $\mathbb{R}^+ \times \mathbb{R}$ under the name \textit{noise reinforced Poisson point processes} -- abbreviated NRPPPs -- that will play the analogous role of PPPs for the jump measure of Lévy processes. Each element of this family of measures is parametrized by a sigma finite measure $\Lambda$ in $\mathbb{R}$, that we refer to as its characteristic measure, and a positive value $p \in (0,1)$, that we call its reinforcement parameter. The construction of NRPPPs consists essentially in the reinforced version  of the one of PPPs. More precisely, we shall construct them by decorating the point process of jumps of a \textit{reinforced} Poisson process.  The main result of this section is the reinforced version of the celebrated Lévy-Itô decomposition:
\begin{theorem}\emph{(Reinforced Lévy-Itô decomposition)} \label{lemma:PreFormulaCompensacion} \\
The jump measure $\hat{\mu}$ of  $\hat{\xi}$ is a noise reinforced Poisson point process with characteristic measure $\Lambda(\dd x)$ and reinforcement parameter $p$. 
\end{theorem}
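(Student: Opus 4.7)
The strategy is to identify the jump measure $\hat{\mu}$ directly from the explicit Poissonian construction of $\hat{\xi}$ on the unit interval, then transfer the conclusion to $\mathbb{R}^+$ using the scaling identity \eqref{identity:extension}. I would first reduce to the time interval $[0,1]$: indeed, for any $T > 0$, \eqref{identity:extension} tells us that $(\hat{\xi}_{sT})_{s \in [0,1]}$ is itself a NRLP on $[0,1]$ (associated to the exponent $T\Psi$, i.e. Lévy measure $T\Lambda$), so proving the statement on $[0,1]$ for an arbitrary Lévy triplet and then rescaling will yield the result on $[0,T]$ for every $T$, hence on $\mathbb{R}^+$.

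On $[0,1]$, I would use the decomposition \eqref{descomposicionNRLP} together with the explicit constructions \eqref{example:reinfPoissonProcess} and \eqref{prelim:explicitConstructionCompensatedSums}. The reinforced Brownian component $\hat B$ is continuous, hence contributes nothing to $\hat\mu$. The remaining two components are built from the same PRM $\mathcal{M} = \sum_i \delta_{(x_i,Y_i)}$ in $\mathbb{R}\times D[0,1]$ with intensity $(1-p)\Lambda\otimes\mathbb{Q}$: summing $x_i Y_i(t)$ over atoms with $|x_i|\ge 1$ yields $\hat\xi^{(2)}$, while summing over atoms with $a\le |x_i|<1$ (minus drift) yields $\hat\xi^{(3)}_{a,1}$. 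Since each $Y_i$ is a counting process, $\Delta(x_i Y_i)(s) = x_i$ at every jump time of $Y_i$, so the jump measure of $\hat\xi^{(2)} + \hat\xi^{(3)}_{\epsilon,1}$ equals, pathwise,
\begin{equation*}
\sum_{i : |x_i|\ge\epsilon}\ \sum_{s\in \text{jumps}(Y_i)} \delta_{(s,\,x_i)}(\dd t,\dd x),
\end{equation*}
which is precisely the restriction to $\{|x|\ge\epsilon\}$ of the decorated point measure produced by the NRPPP construction of Definition \ref{proposition:reinforcedPPPbyMarking}.

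The only non-trivial issue is that $\hat\xi^{(3)}$ is obtained as a limit as $a\downarrow 0$ and one must rule out the creation of spurious jumps. Here Theorem \ref{regularityNRLP} enters crucially: the estimate \eqref{equation:convergenceSmallJumps} guarantees that $\sup_{s\le 1}|\hat\xi^{(3)}_{0,\epsilon}(s)|\to 0$ in $L_1$, hence along a subsequence a.s., so the jumps of $\hat\xi$ of size $\ge \epsilon$ coincide with the jumps of $\hat\xi^{(2)}+\hat\xi^{(3)}_{\epsilon,1}$, while no jump is lost in the limit $\epsilon\downarrow 0$. Consequently $\hat\mu$ equals pathwise the decorated point measure $\sum_i\sum_{s\in \text{jumps}(Y_i)}\delta_{(s,x_i)}$ read off from $\mathcal{M}$.

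It remains to check properties (i) and (ii) defining a NRPPP. For Borel $A$ with $\Lambda(A)<\infty$, the restriction theorem for Poisson random measures implies that $\{(x_i,Y_i):x_i\in A\}$ is a PPP in $A\times D[0,1]$ with intensity $(1-p)\Lambda|_A\otimes\mathbb{Q}$, and then
\begin{equation*}
\hat{N}_A(t) \;=\; \sum_{i:\,x_i\in A} Y_i(t)
\end{equation*}
is, by \eqref{example:reinfPoissonProcess} applied with $P_X=\delta_1$ and rate $\Lambda(A)$, a noise reinforced Poisson process of rate $\Lambda(A)$. Disjointness of $A_1,\dots,A_k$ yields disjoint restrictions of $\mathcal{M}$, which are independent, giving (ii). The main obstacle I anticipate is making the passage $\epsilon\downarrow 0$ fully rigorous uniformly in $t$ (so that $\hat\mu$ is truly determined by $\mathcal{M}$ rather than just equal to it after an a.s.\ subsequence), but this is delivered by the uniform convergence consequence of \eqref{equation:convergenceSmallJumps} noted after Theorem \ref{regularityNRLP}.
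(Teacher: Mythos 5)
Your proposal is correct and follows essentially the same route as the paper: reduce to $[0,1]$ via \eqref{identity:extension}, use Theorem \ref{regularityNRLP} (through \eqref{equation:convergenceSmallJumps}) to identify the jumps of $\hat{\xi}$ with the jumps of the weighted Yule--Simon processes $x_i Y_i$ from the Poisson random measure $\mathcal{M}$, and then verify conditions (i)--(ii) of the characterisation in Proposition \ref{caractherisation:nrppp}, with independence coming from the restriction property of $\mathcal{M}$. The only immaterial difference is that you obtain (i) by invoking the representation \eqref{example:reinfPoissonProcess} with $P_X=\delta_1$, whereas the paper recomputes the characteristic function of the finite-dimensional distributions of $\hat{N}_A$ via the Poisson exponential formula and compares it with \eqref{definition:noiseReinfPoisson}.
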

\noindent \textit{The rest of the section is organised as follows: }In Section \ref{subsection:JumpsReinforcedPP} we restrict our study to the jump process of reinforced Poisson processes. In Section \ref{subsection:jump_NRPPPdecorated}, we construct NRPPPs by decorating the jump process of reinforced Poisson processes and  then study its basic properties.  For instance,   in Proposition \ref{proposition:exponentialFormulasJumpProcess} we prove a characterisation in the same vein as the one holding for PPPs, recalled at the end of Section \ref{section:extensionAndRegularity}. Finally, in Section \ref{subection:jump_LevyIto-Part1} we prove Theorem \ref{lemma:PreFormulaCompensacion} and in Proposition \ref{theorem:compensationFormula}  we  identify  the predictable compensator of $\hat{\mu}$.  
{   
\subsection{The jumps of noise reinforced Poisson processes} \label{subsection:JumpsReinforcedPP}

\indent Let us start by introducing the basic building block of this section. \smallskip  \\
{\noindent $\circ$ \textit{Noise reinforced Poisson process:} 
When $\xi$ is a Poisson process $N$ with rate $c$, any reinforcement parameter $p\in (0,1)$ is admissible and recall from the discussion following  (\ref{example:reinfPoissonProcess}) that $\hat{N}$ is a counting process. Moreover,  the corresponding noise reinforced Poisson process (abbreviated NRPP) with rate $p$ has finite dimensional distributions characterised, for any $0 < s_1 < \dots < s_k \leq t$ and $\lambda_j \in \mathbb{R}$, by the identity 
\begin{equation}  \label{definition:noiseReinfPoisson}
      \esp{ \exp \left\{ i \sum_{i=1}^k \lambda_i \hat{N}_{s_i} \right\}  }{} 
     = \exp \left\{ (1-p) c  t \esp{\left( \exp \bigg\{ {i \sum_{i=1}^k \lambda_i Y( s_i/t)} \bigg\} -1 \right) }{}   \right\}.  
\end{equation}
A Poisson process with rate $c$ has associated to it the random measure $\dd N_s$, also called its point process of jumps.  This is a Poisson random measure in $\mathbb{R}^+$ with intensity $c \dd t$ and it has a natural reinforced counterpart: namely,  the random measure  $\dd  \hat{N}_s$, that we shall now study in detail.  \smallskip \\ }
\indent To do so, we start by introducing some standard notation for point processes. We shall identify discrete random sets $D= \{ t_1, t_2, \dots  \} \subset \mathbb{R}$ with counting measures $\sum_{t \in D} \delta_{t}$ and for $f: \mathbb{R} \mapsto \mathbb{R}$,  we  use the notation $\langle  D , f\rangle$ for $\sum_{t \in D} f(t)$. The collection of counting measures in $\mathbb{R}$ is denoted by $\mathcal{M}_c$. We will make use of the following two basic transformations: for $x \in \mathbb{R}$, we denote by $\mathcal{T}_xD$ the translated point process $\{  t + x : t \in D  \}$ and  for $f:\mathbb{R} \mapsto \mathbb{R}$, we write   $D \circ f^{-1}$ the push-forwarded  point process $\{  f(t) : t \in D \}$. 
\par
Now,  consider an increasing sequence of random times $0 = T_0<T_1<T_2<...$,  such  that the increments  $(T_{n}-T_{n-1}: n \geq 1)$ are independent and for any $n \geq 1$,  $T_n-T_{n-1}$ is exponentially distributed with parameter $p n$. Write $D := \{ 0, T_1, T_2, ... \}$ the point process associated to this family and we denote its law in $\mathcal{M}_c$ by $\mathbb{D}(\dd \mu)$. From these ingredients, we define a decorated measure as follows: first, consider $\scr{E}$ a Poisson point process with intensity $c (1-p) e^{t} \dd t$ in $\mathbb{R}$ and, for each atom $u \in \scr{E}$, let  $D_u$ be an independent copy of $D$.   Then,  we set 
\begin{equation} \label{definition:reinforcedPPExpScale} 
    \mathcal{L}( \dd s ) :=  \sum_{u \in \scr{E}}  \sum_{t \in D_u} \delta_{ u + t} = \sum_{u \in \scr{E}} \mathcal{T}_{u} D_u. 
\end{equation}
Remark that if $(Z_t)$ is a standard Yule process started from $1$, $D$ has the same law as   the point process induced by  the jump-times of $(Z_{tp})$, with a Dirac mass at $0$. The next proposition shows  that the law of the point process of jumps of a noise reinforced Poisson process with rate $c$ is precisely $\mathcal{L} \circ \exp^{-1}$, the pushforward of $\mathcal{L}$ by the exponential function.  

\begin{proposition} \label{lemma:nrpp_jumpProcess}
The following properties hold:
\begin{enumerate} 
    \item[\emph{(i)}] Let $\hat{N}$ be a noise-reinforced Poisson process with {rate $c$} and write  $\hat{\scr{P}} := \dd \hat{N}_s$ the   point process of its jump-times in $\mathbb{R}^+$. Then, we have the equality in distribution  $\hat{\scr{P}} \overset{\scr{L}}{=} \mathcal{L} \circ \exp^{-1}$. We will still refer to $\hat{\scr{P}}$ as a  reinforced Poisson process with rate $c$ and reinforcement parameter $p$. 
    \item[\emph{(ii)}]  If   $Y$ is a 
 Yule-Simon process with parameter $1/p$, for any $f:\mathbb{R}^+ \mapsto \mathbb{R}^+$ we have 
\begin{equation} \label{identity:laplaceNRPPcompactos}
    \hspace{-23mm} - \log \esp{ \exp \bigg\{ - \langle \hat{\scr{P}} ,  \indic{(0,t]} f \rangle \bigg\} }{} = t c \cdot (1-p) \esp{1-e^{-\int_0^1 f(st)\dd Y(s) }}{}.
\end{equation}
\end{enumerate}
\end{proposition}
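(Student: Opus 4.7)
My plan is to characterise both $\hat{\scr{P}}$ and $\mathcal{L}\circ\exp^{-1}$ via their Laplace functionals on bounded intervals: I would derive (ii) directly from the finite-dimensional formula \eqref{definition:noiseReinfPoisson}, and then prove (i) by showing that the Campbell-formula computation for $\mathcal{L}\circ\exp^{-1}$ produces exactly the same expression. The hardest step will be the change of variable matching the Campbell integral over $\scr{E}$ with the Stieltjes integral $\int_0^1 f(rt)\, dY(r)$, which relies on the exponential-coordinate representation \eqref{formula:RepresentacionYuleSimon} of the Yule-Simon process.

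For (ii), given a partition $0 = s_0 < s_1 < \cdots < s_k = t$, setting $\lambda_j = \alpha_j - \alpha_{j+1}$ (with $\alpha_{k+1}=0$) in \eqref{definition:noiseReinfPoisson} gives, by Abel summation and the conventions $\hat{N}_{s_0} = 0$, $Y(s_0/t) = 0$, the joint characteristic function of the increments $X_j := \hat{N}_{s_j} - \hat{N}_{s_{j-1}}$ in the form $\exp\{(1-p)ct\,\mathds{E}[e^{i\sum_j \alpha_j V_j} - 1]\}$, where $V_j := Y(s_j/t) - Y(s_{j-1}/t)$. Because $X_j, V_j \geq 0$, standard analytic continuation upgrades this to the joint Laplace transform. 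For a non-negative step function $f = \sum_j \lambda_j\mathds{1}_{(s_{j-1},s_j]}$ supported on $(0,t]$, one has $\langle \hat{\scr{P}}, \mathds{1}_{(0,t]}f\rangle = \sum_j \lambda_j X_j$ and $\int_0^1 f(rt)\, dY(r) = \sum_j \lambda_j V_j$, which yields \eqref{identity:laplaceNRPPcompactos} for step functions; a monotone-class argument then extends the identity to arbitrary non-negative Borel $f$.

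For (i), Campbell's exponential formula applied to $\scr{E}$, combined with the conditional independence of the decorations $(D_u)$, yields
\[
-\log \mathds{E}\exp\Big(-\langle \mathcal{L}\circ \exp^{-1},\, \mathds{1}_{(0,t]}f\rangle\Big) = (1-p)c \int_{\mathbb{R}} \Big(1 - \mathds{E}\exp\bigl\{-\textstyle\sum_{v \in D}\mathds{1}_{\{u+v\leq \log t\}}f(e^{u+v})\bigr\}\Big)\, e^u\, du.
\]
Since $0 \in D$, the integrand vanishes for $u > \log t$, so the domain of integration is effectively $(-\infty, \log t]$. The substitution $r = e^u/t \in (0,1]$ turns $(1-p)c\,e^u\,du$ into $(1-p)ct\, dr$, converts the constraint $u+v \leq \log t$ into $e^v \leq 1/r$, and rewrites $f(e^{u+v})$ as $f(rte^v)$. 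Invoking \eqref{formula:RepresentacionYuleSimon}, for $U$ uniform on $[0,1]$ independent of $(T_k)_{k \geq 0}$ distributed as $D$, the jump-times of $Y$ on $[0,1]$ are exactly $\{Ue^{T_k}: k\geq 0,\, Ue^{T_k}\leq 1\}$; hence $\int_0^1 f(ts)\, dY(s)$ has the same law as $\sum_{k\,:\, Ue^{T_k}\leq 1} f(tUe^{T_k})$. Treating the integration variable $r$ as playing the role of $U$, the $r$-integral collapses to $(1-p)ct\,\mathds{E}[1 - \exp(-\int_0^1 f(rt)\, dY(r))]$, which matches the Laplace functional from (ii). Thus $\hat{\scr{P}}$ and $\mathcal{L}\circ\exp^{-1}$ have the same Laplace functional on every bounded interval, and therefore the same law on $\mathbb{R}^+$.
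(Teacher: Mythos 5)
Your argument is correct, and for part (i) it is essentially the paper's own computation: you apply the exponential formula to the marked Poisson process $\scr{E}$, change variables $r=e^u/t$, and identify the resulting mixture over $r\in(0,1]$ with a Yule--Simon process via \eqref{formula:RepresentacionYuleSimon} and the description of $D$ as the jump times of the time-changed Yule process. The paper packages exactly this identification into Lemma \ref{lemma:restriccionesRepresentante} (ii) of the Appendix (the measure $\mbm{Z}^\bullet(\,\cdot\,|u\le t)$); you have simply inlined it. Where you genuinely diverge is part (ii): the paper obtains \eqref{identity:laplaceNRPPcompactos} in one line by combining the scaling identity \eqref{identity:extension} with the explicit Poissonian representation \eqref{example:reinfPoissonProcess} of a NRPP on $[0,1]$ as $\sum_i Y_i(\cdot)$, so that the exponential formula for Poisson random measures applies directly. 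You instead reconstruct the Laplace functional from the finite-dimensional formula \eqref{definition:noiseReinfPoisson} alone: Abel summation to pass to increments, identification of the resulting characteristic function as that of a compound Poisson vector (equivalently, analytic continuation) to get the joint Laplace transform of the increments, and a monotone-class extension from step functions to general nonnegative $f$. That route is longer but buys independence from the pathwise construction on $[0,1]$ — the only pathwise input you need is that $\hat N$ is a counting process with unit jumps, so that $\langle\hat{\scr{P}},\indic{(0,t]}f\rangle$ reduces to a linear combination of increments when $f$ is a step function; this fact is established in the paper's preliminaries and you should cite it explicitly, since without it the passage from finite-dimensional distributions to the point process $\dd\hat N_s$ is not automatic. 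Two cosmetic points: the vanishing of the Campbell integrand for $u>\log t$ follows from $D\subset[0,\infty)$ rather than from $0\in D$, and in your final display the symbol $r$ is used simultaneously as the outer integration variable and the Stieltjes variable of $\dd Y$; neither affects the validity of the argument.
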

 \noindent In particular, from (\ref{definition:reinforcedPPExpScale}) and (i)  we deduce the following   identity in distribution: if $\mathscr{P}$ is a Poisson process in $\mathbb{R}_+$ with intensity $c(1-p) \dd t$, we have
 \begin{equation}
     \label{definition:PPofNRPP_intro_intro}
    \hat{\scr{P}} = \sum_{s\in \mathbb{R}^+} \indic{\{ s : \Delta \hat{N}_s = 1 \}}  \delta_{ s } \overset{\scr{L}}{=}  \sum_{u \in \scr{P}} \sum_{t \in D_u} \delta_{ue^{t}}. 
 \end{equation}
Roughly speaking,  the jumps of $\hat{N}$ consist in  Poissonian jumps $u \in\scr{P}$ which -- in analogy with the discrete setting --  we refer to as innovations,  and each  $u$  has attached to it a family  $\{ue^{t} : t \in D_u, \, t \neq 0\}$ which should be interpreted as repetitions of the original $u$ through time.

\begin{figure}[htbp!]
   \centering
   \scalebox{1.4}{  
  \includegraphics[scale=1.0]{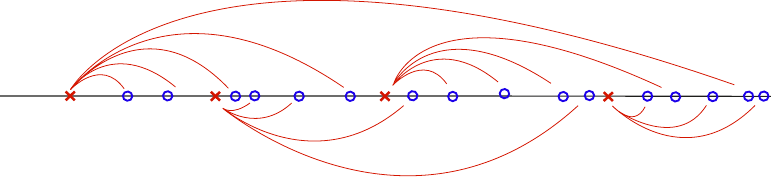}
  }
  \caption{Sketch of the jumps of a noise reinforced Poisson process. We marked by   {\color{red} x} the jumps corresponding to innovations, while each linked  {\color{blue}o} is a repetition of the former.}
\end{figure}

Notice that the time at which $u$ occurs affects the rate of the subsequent repetitions, slowing the rate down as $u$ grows. This is closely related to what happens to the rate at which a step is repeated  in  a step reinforced random walk, depending on its first time of appearance. For later use, remark  that for fixed $u \in \mathbb{R}^+$,  the atoms of  $\sum_{t \in D} \delta_{ue^{t}}$ are distributed as the jump times  of the counting process 
\begin{equation} \label{definition:logtimeChangedYule}
    \indic{\{u \leq s \}} Z_{p( \ln(s) - \ln(u) )}\, ,  \quad \quad s \geq 0.
\end{equation}
\begin{proof}
To establish the identity in distribution stated in (i), we compute the respective Laplace functional of both random measures. Starting with $\hat{\mathscr{P}}$, fix $t \geq 0$ and recall from  the identity in distribution (\ref{identity:extension}) that $(\hat{N}_{ts})_{s \in [0,1]}$ has the same law as a noise reinforced Poisson process with same reinforcement parameter $p$ and rate $tc$, say  $(\hat{N}^{(t)}_s)_{s \in [0,1]}$.  This NRLP is defined in $[0,1]$  and hence admits a simple representation in terms of Poisson random measures: by (\ref{example:reinfPoissonProcess}), if $\sum_{i}\delta_{Y_i}$ is a Poisson random measure in $D[0,1]$ with intensity $t c (1-p) \mathbb{Q}$,  the process  $(\sum_iY_i(s) : s \in [0,1])$ has the same distribution as $(\hat{N}^{(t)}_s)_{s \in [0,1]}$. In particular, we have 
\begin{equation*}
    \int_0^t f(s) \dd \hat{N}_s =  \int_0^1 f(st) \dd \hat{N}_{st} \overset{\scr{L}}{=} \sum_i \int_0^1 f(st) \dd  Y_i(s).
\end{equation*}
Putting everything together, we deduce (\ref{identity:laplaceNRPPcompactos}) by making use of the Laplace formula for integrals with respect to Poisson random measures -- we invite the reader to compare (\ref{identity:laplaceNRPPcompactos}) with the identity   \eqref{equation:fddsNRLPs} for the finite-dimensional distributions of NRLPs -- and it remains to show that the Laplace functional of $\mathcal{L}\circ \exp^{-1}$ coincide with this expression. 

In this direction, recall the observation made in (\ref{definition:logtimeChangedYule}) and denote  by $\mbm{Z}$ the law of the standard Yule process $Z$. 
It follows that the law of  $\langle  \mathcal{L} \circ \exp^{-1} , 1_{ (0,t]  } f  \rangle$ can be expressed in terms of the Poisson random measure $\mathcal{M} := \sum_i \delta_{(u_i, Z^{(i)})}$ in $\mathbb{R}^+ \times D[0,1]$, with intensity $c (1-p)\dd t\otimes \mbm{Z}$, by considering the functional 
\begin{equation*}
    \sum_{i} \int_{(0,t]} f(s) \, \dd \left(  \indic{\{u_i \leq s \}} Z^{(i)}_{p( \ln(s) - \ln(u_i) )} \right),   
\end{equation*} where the integrals in the previous expression are respectively with respect to the Stieltjes  measure associated to the counting process $s \mapsto  \indic{\{u_i \leq s \}} Z^{(i)}_{p( \ln(s) - \ln(u_i) )}$. 
It now follows also by the exponential formula  that 
\begin{align} \label{cuentas:laplacereinforcedPP_parte2}
     -\log \esp{e^{ - \langle \mathcal{L} \circ \exp^{-1} ,  1_{\{\cdot \leq t \} } f  \rangle  }}{} 
    &= (1-p)c \int_{\mathbb{R}} \dd u \, \esp{ 1-\exp\bigg\{-\int_0^t f(s) \dd \left( \indic{\{u \leq s \}} Z_{p(\ln(s) - \ln(u))} \right)   \bigg\} }{}  \nonumber  \\
    &= (1-p)c \int_{0}^t \dd u \,  \esp{ 1-\exp\bigg\{-\int_0^t f(s) \dd \left( \indic{\{u \leq s \}} Z_{p(\ln(s) - \ln(u))} \right)   \bigg\} }{} \nonumber \\
    &=t  (1-p)c  \cdot \mbm{Z}^\bullet \left( 1-\exp\bigg\{\int_0^1 f(st) \dd \left( \indic{\{u \leq st \}} Z_{p(\ln(st) - \ln(u))} \right) \bigg\} \big| u \leq t\right), 
\end{align}
where we denoted in the last line by $\mbm{Z}^\bullet( \, \cdot \,  | u \leq t )$ the integral in  $\mathbb{R}^+ \times D[0,\infty)$ with respect to the probability measure 
\begin{equation*}
    \mbm{Z}^\bullet (\, \cdot \, | u \leq t) := \frac{\indic{\{ u \leq t \} }}{t} \dd u \, \mbm{Z}(  \dd Z  ).
\end{equation*}
Now,  we deduce by  Lemma \ref{lemma:restriccionesRepresentante} - (ii) in the Appendix that (\ref{cuentas:laplacereinforcedPP_parte2}) is precisely (\ref{identity:laplaceNRPPcompactos}).  
\end{proof}

Finally, for later use we  state  the following equivalent expression for the  Laplace functional associated to the random measure $\mathcal{L} \circ \exp^{-1}$.
\begin{lemma}
For any measurable $f : \mathbb{R}^+ \mapsto \mathbb{R}^+$,  we have 
\begin{equation} \label{identity:laplaceNRPPP}
    - \log \esp{ \exp \bigg\{ - \langle  \mathcal{L} \circ \exp^{-1} ,  f  \rangle  \bigg\} }{} =  (1-p)c \int_0^\infty \dd u \int_{\mathcal{M}_c} 1-e^{-\langle T_{\log(u)} \mu , f \circ \, \exp  \rangle } \mathbb{D}(\dd \mu ).  
\end{equation}
\end{lemma}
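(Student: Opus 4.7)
The plan is to view $\mathcal{L}$ as a decorated Poisson point process and apply the exponential (Campbell) formula, then rewrite the resulting integral via the change of variables $u=e^t$.

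First I would unfold the definition $\mathcal{L}=\sum_{u\in\mathscr{E}}\mathcal{T}_u D_u$ given in (\ref{definition:reinforcedPPExpScale}) and use the fact that the push-forward through $\exp$ and the translation in the time variable interact through the adjoint relation $\langle \nu,\,g\circ\exp\rangle=\langle \nu\circ\exp^{-1},g\rangle$. Thus
\begin{equation*}
\langle \mathcal{L}\circ\exp^{-1},f\rangle \;=\;\sum_{u\in\mathscr{E}}\langle (\mathcal{T}_u D_u)\circ\exp^{-1},\,f\rangle \;=\;\sum_{u\in\mathscr{E}}\langle \mathcal{T}_u D_u,\,f\circ\exp\rangle.
\end{equation*}

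Next I would think of the pair $\{(u,D_u):u\in\mathscr{E}\}$ as the atoms of a Poisson random measure on $\mathbb{R}\times\mathcal{M}_c$ with intensity $c(1-p)e^{t}\dd t\otimes\mathbb{D}(\dd\mu)$: this is the content of constructing $\mathcal{L}$ as a decorated Poisson point process, the $D_u$ being i.i.d.\ with common law $\mathbb{D}$ and independent of $\mathscr{E}$. Applying the exponential formula to the nonnegative functional $(t,\mu)\mapsto\langle \mathcal{T}_t\mu,f\circ\exp\rangle$ yields
\begin{equation*}
-\log\mathbb{E}\!\left[e^{-\langle\mathcal{L}\circ\exp^{-1},f\rangle}\right]
\;=\;c(1-p)\int_{\mathbb{R}}e^{t}\dd t\int_{\mathcal{M}_c}\bigl(1-e^{-\langle\mathcal{T}_t\mu,\,f\circ\exp\rangle}\bigr)\,\mathbb{D}(\dd\mu).
\end{equation*}

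Finally, the change of variables $u=e^{t}$, so that $\dd u=e^{t}\dd t$ and $u$ ranges over $(0,\infty)$ as $t$ ranges over $\mathbb{R}$, converts the outer integral into $\int_0^\infty\dd u$ and replaces $\mathcal{T}_t$ by $\mathcal{T}_{\log(u)}$, giving exactly the right-hand side of (\ref{identity:laplaceNRPPP}). There is no real obstacle here beyond being careful with the identity $\langle\nu\circ\exp^{-1},f\rangle=\langle\nu,f\circ\exp\rangle$ and the invocation of the exponential formula; the nonnegativity of $f$ makes Fubini and the Campbell computation unproblematic, so the argument is essentially bookkeeping once $\mathcal{L}$ is presented as a decorated Poisson point process.
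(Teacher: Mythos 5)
Your proof is correct and takes essentially the same route as the paper: the adjoint identity $\langle\mathcal{L}\circ\exp^{-1},f\rangle=\langle\mathcal{L},f\circ\exp\rangle$, the exponential formula applied to the marked Poisson point process $\sum_{u\in\mathscr{E}}\delta_{(u,D_u)}$ with intensity $c(1-p)e^{t}\dd t\otimes\mathbb{D}(\dd\mu)$, and the change of variables $u=e^{t}$. The paper states this more tersely but the argument is the same.
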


\begin{proof}
The proof follows from the equality $\langle  \mathcal{L} \circ \exp^{-1} ,  f  \rangle = \langle  \mathcal{L}  ,  f \circ \exp  \rangle$ and the identity: 
\begin{equation*} 
    - \log \esp{ \exp \left\{ - \langle \mathcal{L},h \rangle   \right\}  }{} 
    = (1-p) c \int_0^\infty \dd u \int_{\mathcal{M}_c} 1-e^{- \langle \mathcal{T}_{\log(u)}  \mu , h \rangle  } \mathbb{D}(\dd \mu ), 
\end{equation*}
holding for any  measurable $h:\mathbb{R}^+ \mapsto \mathbb{R}^+$.
The proof of the later is just a straightforward consequence of (\ref{definition:reinforcedPPExpScale}) and the exponential formula for Poisson random measures. 
\end{proof}
}

\begin{remark} 
Notice from   (\ref{definition:PPofNRPP_intro_intro}) that the reinforced Poisson  process with rate $c$    can be interpreted as a Yule-Simon process with immigration: this is, a process modelling the evolution of a population where new independent immigrants arrive according to a Poisson point process with intensity  $(1-p) c  \cdot \dd t$ and reproduce according to  a time changed Yule process, independent of the rest.
\end{remark}

\subsection{Construction of noise reinforced Poisson point processes by decoration} \label{subsection:jump_NRPPPdecorated}
This section is devoted to the  construction of \textit{noise reinforced Poisson point processes} and to establishing their first properties.  From here, we fix $p \in (0,1)$. \medskip \\
$\, \bullet$ \textit{Step 1: }Suppose first that $0 < \Lambda(\mathbb{R}) < \infty$.  
 With the  same notation of Section \ref{subsection:JumpsReinforcedPP},  denote by $\scr{E}$ a Poisson random measure  in $\mathbb{R}$ with intensity $ \Lambda(\mathbb{R})(1-p)e^t \dd t$ and consider the Poisson point process  $\Sigma_{u \in \scr{E}} \delta_{(u, x_u)}$ in $\mathbb{R} \times \mathbb{R}$ with intensity $(1-p) e^t \dd t\otimes \Lambda(\dd x)$. Now, for each $u \in \scr{E}$,  consider an independent copy $D_u$ of $D$ and set 
\begin{equation} \label{definition:expNRPPP}
    \mathcal{L}^{x}( \dd s ,\dd x) :=
    \sum_{u \in \scr{E}} \sum_{t \in D_u} \delta_{ (u + t , x_u)}.
\end{equation}
{This is just  the point process $\mathcal{L}$ from (\ref{definition:reinforcedPPExpScale}) with  $c := \Lambda(\mathbb{R})$},  marked by a collection of i.i.d. random variables with law $\Lambda(\dd x)/\Lambda(\mathbb{R})$. Formula (\ref{definition:expNRPPP}) defines a  random measure in $\mathbb{R} \times \mathbb{R}$ and  if we consider its push forward by $ (t,x) \mapsto (\exp(t) , x )$, that we denote as  $\hat{\mathcal{N}} := \mathcal{L}^{x} \circ (\exp , \text{Id})^{-1}$, we obtain the  measure in $\mathbb{R}^+ \times \mathbb{R}$ given by 
\begin{equation} \label{definition:casofinitoNRPPP}
    \hat{\mathcal{N}}( \dd s ,\dd x) :=  \sum_{u \in \scr{P} } \sum_{t \in D_u} \delta_{( ue^{t} ,x_u) }, 
\end{equation} 
where $\scr{P} := \scr{E} \circ \exp^{-1}$ is a Poisson point process in $\mathbb{R}_+$ with intensity $ \Lambda(\mathbb{R}) (1-p) \dd t$. We refer to the measure in the previous display as a NRPPP with (finite) characteristic measure $\Lambda$ and reinforcement parameter $p$.  \\
$\bullet$ \textit{Step 2: } If we no longer assume $\Lambda(\mathbb{R})<\infty$, we proceed by superposition. More precisely, let $(A_j)_{j \in \mathcal{I}}$ be a disjoint partition of   $\mathbb{R} \setminus \{ 0 \}$ such that $\Lambda(A_j) < \infty$.  Consider a collection of independent NRPPPs  $(\hat{\mathcal{N}}_j( \dd s , \dd x) : j \in \mathcal{I})$ with respective characteristic measures $(\Lambda(\, \cdot \cap A_j) : j \in \mathcal{I})$ constructed as in  (\ref{definition:casofinitoNRPPP}),  respectively in terms of:
\smallskip \\
\indent - independent Poisson random measures $\sum_{u \in \scr{P}_j} \delta_{(u,x_u)}$ with intensities $(1-p) \dd t\otimes \Lambda( \, \cdot \cap A_j)$. \\
\indent - independent collections $(D_u)_{u \in \scr{P}_j}$  of i.i.d. copies of $D$. \smallskip  \\
Finally, set  $\scr{P} := \sum_j \scr{P}_j$.  Now we are in position to introduce  NRPPPs with sigma-finite characteristic measures: 
\begin{definition}\label{proposition:reinforcedPPPbyMarking} \emph{(Noise Reinforced Poisson Point Process - NRPPP)}\\ 
The random measure $\hat{\mathcal{N}}( \dd s , \dd x) := \sum_{j \in \mathcal{I}} \hat{\mathcal{N}}_j(\dd s , \dd x )$ is called a  reinforced Poisson point process with reinforcement (or memory) parameter $p$ and characteristic measure  $\Lambda$. Moreover,  $\hat{\mathcal{N}}$  writes 
\begin{equation} \label{definition:decoratedNRPP}
   \hspace{-8mm} \hat{\mathcal{N}}( \dd s ,\dd x) =  \sum_{u \in \scr{P} } \sum_{t \in D_u} \delta_{( ue^{t} ,x_u)}. 
\end{equation}
\end{definition}
From the identity in the previous display and  recalling that the first element of $D$ is just $0$, the measure $\hat{\mathcal{N}}$ naturally decomposes as $\hat{\mathcal{N}} = \mathcal{N}' + {\mathcal{N}}''$, where  $\mathcal{N}'$ is a PPP with intensity $(1-p) \dd t \otimes \Lambda$. Moreover, the following properties readily follow from our construction:  
\begin{lemma} \label{proposition:propiedadesNRPPP}
 Let $\hat{\mathcal{N}}$ be a NRPPP with characteristic measure  $\Lambda$ and reinforcement parameter $p$. 
 \begin{enumerate}
     \item[\emph{(i)}] If $A \in \scr{B}(\mathbb{R})$,  the restriction $\indic{A}(x) \hat{\mathcal{N}}( \dd s,\dd x)$ is a NRPPP with characteristic measure $ \indic{A} \Lambda$ and  parameter $p$.
     \item[\emph{(ii)}]  If $A_1, A_2 \in \scr{B}(\mathbb{R})$ are disjoints, then $\indic{A_1}(x)\hat{\mathcal{N}}( \dd s,\dd x)$, $\indic{A_2}(x) \hat{\mathcal{N}}( \dd s,\dd x)$  are independent.
     \item[\emph{(iii)}]  If $\hat{\mathcal{N}}_1$, $\hat{\mathcal{N}}_2$ are independent NRPPPs with respective characteristic measures $\Lambda_1$, $\Lambda_2$ and same reinforcement parameter $p$, then $\hat{\mathcal{N}}_1+\hat{\mathcal{N}}_2$ is a NRPPP with characteristic measure $\Lambda_1+\Lambda_2$ and  parameter $p$. 
 \end{enumerate}
\end{lemma}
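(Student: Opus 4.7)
The plan is to deduce all three properties from a single refinement principle for the construction in Definition~\ref{proposition:reinforcedPPPbyMarking}. Concretely, the key auxiliary claim is: if some piece $A_j$ of the partition is split into disjoint Borel subsets $B_1,B_2$ with $\Lambda(B_l) < \infty$, then the component $\hat{\mathcal{N}}_j$ of the construction equals in distribution the sum of two \emph{independent} NRPPPs $\hat{\mathcal{N}}_j^{(1)}+\hat{\mathcal{N}}_j^{(2)}$, built as in Step 1 of the construction with respective characteristic measures $\indic{B_l}\Lambda$. This rests on two standard observations: (a) the underlying Poisson random measure $\sum_{u \in \scr{P}_j} \delta_{(u,x_u)}$ on $\mathbb{R}^+ \times A_j$ with intensity $(1-p)\dd t \otimes \Lambda(\,\cdot\, \cap A_j)$ restricted to $\mathbb{R}^+\times B_1$ and $\mathbb{R}^+ \times B_2$ yields two independent Poisson random measures with the expected intensities; and (b) the decoration families $(D_u)_{u \in \scr{P}_j}$ are i.i.d. copies of $D$ attached independently to each atom and hence split consistently with the partition of the atoms. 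Iterating, we may freely replace $(A_j)$ by any refinement without changing the law of $\hat{\mathcal{N}}$.

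Given this refinement principle, property (i) follows by choosing a finer partition so that $A$ is the union of certain pieces $A_j$: then $\indic{A}(x)\hat{\mathcal{N}}(\dd s,\dd x)= \sum_{A_j \subset A}\hat{\mathcal{N}}_j(\dd s,\dd x)$, which matches the Step~2 construction of a NRPPP with characteristic measure $\indic{A}\Lambda$ and parameter $p$. Property (ii) is obtained by the same refinement: once $A_1$ and $A_2$ are both unions of pieces of a common refined partition, the two restrictions are built from disjoint subfamilies of independent Poisson random measures and independent decorations, hence are independent. For property (iii), given two independent NRPPPs $\hat{\mathcal{N}}_1,\hat{\mathcal{N}}_2$ with characteristic measures $\Lambda_1,\Lambda_2$, one picks a common partition $(A_j)$ with $\Lambda_1(A_j),\Lambda_2(A_j)<\infty$; on each piece, the sum of the two Step~1 Poisson random measures is again a Poisson random measure with the summed intensity, and the independent decorations persist, so $\hat{\mathcal{N}}_1+\hat{\mathcal{N}}_2$ coincides in distribution with the Step~2 construction of a NRPPP with characteristic measure $\Lambda_1+\Lambda_2$.

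The main obstacle is essentially bookkeeping: one has to verify carefully that the refinement step is a genuine equality in distribution of the full random measures on $\mathbb{R}^+\times\mathbb{R}$, and not merely of their one-dimensional marginals. This however reduces to the classical splitting/superposition property of Poisson random measures applied on $\mathbb{R}^+ \times A_j$ combined with the fact that conditionally on $\scr{P}_j$, the collection $(D_u)_{u\in\scr{P}_j}$ consists of independent copies of $D$, so any partition of $\scr{P}_j$ according to the $A_j$-marks produces independent decorated subfamilies. Once this is in place, (i)--(iii) are immediate consequences of the construction in Definition~\ref{proposition:reinforcedPPPbyMarking}.
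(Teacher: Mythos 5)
Your argument is correct and is precisely the construction-based reasoning the paper has in mind: the paper states these properties "readily follow from our construction" and gives no further proof, and your refinement principle (splitting/superposition of the underlying Poisson random measure $\sum_u \delta_{(u,x_u,D_u)}$ together with the i.i.d.\ decorations) is the standard way to make that assertion precise. Your added observation that the law is invariant under refining the partition $(A_j)$ is a worthwhile point, since it also settles that the Step~2 construction does not depend on the chosen partition.
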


\noindent The following lemma shows that the intensity measure of a NRPPP with characteristic measure $\Lambda$ and parameter $p$, coincides with the one of a PPP with characteristic measure $\Lambda$.  

\begin{lemma} \label{proposition:intensidadNRPPP}
 Let $\hat{\mathcal{N}}$ be a NRPPP with characteristic measure $\Lambda$ and reinforcement parameter $p$. For any measurable  $f:\mathbb{R}^+ \times \mathbb{R} \mapsto \mathbb{R}^+$, we have   $\mathbb{E}[ \langle  f ,   \hat{\mathcal{N}} \rangle  ]
      =  \int_0^\infty \dd s \int_\mathbb{R} \Lambda(\dd x) f(s , x).$
\end{lemma}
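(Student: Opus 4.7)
The plan is to identify the intensity measure of $\hat{\mathcal{N}}$ as $\dd s \otimes \Lambda(\dd x)$ on $\mathbb{R}^+ \times \mathbb{R}$; the lemma will then follow from the standard approximation of any non-negative measurable $f$ by simple functions, together with monotone convergence.

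First, I would reduce to the case where $\Lambda$ is finite. By Step 2 of the construction of $\hat{\mathcal{N}}$, we can write $\hat{\mathcal{N}} = \sum_{j \in \mathcal{I}} \hat{\mathcal{N}}_j$ with independent NRPPPs $\hat{\mathcal{N}}_j$ of \emph{finite} characteristic measures $\Lambda(\,\cdot \cap A_j)$. By monotone convergence,
\begin{equation*}
\esp{\langle f, \hat{\mathcal{N}}\rangle}{} = \sum_{j \in \mathcal{I}} \esp{\langle \indic{A_j}(x) f, \hat{\mathcal{N}}_j \rangle}{},
\end{equation*}
so the sigma-finite case will follow from the finite one.

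Assume therefore that $\Lambda$ is finite and fix a Borel set $A \subset \mathbb{R}$. By Lemma \ref{proposition:propiedadesNRPPP}-(i), the restriction $\indic{A}(x)\hat{\mathcal{N}}(\dd s, \dd x)$ is again a NRPPP, now with characteristic measure $\indic{A}\Lambda$. Inspecting the explicit representation (\ref{definition:decoratedNRPP}), only the atoms $u \in \scr{P}$ with $x_u \in A$ contribute, and projecting the resulting measure onto the first coordinate yields the random measure $\mathcal{L} \circ \exp^{-1}$ of (\ref{definition:reinforcedPPExpScale}) with rate $c = \Lambda(A)$. By Proposition \ref{lemma:nrpp_jumpProcess}-(i), this projection is the point process of jumps of a NRPP $\hat{N}$ with rate $\Lambda(A)$. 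A direct computation combining the Poissonian representation (\ref{example:reinfPoissonProcess}) with the first identity in (\ref{formula:expectationYuleSimon}) gives $\esp{\hat{N}_t}{} = \Lambda(A) \, t$ for $t \in [0,1]$, and (\ref{identity:extension}) extends the identity to arbitrary $t \geq 0$. Consequently,
\begin{equation*}
\esp{\hat{\mathcal{N}}((0,t]\times A)}{} = \Lambda(A)\, t \qquad \text{for every } t \geq 0 \text{ and every Borel } A \subset \mathbb{R}.
\end{equation*}
Since the rectangles $(0,t]\times A$ form a $\pi$-system generating the Borel $\sigma$-algebra of $\mathbb{R}^+ \times \mathbb{R}$ on which both sides are sigma-finite, the intensity measure of $\hat{\mathcal{N}}$ agrees with $\dd s \otimes \Lambda(\dd x)$, and the announced formula follows.

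I do not anticipate any real obstacle here; the only point requiring a small amount of care is the sigma-finite reduction via superposition, which is painless because both sides of the claimed identity are non-negative.
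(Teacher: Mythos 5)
Your proof is correct and follows essentially the same route as the paper: compute $\mathbb{E}[\hat{\mathcal{N}}((0,t]\times A)]=t\,\Lambda(A)$ via Campbell's formula in the finite case and conclude by superposition and a monotone class/simple-function argument. The only cosmetic difference is that the paper evaluates the expectation directly on the decorated Poisson measure using $\mathbb{E}[Z_t]=e^{t}$ for the standard Yule process, whereas you route through the Yule--Simon series representation on $[0,1]$, the first identity in \eqref{formula:expectationYuleSimon}, and the scaling identity \eqref{identity:extension}; both computations are valid.
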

\begin{proof}
Suppose first that $\Lambda(\mathbb{R})< \infty$ and  recall from (\ref{definition:logtimeChangedYule}) that for  fixed $u \in \mathbb{R}_+$, the atoms of the  measure $\sum_{t \in D} \delta_{ue^{t}}$ are precisely the jumps of the time-changed Yule process (\ref{definition:logtimeChangedYule}). 
Hence, if $\sum_{u \in \scr{P}}\delta_{(u, x_u)}$ is a Poisson random measure with intensity $(1-p) \dd t \otimes \Lambda(\dd x)$ and $(Z^{(u)})_{u \in \scr{P}}$ is an independent collection with law $\mbm{Z}$,  it is then clear from our construction in the finite case (\ref{definition:casofinitoNRPPP}) that we can write 
\begin{equation*}
    \esp{\hat{\mathcal{N}} (0,T]\times A}{} 
    = \esp{\sum_{u \in \scr{P}}  \indic{\{u \leq T \}} Z^{(u)}_{\{ p (\ln(T) - \ln(u)) \}} \indic{\{x_u \in A \}} }{},  
\end{equation*}
where the random measure $\sum_{u \in \scr{P}} \delta_{(u,x_u, Z^{(u)})}$ is Poisson with intensity $(1-p)\dd t \otimes \Lambda \otimes \mbm{Z}$. Consequently, recalling that $E[Z_t] = e^t$, by Campbell's formula we obtain that 
\begin{equation*}
    \esp{\hat{\mathcal{N}} (0,T]\times A}{}  = T \cdot \Lambda(A) , 
\end{equation*}
 and we deduce that the intensity measure of $\hat{\mathcal{N}}$ is given by $\dd t \otimes \Lambda$. When $\Lambda(\mathbb{R})= \infty$, we can proceed by superposition.
\end{proof}

We now identify the law of $\hat{ \mathcal{N} }$ by computing its exponential functionals. 

\begin{proposition} \label{proposition:exponentialFormulasJumpProcess} Let $\hat{\mathcal{N}}$ be a NRPPP  with characteristic measure $\Lambda$ and reinforcement parameter $p$. 
\begin{enumerate}
    \item[\emph{(i)}] For every measurable  $f:\mathbb{R}^+ \times \mathbb{R} \mapsto \mathbb{R}^+$ and $t \geq 0$ we have  
\begin{equation} \label{formula:exponencialLaplace} 
    \hspace{-3mm}  \esp{\exp \bigg\{ - \int_{(0,t] \times \mathbb{R} } f(s , x ) \hat{\mathcal{N}}( \dd s ,\dd x)  \bigg\} }{}
    = 
    \exp \bigg\{- t (1-p)
    \int_{\mathbb{R}} \Lambda(\dd x) \esp{   1-\exp\left(- \int_0^1 f(st,x)\dd Y(s) \right)      }{} 
    \bigg\}.
\end{equation}
    \item[\emph{(ii)}] If we no longer assume that $f$ is non-negative, under the condition $\int_0^t \dd s \int_\mathbb{R} \Lambda(\dd x) |f(s,x)| < \infty$ we have:
\begin{equation}\label{formula:exponencialFourier}
    \hspace{-3mm} \esp{\exp \bigg\{ i \int_{(0,t] \times \mathbb{R}} f(s , x ) \hat{\mathcal{N}}( \dd s ,\dd x) \bigg\} }{}
    = 
    \exp \bigg\{ t (1-p)
    \int_{\mathbb{R}} \Lambda(\dd x) \esp{   \exp\left(i \int_0^1 f(st,x)\dd Y(s) \right) -1 }{} 
    \bigg\}.
\end{equation}
\end{enumerate}
\end{proposition}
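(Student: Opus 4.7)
My plan is to reduce the general case to simple integrands of product form $f(s,x) = \sum_i g_i(s)\indic{B_i}(x)$ with $\Lambda(B_i) < \infty$, for which the statement follows directly from the Laplace identity \eqref{identity:laplaceNRPPcompactos} for noise reinforced Poisson processes together with the restriction and independence properties from Lemma \ref{proposition:propiedadesNRPPP}. This avoids redoing a Campbell-type computation from scratch.

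For part (i), first assume $f(s,x) = \sum_{i=1}^k g_i(s)\indic{B_i}(x)$ with the $B_i$ pairwise disjoint, $\Lambda(B_i)<\infty$, and $g_i \geq 0$ supported in $(0,t]$. Setting $\hat{N}_i(\dd s) := \int \indic{B_i}(x)\hat{\mathcal{N}}(\dd s,\dd x)$, Lemma \ref{proposition:propiedadesNRPPP}(i)--(ii) shows that the processes $\hat{N}_i$ are independent; moreover, inspecting the construction \eqref{definition:casofinitoNRPPP} and applying Proposition \ref{lemma:nrpp_jumpProcess}(i), each $\hat{N}_i$ is distributed as the jump process of a NRPP with rate $\Lambda(B_i)$. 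Since $\int f\,\dd\hat{\mathcal{N}} = \sum_i \int g_i\,\dd\hat{N}_i$, independence and \eqref{identity:laplaceNRPPcompactos} applied to each $\hat{N}_i$ yield
\begin{equation*}
-\log \mathbb{E}\Bigl[\exp\Bigl\{-\int f\,\dd\hat{\mathcal{N}}\Bigr\}\Bigr] = t(1-p)\sum_{i=1}^k \Lambda(B_i)\,\mathbb{E}\Bigl[1-\exp\Bigl(-\int_0^1 g_i(st)\,\dd Y(s)\Bigr)\Bigr],
\end{equation*}
which is exactly the right-hand side of \eqref{formula:exponencialLaplace} at this $f$.

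For general non-negative measurable $f$, I approximate $f\,\indic{(0,t]\times\mathbb{R}}$ pointwise from below by an increasing sequence of such simple integrands $f_n$ (using a countable partition of $\mathbb{R}\setminus\{0\}$ into sets of finite $\Lambda$-measure to handle the sigma-finiteness of $\Lambda$); monotone convergence on both sides then yields \eqref{formula:exponencialLaplace} in full generality, the right-hand side being handled by monotone convergence inside the $\dd Y$-expectation.

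For part (ii), the integrability assumption combined with Lemma \ref{proposition:intensidadNRPPP} ensures $\int|f|\,\dd\hat{\mathcal{N}}<\infty$ almost surely, so $\int e^{if}\,\dd\hat{\mathcal{N}}$ is well-defined. The same scheme transfers: for simple $f$ as above, the Fourier analogue of \eqref{identity:laplaceNRPPcompactos} follows from \eqref{equation:fddsNRLPs} applied to the Lévy process with exponent $\Psi(\lambda)=c(e^{i\lambda}-1)$ (i.e. a pure Poisson process of rate $c$), and one passes to general $f$ via dominated convergence, controlled through $|e^{iF}-1|\leq|F|$ and the hypothesis $\int_0^t\dd s\int_\mathbb{R}|f(s,x)|\Lambda(\dd x)<\infty$. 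The main point requiring attention is this last dominated-convergence step, where one must uniformly control the inner Yule-Simon integrals $\int_0^1 f_n(st,x)\,\dd Y(s)$ as $f_n\to f$; this is routine given the intensity formula of Lemma \ref{proposition:intensidadNRPPP} and the almost-sure finiteness of $Y$ on $[0,1]$.
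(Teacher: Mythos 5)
Your reduction to integrands of the form $f(s,x)=\sum_i g_i(s)\indic{B_i}(x)$ with disjoint $B_i$ of finite $\Lambda$-measure is sound and is essentially the paper's own first two steps, packaged a little more efficiently: the paper computes the Laplace functional from scratch for $f(s,x)=h(s)g(x)$ via the decorated representation and then uses independence of restrictions for the multi-set case, whereas you shortcut the computation by observing that the time-marginal of $\indic{B_i}\hat{\mathcal{N}}$ is distributed as the jump process of a NRPP with rate $\Lambda(B_i)$ and invoking \eqref{identity:laplaceNRPPcompactos} directly. That identification is correct, and the superposition step for $\sigma$-finite $\Lambda$ is also as in the paper.

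The gap is in the passage to general $f$. You claim that every non-negative measurable $f$ on $(0,t]\times\mathbb{R}$ is the \emph{pointwise increasing} limit of integrands of the product-sum form; this is false. If $h(s,x)=\sum_i g_i(s)\indic{B_i}(x)\le f$ with the $B_i$ disjoint, then $g_i(s)\le \inf_{x'\in B_i}f(s,x')$ for every $s$. Taking $f=\indic{\{s=x\}}$ (the diagonal indicator, a perfectly legitimate measurable integrand) forces $g_i\equiv 0$ unless $B_i$ is a singleton, so the supremum of any countable family of such minorants is supported on $(0,t]\times C$ for a countable set $C$ and cannot reach $f$ everywhere. So the monotone convergence argument cannot be run as stated: the approximation by product-form simple functions holds only $\dd s\otimes\Lambda(\dd x)$-almost everywhere (equivalently, in $L^1(\dd s\otimes \Lambda)$ after truncating $f$), and one then needs a quantitative way to transfer an a.e./$L^1$ approximation to both sides of \eqref{formula:exponencialLaplace}. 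This is exactly what the paper supplies: for bounded $f$ it combines the Lipschitz bound $|e^{-a}-e^{-b}|\le |a-b|$ with the intensity formula of Lemma \ref{proposition:intensidadNRPPP}, which gives $\esp{|\langle \hat{\mathcal{N}}, f-f_n\rangle|}{}\le \int_0^t \dd s\int_{\mathbb{R}}\Lambda(\dd x)\,|f-f_n|$, together with the analogous estimate $\int_{\mathbb{R}}\Lambda(\dd x)\,\esp{\int_0^1 |f-f_n|(st,x)\,\dd Y(s)}{}=(1-p)^{-1}\int_0^1\dd s\int_\mathbb{R}\Lambda(\dd x)\,|f-f_n|(st,x)$ on the Yule--Simon side; unbounded non-negative $f$ is then recovered by a genuine (pointwise) truncation $f\wedge n\uparrow f$. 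Your part (ii) inherits the same issue; there the correct control is again the $L^1(\dd s\otimes\Lambda)$ estimate via $|e^{ib}-e^{ia}|\le|a-b|$, not pointwise monotonicity. With the limiting step repaired along these lines, the rest of your argument goes through.
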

\begin{proof}
(i) We start by considering the finite case $\Lambda(\mathbb{R})< \infty$ and we make use of the notations introduced in \eqref{definition:expNRPPP}; for instance, recall that $\langle \hat{\mathcal{N}} , f  \rangle = \langle \mathcal{L}^{x} , f \circ (\exp , \rm{ \text{Id}}) \rangle $.  We start showing the result for $f$ of the form $f(s,x)= h(s)g(x)$, for non-negatives $h:\mathbb{R}^+\mapsto \mathbb{R}^+$ and $g:\mathbb{R} \mapsto \mathbb{R}^+$, in which case we can write
\begin{align}
    \langle \mathcal{L}^{x} , (h \circ \exp) g  \rangle 
    = \sum_{u \in \scr{E}} \sum_{t \in D_u} h\circ \exp( u + t) g(x_u) 
    = \sum_{u \in \scr{E}}  g(x_u) \langle  \mathcal{T}_{u} D_u ,  h \circ \exp \rangle.  \label{equation:integralL}
\end{align}
Now,  we deduce from the formula for the Laplace transform of Poisson integrals and a change of variable that
\begin{equation*}
    - \log \esp{ e^{- \langle \mathcal{L}^{x},(h \circ \exp) g \rangle } }{} = (1-p)   \int_{\mathbb{R}}\Lambda(\dd x) \int_{\mathbb{R}^+} \dd u \int_{\mathcal{M}_c}  1-e^{-g(x) \langle  \mathcal{T}_{\log(u)}\mu , h \circ \exp \rangle} \mathbb{D}(\dd \mu ).
\end{equation*}
If we now replace $h$ by $h \indic{\{ \cdot \leq t \}}$, making use of the  equivalent identities (\ref{identity:laplaceNRPPP}) and  (\ref{identity:laplaceNRPPcompactos}),  we obtain that the previous display writes: 
\begin{equation*}
   t \cdot (1-p)   \int_{\mathbb{R}}\Lambda(\dd x) \esp{ 1- e^{-g(x) \int_0^1 h(st)\dd Y(s)} }{}, 
\end{equation*}
proving the claim. Now, still under the hypothesis $\Lambda(\mathbb{R}^+) < \infty$, fix arbitrary $\alpha_{i,j} \in \mathbb{R}^+$,   consider  $0 = t_1 <  \dots < t_{k+1} < t$ as well as disjoint subsets  $A_1,  \dots , A_n$  of $\mathbb{R}^+$. Further,  suppose that $f$ is of the form 
\begin{equation} \label{cuentas:apeoximation}
    f(s,x) := \sum_{j=1}^n \sum_{i=1}^k \alpha_{i,j} \indic{(t_i, t_{i+1}]}(s) \indic{A_j}(x) \quad \quad \text{ and write } \quad  g_j(s,x) := \sum_{i=1}^k \alpha_{i,j} \indic{(t_i, t_{i+1}]}(s) \indic{A_j}(x). 
\end{equation}
Recall from Lemma \ref{proposition:propiedadesNRPPP}  that  the restrictions $\indic{A_1}\hat{\mathcal{N}}, \dots , \indic{A_n}\hat{\mathcal{N}}$ are independent NRPPPs with respective characteristic measures $\Lambda( \cdot \cap A_i)$. By independence and applying the previous case to each $g_j$, we deduce that 
\begin{align*}
    \esp{\exp \bigg\{ - \langle \hat{ \mathcal{N}} , \indic{\{ \cdot \leq t \}} f \rangle   \bigg\} }{}
    &= \prod_{j=1}^n \esp{\exp \bigg\{ - \langle \hat{ \mathcal{N}} , \indic{\{ \cdot \leq t \}} g_j \rangle   \bigg\} }{} \\
    &= \prod_{j=1}^n \exp \bigg\{ t(1-p) \Lambda(A_j) \esp{1- \exp\bigg\{ - \int_0^1 \sum_{i=1}^k \alpha_{i,j} \indic{(t_i < st \leq  t_{i+1}]}(s)\dd Y(s)  \bigg\} }{}  \bigg\} \\
    &=  \exp \bigg\{ t(1-p) \int_\mathbb{R} \Lambda(\dd x) \esp{1- \exp\bigg\{ - \int_0^1 f(st,x)\dd Y(s)  \bigg\} }{}  \bigg\},  
\end{align*}
and once again we recover (\ref{formula:exponencialLaplace}). Finally, if $f$ is non-negative and bounded with support in $[0,t] \times \mathbb{R}$,  it can be approximated by a bounded sequence of  functions $(f_n)$ of the form (\ref{cuentas:apeoximation}), the convergence holding $\dd t \Lambda(\dd x)$ a.e. For each $n$,  we have 
\begin{equation}\label{cuentas:formulaexpo}
    \esp{\exp \bigg\{ - \langle \hat{ \mathcal{N}} ,  f_n \rangle   \bigg\} }{}
    = 
    \exp \bigg\{ t(1-p) \int_\mathbb{R} \Lambda(\dd x) \esp{1- \exp\bigg\{ - \int_0^1 f_n(st,x)\dd Y(s)  \bigg\} }{}  \bigg\},  
\end{equation}
and   by Lipschitz-continuity, it follows that 
\begin{align*}
\esp{ | \exp \big\{ - \langle \hat{ \mathcal{N}} ,  f \rangle   \big\} 
- 
\exp \big\{ - \langle \hat{ \mathcal{N}} ,  f_n \rangle   \big\}|
}{} 
&\leq  \esp{  \int_{[0,t] \times \mathbb{R} } |f(s,x) - f_n(s,x) |\hat{\mathcal{N}}( \dd s ,\dd x) }{} \\
&=  \int_0^t\dd s \int_\mathbb{R} \Lambda(\dd x) | f(s,x) - f_n(s,x)  | \rightarrow 0 \,  \text{ as } n \uparrow \infty.  
\end{align*}  In the last equality we used Lemma  \ref{proposition:intensidadNRPPP}. From the same arguments we also obtain that 
\begin{align*}
   \int \Lambda(\dd x)  \esp{  | e^{-\int_0^1 f(st , x)\dd Y(s) }  - e^{-\int_0^1 f_n(st , x)\dd Y(s) }  | }{}  
    &\leq \int \Lambda(\dd x) \esp{  \int_0^1 |f(st,x) - f_n(st,x) | \dd Y(s) }{} \\ 
    &= (1-p)^{-1} \int_0^1\dd s \int_\mathbb{R} \Lambda(\dd x) | f(st,x) - f_n(st,x)  | \rightarrow 0 \,  
\end{align*}
as  $n \uparrow \infty$. Now,  we deduce from taking the limit as $n \uparrow \infty$ in (\ref{cuentas:formulaexpo}) that  the identity (\ref{formula:exponencialLaplace}) also holds for $f$. 
\par 
If we suppose that $\Lambda(\mathbb{R}) = \infty$, the proof follows by superposition.  Namely,  with the same notation used  for constructing   (\ref{definition:decoratedNRPP}),     the random measures $(\hat{\mathcal{N}} _j)_{j \in \mathcal{I}}$ are   independent NRPPPs with respective finite characteristic measures  $\Lambda(\, \cdot \, \cap A_j)$   and by definition we have $\hat{\mathcal{N}} = \sum_j  \hat{\mathcal{N}}_j$. From  the formula for the Laplace transform we just proved in the finite case and independence it follows that 
\begin{align*}
    \esp{ e^{- \langle \hat{\mathcal{N}},f \indic{\{ \cdot \leq t \}} \rangle } }{} 
    &= \prod_{j \in \mathcal{I}} \esp{ e^{- \langle \hat{\mathcal{N}}_j,f \indic{\{ \cdot \leq t \}} \rangle } }{} \\
    &= \prod_{j \in \mathcal{I}} \exp \bigg\{ - t \cdot (1-p)
    \int_{{A}_j} {\Lambda(\dd x)} \esp{\left( 1- e^{-\int_0^1 f(st,x)\dd Y(s)} \right)  }{} \bigg\},
\end{align*}
proving (i).  Now  (ii) follows from similar arguments, by making use  of the formula for the characteristic function for Poissonian integrals and the inequality $|e^{ib} - e^{ia}| \leq |a-b|$ for $a,b \in \mathbb{R}$,  we omit the details. 
\end{proof}

The following result is the reinforced analogue of the well known characterisation result for Poisson point processes. The arguments we use are similar to the ones in the non-reinforced case.    
\begin{proposition} \label{caractherisation:nrppp}
Let   $\hat{\mathcal{N}}$ be a point process in $\mathbb{R}^+ \times \mathbb{R}$ and for any Borelian $A \subset \mathbb{R}$,  set  
\begin{equation*}
    \hat{N}_A(t) := \hat{\mathcal{N}}([0,t] \times A), \quad t \geq 0.
\end{equation*}
Then, $\hat{\mathcal{N}}$ is a noise reinforced Poisson point process with characteristic measure $\Lambda$ and parameter $p$ if and only if the two following conditions are satisfied:
\begin{enumerate}
    \item[\emph{(i)}] For any Borelian $A$ with $\Lambda(A) < \infty$, the process $\hat{N}_A$ is a noise reinforced Poisson process with rate $\Lambda(A)$ and reinforcement parameter $p$.
    \item[\emph{(ii)}]  If $A_1, \dots A_k$ are disjoint Borelians with $\Lambda(A_i)< \infty$ for all $i \in \{1 , \dots, k\}$,  the processes $\hat{N}_{A_1}, \dots , \hat{N}_{A_k}$ are independent. 
\end{enumerate}
\end{proposition}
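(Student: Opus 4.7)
The plan is to prove both implications separately, using Proposition \ref{proposition:exponentialFormulasJumpProcess} as the main tool: a point process in $\mathbb{R}^+ \times \mathbb{R}$ is characterised (in law) by its characteristic functional evaluated on a sufficiently rich class of test functions, and the NRPPP is uniquely determined by the exponential formula (\ref{formula:exponencialFourier}).

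The forward implication is essentially a rereading of the results of Section \ref{subsection:jump_NRPPPdecorated}. If $\hat{\mathcal{N}}$ is a NRPPP with characteristic measure $\Lambda$ and parameter $p$, then Lemma \ref{proposition:propiedadesNRPPP}-(ii) gives (ii) directly, since the restrictions $\indic{A_i}(x)\hat{\mathcal{N}}(\dd s,\dd x)$ are independent. To obtain (i), I would apply Proposition \ref{proposition:exponentialFormulasJumpProcess}-(ii) to the test function $f(s,x)=\sum_{j=1}^k \lambda_j \indic{(0,s_j]}(s)\indic{A}(x)$ for $A$ with $\Lambda(A)<\infty$ and $0<s_1<\dots <s_k\leq t$, yielding
\begin{equation*}
\esp{\exp\bigg\{i\sum_{j=1}^k \lambda_j \hat{N}_A(s_j)\bigg\}}{} = \exp\bigg\{t(1-p)\Lambda(A)\,\esp{\exp\bigg(i\sum_{j=1}^k \lambda_j Y(s_j/t)\bigg)-1}{}\bigg\},
\end{equation*}
which matches exactly formula (\ref{definition:noiseReinfPoisson}) identifying $c$ with $\Lambda(A)$, so $\hat{N}_A$ is a NRPP.

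For the converse, suppose $\hat{\mathcal{N}}$ satisfies (i)--(ii). The strategy is to verify that the characteristic functional of $\hat{\mathcal{N}}$ coincides with that given by (\ref{formula:exponencialFourier}) on a sufficiently rich class. First I would focus on step functions $f(s,x)=\sum_{j=1}^n \sum_{i=1}^k \alpha_{i,j}\indic{(t_i,t_{i+1}]}(s)\indic{A_j}(x)$ with $0\le t_1<\dots<t_{k+1}\le t$ and disjoint $A_j$ having $\Lambda(A_j)<\infty$, writing
\begin{equation*}
\langle \hat{\mathcal{N}},f\rangle = \sum_{j=1}^n \sum_{i=1}^k \alpha_{i,j}\bigl(\hat{N}_{A_j}(t_{i+1})-\hat{N}_{A_j}(t_i)\bigr).
\end{equation*}
By (ii) the characteristic function factorises over $j$, and by (i) each factor is given by (\ref{definition:noiseReinfPoisson}) with rate $\Lambda(A_j)$. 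Taking the product and collecting the exponents produces
\begin{equation*}
\exp\bigg\{t(1-p)\sum_{j=1}^n \Lambda(A_j)\,\esp{\exp\bigg(i\int_0^1 g_j(st,x)\dd Y(s)\bigg)-1}{}\bigg\},
\end{equation*}
where $g_j$ is the restriction of $f$ to $(0,t]\times A_j$; since $f=\sum_j g_j$ with disjoint $x$-supports, the sum over $j$ equals $\int_{\mathbb{R}}\Lambda(\dd x)\,\esp{\exp(i\int_0^1 f(st,x)\dd Y(s))-1}{}$, matching (\ref{formula:exponencialFourier}).

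Finally, I would extend to arbitrary measurable $f$ with support in $[0,t]\times \mathbb{R}$ satisfying $\int_0^t \dd s\int_\mathbb{R} \Lambda(\dd x)|f(s,x)|<\infty$, by approximation with simple functions $f_n$ of the previous form and the inequality $|e^{ia}-e^{ib}|\le |a-b|$, exactly as in the proof of Proposition \ref{proposition:exponentialFormulasJumpProcess}. The required integrability of the intensity measure of $\hat{\mathcal{N}}$ comes for free from (i) via $\esp{\hat{N}_A(t)}{}=\Lambda(A)t$, recalling (\ref{formula:expectationYuleSimon}) and the NRPP structure. Uniqueness of the law of $\hat{\mathcal{N}}$ then follows since a point process is determined by its characteristic functional on indicator-based test functions. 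The main obstacle I anticipate is the combinatorial bookkeeping in reassembling the product of NRPP characteristic functionals over the disjoint $A_j$ into the single NRPPP functional; the key algebraic fact enabling this is that $Y$ appears \emph{inside} the expectation on the right-hand side of (\ref{definition:noiseReinfPoisson}), so the independent copies of $Y$ arising from distinct $A_j$ play no role in the law and the sum $\sum_j \Lambda(A_j)\esp{\cdots}{}$ cleanly integrates against $\Lambda$.
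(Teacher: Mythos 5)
Your proposal is correct and follows essentially the same route as the paper: the forward implication via Lemma \ref{proposition:propiedadesNRPPP}-(ii) and the exponential formula of Proposition \ref{proposition:exponentialFormulasJumpProcess} applied to the step test function, and the converse by reassembling the product of NRPP characteristic functions over disjoint sets into the functional \eqref{formula:exponencialFourier} on simple functions, then extending by approximation using the intensity identity $\mathbb{E}[\hat{N}_A(t)]=t\Lambda(A)$ deduced from (i). The only cosmetic difference is that the paper spells out the monotone class step for the intensity and the final appeal to sigma-finiteness of $\Lambda$, both of which you invoke implicitly but correctly.
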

\begin{proof}
First, let us prove that NRPPP do satisfy (i) and (ii). Remark that (ii) is just a consequence of Lemma \ref{proposition:propiedadesNRPPP} - (ii) and we focus on (i).   Fix $A$ as in (i) as well as times  $0 <  t_1 < \dots < t_k \leq  t$, and we proceed by computing the characteristic function  of the finite dimensional distributions of $\hat{N}_A$. This can now be done by  considering the function  $f(s,x) := \sum_{i=1}^k \lambda_i \indic{\{ s \leq t_i \}} \indic{A}(x)$ and applying the exponential formula (\ref{formula:exponencialFourier}), yielding  
\begin{align*}  
      \esp{ \exp \left\{ i \sum_{i=1}^k \lambda_i \hat{N}_{A}(t_i) \right\}  }{} 
      &= \exp \bigg\{ t (1-p)
     \int_{\mathbb{R}} \Lambda(\dd x) \esp{   \exp\left(i \sum_{i=1}^k \int_0^1  \lambda_i \indic{\{ s t \leq t_i \}} \indic{A}(x)\dd Y(s) \right) -1 }{} 
     \bigg\}.\\ 
     &= \exp \left\{ t (1-p) \Lambda(A)   \esp{\left( \exp \bigg\{ {i \sum_{i=1}^k \lambda_i Y( t_i/t)} \bigg\} -1 \right) }{}   \right\}.  
\end{align*}
Recalling the identity  (\ref{definition:noiseReinfPoisson}), we deduce that $\hat{N}_A$ is a noise reinforced Poisson process with rate $\Lambda(A)$ and reinforcement $p$. 

Now, we argue that if $\hat{\mathcal{N}}$ is a random measure satisfying (i) and (ii), then it is a NRPPP. We will establish this claim by showing that $\hat{\mathcal{N}}$ satisfies the exponential formula (\ref{formula:exponencialFourier}). First, observe that (i) implies that  {$\mathbb{E} [ \hat{N}_A(t)]  = t \Lambda(A)$}, for example by making use of  Lemma  \ref{proposition:intensidadNRPPP} and the fact that if $\hat{\mathcal{M}}$  is a NRPPP with characteristic measure $\Lambda$ and parameter $p$, then $(\hat{\mathcal{M}} ([0,t]\times A) :  t \geq 0)$ is a reinforced Poisson process with rate $\Lambda(A)$ and parameter $p$. We deduce by a monotone class argument that $\hat{\mathcal{N}}$ satisfies, for any measurable $f: \mathbb{R}^+ \times \mathbb{R} \mapsto \mathbb{R}^+$, the identity: 
\begin{equation} \label{cuentas:intensidadCaracterizacion}
    \esp{  \int_{[0,t] \times \mathbb{R}} f(s,x) \hat{\mathcal{N}}( \dd s,\dd x)  }{} = \int_0^t\dd s \int \Lambda(\dd x) f(s,x).
\end{equation}
Still for $A$ as in (i) and for an arbitrary collection of times $0 = t_1 < t_2 < \dots  < t_{k+1} < t$,  we  set
\begin{equation}
    g(s,x) := \sum_{i=1}^k \alpha_i \indic{ (t_i, t_{i+1}] } (s) \indic{A}(x).
\end{equation}
Since by hypothesis $(\hat{N}_A(t))_{t \in \mathbb{R}^+}$ is a NRPP with rate  $\Lambda(A)$, by  the formula (\ref{definition:noiseReinfPoisson}) for the characteristic function of the finite dimensional distributions of reinforced Poisson processes, we obtain that
\begin{align*}
  \esp{\exp \bigg\{ i \langle \hat{ \mathcal{N}} , \indic{\{ \cdot \leq t \}} g \rangle   \bigg\} }{} 
&=  \esp{\exp \bigg\{ i  \sum_{i=1}^k \alpha_i( N_A(t_{i+1})-N_A(t_{i}))    \bigg\} }{} \\
&= \exp \bigg\{ t (1-p)
      \Lambda(A) \esp{   \exp\left(i \sum_{i=1}^k \alpha_i(   Y(t_{i+1}/t) - Y(t_{i}/t) ) \right) -1 }{} 
     \bigg\}\\ 
&= \exp \bigg\{ t (1-p)
     \int_{\mathbb{R}} \Lambda(\dd x) \esp{   \exp\left(i  \int_0^1 \sum_{i=1}^k  \alpha_i \indic{\{ t_i < s t \leq t_{i+1} \}} \indic{A}(x)\dd Y(s) \right) -1 }{} 
     \bigg\}.  
\end{align*}
Remark that this is precisely the identity (ii) of  Proposition \ref{proposition:exponentialFormulasJumpProcess}  for our choice of $g$. Making use of  the independence hypothesis of  $\hat{N}_{A_1}, \dots \hat{N}_{A_k}$ for disjoints $A_1, \dots , A_k$ with $\Lambda(A_i) < \infty$,  we can also show that 
the identity holds for $f$ as in (\ref{cuentas:apeoximation}) for such collection of sets. 
Now, if $f$ is non-negative, bounded and supported on $[0,t] \times A$ with $\Lambda(A) < \infty$, making use of  (\ref{cuentas:intensidadCaracterizacion}),  we can  proceed  as in (\ref{cuentas:formulaexpo}) for the proof of  Proposition \ref{proposition:exponentialFormulasJumpProcess}, approximating $f$ by a bounded sequence of the form (\ref{cuentas:apeoximation}),  and show that  the exponential formula (\ref{formula:exponencialFourier}) still holds.  The general case follows  by sigma finiteness of $\Lambda$ and we deduce that $\hat{\mathcal{N}}$  is a NRPPP with the desired parameters.  
\end{proof}

\subsection{Proof of Theorem \ref{lemma:PreFormulaCompensacion} and compensator of the jump measure}\label{subection:jump_LevyIto-Part1} 

Let us now establish Theorem \ref{lemma:PreFormulaCompensacion}. Remark that paired with  Proposition \ref{caractherisation:nrppp}, it entails that the role of the counting process of jumps $\Delta \hat{\xi}_s \in A$ for fixed   $A \in \mathscr{B}(\mathbb{R})$ is played precisely by noise-reinforced Poisson processes,  in analogy with the non-reinforced setting.

\begin{proof}[Proof of Theorem \ref{lemma:PreFormulaCompensacion}]
The result will follow as soon as we establish (i) and (ii) of Proposition \ref{caractherisation:nrppp} for 
\begin{equation} \label{definition:countingProcessJumps}
    \hat{N}_A(t) := \# \big\{  (s, \Delta \hat{\xi}_s) \in [0,t]\times A \big\}, \quad \quad t \geq 0, 
\end{equation} 
where $A$ is an arbitrary Borelian satisfying $\Lambda(A) < \infty$. By  the identity in distribution (\ref{identity:extension}), we can restrict our arguments to the  unit interval  and hence we can make use of the explicit construction of NRLPs in $[0,1]$ that we recalled in Section \ref{subsection:preliminaries,examplesNRLPs}, in terms of Yule-Simon series.

Denote by ${\mathcal{M}} := \sum_{i} \delta_{(x_i, Y_i)}$ the  Poisson random measure  with intensity $(1-p) \Lambda \otimes \mathbb{Q}$ and recall the discussion following Theorem \ref{regularityNRLP}. If  $(x_i, Y_i)$ is an atom of ${\mathcal{M}}$,  then at time $U_i = \inf\{t \geq 0 : Y_i(t) = 1 \}$, the process $\hat{\xi}$  performs the jump $x_i$ for the first time, i.e.  $\Delta\hat{\xi}_{U_i}= x_i$  and this precise jump $x_i$ is repeated in the interval $[0,1]$ at each jump time of $Y_i$.  It follows that for any $f: \mathbb{R}\mapsto \mathbb{R}^+$ we  have: 
\begin{equation} \label{equation:saltosFiniteVar_2}
     \sum_{s \leq t} f(\Delta \hat{\xi}_s)  = \sum_{i}  f(x_i) Y_i(t), 
\end{equation}
and in particular, we get:
\begin{equation*}
     \hat{N}_A(t)  = \sum_{i}  \indic{\{ x_i \in A \}} Y_i(t).  
\end{equation*}
Hence, by the independence property of Poisson random measures, the processes $\hat{N}_{A_1}, \dots , \hat{N}_{A_n}$ are independent as soon as $A_i\cap A_j = \emptyset$ for all $i \neq j$.  Now, if we fix  $\lambda_1, \dots , \lambda_k \in \mathbb{R}$, $0 \leq t_1 < \dots < t_k \leq 1$, we deduce from the formula for the characteristic function for Poisson integrals the equality:  
\begin{equation*}
 \esp{ \exp \bigg\{ i \sum_{j=1}^k \lambda_j \hat{N}_A({t_j}) \bigg\} }{}  
  = 
    \exp \bigg\{   (1-p) \Lambda(A) 
    \esp{   \exp \bigg\{i \sum_{j=1}^k \lambda_j Y(t_j) \bigg\} -1 }{}
    \bigg\}.
\end{equation*}
Comparing with $(\ref{definition:noiseReinfPoisson})$, we get that the right-hand side in the previous display is precisely the characteristic function of the finite dimensional distributions at times $t_1, \dots , t_k$ of a reinforced Poisson process with rate  $\Lambda(A)$ and parameter $p$.  
\end{proof}

 Recalling the explicit construction of NRPPPs from  Definition  \ref{proposition:reinforcedPPPbyMarking},  we stress that Theorem \ref{lemma:PreFormulaCompensacion} formalises the idea that the jumps of NRLPs are jumps that are repeated through time, similarly to the dynamics of noise reinforced random walks -- we refer to the beginning of Section \ref{section:jointlaw} for a brief introduction to the later. Our terminology and  notation  for the reinforced measure  $\hat{\mu}$ can now be  justified by the following: if $\mu$ is the jump measure of $\xi$, the counting process  $(\mu([0,t] \times A) : t \geq 0 )$ is a Poisson process with rate $\Lambda(A)$ while  $(\hat{\mu}([0,t] \times A) : t \geq 0 )$ is a reinforced Poisson process with rate $\Lambda(A)$. Said otherwise,  the following   identity holds in distribution: 
\begin{equation}
   \hat{\mu}([0,\cdot \, ] \times A) \overset{\scr{L}}{=}  \reallywidehat{ {\mu}([0,\cdot \,] \times A)}.  
\end{equation}

\par Now that the key result of the section has been established, we continue our study of the jump process of NRLPs. In this direction,  we start by briefly recalling notions of semi-martingale theory that will be needed. Let $X$ be a semimartingale defined on a probability space $(\Omega , \scr{F}, (\scr{F}_t), \mathbb{P}).$ Its jump measure $\mu_X$ is an integer valued random measure on $(\mathbb{R}^+ \times \mathbb{R}, \scr{B}(\mathbb{R}^+) \otimes \scr{B}(\mathbb{R}))$, in the sense of   \cite[Chapter II-1.13]{JacodShiryaev_LimitTheorems}.
Denote  the predictable   sigma-field  on $\Omega\times \mathbb{R}^+$ by  $\pred$. If $H$ is a  $\pred \otimes \scr{B}(\mathbb{R}^+)$-measurable function, we simply  write $H * \mu_X$ for the process defined at each $t \in \mathbb{R}^+$ as 
\begin{equation} \label{equation:integrationMeasuresJacod}
    (H * \mu_X)_t(\omega) :=  \int_{(0,t] \times \mathbb{R}} \mu_X(\omega ; \dd s, \dd x) H_s(\omega;  x),  \quad \quad \text{ if } \quad \int_{(0,t] \times \mathbb{R}}  \mu_X (\omega ; \dd s, \dd x) |H_s(\omega;  x) | < \infty,
\end{equation}
and $\infty$ otherwise. Both notations for the integral will be used indifferently. Further, we denote by $\scr{A}^+$ the class of increasing, adapted rcll finite-variation processes $(A_t)$, with $A_0 = 0$ such that $\esp{A_\infty}{} < \infty$, and by $\scr{A}_{loc}^+$ its localisation class. The jump measure $\mu_X$ posses a predictable compensator, this is, a random measure $\mu_X^{\tt{p}}$ on $(\mathbb{R}^+ \times \mathbb{R}, \scr{B}(\mathbb{R}^+) \otimes \scr{B}(\mathbb{R}))$ unique up to a $\mathbb{P}$-null set, characterised  by being the unique predictable random measure (in the sense of \cite[Chapter II-1.6]{JacodShiryaev_LimitTheorems}) satisfying that for any non-negative $H \in \pred \otimes \scr{B}(\mathbb{R})$,  the equality
\begin{equation*}
    \esp{(H * \mu_X)_\infty }{} = \esp{(H * \mu_X^{\tt{p}})_\infty }{}
\end{equation*}
holds. Equivalently, for any $H \in \pred \otimes \scr{B}(\mathbb{R})$ such that $|H|*\mu_X \in \scr{A}_{loc}^+$, the process $|H|*\mu_X^{\tt{p}}$   belongs to $\scr{A}_{loc}^+$ and $H * \mu_X^{\tt{p}}$ is the predictable compensator of $H*\mu_X$. Said otherwise,  $H * \mu_X - H * \mu_X^{\tt{p}} $
is a local martingale.\par 
 Recall that by  Proposition \ref{proposition:laMartingala}, the process $\hat{\xi}$ is a semimartingale. Hence,  we can consider  $\hat{\mu}^{\tt{p}}$, the predictable compensator of its jump measure $\hat{\mu}$, and our purpose is to identify explicitly   $\hat{\mu}^{\tt{p}}$.  In contrast, it might be worth mentioning that if $\xi$ is a Lévy process with Lévy measure  $\Lambda$, the compensator of its jump measure $\mu$ is just the deterministic measure $\mu^{\tt{p}} =\dd t\otimes \Lambda(\dd x)$. The first step consists in observing the following: 
\begin{lemma}
 Let $A \in \scr{B}(\mathbb{R})$ be a Borel set  that doesn't intersect some open neighbourhood of the origin. If we denote by $(\scr{F}_t^A)$ the natural filtration of $\hat{N}_A$, then the process  $M_A = (M_A(t))_{t \in \mathbb{R}^+}$ defined as $M_A(0) = 0$ and
\begin{equation*}
    M_A(t) = t^{-p} \left( \hat{N}_A(t) - t \Lambda(A) \right),  \quad \quad t \geq 0,
\end{equation*}
is a finite variation $(\scr{F}^A_t)$-martingale.
\end{lemma}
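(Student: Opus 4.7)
The plan is to reduce the statement to Proposition \ref{proposition:laMartingala} applied to an auxiliary centered Lévy process. First, I would introduce the compensated Poisson process
\[
\xi^A_t := N_A(t) - t\Lambda(A), \qquad t \geq 0,
\]
where $N_A$ is an ordinary Poisson process with rate $\Lambda(A)$. This is a Lévy process with characteristic exponent $\Psi_A(\lambda) = \Lambda(A)(e^{i\lambda}-1-i\lambda)$ and Lévy measure $\Lambda(A)\delta_1$; since $\Psi_A'(0) = 0$ and the Lévy measure is finite, the integrability assumption $\int_{\{|x|\geq 1\}} |x|\, \Lambda(A)\delta_1(dx) = \Lambda(A) < \infty$ is immediate. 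Proposition \ref{proposition:laMartingala} therefore applies and yields that $t \mapsto t^{-p}\hat{\xi}^A_t$ is a martingale with respect to its natural filtration.

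The second step is to identify the reinforced version $\hat{\xi}^A$ in law with the process $t \mapsto \hat{N}_A(t) - t\Lambda(A)$. Writing $\Psi_A(\lambda) = \Psi_{N_A}(\lambda) - i\lambda\Lambda(A)$ with $\Psi_{N_A}(\lambda) := \Lambda(A)(e^{i\lambda}-1)$ the exponent of $N_A$, and substituting into the finite-dimensional characterisation \eqref{equation:fddsNRLPs}, the drift contribution collapses precisely to $-i\Lambda(A)\sum_j \lambda_j s_j$ thanks to the identity $\mathbb{E}[Y(s/t)] = (s/t)/(1-p)$ from \eqref{formula:expectationYuleSimon}. In other words, a deterministic drift is left invariant by the noise-reinforcement operation, so $\hat{\xi}^A \overset{\scr{L}}{=} (\hat{N}_A(t) - t\Lambda(A))_{t \geq 0}$. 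Combining with the first step, $M_A(t) = t^{-p}\hat{\xi}^A_t$ is a martingale; moreover, since $\hat{\xi}^A$ and $\hat{N}_A$ differ by a deterministic function they generate the same natural filtration, namely $(\mathcal{F}^A_t)$. The main obstacle is precisely this identification, but it amounts to a one-line calculation using the expectation formula for the Yule--Simon process.

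For the finite-variation claim I would split $M_A(t) = t^{-p}\hat{N}_A(t) - t^{1-p}\Lambda(A)$. The second summand is deterministic, smooth and of finite variation. For the first, since $A$ is bounded away from the origin and $\Lambda(A) < \infty$, the counting process $\hat{N}_A$ vanishes on $[0, T_1^A]$ for some random $T_1^A > 0$ almost surely, so the singularity of $s \mapsto s^{-p}$ at the origin is harmless; integration by parts expresses $t^{-p}\hat{N}_A(t)$ as the sum of $\int_{(T_1^A, t]} s^{-p}\, d\hat{N}_A(s)$ and $\int_{T_1^A}^t \hat{N}_A(s)\, d(s^{-p})$, both manifestly of finite variation on bounded intervals since $\hat{N}_A$ has only finitely many jumps on such intervals and $s^{-p}$ is smooth on $(0,\infty)$.
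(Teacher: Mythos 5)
Your proof is correct and takes the same route as the paper: the paper disposes of this lemma with the one-line remark that it is a special case of Proposition \ref{proposition:laMartingala} for the Lévy measure $\Lambda(A)\delta_1$ with $q=0$, which is precisely your reduction via the compensated Poisson process $\xi^A$. The extra details you supply — the check that the deterministic drift passes through the reinforcement via $\esp{Y(s/t)}{}=(s/t)/(1-p)$, and the finite-variation argument using that $\hat{N}_A$ vanishes near the origin — are exactly what the paper's remark leaves implicit.
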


Remark that this is just a special case of Proposition \ref{proposition:laMartingala} for a Lévy measure of the form $ \Lambda(A) \delta_{1}$ with $q = 0$.  Now we can state:  

\begin{toexclude} 
In this direction, since $\esp{Y(r)}{} = r (1-p)^{-1}$, if we denote $\Phi^{(c)} (u) = e^{iu} -1 -iu$ we can write  
\begin{align*}
    \esp{ \exp \left\{i\sum_{i=1}^k \lambda_i \hat{N}_A^{(c)}(t_i) \right\} }{} 
    &= 
    \exp \bigg\{ t \cdot (1-p) \Lambda(B) 
    \esp{   \exp \bigg\{i \sum_{j=1}^k \lambda_j Y(t_j/t) \bigg\} -1 - i\sum_{j=1}^k \lambda_j Y(t_j/t)  }{}
    \bigg\} \\
    &= 
    \exp \bigg\{ t \cdot C 
    \esp{ \Phi^{(c)} \left( \sum_{j=1}^k \lambda_j Y(t_j/t)\right)  }{}
    \bigg\}\\
    &=: F(\lambda_1,\dots , \lambda_k, ) 
\end{align*}
for $C = (1-p) \Lambda(B)$. Now, the left hand side of (\ref{equation:martingaleCountingProcEq1}) is 
\begin{align*}
   & -i t_{k+1}^{-p}  \left. {\frac{\partial}{\partial \lambda_{k+1}}} \right|_{\lambda_{k+1} = 0} F(\lambda_1, \dots , \lambda_k ,\lambda_{k+1}) 
   \\
   & \hspace{15mm} = 
    -i \exp \bigg\{ t \cdot C 
    \esp{ \Phi^{(c)} \left( \sum_{j=1}^k \lambda_j Y(t_j/t)\right)  }{}
    \bigg\}
    \esp{ \Phi^{(c)'} \left( \sum_{j=1}^k \lambda_j Y(t_j/t)\right) Y(t_{k+1}/t) }{} t_{k+1}^{-p}  t \cdot C
\end{align*}
while the right hand side of (\ref{equation:martingaleCountingProcEq1}) can be written as 
\begin{align*}
    & -i t_k^{-p}   \frac{\partial}{\partial \lambda_{k} }  F(\lambda_1, \dots , \lambda_k) \\
    &\hspace{15mm} = 
   -i \exp \bigg\{ t \cdot C 
    \esp{ \Phi^{(c)} \left( \sum_{j=1}^k \lambda_j Y(t_j/t)\right)  }{}
    \bigg\}
    \esp{ \Phi^{(c)'} \left( \sum_{j=1}^k \lambda_j Y(t_j/t)\right) Y(t_{k}/t) }{} t_k^{-p} t \cdot C
\end{align*}
and it remains to show the equality 
\begin{equation*}
    \esp{ \Phi^{(c)'} \left( \sum_{j=1}^k \lambda_j Y(t_j/t)\right) Y(t_{k+1}/t) }{} t_{k+1}^{-p}  
    =\esp{ \Phi^{(c)'} \left( \sum_{j=1}^k \lambda_j Y(t_j/t)\right) Y(t_{k}/t) }{} t_k^{-p}. 
\end{equation*}
Recalling that  $\Phi^{(c)'}(0) = 0$ and $Y$ is increasing, the result follows by the same arguments as in (\ref{equation:finalpruebaMartingala}). 
\end{toexclude}

\begin{proposition} \label{theorem:compensationFormula} \emph{(Compensation formula)}\\
Denote by $(\scr{F}_t)$ the natural filtration of  $\hat{\xi}$ and by ${\hat{\mu}}$ its jump measure. The predictable compensator ${\hat{\mu}}^{\tt{p}}$ of ${\hat{\mu}}$ is given by 
\begin{equation}\label{definition:predCompensator}
    \hat{\mu}^{\tt{p}}(\omega;\dd t, \dd x) = (1-p)\dd t\otimes \Lambda(\dd x) +  p\frac{\dd t}{t}  \scr{E}_t (\omega; \dd x), 
\end{equation}
where $\scr{E}_t (\dd x) = \sum_{s < t} \delta_{\Delta \hat{\xi}_s}(\dd x) $ is the empirical measure of jumps that occurred strictly before time $t$. 
\end{proposition}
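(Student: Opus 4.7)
The plan is to identify $\hat{\mu}^{\tt p}$ via the characterization of the compensator as the unique predictable random measure for which the local martingale property holds against simple test functions, and then to produce this local martingale property from the centred martingale introduced in Proposition~\ref{proposition:laMartingala} by an integration-by-parts trick that manufactures the $p\dd t/t$ factor. Let me denote
\begin{equation*}
\nu(\omega;\dd t,\dd x) := (1-p)\dd t\otimes \Lambda(\dd x) + p \frac{\dd t}{t}\, \scr{E}_t(\omega;\dd x).
\end{equation*}
First I would check that $\nu$ is a predictable random measure: the first summand is deterministic, while $\scr{E}_t = \sum_{s<t}\delta_{\Delta \hat{\xi}_s}$ is $\scr{F}_{t-}$-measurable since it only involves jumps occurring strictly before $t$, so $(\omega,t)\mapsto\scr{E}_t(\omega;\cdot)$ is predictable.

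The heart of the argument is to verify, for each fixed Borelian $A$ with $\Lambda(A)<\infty$, that the process
\begin{equation*}
Z_A(t):=\hat{N}_A(t) - (1-p)t\Lambda(A) - p\int_0^t \frac{\hat{N}_A(s^-)}{s}\dd s
\end{equation*}
is a local martingale in the natural filtration of $\hat{\xi}$. By the lemma immediately preceding the statement (or by applying Proposition~\ref{proposition:laMartingala} to the Lévy triplet $(0,0,\Lambda(A)\delta_1)$ after centering), $M_A(t):=t^{-p}(\hat{N}_A(t)-t\Lambda(A))$ is a martingale. Writing $\hat{N}_A(t)-t\Lambda(A)=t^p M_A(t)$ and applying integration by parts gives
\begin{equation*}
\hat{N}_A(t)-t\Lambda(A)= \int_0^t p s^{p-1} M_A(s)\dd s + \int_0^t s^p \dd M_A(s)
= p\int_0^t \frac{\hat{N}_A(s)-s\Lambda(A)}{s}\dd s + \int_0^t s^p \dd M_A(s).
\end{equation*}
The last stochastic integral is a local martingale, and since $\hat{N}_A(\cdot)$ differs from $\hat{N}_A(\cdot^-)$ only on a countable set (hence Lebesgue-negligible), rearranging yields $Z_A(t)=\int_0^t s^p \dd M_A(s)$, which is the desired local martingale. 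The observation that rates $1/(1-p)\cdot(1-p)\Lambda=\Lambda$ match $\esp{\hat{N}_A(t)}{}=t\Lambda(A)$ is what makes the constants line up correctly.

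Finally, I would extend the local martingale identity to test functions of the form $H(\omega;s,x)=K(\omega)\mathds{1}_{(u,v]}(s)\mathds{1}_A(x)$ for bounded $\scr{F}_u$-measurable $K$ by a direct stopping-at-$u$ argument, and then via a monotone class argument to all predictable $H$ with $|H|*\hat{\mu}\in\scr{A}^+_{loc}$. Combined with the predictability of $\nu$ established above, the uniqueness of the predictable compensator of the integer-valued random measure $\hat{\mu}$ (\cite[Chapter~II]{JacodShiryaev_LimitTheorems}) forces $\hat{\mu}^{\tt p}=\nu$, proving \eqref{definition:predCompensator}. The main obstacle is the integration-by-parts step: the appearance of the reinforcement rate $p/t$ is precisely the price paid for multiplying $M_A$ by the deterministic factor $t^p$, which is exactly where the memory parameter enters the local dynamics; all the other steps are routine once this computation is in place.
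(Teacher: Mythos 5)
Your proposal is correct and follows essentially the same route as the paper: the same integration by parts applied to the martingale $M_A(t)=t^{-p}(\hat{N}_A(t)-t\Lambda(A))$ (the special case of Proposition \ref{proposition:laMartingala} with Lévy measure $\Lambda(A)\delta_1$), the same remark that $\hat{N}_A(s)$ and $\hat{N}_A(s^-)$ differ only on a Lebesgue-null set, and then an extension to general predictable integrands. The only cosmetic difference is that the paper extends via stochastic intervals $\llbracket 0,T\rrbracket$ of stopping times (following the proof of II-2.21 in \cite{JacodShiryaev_LimitTheorems}) rather than your $\scr{F}_u$-measurable generators $K\,\indic{(u,v]}$, but both families generate the predictable sigma-field, so the monotone class step is equivalent.
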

Consequently, for any predictable process $H \in \pred\otimes \scr{B}(  \mathbb{R})$  such that $|H|*\hat{\mu} \in \scr{A}_{loc}^+$, we have  $|H|*\hat{\mu}^{\tt{p}} \in \scr{A}_{loc}^+$ and the following process is a local martingale:  
\begin{equation} \label{formula:compensationFormula}
  M_t =  \sum_{s \leq t} H_s ( \cdot \, ,  \Delta \hat{\xi}_s  ) - (1-p) \int_0^t\dd s \int_{\mathbb{R}}  \Lambda(\dd x) H_s(\cdot \, ,x)    - \int_0^t \sum_{r <  s} H_{ s } ( \cdot \, ,\Delta \hat{\xi}_r )  \frac{p}{s}\dd s, \quad t \geq 0. 
\end{equation}
The first compensating term appearing in (\ref{formula:compensationFormula}) is compensating innovations, i.e.  atoms appearing for the first time, while the second one should be interpreted as the compensator of the memory part of $\hat{\mu}$. Notice that Proposition \ref{theorem:compensationFormula} holds if $p=0$. Indeed, in that case  $\hat{\xi}$ is a Lévy process and its jump process $\mu$ is the Poisson point process (\ref{ppp-Levy}). The compensator (\ref{definition:predCompensator}) is just the deterministic compensator   $ \dd t \otimes \Lambda(\dd x)$ for the  Poisson point processes  with  characteristic measure $\Lambda$ and in (\ref{formula:compensationFormula}) we recover the celebrated  compensation formula, see e.g.   \cite[Chapter 1]{Bertoin_LevyProcessesBook}. Remark that since the intensity of both $\mu$ and $\hat{\mu}$ is $\dd t\otimes  \Lambda(\dd x)$, we have, for both $X$ a Lévy process and its associated NRLP, the equality $\esp{ \sum_{s \leq t} f(s,\Delta X_s) }{} = \int_0^t\dd s \int_\mathbb{R}\Lambda(\dd x) f(s,x)$ for any $f:\mathbb{R}\times \mathbb{R}\mapsto \mathbb{R}^+$.  When $X := \xi$, by the compensation formula, this identity  holds also if we replace $f$ by a non-negative predictable process $H \in \pred \otimes \scr{B}(\mathbb{R})$, viz. 
\begin{equation} \label{formula:masterFormula}
    \esp{\sum_{s \leq t} H_s( \cdot , \Delta \xi_s) }{} = \esp{\int_0^t\dd s  \int_{\mathbb{R}} \Lambda(\dd x) H_s(\cdot , x) }{}. 
\end{equation}
However, we point out that if we replace in (\ref{formula:masterFormula}) the Lévy process by its reinforced version $\hat{\xi}$,  the identity no longer holds. Indeed, if such formula was satisfied,  the exact same proof for the exponential formula of PPPs of XII-1.12 in \cite{RevuzYor} would hold in our reinforced setting, and since random measures are characterised by their Laplace functional, this would lead us to the conclusion that the law of $\hat{\mu}$ coincides with the law of $\mu$. 

\begin{toexclude}
As a simple application of (ii) bringing us back to the original construction in terms  of series of Yule-Simon processes in \cite{BertoinNRLP}, pick $f(x) = x \indic{\{a \leq |x| < b\}}$ for some real numbers $0<a<b$. Then, $\esp{ \sum_{s \leq t} f(\Delta \hat{\xi}_ s ) }{} = t \int_{\{a \leq |x| < b\}} x \Lambda(\dd x)$ in accord with Section 2.2 of \cite{BertoinNRLP} since with the notation $(\Sigma_{a,b}(t))_{t \in [0,1]}$ used in \cite{BertoinNRLP}  for the NRLP of characteristics $(0,0,\indic{\{a \leq  |x| < b\}})\Lambda,p)$, we have   $\Sigma_{a,b}(t) = \sum_{s \leq t} \Delta \hat{\xi}_s \indic{\{a \leq  |\Delta \hat{\xi}_s | < b\}}$. 
\end{toexclude}
\begin{proof} (i) In order to establish (\ref{definition:predCompensator}), by (i) of Theorem II-1.8 of \cite{JacodShiryaev_LimitTheorems},  it suffices to show that for any nonnegative predictable process $H \in \pred \otimes \scr{B}(\mathbb{R})$, 
\begin{equation} \label{equation:pruebaCompensacion}
    \esp{(H * \hat{\mu})_\infty}{} = \esp{(H * \hat{\mu}^{\tt{p}})_\infty}{}, 
\end{equation}
and the first step consists in showing the result for deterministic $H_s(\omega , x) = \indic{B}(x)$ for $B \in \scr{B}(\mathbb{R})$. Maintaining the notation introduced in Lemma \ref{lemma:PreFormulaCompensacion} for the process $\hat{N}_B$, consider $B$ an arbitrary interval not containing a neighbourhood of the origin as well as the associated martingale, 
\begin{equation*}
    M_B(t) = t^{-p} \hat{N}_B^{(c)}(t) = t^{-p} \left( \hat{N}_B(t) - t \Lambda(B) \right).  
\end{equation*}
Integrating by parts, we get 
\begin{equation*}
    t^{p} M_B(t) = \int_0^t s^p \dd M_B(s) + \int_0^t p M_B(s) s^{p-1}\dd s, 
\end{equation*}
and consequently, 
\begin{align*}
    \hat{N}_B(t) - t \Lambda(B)  
    &= \int_0^t s^p \dd M_B(s) + \int_0^t p M_B(s) s^{p-1}\dd s  \\
    &= \int_0^t s^p \dd M_B(s) + \int_0^t  \left( \hat{N}_B(s) - s \Lambda(B) \right)  p s^{-1} \dd s \cdot  
\end{align*}
Said otherwise, 
\begin{equation*}
    \hat{N}_B(t) - t (1-p) \Lambda(B) -  \int_0^t   \hat{N}_B(s)  p s^{-1}\dd s   = \int_0^t s^p \dd M_B(s),  
\end{equation*}
is a martingale. Since $(N_B(\omega;s ))_{s \in \mathbb{R}^+}$ and $(N_B(\omega ; {s-}))_{s \in \mathbb{R}^+}$ differ in a set of null Lebesgue measure, the equality still holds replacing $\int_0^t \hat{N}_B(s) ps^{-1} \dd s$ by $\int_0^t \hat{N}_B(s-) ps^{-1} \dd s$   and we obtain precisely (\ref{formula:compensationFormula}) for $H_s(\omega , x) = \indic{B}(x)$. Now we can  proceed as in the proof of II-2.21 from \cite{JacodShiryaev_LimitTheorems}. Concretely,  pick  any  positive Borel-measurable deterministic function $h=h(x)$, $x \in \mathbb{R}$ such that $h * \hat{\mu} - h * \hat{\mu}^{\tt{p}}$ is a local martingale and let $T$ be an arbitrary stopping time. With the same terminology as in I.1.22 of \cite{JacodShiryaev_LimitTheorems} denote by $\llbracket 0,T \rrbracket$ the subset of $\Omega \times \mathbb{R}^+$ defined by 
\begin{equation*}
    \llbracket 0,T \rrbracket = \{ (\omega , s) :  0 \leq s \leq T(\omega) \}.
\end{equation*}
In particular, $(h * \hat{\mu})^T = \indic{\llbracket 0,T \rrbracket}h * \hat{\mu}$ where the process $\indic{\llbracket 0,T \rrbracket}$ is predictable (since left continuous)    and moreover, by Theorem I 2.2 of \cite{JacodShiryaev_LimitTheorems}, the sigma field generated by the collection
\begin{equation*}
\{   A \times \{ 0 \} \text{ where } A \in \scr{F}_0, \text{ and }  \llbracket 0,T \rrbracket \text{ where } T \text{ is any } (\scr{F}_t)  \text{-stopping time } \}
\end{equation*}
is precisely the predictable sigma field $\pred$.  Then, if $(T_n)$ is a localising sequence for the local martingale  $h * \hat{\mu} - h * \hat{\mu}^{\tt{p}}$, it follows from Doob's stopping theorem that for each $n$,
\begin{equation*}
    \esp{ (h * \hat{\mu})_{\infty}^{T \wedge T_n} }{} =  \esp{ (h * \hat{\mu}^{\tt{p}})_{\infty}^{T \wedge T_n} }{}. 
\end{equation*}
 Consequently, taking the limit as $n \uparrow \infty$, we deduce by monotone convergence that
\begin{equation*}
    \esp{ ( \indic{  \llbracket 0,T  \rrbracket} h * \hat{\mu})_\infty }{} = \esp{ (\indic{ \llbracket 0,T \rrbracket} h * \hat{\mu}^{\tt{p}})_\infty }{}
\end{equation*}
which in turn implies that (\ref{equation:pruebaCompensacion}) holds for any predictable process $H = 1_B 1_{\llbracket 0,T \rrbracket}$ where $B$ is any closed interval not containing the origin and $T$ an arbitrary stopping time. Now the claim  follows by a monotone class argument.
\end{proof}
We close our discussion on the jump process of NRLPs with the property at the heart of the infinite divisibility of $\hat{\xi}$ as a stochastic process, a topic that will be studied in Section \ref{section:IDprocesses}.  We claim that, for $A \in \scr{B}(\mathbb{R})$ with $\Lambda(A) < \infty$ the point process of jumps 
\begin{equation}\label{pointProcess}
    \nu_A( \dd s ) =  \sum_{s} \indic{\{ \Delta \hat{\xi}_s \in A \}} \delta_{s}, 
\end{equation}
is an infinitely divisible point process. More precisely,  the measure $\nu_A$ is a reinforced Poisson point process $\hat{\scr{P}}$ with rate  $\Lambda(A)$ in $\mathbb{R}^+$ and if we consider $n$ independent copies  $\nu_A^{1}, \dots ,\nu_A^{n}$ of the reinforced  Poisson process (\ref{pointProcess}) but with rate $n^{-1} \Lambda(A)$, we have the equality in distribution 
\begin{equation} \label{equation:idpointprocess}
    \nu_A \overset{\scr{L}}{=} \nu_A^{1}+ \dots +\nu_A^{n}.
\end{equation}
To see this, consider  $f: \mathbb{R}^+ \mapsto \mathbb{R}^+$ a positive function with support in $[0,t]$, and   observe  that 
\begin{equation*}
    \langle f , \nu_A \rangle = \sum_{s \leq t} \indic{A}(\Delta \xi_s)f(s). 
\end{equation*}
Now the claim follows by computing the Laplace functional of $\nu_A$, $\nu_A^i$  respectively, by applying the exponential formula  (\ref{formula:exponencialLaplace})  and from comparing with  (\ref{identity:laplaceNRPPcompactos}). For a more detailed discussion on infinitely divisible point processes  we refer to page 5 of \cite{MaillardStablePP}.

\addtocontents{toc}{\vspace{\contentsSpacingBefore}}
\section{Weak convergence of the pair of skeletons} 
\label{section:jointConvergence}
\addtocontents{toc}{\vspace{0 pt}}

Before stating the first result of the section, let us briefly recall the statement  of the Lévy-Itô synthesis for Lévy processes:  a Lévy process $\xi$ with triplet $(a,q^2,\Lambda)$ can be written as $\xi = \xi^{(1)} + \xi^{(2)} + {\xi}^{(3)}$, where  $\xi^{(1)} = ( a t + q  B_t : t \geq 0)$ is a Brownian motion with drift  while $\xi^{(2)} + {\xi}^{(3)}$ is a purely discontinuous process that can be explicitly built from the jump measure $\mu$ defined in (\ref{ppp-Levy}). More precisely, if we denote by $\mu^{(sc)}$ the compensated  measure of jumps $\mu^{(sc)} = \mu -\dd t\Lambda(\dd x)$,  we can write 
\begin{equation} \label{equation:LevyIto}
    \xi_t = a t + q B_t + \int_{[0,t]\times (-1,1)^c} x \mu({\dd s,\dd x}) +  \int_{[0,t]\times (-1,1)} x \mu^{(sc)}({\dd s,\dd x}), \quad \quad t \geq 0.
\end{equation}
 The  reinforced Lévy-Itô synthesis, which is the first main result of the section,  states that the analogous result holds for NRLPs where now, the PPP $\mu$ in \eqref{equation:LevyIto} has been replaced by the reinforced version $\hat{\mu}$, and the Brownian motion $B$ by its reinforced version $\hat{B}$ (if $p < 1/2)$.  More precisely,  after properly defining the "space-compensated" measure $\hat{\mu}^{(sc)}$, we prove: 
\begin{theorem}\label{corollary:constructionNRLPs} \emph{(Reinforced Itô's synthesis)} \\
Let $\hat{\mu}$ be the jump measure of a NRLP $\hat{\xi}$ of characteristics $(a,q^2,\Lambda, p)$. Then,  a.s. we have 
\begin{equation*} 
    \hat{\xi}_t = at + q\hat{B}_t + \int_{[0,t] \times (-1,1)^{c}} x \hat{\mu}( \dd s,\dd x) + \int_{[0,t] \times (-1,1)} x \hat{\mu}^{(sc)}( \dd s,\dd x), \quad \quad t \geq 0,
\end{equation*}
for some noise reinforced Brownian motion $\hat{B}$, with the convention that 
 if $p \geq  1/2$ the process $\hat{B}$ is null. Moreover, the integrals in the previous display are  NRLPs with respective characteristics    $(0,0, \indic{ (-1,1)^c }\Lambda ,p)$, $(0,0, \indic{ (-1,1) }\Lambda,p)$. 
\end{theorem}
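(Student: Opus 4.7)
The plan is to establish the identity pathwise on $[0,1]$ using the explicit Yule--Simon representation recalled in Section \ref{subsection:preliminaries,examplesNRLPs}, and then transfer the conclusion to $\mathbb{R}^+$ via the scaling identity \eqref{identity:extension}. Concretely, I will fix a single realisation $\hat{\xi} = (at + q\hat{B}) + \hat{\xi}^{(2)} + \hat{\xi}^{(3)}$ on a common probability space, where $\hat{\xi}^{(2)}, \hat{\xi}^{(3)}$ are built from a Poisson random measure $\mathcal{M} = \sum_i \delta_{(x_i,Y_i)}$ on $\mathbb{R}\times D[0,1]$ with intensity $(1-p)\Lambda\otimes\mathbb{Q}$ (splitting according to $|x|\geq 1$ and $|x|<1$) and $\hat{B}$ is an independent NRBM, as in \eqref{example:reinfPoissonProcess} and \eqref{equation:explicitCompensatedYSseries}. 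This is the working model: once the identity is proven for this particular coupling, the statement holds in distribution for any NRLP, and pathwise on the canonical probability space since $\hat{B}$ is defined as the (continuous) residual.

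The identification of the big--jumps integral is direct. Since $at + q\hat{B}$ is continuous and the jumps of $\hat{\xi}^{(2)} + \hat{\xi}^{(3)}$ occurring in $[0,t]$ are exactly the points $\{(s, x_i) : s \text{ a jump time of } Y_i\}$, formula \eqref{equation:saltosFiniteVar_2} applied with $f(x) = x\,\indic{|x|\geq 1}$ gives
\begin{equation*}
    \int_{[0,t]\times(-1,1)^c} x\,\hat{\mu}(\dd s,\dd x) = \sum_{s\leq t}\Delta\hat{\xi}_s\,\indic{|\Delta\hat{\xi}_s|\geq 1} = \sum_i x_i\,\indic{|x_i|\geq 1}\,Y_i(t) = \hat{\xi}^{(2)}_t,
\end{equation*}
and the representation \eqref{example:reinfPoissonProcess} identifies this finite--variation process as the NRLP of characteristics $(0,0,\indic{(-1,1)^c}\Lambda,p)$. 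For the small--jump integral, I will define the space--compensated measure $\hat{\mu}^{(sc)}$ on $[0,t]\times(-1,1)$ implicitly through
\begin{equation*}
    \int_{[0,t]\times(-1,1)} x\,\hat{\mu}^{(sc)}(\dd s,\dd x) := \lim_{a\downarrow 0}\Bigl(\int_{[0,t]\times\{a\leq |x|<1\}} x\,\hat{\mu}(\dd s,\dd x) - t\!\int_{\{a\leq |x|<1\}}\!x\,\Lambda(\dd x)\Bigr),
\end{equation*}
the limit being taken in $L_r(\mathbb{P})$ for $r$ as in \eqref{condition:choiceLq.}. Applying once more \eqref{equation:saltosFiniteVar_2}, the pre-limit coincides pathwise with the truncated Yule--Simon series $\hat{\xi}^{(3)}_{a,1}(t)$ of \eqref{prelim:explicitConstructionCompensatedSums}, so the existence of the limit and the identity with $\hat{\xi}^{(3)}_t$ are precisely the content of \eqref{equation:explicitCompensatedYSseries}. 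The fact that the resulting process is a NRLP with characteristics $(0,0,\indic{(-1,1)}\Lambda,p)$ is again part of the Yule--Simon construction of Section \ref{subsection:preliminaries,examplesNRLPs}.

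Combining these two identifications with the a priori decomposition gives, pathwise for $t\in[0,1]$,
\begin{equation*}
    \hat{\xi}_t - at - \int_{[0,t]\times(-1,1)^c} x\,\hat{\mu}(\dd s,\dd x) - \int_{[0,t]\times(-1,1)} x\,\hat{\mu}^{(sc)}(\dd s,\dd x) = q\hat{B}_t,
\end{equation*}
the residual being continuous because \eqref{equation:convergenceSmallJumps} of Theorem \ref{regularityNRLP} rules out any accumulation of small jumps being absorbed into it. To extend to $\mathbb{R}^+$, I will invoke \eqref{identity:extension}: for arbitrary $T>0$, the process $(\hat{\xi}_{sT})_{s\in[0,1]}$ is the NRLP associated to exponent $T\Psi$, its jump measure is the NRPPP with characteristic measure $T\Lambda$ obtained from $\hat{\mu}$ by the rescaling $(s,x)\mapsto(s/T,x)$, and the $[0,1]$ decomposition just established translates back into the claimed decomposition on $[0,T]$; a standard Kolmogorov consistency argument across growing $T$ then yields the identity on $\mathbb{R}^+$.

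The main obstacle I foresee is keeping the identifications \emph{pathwise} rather than only in law across the steps of the argument: the Yule--Simon construction gives an in-law decomposition \eqref{descomposicionNRLP}, so the cleanest route is to take the explicit construction as the working definition of $\hat{\xi}$ and read off $\hat{\mu}$ from it, then verify that the integrals against $\hat{\mu}$ recover the same $\hat{\xi}^{(2)}, \hat{\xi}^{(3)}$ used in the construction; this is where \eqref{equation:saltosFiniteVar_2} and the uniform version of \eqref{equation:explicitCompensatedYSseries} (obtained via Doob's inequality applied to the martingale $M^{(\epsilon)}$ of the proof of Theorem \ref{regularityNRLP}) do the essential work. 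A secondary technical point is to make sense of the second integral as an actual signed-measure integral of $x$ against $\hat{\mu}^{(sc)}$ uniformly in $t$; I will handle this by promoting the $L_r$--convergence at fixed $t$ to a uniform convergence on $[0,1]$ through the same Doob estimate, which is already implicit in the remark following \eqref{equation:convergenceSmallJumps}.
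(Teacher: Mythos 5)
Your proposal is correct in substance and ultimately rests on the same ingredients as the paper: the identification of the jumps of $\hat{\xi}$ with the jump times of the weighted Yule--Simon processes (which is Theorem \ref{lemma:PreFormulaCompensacion} together with \eqref{equation:convergenceSmallJumps}), the convergence \eqref{equation:explicitCompensatedYSseries} of the compensated series, and a comparison in law with the explicit construction. The organisation, however, differs. The paper works directly on $\mathbb{R}^+$ with the abstract NRPPP $\hat{\mu}$: Lemma \ref{lemma:NRPPasIntegralsOfNRPPP} shows that $\indic{(-1,1)^c}x * \hat{\mu}$ and $\indic{(-1,1)}x * \hat{\mu}^{(sc)}$ are NRLPs with the stated characteristics and that the compensated limit exists uniformly on compacts along a subsequence; it then identifies the continuous residual $\hat{C} := \hat{\xi} - \hat{\xi}^{(2)} - \hat{\xi}^{(3)}$ as $at + q\hat{B}$, independent of the jump part, by comparing the joint law of $\hat{\xi}$ with its jump functionals to that of the model process \eqref{equation:constructionOfNRLPS} via \eqref{equation:representationsNRLPequality} and letting $\epsilon \downarrow 0$. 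You instead verify the identity on the explicit $[0,1]$ coupling, where the residual equals $q\hat{B}$ by construction, and transfer to an arbitrary $\hat{\xi}$ by equality in law, extending to $\mathbb{R}^+$ by the scaling identity \eqref{identity:extension}. This is legitimate, but the transfer step then carries all the weight and deserves to be spelled out: one must check that every clause of the theorem -- existence of the a.s.\ subsequential uniform limit defining the compensated integral, continuity of the residual, and the laws of the three pieces (including the independence that is used later) -- is a property of measurable functionals of the path of $\hat{\xi}$ alone, hence invariant under equality in law; this is exactly the point where the paper instead does concrete work through \eqref{equation:representationsNRLPequality}. Two small remarks: the continuity of the residual in your coupling is immediate from the construction rather than a consequence of \eqref{equation:convergenceSmallJumps}, whose real role is to justify \eqref{equation:saltosFiniteVar_2}; and no Kolmogorov consistency argument is needed for the extension to $\mathbb{R}^+$, since once the decomposition holds a.s.\ on each $[0,T]$ the residuals agree on overlaps automatically, being computed from the restriction of the path.
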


\begin{remark} Beware of the notation,  $\hat{\mu}^{(sc)}$ stands for the \textit{space}-compensated  jump measure $\hat{\mu}$ and should not be confused with the \textit{time}-compensated measure $(\mu-\mu^p)$  in the sense of   \cite[Chapter II-1.27]{JacodShiryaev_LimitTheorems}. For instance, we stress that  $\hat{\xi}^{(3)}$ is not a local martingale. Remark that for Lévy processes, the time and space compensation of its jump measure coincide, since the compensating measure is the same.
\end{remark}

After proving this result, we start settling the ground for the main result of the section. First, making use of Theorem \ref{corollary:constructionNRLPs},  we   define  the joint law, of a Lévy process and its reinforced version, by introducing an appropriate coupling $(\xi , \hat{\xi})$.  We then   characterise its law by computing the characteristic function of its finite dimensional distributions: 

\begin{proposition}\label{theorem:leyConjunta}
There exists a pair $(\xi, \hat{\xi})$, where $\hat{\xi}$ has the law of a NRLP   with characteristics $(a,q^2,\Lambda, p)$, with law determined by the following: for all $k \geq 1$,  $\lambda_1, \dots , \lambda_k$, $\beta_1, \dots \beta_k$  real numbers, and $0<t_1< \dots < t_k \leq t$, we have 
\begin{align}
    &\esp{\exp\bigg\{ i \sum_{j=1}^k \big( \lambda_j  \xi_{t_j} + \beta_j  \hat{\xi}_{t_j}\big)  \bigg\}  }{} 
    = \nonumber \\
    &\exp \bigg\{t \cdot p  \esp{ \Psi \Bigg( \sum_{j=1}^k \lambda_j \indic{\{ U \leq t_j/t  \}} \Bigg) }{} +  t \cdot (1-p) \esp{\Psi \Bigg( \sum_{j=1}^k (\lambda_j \indic{\{ Y(t_j/t) \geq 1 \}} + \beta_i Y(t_j/t)) \Bigg) }{} \bigg\},  \label{eq:leyConjunta}
\end{align}
where $U$ is a uniform random variable in $[0,1]$. A pair of processes with such distribution will always be denoted by $(\xi, \hat{\xi})$.
\end{proposition}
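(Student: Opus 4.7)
I would construct the pair $(\xi, \hat\xi)$ pathwise by imitating the reinforcement algorithm at the level of the jump measure, and then verify \eqref{eq:leyConjunta} by a direct application of the exponential formulas \eqref{formula:exponencialLaplace}--\eqref{formula:exponencialFourier}. Concretely, on some probability space take two independent Poisson point processes $\mu_1, \mu_2$ on $\mathbb{R}^+ \times \mathbb{R}$ of respective intensities $p\,\dd s \otimes \Lambda$ and $(1-p)\,\dd s \otimes \Lambda$; atoms of $\mu_1$ are the ``discarded'' jumps (present only in $\xi$) and atoms of $\mu_2$ are the ``shared innovations''. To each atom $(u,x) \in \mu_2$ attach an independent copy $D_u$ of the point process $D$ from \eqref{definition:reinforcedPPExpScale} and form
\[
\hat{\mathcal N}(\dd s, \dd x) := \sum_{(u,x) \in \mu_2} \sum_{t \in D_u} \delta_{(ue^t,\, x)}(\dd s, \dd x),
\]
which by Definition \ref{proposition:reinforcedPPPbyMarking} is a NRPPP with characteristic measure $\Lambda$ and parameter $p$, while $\mu := \mu_1 + \mu_2$ is a PPP of intensity $\dd s \otimes \Lambda$. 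Independently of these, take a centred Gaussian pair $(B, \hat B)$ with $B$ a standard Brownian motion, $\hat B$ a NRBM with parameter $p$, and cross-covariance $\mathbb{E}[B_s \hat B_t] = s^{1-p} t^p$ for $s \le t$ and $\mathbb{E}[B_s \hat B_t] = t$ for $s \ge t$; such a coupling is realised e.g. by $\hat B_t = (1-p) t^p \int_0^t r^{-p} \dd B_r + \sqrt{p(2-p)}\,\hat B'_t$, with $\hat B'$ an independent NRBM (a direct variance computation shows that the prescribed covariance is positive definite and reproduces \eqref{formula:covarianceNRBMoriginal}). Finally, define $\xi$ by the classical Lévy--Itô synthesis from $(B, \mu)$ with triplet $(a, q^2, \Lambda)$ and $\hat\xi$ by the reinforced Lévy--Itô synthesis (Theorem \ref{corollary:constructionNRLPs}) from $(\hat B, \hat{\mathcal N})$; the marginals have the prescribed laws by construction.

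To identify the joint law I would compute the left-hand side of \eqref{eq:leyConjunta} by using the independence of the Gaussian and jump parts and, within the jump part, the independence of $\mu_1$ from $(\mu_2, \{D_u\})$, so that the expectation factors. Using $\mathbb{P}(Y(r) \ge 1) = r$, $\mathbb{E}[Y(r)] = r/(1-p)$, \eqref{formula:covarYuleSimon}, and the conditional identity $\mathbb{E}[\mathbf{1}_{\{Y(s) \ge 1\}} Y(t)] = s^{1-p} t^p/(1-p)$ for $s \le t$ extracted from \eqref{formula:expectationYuleSimon}, one checks that the drift and Gaussian contributions on the right-hand side of \eqref{eq:leyConjunta} exactly match the drift $at$ in both processes and the chosen covariance of $(B, \hat B)$. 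The $\mu_1$ factor is the characteristic function of a pure-jump Lévy process with Lévy measure $p\Lambda$, and rewriting the first jump-time as $tU$ with $U$ uniform on $[0,1]$ produces, together with the drift and Brownian content allocated to it, the first term $\exp\{tp\,\mathbb{E}[\Psi(\sum_j \lambda_j \mathbf{1}_{\{U \le t_j/t\}})]\}$ on the right-hand side. For the $(\mu_2, \{D_u\})$ factor, I would apply the exponential formula to the PPP $\sum_i \delta_{(u_i, x_i, D_{u_i})}$ on $\mathbb{R}^+ \times \mathbb{R} \times \mathcal M_c$ of intensity $(1-p)\,\dd s \otimes \Lambda \otimes \mathbb{D}$, using the key identifications (obtained by setting $U := u/t$ in the representation \eqref{formula:RepresentacionYuleSimon})
\[
\mathbf{1}_{\{u \le t_j\}} = \mathbf{1}_{\{Y(t_j/t) \ge 1\}}, \qquad |\{s \in D_u : ue^s \le t_j\}| \stackrel{d}{=} Y(t_j/t),
\]
so that after the change of variable $u = tr$ the coefficient of $x$ inside $\Psi$ becomes precisely $\sum_j (\lambda_j \mathbf{1}_{\{Y(t_j/t) \ge 1\}} + \beta_j Y(t_j/t))$, producing the second term $\exp\{t(1-p)\,\mathbb{E}[\Psi(\sum_j (\lambda_j \mathbf{1}_{\{Y(t_j/t) \ge 1\}} + \beta_j Y(t_j/t)))]\}$.

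The main technical obstacle will be the bookkeeping of small-jump compensators in the coupled exponential formula for $(\mu_2, \{D_u\})$: one must split the contribution of each atom $(u, x, D_u)$ into a contribution to $\xi_{t_j}$ (compensated via the usual intensity $\dd s \otimes \Lambda$) and a contribution to $\hat\xi_{t_j}$ (compensated via the space compensation of Theorem \ref{corollary:constructionNRLPs}, not a predictable compensator), and verify that the combined compensation reconstructs the full $\Psi$ with drift and Gaussian terms as required on the right-hand side of \eqref{eq:leyConjunta}. A closely related point is the allocation of the linear drift and Brownian variance between the $p$ and $(1-p)$ terms of the RHS, which must recombine with the coupled pair $(B, \hat B)$ to give the drift $at$ for both $\xi$ and $\hat\xi$ together with the prescribed joint Gaussian covariance. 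Modulo these compensation and bookkeeping checks, the proof is a direct assembly of the exponential formulas \eqref{formula:exponencialLaplace}--\eqref{formula:exponencialFourier} and the independence properties of Poisson random measures.
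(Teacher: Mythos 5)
Your proposal is correct and follows essentially the same route as the paper: the same thinning-plus-decoration construction of $(\mathcal{N},\hat{\mathcal{N}})$ (the paper phrases it as a Bernoulli thinning of the jumps of a given $\xi$, which is equivalent to your superposition of $\mu_1$ and $\mu_2$), the same stochastic-integral coupling of $(B,\hat B)$ with the same cross-covariance, and the same verification via the exponential formula for the combined Poisson random measure together with the covariance identities \eqref{formula:covarRefrozado}--\eqref{formula:covarRefrozadoMB}. The small-jump compensation step you defer is carried out in the paper exactly as you anticipate, by truncating at level $\epsilon$ and passing to the limit by dominated convergence using $Y(t)\in L_r(\mathbb{P})$ for $r<1/p$ and the bound $|e^{ix}-1-ix|\leq C|x|^r$.
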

  \par Now, we  connect  the distribution of the pair  $(\xi, \hat{\xi})$  with the discrete setting.  In this direction, consider  the Lévy process  $\xi$    and for each fixed $n \in \mathbb{N}$ we set 
 \begin{equation} \label{definition:incrementsSkeletton}
     X^{(n)}_k := \Delta^{(n)}\xi_{k} = \xi_{k/n} - \xi_{(k-1)/n}, \quad \text{ for }  k \geq 1.
 \end{equation}
 For each $n$, the sequence  $(X^{(n)}_k)$  is identically distributed with  law $\xi_{1/n}$ and the random walk $S^{(n)}_{k} = X^{(n)}_1 + \dots + X^{(n)}_k  \text{ for } k\geq 1$, $\, S^{(n)}_0 = 0$  built from these increments for a mesh of length $1/n$ is  referred to as  the $n-$skeleton of the Lévy process $\xi$. This process consists in the  positions of  $\xi$ observed at discrete time intervals and, if we write $D(\mathbb{R}_+)$ for the space of $\mathbb{R}_+$ indexed rcll functions into $\mathbb{R}$ with the Skorokhod topology,  we have  $S^{(n)}_{\lfloor n\cdot \rfloor} \overset{D(\mathbb{R}_+)}{\rightarrow} \xi$ as $n \uparrow \infty$. Now,  fix a memory parameter $p \in (0,1)$ and for each $n$, consider the associated noise reinforced random walk $(\hat{S}^{(n)}_k)$ with parameter $p$ built from the same collection of increments: 
 \begin{equation} \label{definition:LevyReinforcedSkeletton}
     \hat{S}_k^{(n)} := \hat{X}_1^{(n)} + \dots + \hat{X}_k^{(n)},  \quad \text{ for } k \geq 1, 
 \end{equation}
 where we set   $\hat{S}^{(n)}_0 := 0$. For a detailed  account on the noise reinforced random walk, we refer to the beginning of Section \ref{section:jointlaw}. The main result in \cite{BertoinNRLP} states that $\hat{S}_{\lfloor n \cdot  \rfloor} \overset{f.d.d.}{\rightarrow} \hat{\xi}$, the convergence holding in the sense of finite-dimensional distributions, and we shall now   strength this result.
 To simplify notation, write  $D^2(\mathbb{R}_+)$ the product space $D(\mathbb{R}_+) \times D(\mathbb{R}_+)$ endowed with the product topology.  Now we can state the main result of the section: 
 \begin{theorem} \label{thm:convergenciaConjunta}
Let $\xi$ be a Lévy process with characteristic triplet $(a,q^2,\Lambda)$, fix  $p \in (0,1/2)$ an admissible memory parameter and for each $n$, let 
$(S^{(n)}_k,\hat{S}^{(n)}_k)$ be the pair of the $n$-skeleton of $\xi$ and its reinforced version.  Then, there is weak convergence in  $D^2(\mathbb{R}_+)$ as $n \uparrow \infty$
\begin{equation} \label{equation:joint convergence}
    \Big(  S^{(n)}_{\lfloor n \cdot  \rfloor} ,   \hat{S}^{(n)}_{\lfloor n \cdot  \rfloor} \Big) \overset{\scr{L}}{ \longrightarrow} (  \xi , \hat{\xi} ), 
\end{equation}
where  $(\xi, \hat{\xi})$ is a pair of processes with law  (\ref{eq:leyConjunta}).  
\end{theorem}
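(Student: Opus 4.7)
The plan is to reduce the proof to two standard ingredients: joint convergence of finite-dimensional distributions towards the law specified in Proposition \ref{theorem:leyConjunta}, and joint tightness in $D^2(\mathbb{R}_+)$. Because $D^2(\mathbb{R}_+)$ carries the product topology, joint tightness reduces to marginal tightness; the first marginal $S^{(n)}_{\lfloor n\cdot\rfloor}$ converges to $\xi$ in $D(\mathbb{R}_+)$ by the classical Donsker-type invariance principle for Lévy skeletons and is therefore tight. The analysis thus concentrates on (i) computing the joint characteristic function of $(S^{(n)}_{\lfloor nt_l\rfloor}, \hat{S}^{(n)}_{\lfloor nt_l\rfloor})_{l=1}^k$, and (ii) establishing tightness of the reinforced skeleton $\hat{S}^{(n)}_{\lfloor n\cdot\rfloor}$ on its own.

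For (i), I would exploit the fact that every step $\hat{X}^{(n)}_i$ can be traced back through the reinforcement chain to a unique innovation index $\tau(i)\leq i$, yielding the decomposition
\[
    \hat{S}^{(n)}_k = \sum_{\substack{j \leq k \\ j\text{ innov.}}} N_{j,k}\, X^{(n)}_j, \qquad  N_{j,k} := \#\{i \leq k: \tau(i) = j\}.
\]
Conditionally on the genealogy $(\tau(i))_i$, and hence on $(N_{j,k})_{j,k}$, the innovation increments $X^{(n)}_j$ are i.i.d.\ with characteristic function $\exp\{n^{-1}\Psi(\cdot)\}$, so the joint characteristic function factors as
\[
    \mathbb{E}\bigg[\prod_{j=1}^{\lfloor nt_k\rfloor} \exp\Big\{n^{-1}\Psi\big(A^{(n)}_j\big)\Big\}\bigg], \quad A^{(n)}_j := \sum_l \lambda_l \indic{\{j \leq \lfloor nt_l\rfloor\}} + \indic{\{j \text{ innov.}\}}\sum_l \beta_l\, N_{j,\lfloor nt_l\rfloor}.
\]
I would split the logarithm of this product into a sum over non-innovation indices, whose empirical density converges to $p$ and which, via a Riemann-sum argument after reparametrising $j = \lfloor n s t_k\rfloor$ with $s$ uniform in $[0,1]$, recovers the first term on the right-hand side of \eqref{eq:leyConjunta}; and a sum over innovation indices, where the counts $N_{\lfloor ns\rfloor,\lfloor nt\rfloor}$ converge in distribution to $Z_{p(\log t-\log s)}$ with $Z$ a standard Yule process. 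Invoking the Yule--Simon representation \eqref{formula:RepresentacionYuleSimon} then matches the innovation contribution with the second term of \eqref{eq:leyConjunta}. Passing to the limit requires uniform integrability of $N_{j,\lfloor nt\rfloor}$ summed over $j$, which is furnished by the admissibility condition $p\beta<1$, exactly as in the proof of the fdd convergence \eqref{equation:fddsNRLPs} in \cite{BertoinNRLP}.

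For (ii), I would decompose the driving Lévy process as $\xi = \xi^{(1)} + \xi^{(2,\epsilon)} + \xi^{(3,\epsilon)}$ (drift plus Brownian motion, jumps of size $\geq\epsilon$, and compensated small jumps); this decomposition descends to the skeleton increments $X^{(n)}_j$ and hence to the reinforced walks, $\hat{S}^{(n)} = \hat{S}^{(n,1)} + \hat{S}^{(n,2,\epsilon)} + \hat{S}^{(n,3,\epsilon)}$, whose candidate limits are the three summands of the reinforced Lévy--Itô synthesis of Theorem \ref{corollary:constructionNRLPs}. Tightness of the Gaussian piece $\hat{S}^{(n,1)}$ is provided by the invariance principle for noise-reinforced random walks in the $L_2$ regime of \cite{BertoinUniversality, InvariancePrinciplesNRRW}; tightness of $\hat{S}^{(n,2,\epsilon)}$ is straightforward since its number of innovations on any compact interval is of order $n(1-p)\Lambda\{|x|\geq\epsilon\}$ and it is a finite-variation process with controlled total variation. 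The main obstacle is the small-jumps piece $\hat{S}^{(n,3,\epsilon)}$; here I would apply Doob's $L_r$ maximal inequality (with $r$ chosen as in \eqref{condition:choiceLq.}) to the discrete analogue of the martingale from Proposition \ref{proposition:laMartingala}, namely $k \mapsto (k/n)^{-p}\hat{S}^{(n,3,\epsilon)}_k$, and combine the resulting maximal estimate with \eqref{equation:convergenceSmallJumps} together with the one-time marginal limit given by (i) to conclude that
\[
    \lim_{\epsilon \downarrow 0} \limsup_{n \to \infty} \mathbb{E}\bigg[\sup_{s \leq T} \big|\hat{S}^{(n,3,\epsilon)}_{\lfloor ns\rfloor}\big|\bigg] = 0.
\]
A triangle-style argument then upgrades tightness of the three pieces to tightness of $\hat{S}^{(n)}_{\lfloor n\cdot\rfloor}$ in $D(\mathbb{R}_+)$, which combined with (i) yields the joint weak convergence \eqref{equation:joint convergence}.
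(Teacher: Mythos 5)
Your finite-dimensional part is essentially the paper's argument (Proposition \ref{thm:convergenciaConjuntaFDD}): conditioning on the reinforcement genealogy, factorising the characteristic function over increments, splitting innovation from non-innovation indices, and passing to the limit via a law of large numbers for the counting processes. Be aware that the per-index convergence $N_{\lfloor ns\rfloor,\lfloor nt\rfloor}\Rightarrow Z_{p(\log t-\log s)}$ is not by itself enough to handle the empirical average $\frac1n\sum_j\Psi(A_j^{(n)})$; you need the $L_1$ law of large numbers for functionals of the counting processes (Corollary 3.7 of \cite{BertoinNRLP}) together with the growth bound on $\Psi$ from \cite{BlumenthalGetoor_SampleFunctions} to verify its hypotheses. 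Since you defer to exactly this machinery, this part is fine.

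The tightness part takes a genuinely different route. The paper rescales the discrete Bercu-type martingale $M_n=a_n\hat S_n$ and applies Aldous's criterion through Rebolledo's theorem, i.e.\ it proves tightness of the predictable quadratic variations $\langle N^n,N^n\rangle$ using the explicit formula of Lemma \ref{LaVariacionCuad}, first on $[\epsilon,1]$, then near the origin by a Doob estimate, and finally removes the compact-support assumption by a truncation $K(n)\uparrow\infty$ and \cite[Lemma 3.31, Ch.~VI]{JacodShiryaev_LimitTheorems}. Your alternative — make the compensated small-jump piece uniformly negligible in supremum norm and treat the Gaussian and large-jump pieces separately — is viable and arguably avoids Rebolledo altogether. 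However, as written there is one step whose justification does not work: you cannot obtain $\lim_{\epsilon\downarrow0}\limsup_n\mathbb{E}[\sup_{s\le T}|\hat S^{(n,3,\epsilon)}_{\lfloor ns\rfloor}|]=0$ from the continuous-time statement \eqref{equation:convergenceSmallJumps} combined with one-dimensional marginal convergence; transferring a bound on the limit back to the prelimit supremum presupposes functional convergence, which is what you are trying to prove. The estimate must be derived directly at the discrete level, and it can be: Doob's $L_2$ inequality for $M_k=a_k\hat S^{(n,3,\epsilon)}_k$ together with $\mathbb{E}[\langle M,M\rangle_n]\le\sigma_{n,\epsilon}^2(1+\sum_{k\le n}a_k^2)$ from Lemma \ref{LaVariacionCuad}, $\sigma_{n,\epsilon}^2\sim n^{-1}\int_{|x|<\epsilon}x^2\Lambda(\dd x)$ and $\sum_{k\le n}a_k^2\sim n^{1-2p}/(1-2p)$, with the monotonicity $a_k\ge a_n$ used to undo the weighting, gives $\limsup_n\mathbb{E}[\sup_{k\le n}|\hat S^{(n,3,\epsilon)}_k|^2]\le 4(1-2p)^{-1}\int_{|x|<\epsilon}x^2\Lambda(\dd x)$, which is where the restriction $p<1/2$ enters. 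You should replace the appeal to \eqref{equation:convergenceSmallJumps} by this computation. Two smaller points: tightness in $D$ of the reinforced compound-Poisson skeleton $\hat S^{(n,2,\epsilon)}$ for fixed $\epsilon$ deserves a short argument (finitely many innovation times and their repetition times converge), since sums of tight sequences in the Skorokhod topology are not automatically tight — your triangle argument works only because the Gaussian piece is $C$-tight and the remainder is small in supremum norm; and the invariance principle of \cite{BertoinUniversality,InvariancePrinciplesNRRW} is stated for a fixed step law, whereas here the steps form a triangular array, so either quote the triangular-array version or fold the Gaussian piece into the martingale estimate as the paper does.
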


\noindent \textit{The section is organised as follows:}   In Section \ref{subsection:jump_LevyItoPart2}, after  introducing the (space) compensated integral with respect to NRPPPs, we shall establish Theorem   \ref{corollary:constructionNRLPs}. Making use of this result, in Section  \ref{section:jointlaw}  we define the joint law of a Lévy process and its reinforced version $(\xi, \hat{\xi})$. More precisely,  by Lévy-Itô Synthesis and its  reinforced  version of Theorem   \ref{corollary:constructionNRLPs}, it will suffice to define the joint law of $(\mu , \hat{ \mu})$ and $(B , \hat{B})$.  This is  respectively the content of the construction detailed in \ref{section:joint_NN} and Definition \ref{lemma:FourierParejaGaus}. The construction of $\hat{\mu}$ is  done explicitly in terms of  the jump measure of $\xi$ by a procedure that should be interpreted as the continuous-time reinforcement analogue of the reinforcement algorithm for random walks. We then  introduce the joint law $(\xi, \hat{\xi})$ in Definition \ref{definition:Levy_LevyR} and  prove Proposition \ref{theorem:leyConjunta}. Finally, Section \ref{subsection:PruebajointConvergence} is devoted to the proof of Theorem \ref{thm:convergenciaConjunta}. 

\subsection{ Proof of Theorem \ref{corollary:constructionNRLPs} } \label{subsection:jump_LevyItoPart2}
Let us start by  introducing  the (space)-compensated integral with respect to NRPPPs. Recall  the identity of  Lemma  \ref{proposition:intensidadNRPPP} for the intensity measure of NRPPPs and  for fixed $t \in \mathbb{R}$, let  $f:\mathbb{R}^+ \times \mathbb{R}\mapsto \mathbb{R}$ be a  measurable function satisfying, for all $0 < a < b$,  the integrability condition
\begin{equation*}
      \int_{(0,t] \times \{ a \leq | x | <b \} } |f(s,x)|\dd s \Lambda(\dd x) < \infty. 
\end{equation*}
 Next, we set 
\begin{equation} 
    \int_{[0,t]\times \{a \leq |x| < b\}} f(s,x) \hat{\mathcal{N}}^{(\text{sc})}( \dd s ,\dd x) := \int_{[0,t]\times \{a \leq |x| < b\}} f(s,x) \hat{\mathcal{N}}( \dd s ,\dd x) -  \int_{(0,t]\times \{a \leq |x| < b\}} f(s,x)\dd s\Lambda(\dd x).
\end{equation}
This is a centred  random variable and if we denote it by  $\Sigma_{a,b}^{(c)}(f,t)$,  from Proposition \ref{proposition:reinforcedPPPbyMarking} - (ii)  we deduce that $(\Sigma_{e^{-r},b}^{(c)}(f,t))_{r \in [- \log(b),\infty)}$ has independent increments, and hence is a martingale.  When the limit of this martingale exists, we will write 
\begin{equation} \label{equation:compensatedIntegral_}
     \int_{[0,t]\times (-b ,b )} f(s,x) \hat{\mathcal{N}}^{(\text{sc})}( \dd s ,\dd x) := \lim_{r \uparrow \infty}  \int_{[0,t]\times \{e^{-r}\leq |x|<b\}} f(s,x) \hat{\mathcal{N}}^{(\text{sc})}( \dd s ,\dd x). 
\end{equation}
 Recall that the characteristics of a NRLP are being considered with respect to the cutoff function $x\indic{\{ |x|< 1 \}}$ as well as the notation $f * \hat{\mathcal{N}}$ from \eqref{equation:integrationMeasuresJacod}. The following lemma shows that the sums of atoms of NRPPPs are precisely  purely discontinuous NRLPs:
\begin{lemma}\label{lemma:NRPPasIntegralsOfNRPPP} Fix a Lévy measure $\Lambda$, a parameter $p \in (0,1)$ such that $\beta(\Lambda)p<1$ and let $\hat{\mathcal{N}}$ be a  NRPPP with characteristic measure $\Lambda$ and reinforcement parameter $p$.
\begin{enumerate}
    \item[\emph{(i)}] For any $0< a<b$, the process $ \indic{\{ a \leq |x| < b \}} x * \hat{\mathcal{N}}$ is a noise reinforced compound Poisson process with characteristics $( \Lambda(\indic{\{a \leq |x| < 1 \}} x), 0, \indic{\{a \leq  |x| < b \}} \Lambda, p)$.
    \item[\emph{(ii)}] For each  $t \in \mathbb{R}^+$ the compensated integral 
\begin{equation} \label{equation:convergenceCompesantedIntegralNRLP}
    \int_{[0,t]\times (-1,1)} x \hat{\mathcal{N}}^{(\text{sc})}( \dd s ,\dd x) 
    :=   
    \lim_{r \uparrow \infty}  \int_{[0,t]\times \{e^{-r}\leq |x|<1\}} x \hat{\mathcal{N}}^{(\text{sc})}( \dd s ,\dd x)
\end{equation}
exists. The process $\indic{(-1,1)} x * \hat{\mathcal{N}}^{(sc)}$ is a NRLP with characteristics $(0,0,\indic{(-1,1)} \Lambda,p)$ and hence has a rcll modification.  Moreover, the convergence (\ref{equation:convergenceCompesantedIntegralNRLP}) holds towards its rcll modification  uniformly in compact intervals for some subsequence $(r_n)$, and we shall consider it and denote it in the same way without further comments. 
\end{enumerate}
\end{lemma}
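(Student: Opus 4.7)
The idea is to identify the law of $\indic{\{a\leq |x|<b\}}x*\hat{\mathcal{N}}$ by computing its joint characteristic function at finitely many times. Since $\indic{\{a\leq |x|<b\}}\Lambda$ is a finite measure, the integrability condition required for the exponential formula \eqref{formula:exponencialFourier} is immediate. Plugging $f(s,x):=\sum_j\lambda_j\indic{\{s\leq t_j\}}\indic{\{a\leq |x|<b\}}x$ into \eqref{formula:exponencialFourier} and using $\int_0^1 f(st,x)\,\dd Y(s)=x\sum_j\lambda_j Y(t_j/t)$ produces an expression of the form \eqref{equation:fddsNRLPs} with characteristic exponent
\[
\Psi(\lambda)=\int_{\{a\leq |y|<b\}}(e^{i\lambda y}-1)\Lambda(\dd y),
\]
which is precisely the exponent of a compound Poisson process with Lévy measure $\indic{\{a\leq |y|<b\}}\Lambda$. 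Rewriting $\Psi$ in canonical Lévy-Khintchine form \eqref{formula:charExponentLevy} with cutoff $y\indic{\{|y|<1\}}$ identifies the drift as $\Lambda(\indic{\{a\leq |y|<1\}}y)$ (at least when $b\geq 1$, and in particular when $b=1$ --- the case that will be used in (ii)), yielding the claimed triplet.

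\textbf{Part (ii), uniform bound.} Writing $Z^\epsilon_t:=\int_{[0,t]\times\{\epsilon\leq |x|<1\}}x\,\hat{\mathcal{N}}^{(sc)}(\dd s,\dd x)$, by (i) applied with $b=1$ and subtraction of the drift $t\Lambda(\indic{\{\epsilon\leq|x|<1\}}x)$, the process $Z^\epsilon$ has the finite-dimensional distributions of a NRLP with characteristics $(0,0,\indic{\{\epsilon\leq|x|<1\}}\Lambda,p)$. For $0<\epsilon'<\epsilon<1$, Lemma \ref{proposition:propiedadesNRPPP}(ii) makes the restrictions of $\hat{\mathcal{N}}$ to $\{\epsilon'\leq |x|<\epsilon\}$ and $\{\epsilon\leq |x|<1\}$ independent, so $Z^{\epsilon'}-Z^\epsilon$ is a centered rcll (finite activity) process distributed as a NRLP with characteristics $(0,0,\indic{\{\epsilon'\leq |x|<\epsilon\}}\Lambda,p)$. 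Its characteristic exponent satisfies $\Psi'(0)=0$ and has compactly supported Lévy measure, so Proposition \ref{proposition:laMartingala} applies: $M^{\epsilon,\epsilon'}_t:=t^{-p}(Z^{\epsilon'}_t-Z^\epsilon_t)$ is a rcll $L_r(\mathbb{P})$-martingale for $r$ as in \eqref{condition:choiceLq.}. Since $\sup_{s\leq t}|Z^{\epsilon'}_s-Z^\epsilon_s|\leq t^p\sup_{s\leq t}|M^{\epsilon,\epsilon'}_s|$, Doob's maximal inequality yields
\[
\esp{\sup_{s\leq t}|Z^{\epsilon'}_s-Z^\epsilon_s|^r}{}\leq C_r\,\esp{|Z^{\epsilon'}_t-Z^\epsilon_t|^r}{}.
\]
After rescaling via \eqref{identity:extension} if $t>1$, the variable $Z^{\epsilon'}_t-Z^\epsilon_t$ has the same distribution as an increment $\hat{\xi}^{(3)}_{\epsilon',1}(u)-\hat{\xi}^{(3)}_{\epsilon,1}(u)$ of the compensated Yule-Simon series \eqref{prelim:explicitConstructionCompensatedSums} for a suitable $u\in[0,1]$, and the $L_r$-version of the convergence \eqref{equation:explicitCompensatedYSseries} implies that the right-hand side vanishes as $\epsilon,\epsilon'\downarrow 0$.

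\textbf{Identification and main obstacle.} Consequently, $(Z^\epsilon)$ is $L_r$-Cauchy uniformly on $[0,t]$; extracting a subsequence $(\epsilon_n)$ along which $Z^{\epsilon_n}$ converges almost surely uniformly on $[0,t]$ (and diagonalising over compacts) yields a rcll limit process $\hat{J}$. Passing to the limit in the exponential formula \eqref{formula:exponencialFourier} applied to $\indic{\{\epsilon_n\leq|x|<1\}}\sum_j\lambda_j\indic{\{s\leq t_j\}}x$ identifies the finite-dimensional distributions of $\hat{J}$ with those of a NRLP of characteristics $(0,0,\indic{(-1,1)}\Lambda,p)$; the rcll modification of that NRLP coincides with $\hat{J}$ as both are rcll with the same finite-dimensional distributions. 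The main technical obstacle is the uniform-in-time $L_r$-Cauchy bound above, which requires three ingredients to line up: the centering $\Psi'(0)=0$ of the difference (needed to invoke Proposition \ref{proposition:laMartingala}), the strict inequality $r>1$ imposed by \eqref{condition:choiceLq.} (needed for Doob's maximal inequality), and the strengthening of \eqref{equation:explicitCompensatedYSseries} to $L_r$-convergence recalled in Section \ref{subsection:preliminaries,examplesNRLPs}.
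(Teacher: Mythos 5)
Your proof is correct and follows essentially the same route as the paper: part (i) via the exponential formula \eqref{formula:exponencialFourier}, and part (ii) via the martingale of Proposition \ref{proposition:laMartingala}, Doob's maximal inequality, and the $L_r$-convergence of \eqref{equation:explicitCompensatedYSseries}. The only organizational difference is that you obtain the uniform convergence by running the Doob argument directly on the differences $Z^{\epsilon'}-Z^{\epsilon}$, which amounts to re-proving the estimate \eqref{equation:convergenceSmallJumps} of Theorem \ref{regularityNRLP} instead of citing it.
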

\begin{proof}
(i) If we consider  $\hat{\xi}$ a reinforced compound Poisson process with such characteristics and $\hat{\mu}$ is its jump measure, it is a pure jump process and we can write it as the sum of its jumps.  Our claim can now be  proved directly from the identity $\hat{\xi} = (x * \hat{\mu}) \overset{\scr{L}}{=}  (\indic{(a \leq |x| < b)} x * \hat{\mathcal{N}})$, since by Proposition  \ref{proposition:propiedadesNRPPP} - (i), the restriction $\indic{(a \leq |x| < b)} \hat{\mathcal{N}}$ has the same distribution as $\hat{\mu}$. Alternatively, this can be established by means  of the exponential formulas we obtained in Proposition \ref{proposition:exponentialFormulasJumpProcess},  by  fixing $0<t_1< \dots < t_k<t$ and computing the characteristic function of the finite-dimensional distributions at times $t_1, \dots ,  t_k$ of $\indic{\{a \leq |x|<b\}} x * \hat{\mathcal{N}}$, noticing  that for   $f(s,x) := \left( \sum_{j=1}^k \lambda_j \indic{\{ s \leq t_j\}}\right) x \indic{\{a \leq |x| < b \}}$ we have 
\begin{equation*}
    \sum_{j=1}^k \lambda_j ( \indic{\{ a \leq  |x| < b\}} x * \hat{\mathcal{N}})_{t_j} = \int_{[0,t] \times\mathbb{R}} f(s,x) \hat{\mathcal{N}}( \dd s ,\dd x). 
\end{equation*}
The claim follows by  comparing with the identity for the characteristic function of the finite-dimensional distributions (\ref{identity:extension})  of $\hat{\xi}$ . \\
(ii) Recall the notation introduced before (\ref{equation:compensatedIntegral_}) for the martingale $(\Sigma_{e^{-r} ,1}^{(c)}(f,t))_{r \geq 0}$. In our case, we have $f(s,x) = x$ and we just write $(\Sigma_{e^{-r} ,1}^{(c)}(t))_{r \geq 0}$. The fact that the martingale $(\Sigma_{e^{-r} ,1}^{(c)}(t))_{r \geq 0}$ converges as $r \uparrow \infty$ and that the limit is a NRLP with characteristics $(0,0,\indic{(-1,1)} \Lambda )$ can be achieved  by similar arguments as in \cite{BertoinNRLP} after a couple of observations. Starting with the former, recall the definition of $\hat{\mathcal{N}}$ from (\ref{definition:decoratedNRPP}), and remark that for each $r>0$ we have 
\begin{equation*}
    \int_{[0,t] \times \mathbb{R} } \indic{\{ e^{-r}\leq |x| < 1\}} x \hat{\mathcal{N}}( \dd s ,\dd x) 
    = \sum_{u \in \scr{P}} \indic{\{ u \leq t \}} \indic{\{ e^{-r}\leq |x_u| < 1\}} x_u \cdot \# \big\{ \{ ue^{s} : s \in D_u \} \cap [0,t] \big\}. 
\end{equation*}
From the discussion right after Proposition \ref{proposition:reinforcedPPPbyMarking}, we infer that if  we we consider  $(Z^u)_{u \in \mathscr{P}}$ an independent collection of independent, standard  Yule processes,  the family  $\{ ue^{s} : s \in D_u \}$ has the same distribution as the collection of jump times of the counting process $\indic{\{ u \leq t \}} Z^{u}_{p(\ln(t) - \ln(u))}$, $t \geq 0$. Hence the previous display  can also be written as 
\begin{equation*}
    \sum_{u \in \scr{P}}\indic{\{ e^{-r}\leq |x_u| < 1\}} x_u \indic{\{ u \leq t \}} Z^u_{p(\ln(t) - \ln(u))},  
\end{equation*}
and now the proof of the convergence as $r \uparrow \infty$ of  $(\Sigma_{e^{-r} ,1}^{(c)}(t))_{r \geq 0}$  follows by the same arguments as in  \cite[Lemma 2.6]{BertoinNRLP}. Alternatively, one can make use of   \eqref{identity:extension} to restrict our arguments to the interval $[0,1]$ and apply \cite[Lemma 2.6]{BertoinNRLP}.  Next,  to see that the process $\indic{(-1,1)}x * \hat{\mathcal{N}}^{(sc)}$ defines a NRLP with characteristics $(0,0,\indic{(-1,1)}\Lambda)$, fix $0< t_1< \dots < t_k < t$ and for $\epsilon>0$,   $\lambda \in \mathbb{R}$ set 
\begin{equation*}
    \Phi_{\epsilon,1}^{(3)}(\lambda) = \int_{\{\epsilon \leq |x| < 1\}} \left( e^{i\lambda x} -1- i\lambda x  \right)\Lambda(\dd x).
\end{equation*}
Recalling the formula (\ref{formula:exponencialFourier}) for the characteristic function of integrals with respect to NRPPPs, we deduce from considering the function $f(s,x) := \left( \sum_{j=1}^k \lambda_j \indic{\{ s \leq t_j\}}\right) x \indic{\{\epsilon \leq |x| < 1 \}}$ that we have 
\begin{equation*}
     \esp{ \exp \bigg\{ i \sum_{j=1}^k  \lambda_j ( \indic{\{ \epsilon \leq  |x| < 1\}} x * \hat{\mathcal{N}}^{(sc)})_{t_j}) \bigg\} }{} = \exp \bigg\{ t (1-p) \esp{ \Phi_{\epsilon,1}^{(c)}\left( \sum_{j=1}^k \lambda_j Y(t_j/t) \right) }{} \bigg\}.
\end{equation*}
Now we can apply the exact same reasoning as in the proof of Corollary 2.8 in   \cite{BertoinNRLP} by writing $s_j = t_j/t \in [0,1]$ and taking the limit as $\epsilon \downarrow 0$.  The uniform convergence in compact intervals towards the rcll modification of $\indic{(-1,1)} x * \hat{\mathcal{N}}^{(sc)}$ follows from the second statement of Theorem \ref{regularityNRLP},  since for every $\epsilon \in (0,1)$, the process 
\begin{equation*}
    \int_{[0,t]\times \{0 \leq |x|< \epsilon\}} x \hat{\mathcal{N}}^{(\text{sc})}( \dd s ,\dd x), \quad t \geq 0, 
\end{equation*}
is a NRLP with characteristics $(0,0, \indic{\{|x| < \epsilon\}}\Lambda  )$. 
\end{proof}
\comentario{{\color{red} Para justiifcar unforme, usando el scalling basta en (0,1) y luego usar igualdad en ley con construcction en \cite{BertoinNRLP} ?}}
It immediately follows from the previous lemma that if  $\hat{\mathcal{N}}$ is a  NRPPP with characteristic measure $\Lambda$, parameter $p$ and, if $p < 1/2$, we consider    $\hat{W}$  an independent NRBM with same parameter,  then
\begin{equation} \label{equation:constructionOfNRLPS}
    \hat{X}_t = at + q\hat{W}_t + \int_{[0,t] \times (-1,1)^{c}} x \mathcal{\hat{N}}( \dd s,\dd x) + \int_{[0,t] \times (-1,1)} x \mathcal{\hat{N}}^{(sc)}( \dd s,\dd x), \quad \quad t \geq 0,
\end{equation}
defines a NRLP with characteristics $(a,q^2,\Lambda, p)$. To obtain the a.s. statement of Theorem \ref{corollary:constructionNRLPs} we still need a short argument. 
\begin{proof}[Proof of Theorem \ref{corollary:constructionNRLPs}.]
 The result will be deduced from the equality in distribution $\hat{\xi} \overset{\scr{L}}{=} \hat{X}$ for $\hat{X}$ defined as in (\ref{equation:constructionOfNRLPS}) with same characteristics as $\hat{\xi}$. In this direction, wlog we assume $p < 1/2$,  $q=1$  and we set
\begin{align*} 
    \hat{\xi}^{\leq 1} := \hat{\xi}_t - \sum_{s \leq t} \indic{\{ |\Delta \hat{\xi}_s| \geq 1 \}} \Delta \hat{\xi}_s 
    \hspace{5mm} \text{ and } \hspace{5mm}
    \hat{\xi}^{< \epsilon}_t := \hat{\xi}_t^{\leq 1} - \Big( \sum_{s \leq t} \indic{\{\epsilon \leq |\Delta \xi_s| < 1\}} \Delta \hat{\xi}_s - t \int_{\{ \epsilon \leq |x| < 1\}} x \Lambda(\dd x) \Big). 
\end{align*}
Notice that for every $\epsilon >0$, we can write 
\begin{equation} \label{equation:decompItoPrueba}
    \hat{\xi}_t = \hat{\xi}_t^{< \epsilon} +  \Big( \sum_{s \leq t} \indic{\{\epsilon \leq |\Delta \xi_s| < 1\}} \Delta \hat{\xi}_s - t \int_{\{ \epsilon \leq |x| < 1\}} x \Lambda(\dd x) \Big)  + \hat{\xi}^{(2)}_t.
\end{equation}
Since $\hat{\mu}$ is a reinforced PPP, by Lemma \ref{lemma:NRPPasIntegralsOfNRPPP} the process  (\ref{equation:decompItoPrueba})  converges uniformly in compact intervals for some subsequence $(\epsilon_n)$ as $\epsilon_n \downarrow 0$ towards $\hat{C} + \hat{\xi}^{(2)} + \hat{\xi}^{(3)}$, for some process $\hat{C}:= \hat{\xi} - \hat{\xi}^{(2)} - \hat{\xi}^{(3)}$ continuous by construction. Since $\hat{\mu}$ is a reinforced PPP, by the   independence  properties of its restriction we know that $\hat{\xi}^{(2)}, \hat{\xi}^{(3)}$ are independent. Hence,  it remains to show that $(\hat{\xi}^{(2)}, \hat{\xi}^{(3)})$ is independent of $\hat{C}$ and that $\hat{C} - at =: \hat{B}$ is a NRBM.
Fix arbitrary $0<u<v \leq \infty$ and   maintain the notation for $\hat{W}$, $\hat{\mathcal{N}}$ used in the  representation (\ref{equation:constructionOfNRLPS}). Since $\hat{\mathcal{N}}$ is the  clearly the jump measure of $\hat{X}$,  we have the equality in distribution:  
\begin{equation} \label{equation:representationsNRLPequality}
    \Big( \hat{\xi}, \sum_{s \leq \cdot} \indic{\{ u \leq  |\Delta \hat{\xi}_s| < v \}} \Delta \hat{\xi}_s  \Big) \overset{\scr{L}}{=} \left( \hat{X} , \indic{\{u \leq  |x| < v \}} x * \mathcal{\hat{N}} \right).
\end{equation}
Moreover, since $\hat{W}$ is independent of $\hat{\mathcal{N}}$, from the independence of restrictions of NRPPP and (\ref{equation:representationsNRLPequality}) we deduce that  $\indic{\{ \epsilon \leq |x| < 1\}}x * \hat{\mu}^{(sc)} + \indic{\{ 1 \leq |x| \}}x * \hat{\mu}$ and $\hat{\xi} - \indic{\{ \epsilon \leq |x| < 1\}}x * \hat{\mu}^{(sc)} - \indic{\{ 1 \leq |x| \}}x * \hat{\mu}$ are independent, 
the later having the same distribution as $at +  \hat{W}_t + \indic{(-\epsilon, \epsilon)}x * \hat{\mathcal{N}}$. Now the claim follows by  taking the limit as $\epsilon \downarrow 0$. 
\end{proof}

{
\subsection{The joint law $(\xi, \hat{\xi})$ of a Lévy process and {its} reinforced version}
\label{section:jointlaw}
}
In this section we  construct explicitly, for an arbitrary fixed Lévy process $\xi$, the process $\hat{\xi}$ in terms of $\xi$ that will be referred to as {the} noise reinforced version of $\xi$. This will yield a definition for the joint law $(\xi, \hat{\xi})$. Our construction will be justified by the weak convergence of Theorem \ref{thm:convergenciaConjunta}. Let us start by recalling the discrete setting, since our construction is essentially the continuous-time analogue of the dynamics that we now describe. \medskip \\
$\circ$ \textit{The noise reinforced random walk.} Given a collection of identically distributed random variables $(X_n)$ with law $X$, denote by  
 $S_n := X_1 + \dots + X_n, \text{ for }  n \geq 1$ the corresponding random walk. We  construct, simultaneously to $(S_n)$, a noise reinforced version  using the same sample of random variables and performing the reinforcement algorithm at each discrete time step. In this direction, consider $(\epsilon_n)$ and $(U[n])$ independent sequences of  Bernoulli random variables with parameter $p \in (0,1)$ and uniform random variables on $\{1,\dots ,n \}$ respectively. Set $\hat{X}_1 := X_1$ and, for $n \geq 1$, define 
 \begin{equation*}
     \hat{X}_{n+1} :=  X_{n+1}\indic{\{ \epsilon_{n+1} = 0 \}} + \hat{X}_{U[n]} \indic{\{ \epsilon_{n+1} = 1 \}}. 
 \end{equation*}
Finally, we denote the corresponding partial sums  by $\hat{S}_n := \hat{X}_1 + \dots + \hat{X}_n, \,  n \geq 1$. The process $(\hat{S}_n)$ is the so-called noise reinforced random walk with memory parameter $p$, and  we  refer to this particular construction of  $(\hat{S}_n)$ as the noise reinforced version of $(S_n)$. The process $(\hat{S}_n)$ can be written in terms of the individual contributions made by each one of the steps. In this direction, let us   introduce  a counting process keeping track of the number of times each step $X_k$ is repeated up to time $n$. Since if the law of $X$ has atoms, we have  $\mathbb{P}({X}_1 = X_2) > 0$, and we need to perform a slight modification to our algorithm.  Namely, for each $n \geq 1$ we write $X_n' := (X_n,n)$  and we perform the reinforcement algorithm to the pairs $(X'_n)$. This yields a sequence that, with a slight abuse of notation, we denote by  $(\hat{X}'_n)$. If for every $k, n \geq 1$ we set:
\begin{equation} \label{definition:countingRepsNRRW}
    N_k(n) := \# \{ 1 \leq i \leq n : X_k' = \hat{X}_i' \}, 
\end{equation}
we can write:  
\begin{equation} \label{identity:definition:countingRepsNRRW}
    \hat{S}_n = \sum_{k=1}^\infty N_k(n) X_k, \quad \text{ for } n \geq 1.
\end{equation}
For convenience, we always set $S_0 = 0 = \hat{S}_0$, and  when working with pairs of the form  $(S, \hat{S})$ it will always be implicitly assumed that the noise reinforced version has been constructed by the algorithm we described.  For instance, it is clear that at each discrete time step $n$,  with probability $1-p$, ${S}_n$  and $\hat{S}_n$ share the same increment, while with complementary probability $p$, they perform different steps.  
\medskip \\
\indent Roughly speaking, in the continuum,  the steps $(X_n)$ are replaced by jumps $\Delta \xi_s$ of the Lévy process $\xi$. With probability $1-p$, the jump is  shared with its reinforced version $\hat{\xi}$ while with complementary probability $p$ it is  discarded and remains independent of $\hat{\xi}$. The jumps that are not discarded by this procedure are then repeated at each jump time of an independent  counting process that will be attached to it. The process of discarding jumps with probability $p$ is traduced in a  thinning of the jump measure of $\xi$. Let us now give a formal description of this heuristic discussion. \vspace{0.25mm}  \\
\subsubsection{Construction of the pair  $(\mathcal{N} , \hat{\mathcal{N}} )$ }\label{section:joint_NN}
For the rest of the section, we fix a Lévy process $\xi$ with non-trivial Lévy measure $\Lambda$, denote the set of its jump times  by $\scr{I} := \{ u \in \mathbb{R}^+ : {\Delta }\xi_u \neq 0 \}$ and  let 
\begin{equation*}
    \mathcal{N}( \dd s ,\dd x) := \sum_{ u \in \scr{I}} \delta_{(u,\Delta \xi_u)}, 
\end{equation*}
be its jump measure. By the Lévy-Itô decomposition, this is a PPP with characteristic measure $\Lambda$ and   we can write  $\xi = \xi^{(1)} + J$, where $\xi^{(1)}$ is a continuous process while  $J$ is a  process that can be explicitly recovered from $\mathcal{N}$, as we recalled in (\ref{equation:LevyIto}). 

If $\hat{\xi}$ has the law of its reinforced version, by Theorem  \ref{corollary:constructionNRLPs} it can also be written as $\hat{\xi} = \hat{\xi}^{(1)} + \hat{J}$, where  $\hat{J}$  is a functional of a  NRPPP $\hat{\mathcal{N}}$ with characteristic measure $\Lambda$. Hence,   the main step for defining the law of the pair $(J,\hat{J})$ consists in appropriately defining $(\mathcal{N}, \hat{\mathcal{N}})$. However, recalling the construction of NRPPPs by superposition detailed before Definition \ref{proposition:reinforcedPPPbyMarking},  this can be achieved as follows: first, set  $A_0 := \{ 1 \leq |x| \}$ and for each $j \geq 1$, let $A_j := \{ 1/(j+1) \leq |x| < 1/j \}$. Next, for $j \geq 0$ consider the point process 
 \begin{equation*}
     \scr{I}_j := \{ u \in \mathbb{R}^+ : \Delta {\xi}_u \in A_j \}, 
 \end{equation*}
remark that  $\scr{I}_j$ is a PPP with intensity $\Lambda(A_j) \dd t$ 
and write  $\scr{I}:= \cup_j \scr{I}_j$. Maintaining the notation of Section \ref{section:NRPPP}, consider  $(D_u)_{u \in \scr{I}}$   a collection of i.i.d. copies of  $D$ and for each $j \geq 0$ we set 
\begin{equation*}
    \mathcal{N}_j(\dd s , \dd x) := \sum _{u \in \mathscr{I}_j} \sum_{t \in D_u} \delta_{(ue^{t}, \Delta \xi_u ) }.  
\end{equation*}
The measure $\mathcal{N}_j$ is a NRPPP with characteristic measure $(1-p)^{-1} \Lambda( \,  \cdot \cap A_j )$, 
and we can now   proceed as in Section \ref{subsection:jump_NRPPPdecorated} to construct  the following NRPPP with parameter $p$ by superposition of $(\mathcal{N}_j)_{j \geq 1}$, 
\begin{equation} 
     \sum_{u \in \scr{I} } \sum_{t \in D_u} \delta_{( ue^{t} , \Delta {\xi}_u)}.
\end{equation}
Notice however that its  characteristic measure is  $(1-p)^{-1} \Lambda$. In this direction, we consider a sequence of independent Bernoulli random variables $(\epsilon_u)_{u \in \scr{I}}$ with parameter $1-p$ and apply a thinning:  \begin{equation} \label{definition:thereinfocedversion_ofPPP}
    \hat{\mathcal{N}}( \dd s ,\dd x) :=   \sum_{u \in \scr{I} } \indic{\{ \epsilon_u = 1 \}}\sum_{t \in D_u} \delta_{( ue^{t} , \Delta {\xi}_u) }.
\end{equation}
Now, $\hat{\mathcal{N}}$ is a NRPPP with characteristic measure $\Lambda$ and reinforcement parameter $p$  built explicitly from the jump process of ${\xi}$. From the construction, if a jump $\Delta \xi_u$ occurs at time $u$, with probability $1-p$ it is kept  and repeated at each $ue^{t}$ for $t \in D_u$,  while with complementary probability $p$, it is discarded  and remains independent of $\hat{\mathcal{N}}$. From now on, we always consider the pair $(\mathcal{N}, \hat{\mathcal{N}})$ constructed by this procedure. Then, by definition of $\mathcal{N}$ we can write  
\begin{equation*}
    \hspace{-10mm} J_t = {\xi}^{(2)}_t + {\xi}^{(3)}_t = \int_{[0,t] \times \{ |x|\geq 1 \}} x \mathcal{N}( \dd s ,\dd x) +  \int_{[0,t] \times (-1,1)} x \mathcal{N}^{(sc)}( \dd s ,\dd x),  \quad \quad t \geq 0, 
\end{equation*}
while on the other hand, by Theorem  \ref{corollary:constructionNRLPs} the process defined as 
\begin{equation} \label{definition:THeresinforcedJumpLevy}
    \hat{J}_t = \hat{\xi}^{(2)}_t + \hat{\xi}^{(3)}_t := \int_{[0,t] \times \{ |x|\geq 1 \}} x \hat{\mathcal{N}}( \dd s ,\dd x) +  \int_{[0,t] \times (-1,1)} x \hat{\mathcal{N}}^{(sc)}( \dd s ,\dd x),  \quad \quad t \geq 0, 
\end{equation}
is a NRLP with characteristics $(0,0,\Lambda, p)$. 
From our construction, the random measures  $\mathcal{N}$, $\hat{\mathcal{N}}$ can be encoded in terms of a  single Poisson random measure $\sum_{u\in \scr{I}} \delta_{(u,\Delta \xi_u, D_u , \epsilon_u)}$, allowing us to  compute explicitly the characteristic function of the finite dimensional distributions of  $(\xi^{(2)},\hat{\xi}^{(2)})$ and $(\xi^{(3)},\hat{\xi}^{(3)})$. In this direction, for $\lambda \in \mathbb{R}$ recall the notation 
\begin{equation} \label{equation:charExpoSaltosGrandes}
    \Phi^{(2)}(\lambda) := \int_{\{|x| \geq 1\}} (e^{i \lambda x}-1) \Lambda(\dd x). 
\end{equation} 
\begin{lemma} \label{lemma:FourierPareja(1)}
For all $k \geq 1$, let $\lambda_1, \dots , \lambda_k$ and $\beta_1, \dots \beta_k$ be real numbers and fix times $0<t_1< \dots < t_k<t$. Then, we have 
\begin{align}
    &\esp{\exp\bigg\{ i \sum_{j=1}^k \lambda_j \big( \xi_{t_j}^{(2)} + \beta_j\hat{\xi}^{(2)}_{t_j}  \big) \bigg\}  }{} = 
     \nonumber \\
    &\exp \bigg\{ t p \esp{\Phi^{(2)}\left( \sum_{j=1}^k \lambda_j \indic{\{ Y(t_j/t) \geq 1 \}} \right) }{} + t (1-p) \esp{\Phi^{(2)}\left( \sum_{j=1}^k ( \lambda_j \indic{\{ Y(t_j/t) \geq 1 \}} + \beta_i Y(t_j/t)) \right) }{} \bigg\} \label{eq:leyConjunta(1)}, 
\end{align}
where we denote by $Y$ a Yule-Simon process with parameter $1/p$. 
\end{lemma}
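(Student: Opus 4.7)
The plan is to compute the characteristic function directly, by recognising that the joint construction of $(\xi,\hat{\xi})$ via thinning and decoration assembles $(\xi^{(2)},\hat{\xi}^{(2)})$ into a single functional of one Poisson random measure. More precisely, the atoms $(u,\Delta\xi_u)_{u\in\mathscr{I}}$ of $\mathcal{N}$ together with the independent marks $(D_u,\epsilon_u)$ form a Poisson random measure on $\mathbb{R}^+\times\mathbb{R}\times\mathcal{M}_c\times\{0,1\}$ with intensity $dt\otimes\Lambda(dx)\otimes\mathbb{D}(d\mu)\otimes[p\delta_0+(1-p)\delta_1](d\epsilon)$. Restricting to $\{|x|\geq 1\}$ yields a finite-intensity measure, and by (\ref{equation:LevyIto}) and (\ref{definition:THeresinforcedJumpLevy}),
\begin{equation*}
\xi^{(2)}_{t_j}=\sum_{u}\mathbb{1}_{\{|\Delta\xi_u|\geq 1\}}\Delta\xi_u\,\mathbb{1}_{\{u\leq t_j\}},\qquad \hat{\xi}^{(2)}_{t_j}=\sum_{u}\mathbb{1}_{\{|\Delta\xi_u|\geq 1\}}\epsilon_u\Delta\xi_u\cdot\#\{t\in D_u:ue^{t}\leq t_j\}.
\end{equation*}

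Next I would apply the exponential formula for Poisson random measures to obtain
\begin{equation*}
\log\mathbb{E}\!\left[\exp\!\left\{i\sum_j(\lambda_j\xi^{(2)}_{t_j}+\beta_j\hat{\xi}^{(2)}_{t_j})\right\}\right]=\int_0^\infty du\!\int_{\{|x|\geq 1\}}\!\Lambda(dx)\,\mathbb{E}_{D,\epsilon}\!\left[e^{iG_u(x,D,\epsilon)}-1\right],
\end{equation*}
where $G_u(x,D,\epsilon)=x\sum_j\bigl(\lambda_j\mathbb{1}_{\{u\leq t_j\}}+\beta_j\epsilon\cdot\#\{t\in D:ue^{t}\leq t_j\}\bigr)$, and then split the expectation over $\epsilon$ into its $\epsilon=0$ part (weight $p$) and $\epsilon=1$ part (weight $1-p$).

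For the $\epsilon=0$ contribution, $D$ disappears from the integrand. Changing variables $u=ts$ reduces the $u$-integral to $pt\,\mathbb{E}_U[\Phi^{(2)}(\sum_j\lambda_j\mathbb{1}_{\{U\leq t_j/t\}})]$ with $U$ uniform on $[0,1]$, and the Yule-Simon representation (\ref{formula:RepresentacionYuleSimon}) identifies $\{U\leq t_j/t\}$ with $\{Y(t_j/t)\geq 1\}$, producing exactly the first term of (\ref{eq:leyConjunta(1)}). For the $\epsilon=1$ contribution, I would invoke the observation stated at (\ref{definition:logtimeChangedYule}) that $\#\{t\in D:ue^{t}\leq \tau\}$ is distributed as $\mathbb{1}_{\{u\leq\tau\}}Z_{p(\ln\tau-\ln u)}$ for a standard Yule process $Z$ (the starting value $1$ accounts for $0\in D$); after the change of variables $u=ts$, setting $U=s$ and reading off $Y(t_j/t)=\mathbb{1}_{\{U\leq t_j/t\}}Z_{p(\ln(t_j/t)-\ln U)}$ from (\ref{formula:RepresentacionYuleSimon}) fuses the uniform birth time and the attached Yule process into a single Yule-Simon process, so that the integrand becomes $\Phi^{(2)}(\sum_j\lambda_j\mathbb{1}_{\{Y(t_j/t)\geq 1\}}+\beta_j Y(t_j/t))$, yielding the second term.

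The calculation is essentially Campbell's formula plus careful relabelling; the only delicate step is the last alignment between the uniform variable produced by the $u=ts$ rescaling and the uniform variable hiding inside the representation of $Y$, together with the "$+1$" coming from the distinguished atom $0\in D$. Once this identification is made the formula falls out, and the finite-intensity restriction to $\{|x|\geq 1\}$ guarantees that all the sums and integrals converge absolutely, so no further approximation or truncation is needed.
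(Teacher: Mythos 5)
Your proposal is correct and follows exactly the route the paper intends: the exponential formula applied to the single encoding Poisson random measure $\sum_{u\in\scr{I}}\delta_{(u,\Delta\xi_u,D_u,\epsilon_u)}$, splitting over the Bernoulli mark, and fusing the $\dd u$-rescaling with the time-changed Yule process into the Yule--Simon representation \eqref{formula:RepresentacionYuleSimon} — the paper merely asserts this computation as "long but straightforward" (after a scaling reduction to $[0,1]$ that your direct change of variables $u=ts$ replaces), whereas you have actually carried it out, including the correct accounting of the distinguished atom $0\in D_u$ and the finiteness of $\Lambda$ on $\{|x|\geq1\}$ that justifies the exponential formula.
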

Let us briefly comment on this expression. The first exponential term in (\ref{eq:leyConjunta(1)}) corresponds to the characteristic function of the  finite dimensional distributions of a Lévy process with law  $(\xi_{pt}^{(2)})_{t \in \mathbb{R}^+}$, viz. 
\begin{equation*}
    \esp{\exp\bigg\{ i \sum_{j=1}^k \lambda_j \xi_{p t_j}^{(2)} \bigg\}  }{} = \exp \bigg\{ t p \esp{\Phi^{(1)}\Big( \sum_{j=1}^k \lambda_j \indic{\{ U \leq  t_j/t \}} \Big) }{} \bigg\}, 
\end{equation*}
where $U$ is a uniform random variable in $[0,1]$ (recall that the first jump time of a Yule-Simon process is uniformly distributed in $[0,1]$). More precisely, this Lévy process is  built from the discarded jumps $\sum_{u} \indic{\{ \epsilon_u = 0 \}}\delta_{(u, \Delta \xi_u)}$ and consequently is independent of $\hat{\xi}^{(2)}$ and  $\sum_{u} \indic{\{ \epsilon_u = 1 \}}\delta_{(u, \Delta \xi_u)}$, which explains the form of the identity  \eqref{eq:leyConjunta(1)}. 
\begin{proof}
We can assume that $t_k < 1$ by working with  $t_1/t < \dots < t_k /t$  and with the pair $(\xi_{s t} , \hat{\xi}_{s t})_{s \in [0,1]}$, which now has Lévy measure $t \Lambda$. {Now, the proof follows by a rather long but straightforward application of the formula for the characteristic function of  integrals with respect to Poisson random measures.}  
\end{proof}

We now turn our attention to the characteristic function of the finite dimensional distributions of $(\xi^{(3)} , \hat{\xi}^{(3)} )$. In this direction, for $\lambda \in \mathbb{R}$, recall the notation 
\begin{equation} \label{equation:charExpoSaltosChicos}
    \Phi^{(3)}(\lambda) := \int_{\{|x| < 1 \}} (e^{i \lambda x} - 1 - i \lambda x ) \Lambda(\dd x).  
\end{equation}

\begin{lemma} \label{lemma:FourierPareja(c)}
For all $k \geq 1$, let $\lambda_1, \dots , \lambda_k$ and $\beta_1, \dots \beta_k$ be real numbers and fix times $0<t_1< \dots < t_k<t$. Then, we have 
\begin{align}
    &\esp{\exp\bigg\{ i \sum_{j=1}^k \big( \lambda_j \xi_{t_j}^{(3)} + \beta_j \hat{\xi}^{(3)}_{t_j} \big) \bigg\}  }{} 
    = \nonumber \\
    &\exp \bigg\{t p \esp{\Phi^{(3)}\left( \sum_{j=1}^k \lambda_j \indic{\{ Y(t_j/t) \geq 1 \}} \right) }{} + t (1-p) \esp{\Phi^{(3)}\left( \sum_{j=1}^k (\lambda_j \indic{\{ Y(t_j/t) \geq 1 \}} + \beta_i Y(t_j/t)) \right) }{} \bigg\}, \label{eq:leyConjunta(c)}
\end{align}
where we denote by $Y$ a Yule-Simon process with parameter $1/p$.
\end{lemma}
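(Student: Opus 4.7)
The plan is to reduce to the bounded-jumps compound Poisson setting by truncating the small jumps at a threshold $\epsilon>0$, apply Lemma \ref{lemma:FourierPareja(1)} to the truncated Lévy measure, incorporate the compensating drifts, and then pass to the limit $\epsilon \downarrow 0$.

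First, for $\epsilon \in (0,1)$, write $c_\epsilon := \int_{\{\epsilon \leq |x|<1\}} x\,\Lambda(\dd x)$, and consider the uncompensated truncated processes $\xi^{(2)}_{\epsilon,1}(t) := \indic{\{\epsilon \leq |x|<1\}} x * \mathcal{N}$ and $\hat{\xi}^{(2)}_{\epsilon,1}(t) := \indic{\{\epsilon \leq |x|<1\}} x * \hat{\mathcal{N}}$, so that
\begin{equation*}
\xi^{(3)}_{\epsilon,1}(t) = \xi^{(2)}_{\epsilon,1}(t) - t\,c_\epsilon, \qquad \hat{\xi}^{(3)}_{\epsilon,1}(t) = \hat{\xi}^{(2)}_{\epsilon,1}(t) - t\,c_\epsilon.
\end{equation*}
Since $(\mathcal{N},\hat{\mathcal{N}})$ was constructed in Section \ref{section:joint_NN} from a common Poisson random measure, applying the same proof of Lemma \ref{lemma:FourierPareja(1)} to the finite Lévy measure $\indic{\{\epsilon \leq |x|<1\}}\Lambda$ yields an analogue of \eqref{eq:leyConjunta(1)} for $(\xi^{(2)}_{\epsilon,1},\hat{\xi}^{(2)}_{\epsilon,1})$, with $\Phi^{(2)}$ replaced by $\Phi^{(2)}_\epsilon(\lambda) := \int_{\{\epsilon \leq |x|<1\}}(e^{i\lambda x}-1)\Lambda(\dd x)$.

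Second, multiplying this characteristic function by the deterministic factor $\exp\{-i c_\epsilon \sum_{j=1}^k (\lambda_j + \beta_j) t_j\}$ gives the joint characteristic function of $(\xi^{(3)}_{\epsilon,1}, \hat{\xi}^{(3)}_{\epsilon,1})$. The key algebraic step is to absorb these linear terms into the two expectations appearing in the exponent. Using the identity $\Phi^{(3)}_\epsilon(\lambda) = \Phi^{(2)}_\epsilon(\lambda) - i\lambda c_\epsilon$, together with the moments $\mathbb{P}(Y(t_j/t) \geq 1) = t_j/t$ and $\mathbb{E}[Y(t_j/t)] = (t_j/t)/(1-p)$, a direct computation shows that
\begin{equation*}
tp\, \mathbb{E}\bigl[\Phi^{(3)}_\epsilon(Z_1)\bigr] + t(1-p)\,\mathbb{E}\bigl[\Phi^{(3)}_\epsilon(Z_2)\bigr]
= tp\, \mathbb{E}\bigl[\Phi^{(2)}_\epsilon(Z_1)\bigr] + t(1-p)\,\mathbb{E}\bigl[\Phi^{(2)}_\epsilon(Z_2)\bigr] - i c_\epsilon \sum_{j=1}^k (\lambda_j + \beta_j)t_j,
\end{equation*}
where $Z_1 = \sum_j \lambda_j \indic{\{Y(t_j/t)\geq 1\}}$ and $Z_2 = \sum_j(\lambda_j \indic{\{Y(t_j/t)\geq 1\}} + \beta_j Y(t_j/t))$. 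Substituting back yields \eqref{eq:leyConjunta(c)} with $\Phi^{(3)}$ replaced by $\Phi^{(3)}_\epsilon$.

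Finally, I pass to the limit $\epsilon \downarrow 0$. On the left-hand side, $\xi^{(3)}_{\epsilon,1}(t_j) \to \xi^{(3)}_{t_j}$ in $L_2$ by the classical Lévy-Itô construction, and $\hat{\xi}^{(3)}_{\epsilon,1}(t_j) \to \hat{\xi}^{(3)}_{t_j}$ in $L_r$ as discussed after \eqref{equation:explicitCompensatedYSseries} (with $r$ chosen as in \eqref{condition:choiceLq.}); hence convergence in probability of the pair, and convergence of the joint characteristic functions. For the right-hand side, $\Phi^{(3)}_\epsilon(\lambda) \to \Phi^{(3)}(\lambda)$ pointwise, and the standard bound $|\Phi^{(3)}(\lambda)| \leq C(1 + |\lambda|^{\beta(\Lambda)})$ combined with the admissibility condition $p\beta(\Lambda) < 1$, which guarantees finiteness of $\mathbb{E}[Y(t_j/t)^{\beta(\Lambda)}]$, provides a uniform integrable dominant for $\Phi^{(3)}_\epsilon(Z_i)$; dominated convergence then gives the desired identity. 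The main obstacle is this last moment estimate when $p \geq 1/2$, but it is essentially the same tail analysis of the Yule-Simon distribution used in \cite{BertoinNRLP} to justify the $L_r$-convergence in \eqref{equation:explicitCompensatedYSseries}.
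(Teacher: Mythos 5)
Your proof is correct and follows essentially the same route as the paper's: truncate at $\epsilon$, compute the joint characteristic function of the truncated pair via the argument of Lemma \ref{lemma:FourierPareja(1)} (your explicit absorption of the compensator $c_\epsilon$ using $\mathbb{P}(Y(s)\geq 1)=s$ and $\mathbb{E}[Y(s)]=s/(1-p)$ is exactly the bookkeeping the paper leaves implicit), then pass to the limit by dominated convergence on the right and convergence in probability on the left. The only slip is in the dominating bound: $|\Phi^{(3)}(\lambda)|\leq C(1+|\lambda|^{\beta(\Lambda)})$ is not valid in general, since $\int_{\{|x|<1\}}|x|^{\beta(\Lambda)}\Lambda(\dd x)$ may be infinite at the index itself; you should instead use $|\Phi^{(3)}_\epsilon(\lambda)|\leq C|\lambda|^{r}\int_{\{|x|<1\}}|x|^{r}\Lambda(\dd x)$ for some $r\in(\beta(\Lambda)\vee 1,\, 1/p\wedge 2)$ (or $r=2$ when $\beta(\Lambda)=2$), which is uniform in $\epsilon$ and for which $\mathbb{E}[Y(t_j/t)^{r}]<\infty$ still holds by admissibility.
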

\begin{proof}
By the usual scaling argument we can suppose that $t_k < 1 =t$. Now, the proof is  similar to the one of Corollary 2.8 in \cite{BertoinNRLP}. In this direction,  notice that  the processes ${\xi}^{(3)} = \indic{(-1,1)} x  * {\mathcal{N}}^{(sc)}$ and $\hat{\xi}^{(3)} = \indic{(-1,1)} x  * \hat{\mathcal{N}}^{(sc)}$ are respectively  the limit as $\epsilon \downarrow 0$   of 
\begin{align} \label{definition:compensatedYSseries_1}
   \xi^{(3)}_{\epsilon,1} := \indic{\{ \epsilon \leq |x| < 1\}} x  * {\mathcal{N}} - \indic{\{ \epsilon \leq |x| < 1\}} x  *\dd t\otimes \Lambda,  
\end{align}
\begin{equation}\label{definition:compensatedYSseries_2}
     \hat{\xi}^{(3)}_{\epsilon ,1} := \indic{\{\epsilon \leq |x| < 1\}} x  * \hat{\mathcal{N}} - \indic{\{\epsilon \leq |x| < 1\}} x  *\dd t\otimes \Lambda, 
\end{equation}
     the convergence holding uniformly in compact intervals. The characteristic function of the finite-dimensional distributions of the pair $(\indic{\{\epsilon \leq |x| < 1\}} x  * {\mathcal{N}} , \indic{\{ \epsilon  \leq |x| < 1\}} x  * \hat{\mathcal{N}})$  can be computed by the same arguments as in Lemma \ref{lemma:FourierPareja(1)} and we obtain for each $0 < \epsilon < 1$ that 
\begin{align}
    &\esp{\exp\bigg\{ i \sum_{j=1}^k ( \lambda_j \xi_{\epsilon,1}^{(3)}(t_j) + \beta_j \hat{\xi}^{(3)}_{\epsilon,1}{(t_j)}) \bigg\}  }{} 
      \nonumber \\
    &= \exp \bigg\{ p \esp{\Phi^{(3)}_\epsilon\left( \sum_{j=1}^k \lambda_j \indic{\{ Y(t_j) \geq 1 \}} \right) }{} + (1-p) \esp{\Phi^{(3)}_\epsilon\left( \sum_{j=1}^k \big( \lambda_j \indic{\{ Y(t_j) \geq 1 \}} + \beta_i Y(t_j) \big) \right) }{} \bigg\}. \label{eq:EncaminoaleyConjunta(1)}
\end{align}
In order to establish that this expression converges as $\epsilon \downarrow 0$ towards  (\ref{eq:leyConjunta(c)}), we recall that since $|e^{ix} -1-ix|$ is $O(|x^2|)$ as $|x| \downarrow 0$ and $O(|x|)$ as $|x| \uparrow \infty$, for any $r \in (\beta(\Lambda) \vee 1  , 1/p \wedge 2)$ if $\beta(\Lambda) < 2$ and  $r=2$ if $\beta(\Lambda) = 2$, we have 
\begin{equation*}
    C := \sup_{x \in \mathbb{R}} |x|^{-r} |e^{ix}-1-ix| < \infty.
\end{equation*}
It follows that for all $0 < \epsilon < 1$, $\lambda \in \mathbb{R}$, we can bound 
\begin{equation*}
    |\Phi_\epsilon^{(3)}(\lambda)| 
    \leq \int_{\{ |x|<1 \}} |e^{i\lambda x} -1- i\lambda x|\Lambda(\dd x) \leq C  |\lambda|^r \int_{\{ |x|<1 \}} |x|^r \Lambda(\dd x).
\end{equation*}
Moreover,  by the remark following Lemma \ref{lemma:yuleSimonProcess},  the random variable  $Y(t) \in L_r(\mathbb{P})$ for any $r < 1/p$ and it follows that  the term  
$$\sum_{j=1}^k \big(\lambda_j \indic{\{ Y(t_j) \geq 1 \}} + \beta_i Y(t_j) \big), $$
is in $L_r(\mathbb{P})$. Hence,  by dominated convergence,   (\ref{eq:EncaminoaleyConjunta(1)}) converges towards (\ref{eq:leyConjunta(c)})  as $\epsilon \downarrow 0$. On the other hand, since $( \xi_{\epsilon,1}^{(3)}(t_j),  \hat{\xi}_{\epsilon ,1}^{(3)}({t_j})) \rightarrow ( \xi_{t_j}^{(3)} ,  \hat{\xi}^{(3)}_{t_j} )$ as $\epsilon  \downarrow 0$, we obtain the desired result. 
\end{proof}
\subsubsection{The distribution of $(B, \hat{B})$ and proof of Proposition \ref{theorem:leyConjunta} }\label{section:joint_LevyLevyR}
The last ingredient needed to define the joint distribution of $(\xi , \hat{\xi})$  is the joint distribution of a Brownian motion $B$ and its reinforced version $\hat{B}$, that we denote as $(B, \hat{B})$. Recall from \cite{BertoinUniversality} that $\hat{B}$ has the same law as the solution to the SDE
\begin{equation} \label{eq:SDEReforzada}
    \dd X_t = \dd B_t + \frac{p}{t} X_t \dd t, 
\end{equation}
and that $X$ can be written explicitly in terms of the stochastic integral (\ref{formula:representationNRBM}) with respect to the driving Brownian motion $B$.
We also recall from (\ref{formula:covarianceNRBM}) that for $0<s,t <T$ the covariance of $\hat{B}$ can be expressed in terms of the Yule Simon process as follows:  
\begin{equation} \label{formula:covarRefrozado}
    \esp{\hat{B}_{t} \hat{B}_s }{} = \frac{(t \vee s)^p (t \wedge s)^{1-p}}{1-2p} = T (1-p) \esp{Y(t/T) Y(s/T)}{},
\end{equation}
and for later use, we observe that 
\begin{equation}\label{formula:covarRefrozadoMB}
    {(t \wedge s)^{1-p}s^p} = T (1-p) \esp{\indic{\{ Y(t/T) \geq 1 \}} Y(s/T) }{}. 
\end{equation}
We stress that the right-hand side in the previous display do not depend on the choice of $T$. The proof  of this identity is a consequence of the representation  (\ref{formula:RepresentacionYuleSimon}) of $Y$ in terms of a standard Yule process and an independent uniform random variable. 
\begin{definition} \label{lemma:FourierParejaGaus}
 Let $(B,\hat{B})$ be a pair of Gaussian processes and fix a parameter $0<p< 1/2$. We  say that the pair $(B,\hat{B})$ has the law of a Brownian motion with its reinforced version if the respective covariances are given by 
\begin{equation} \label{definition:covariances}
   \esp{B_tB_s}{} = (t \wedge s), \quad \quad  \quad  \esp{{B}_t \hat{B}_{s}   }{} = (t \wedge s)^{1-p}s^p, \quad \quad  \quad   \esp{\hat{B}_{t} \hat{B}_s }{} = \frac{(t \vee s)^p (t \wedge s)^{1-p}}{1-2p}, 
\end{equation}
for any $s, t \in \mathbb{R}^+$. 
\end{definition}
Let us briefly explain where this definition comes from: for fixed $p$, by   \cite[Theorem 1.1]{InvariancePrinciplesNRRW} the law of the pair $(B , \hat{B})$ is universal, in the sense that it is the weak joint scaling limit of random walks paired with its reinforced version with parameter $p$ for $p<1/2$, when the typical step is in $L_2(\mathbb{P})$.  For more details, we refer to \cite{BertoinUniversality,InvariancePrinciplesNRRW}. \par 
Given a fixed Brownian motion $B$, it is clear that we can not expect to have an explicit  construction of the reinforced version  $\hat{B}$ in terms of $B$  similar to the one performed for  $(J,\hat{J})$.  However,  we can make use of the SDE (\ref{eq:SDEReforzada}) to get an explicit construct of  $(B,\hat{B})$  with the right covariance structure. This can be easily achieved as follows: first, let $W$ be an independent copy of $B$; if we set
\begin{equation} \label{construccionExplicitaNRBM}
    \beta_t := (1-p) B_t + \sqrt{1-(1-p)^2} W_t, 
\end{equation}
then, $B$ and $\beta$ are two Brownian motions with $ \esp{B_t \beta_s}{} = (1-p) (t \wedge s)$. If we let $\hat{B}$ be the solution to the SDE,  
\begin{equation} \label{formula:laSdeNRBM}
    \dd \hat{B}_t = \dd \beta_t + \frac{p}{t}\hat{B}_t \dd t,
\end{equation}
$\hat{B}$ has the law of a noise reinforced Brownian motion with reinforcement parameter $p$, and  can be written explicitly as 
$\hat{B}_t = t^{p} \int_0^t s^{-p} \dd \beta_s$. Moreover, it readily follows  that the covariance of the pair of Gaussian processes $(B, \hat{B})$ satisfies (\ref{definition:covariances}). The decorrelation applied for constructing $\beta$ is playing the role of the thinning in the construction of $(J, \hat{J})$. \par 
Finally, we will need for the proof of Proposition \ref{theorem:leyConjunta} the following representation of the characteristic function of the finite-dimensional distributions of the pair $(B,\hat{B})$ in terms of the Yule-Simon process:  
\begin{lemma} \label{lemma:charFunMB_NRMB}
Let $(B, \hat{B})$ be a Brownian  motion with its reinforced version for a memory parameter $p < 1/2$. For all $k \geq 1$,  $\lambda_1, \dots , \lambda_k, \beta_1, \dots \beta_k$  real numbers and $0<t_1< \dots < t_k<t$, we have 
\begin{align}\label{eq:leyConjuntaMB}
    &\esp{\exp\bigg\{ i \sum_{j=1}^k \big( \lambda_j  B_{t_j} + \beta_j  \hat{B}_{t_j} \big)\bigg\}  }{} 
    = \nonumber \\
    &\exp \bigg\{- { t \cdot p}  \esp{ \frac{q^2}{2} \Bigg( \sum_{j=1}^k \lambda_j \indic{\{ Y(t_j/t) \geq 1 \}} \Bigg)^2 }{} - t \cdot  {(1-p)} \esp{ \frac{q^2}{2} \Bigg( \sum_{j=1}^k \big( \lambda_j \indic{\{ Y(t_j/t) \geq 1 \}} + \beta_j Y(t_j/t) \big) \Bigg)^2 }{} \bigg\}. 
\end{align}  
\end{lemma}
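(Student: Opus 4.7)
Since by Definition \ref{lemma:FourierParejaGaus} the pair $(B, \hat{B})$ is a centred Gaussian process, the linear combination $L := \sum_{j=1}^k (\lambda_j B_{t_j} + \beta_j \hat{B}_{t_j})$ is a centred Gaussian random variable. Its characteristic function is therefore $\exp(-\tfrac{1}{2}\text{Var}(L))$, and the task reduces to verifying that the exponent on the right-hand side of \eqref{eq:leyConjuntaMB} coincides with $-\tfrac{1}{2}\text{Var}(L)$. (The factor $q^2$ in the statement corresponds to rescaling $B$ by $q$; the argument below is unaffected by this, since it only multiplies all covariances by the common factor $q^2$.)

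First I would compute $\text{Var}(L)$ directly from the covariance identities \eqref{definition:covariances}: this produces three types of contributions, indexed by $\lambda_i\lambda_j$, $\lambda_i\beta_j$, and $\beta_i\beta_j$, whose coefficients are respectively $q^2(t_i \wedge t_j)$, $q^2(t_i \wedge t_j)^{1-p} t_j^p$, and $q^2(t_i \vee t_j)^p (t_i \wedge t_j)^{1-p}/(1-2p)$.

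Next I would expand the two squares on the right-hand side of \eqref{eq:leyConjuntaMB}, relying on three observations about the Yule-Simon process. First, since $\indic{\{Y(u)\geq 1\}}$ takes values in $\{0,1\}$, we have $\indic{\{Y(u)\geq 1\}}^2 = \indic{\{Y(u)\geq 1\}}$ with expectation $u$, and by monotonicity $\indic{\{Y(t_i/t)\geq 1\}}\indic{\{Y(t_j/t)\geq 1\}} = \indic{\{Y((t_i\wedge t_j)/t)\geq 1\}}$ with expectation $(t_i\wedge t_j)/t$. Second, the identity \eqref{formula:covarRefrozadoMB} gives $t(1-p)\,\esp{\indic{\{Y(t_i/t)\geq 1\}} Y(t_j/t)}{} = (t_i\wedge t_j)^{1-p} t_j^p$. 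Third, \eqref{formula:covarRefrozado} gives $t(1-p)\,\esp{Y(t_i/t)Y(t_j/t)}{} = (t_i\vee t_j)^p (t_i\wedge t_j)^{1-p}/(1-2p)$. Plugging these into the expansion, the $\lambda_i\lambda_j$ term comes with coefficient $-(q^2/2)\cdot t\cdot [\,p\cdot (t_i\wedge t_j)/t + (1-p)\cdot (t_i\wedge t_j)/t\,] = -(q^2/2)(t_i\wedge t_j)$, while the cross and $\beta_i\beta_j$ terms match directly via \eqref{formula:covarRefrozadoMB} and \eqref{formula:covarRefrozado}. Summing these three contributions reproduces exactly $-\tfrac{1}{2}\text{Var}(L)$, completing the proof.

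The main obstacle is purely bookkeeping: carefully tracking how each product of Yule-Simon indicators and occupation values on the right-hand side translates into the covariance structure of $(B,\hat{B})$. The two mild subtleties worth flagging are, first, the collapse of the $p$ and $(1-p)$ factors into $1$ in the pure-$\lambda$ term, which correctly reflects the fact that the marginal law of $B$ is unaffected by reinforcement, and second, the $t$-independence of the resulting expression, which is already guaranteed by the invariance underlying \eqref{formula:covarRefrozado}--\eqref{formula:covarRefrozadoMB} (and ultimately by the scaling computation \eqref{equation:nodepenT}).
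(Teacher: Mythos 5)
Your proposal is correct and follows essentially the same route as the paper: both write the characteristic function of the centred Gaussian vector as $\exp(-\tfrac{1}{2}\mathrm{Var}(L))$ and then match the three covariance types against the Yule--Simon identities \eqref{formula:covarRefrozado}, \eqref{formula:covarRefrozadoMB} and the uniform law of the first jump time (the paper merely reduces to $t=1$ by scaling first, which is cosmetic). No gaps.
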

\begin{proof}
Since  NRBM satisfies the same scaling property of Brownian Motion (see page 3 of \cite{BertoinUniversality}), from (\ref{definition:covariances}) we deduce $(B_{tc}, \hat{B}_{tc} )_{t\in \mathbb{R}^+} \overset{\scr{L}}{=}(c^{1/2} B_{t}, c^{1/2} \hat{B}_{t} )_{t\in \mathbb{R}^+}$. Hence, as usual we can suppose that $t_k < 1$ and we take  $t:=1$. To simplify notation we also suppose that $q = 1$. Now, the left hand side of \eqref{eq:leyConjuntaMB} writes 
\begin{align*} 
& \esp{\exp\bigg\{ i \sum_{j=1}^k \big( \lambda_j  B_{t_j} + \beta_j  \hat{B}_{t_j} \big) \bigg\}  }{} \nonumber \\
&= \exp\bigg\{ -\frac{1}{2} \sum_{i,j} \lambda_i \lambda_j Cov(B_{t_i}, B_{t_j}) - \frac{1}{2}\sum_{i,j} \beta_i \beta_j Cov(\hat{B}_{t_i} , \hat{B}_{t_j} ) -  \sum_{i,j} \lambda_i \beta_j Cov(B_{t_i}, \hat{B}_{t_j} ) \bigg\} \\
&= \exp \bigg\{ 
    -\frac{1}{2} \esp{ \left( \sum_{j=1}^k \lambda_j \indic{\{Y(t_j) \geq 1 \}}  \right)^2  + (1-p) \left( \sum_{j=1}^k \beta_j Y(t_j) \right)^2 + 2 {(1-p)}\sum_{ i,j } \lambda_i \beta_j \indic{\{ Y(t_i) \geq 1\}} Y(t_j)  }{}
    \bigg\},  
\end{align*}
where we used respectively for each one of the covariances in order of appearance that: the first jump time of a Yule-Simon process is uniformly distributed, (\ref{formula:covarRefrozado}) and (\ref{formula:covarRefrozadoMB}). However, this is precisely the right hand side of (\ref{eq:leyConjuntaMB}). 
\begin{toexclude}
If $U$ is a uniform random variable in $[0,1]$, we can write 
\begin{equation*}
    \frac{1}{2} \sum_{i,j} \lambda_i \lambda_j Cov(B_{t_i}, B_{t_j})  = \frac{1}{2} \esp{ \left( \sum_{i=1}^k \lambda_i \indic{\{ U \leq t_i \}} \right)^2 }{}
\end{equation*}
while by (\ref{formula:covarRefrozado}) 
\begin{equation} \label{formula:CovarianzaNRBM}
    \frac{1}{2} \sum_{i,j} \beta_i \beta_j Cov(\hat{B}_{t_i} , \hat{B}_{t_j} ) =  \frac{(1-p)}{2}\esp{ \left( \sum_{i=1}^k \beta_i Y(t_i)  \right)^2 }{}
\end{equation}
and finally, by (\ref{formula:covarRefrozadoMB})
\begin{equation*}
    \sum_{i,j} \lambda_i \beta_j Cov(B_{t_i} , \hat{B}_{t_j}) = {(1-p)} \esp{ \sum_{i,j} \lambda_i \beta_j \indic{\{Y(t_i) \geq 1) \}} Y(t_j) }{}.
\end{equation*}
\end{toexclude}
\end{proof}
Now that all the ingredients have been introduced, we  define the law of $(\xi, \hat{\xi})$.\medskip \\
$\Game$ \textit{Recipe for reinforcing Lévy processes: }consider a starting Lévy process $\xi$ with triplet $(a,q^2,\Lambda)$ and denote by $\xi_t = at + q B_t + J_t$ for $t \geq 0$ its Lévy Itô decomposition, where $B$ and $J$ are respectively a Brownian motion and a Lévy process with triplet $(0,0, \Lambda)$. Further,  fix $p \in (0,1)$ an admissible parameter for the triplet, denote the  jump measure of $\xi$ by $\mathcal{N} = \sum \delta_{(u, \Delta \xi_u)}$   and consider the NRPPP $\hat{\mathcal{N}}$ with characteristic measure  $\Lambda$ and reinforcement parameter $p$ as constructed in (\ref{definition:thereinfocedversion_ofPPP}) in terms of $\mathcal{N}$. Denote by $\hat{J} := \indic{(-1,1)}x * \hat{\mathcal{N}}^{(sc)} +   \indic{(-1,1)^c}x * \hat{\mathcal{N}}$ the corresponding NRLP of characteristics $(0,0,\Lambda,p)$ and finally, consider a NRBM $\hat{B}$ independent of $(J,\hat{J)}$, such that  $(B,\hat{B})$ has the law of a Brownian motion with its reinforced version -- for example by proceeding as in (\ref{formula:laSdeNRBM}).

\begin{definition}\label{definition:Levy_LevyR}
 We call the noise reinforced Lévy process  $\hat{\xi}_t := at + q   \hat{B}_t + \hat{J}_t$ for $t \geq 0$ of characteristics $(a,q^2 ,\Lambda, p)$  the noise reinforced version of ${\xi}$, the unicity only holding in distribution. From now on, every time we consider a pair $(\xi, \hat{\xi})$, it will be implicitly assumed that $\hat{\xi}$ has been constructed by the procedure we just described in terms of $\xi$.
\end{definition}

Let us now conclude the proof of Proposition \ref{theorem:leyConjunta}.  

\begin{proof}[Proof of Proposition \ref{theorem:leyConjunta}]
If $\Psi$ is the characteristic exponent of $\xi$, we can write 
\begin{equation*}
    \Psi(\lambda) = i a \lambda - \frac{1}{2} q^2 \lambda^2 + \Phi^{(2)}(\lambda) + \Phi^{(3)}(\lambda), 
\end{equation*}
for  $\Phi^{(2)}$, $\Phi^{(3)}$ defined respectively by (\ref{equation:charExpoSaltosGrandes}) and  (\ref{equation:charExpoSaltosChicos}). Recalling  the independence between the pairs $(B,\hat{B})$, $(\xi^{(2)}, \hat{\xi}^{(2)})$, $(\xi^{(3)}, \hat{\xi}^{(3)})$,   the proof of Proposition \ref{theorem:leyConjunta}  now follows from  Lemmas  \ref{lemma:FourierPareja(1)}, \ref{lemma:FourierPareja(c)}, \ref{lemma:charFunMB_NRMB} and the previous decomposition for the characteristic exponent $\Psi$.
\end{proof}

\indent From  the construction of $({\mathcal{N}} , \hat{\mathcal{N}})$,  we can sketch a sample path of $(\xi, \hat\xi)$, where the jumps that are not appearing on the path of $\hat{\xi}$ are precisely the ones deleted by the thinning: 
\begin{figure}[htbp!]
   \centering
   \includegraphics[scale=0.85]{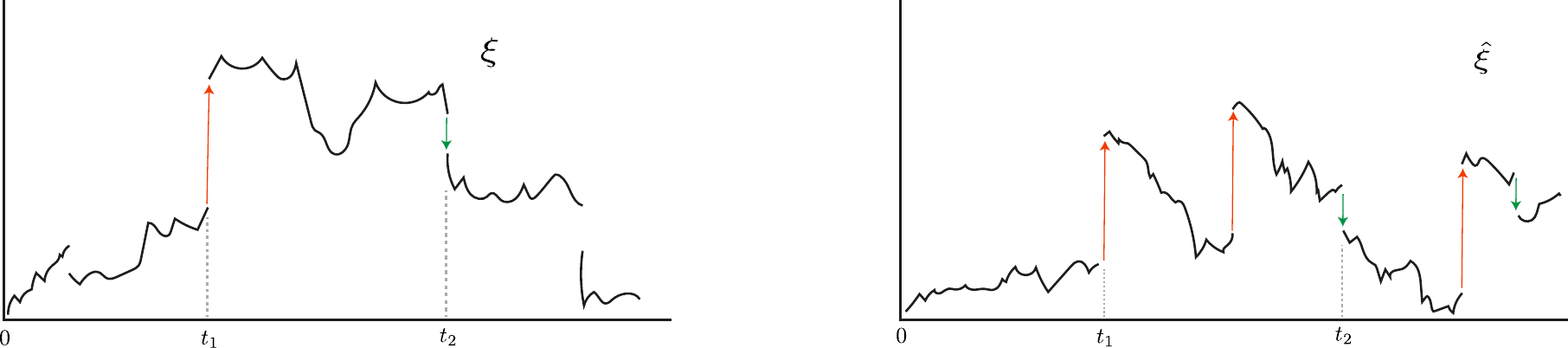}
  \caption{Sample path of a Lévy process and its reinforced version.}
\end{figure}

\subsection{Proof of Theorem \ref{thm:convergenciaConjunta} } \label{subsection:PruebajointConvergence}

 Let us   outline the proof of Theorem \ref{thm:convergenciaConjunta}. First,   by \eqref{identity:extension}, it suffices to prove the convergence in $[0,1]$ and we therefore work with $\xi = (\xi_t)_{t \in [0,1]}$. Next, since we are working in $D^2[0,1]$, it suffices to establish tightness coordinate-wise to obtain tightness for the sequence of pairs.  The first coordinate in (\ref{equation:joint convergence}) converges a.s. towards $\xi$ in $D[0,1]$ (and in particular is tight)  and hence it remains to establish tightness for  the sequence of reinforced $n$-skeletons. This is the content of Section \ref{subsection:tightness} and more precisely, of Proposition \ref{theorem:weakConvergenceOfSkeletons}. This is achieved by means of the celebrated Aldous tightness criterion and our arguments  rely on the discrete counterpart of the remarkable martingale from Proposition \ref{proposition:laMartingala}. This discrete martingale is  introduced in Lemma \ref{LaMartingala} and  we recall from \cite{InvariancePrinciplesNRRW, Bertenghi} its main features. This is the content of  Section \ref{subsection:BercuMartingale}. Finally, the joint convergence in the sense of finite-dimensional distributions towards $(\xi, \hat{\xi})$ is proved in Proposition \ref{thm:convergenciaConjuntaFDD}, by establishing the convergence of the corresponding characteristic functions. 

\subsubsection{The martingale associated with a noise reinforced random walk}\label{subsection:BercuMartingale}
   
 $\circ$ \textit{The elephant random walk and its associated martingale}. Let us start with some historical context. In \cite{Bercu}, Bercu was interested in establishing asymptotic convergence results for a particular random walk with memory, called the elephant random walk. This process is defined as follows: for a fixed $q \in (0,1)$ that we still call the reinforcement parameter, we set $\mathcal{E}_0 := 0$ and let $Y_1$ be a random variable with $Y_1 \in \{-1,1\}$. Then, the position of our elephant at time $n=1$ is given by  $\mathcal{E}_1 = Y_1$ and for $n \geq 2$,  it is  defined recursively by the relation  $\mathcal{E}_{n+1} := \mathcal{E}_{n} + Y_{n+1}$, for $Y_{n+1}$ constructed by  selecting uniformly at random one of the previous increments $\{Y_1 \, \dots \, Y_n \}$, and changing its sign with probability $1-q$. The analysis of  Bercu relies on  a  martingale associated to the elephant random walk, defined as  $M_1 = \mathcal{E}_1$ and for $n \geq 2$, as 
  \begin{equation} \label{Bercu_ElephantMartingale}
      M_n := \hat{a}_n \mathcal{E}_n, \quad \quad \text{ for  }  \hat{a}_n := \frac{\Gamma(n)\Gamma(2q) }{\Gamma(n+2q-1)}, 
  \end{equation}
   and where $\Gamma$ stands for the Euler-Gamma function. 
  This martingale had already made its appearance in the literature in Coletti, Gava, Schütz \cite{GavaSchuetzColetti}.  As was pointed out by Kürsten \cite{Kuersten}, the key is that  when $q \in [1/2,1)$,   the elephant random walk is a version of the noise reinforced random walk when the typical step $X$ has distribution $\proba{X=1}{}= \proba{X=-1}{}=1/2$ with memory parameter $p = 2q-1$.  
\medskip \\   
\mbox{}\hspace{4mm}Getting back to our setting, we maintain the notation introduced at the beginning of Section \ref{section:jointlaw} for the noise reinforced random walk for a memory parameter $p \in (0,1)$. Our first observation is  that the martingale (\ref{Bercu_ElephantMartingale})  associated to the elephant random walk is still a martingale in our setting --  we stress  that the reinforcement parameter $q$ in  \cite{Bercu} corresponds to the  parameter $p= 2q-1$ in our context. This martingale plays a fundamental role in our reasoning, and also played a central role in \cite{InvariancePrinciplesNRRW, Bertenghi}. More precisely, let  {$a_1 := 1$} and for $n \geq 2$ we set
 \begin{equation} \label{equation:coefBercuMartingale}
     a_n :=  \frac{\Gamma(n)  }{\Gamma(n+p)} = \prod_{k=1}^{n-1} \gamma_k^{-1},
 \end{equation}
 for  $\gamma_n := \frac{n+p}{n}$. We write $\scr{F}_n := \sigma (\hat{X}_1, \dots , \hat{X}_n )$ the filtration generated by the reinforced steps. The following lemma is taken from \cite{InvariancePrinciplesNRRW}.
 \begin{lemma}
 \label{LaMartingala}
 \emph{\cite[Proposition 2.1]{InvariancePrinciplesNRRW}}
 Suppose that the typical step $X$ is centred and in $L_2(\mathbb{P})$. Then, the process $M$ defined by $M_0 = 0$ and  $M_n = a_n \hat{S}_n$ for $n \geq 1$ is a square-integrable martingale with respect to the filtration $(\scr{F}_n)$.
 \end{lemma}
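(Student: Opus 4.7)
The plan is to verify the three ingredients defining a square-integrable martingale: integrability, measurability, and the conditional expectation identity.

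First I would check that $M_n \in L_2(\mathbb{P})$. A quick induction on $n$ shows that each reinforced step $\hat{X}_k$ has the same one-dimensional law as the typical step $X$: indeed $\hat{X}_1 = X_1$ is distributed as $X$, and at step $k+1$ the variable $\hat{X}_{k+1}$ is either $X_{k+1}$ (with probability $1-p$) or a uniformly chosen $\hat{X}_j$, $j \leq k$ (with probability $p$), each summand being distributed as $X$ by the induction hypothesis. Hence $\mathbb{E}[\hat{X}_k^2] = \mathbb{E}[X^2] < \infty$, and by Minkowski $\hat{S}_n \in L_2(\mathbb{P})$, so $M_n = a_n\hat{S}_n \in L_2(\mathbb{P})$. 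Measurability with respect to $\scr{F}_n$ is immediate from the definition.

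Next I would compute $\mathbb{E}[\hat{X}_{n+1} \mid \scr{F}_n]$. Conditionally on $\scr{F}_n$ and on the Bernoulli $\epsilon_{n+1}$ deciding innovation versus reinforcement, we have
\begin{equation*}
\mathbb{E}[\hat{X}_{n+1} \mid \scr{F}_n] \;=\; (1-p)\,\mathbb{E}[X_{n+1}] \;+\; p\cdot \frac{1}{n}\sum_{k=1}^n \hat{X}_k \;=\; \frac{p}{n}\,\hat{S}_n,
\end{equation*}
since $X$ is centred and the uniform selection gives the empirical mean of $\hat{X}_1,\dots,\hat{X}_n$. Consequently
\begin{equation*}
\mathbb{E}[\hat{S}_{n+1} \mid \scr{F}_n] \;=\; \hat{S}_n + \frac{p}{n}\hat{S}_n \;=\; \gamma_n \hat{S}_n,
\end{equation*}
with $\gamma_n = (n+p)/n$ as in the statement.

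Finally, multiplying by $a_{n+1}$ we obtain
\begin{equation*}
\mathbb{E}[M_{n+1}\mid \scr{F}_n] \;=\; a_{n+1}\gamma_n \hat{S}_n,
\end{equation*}
and the product representation $a_n = \prod_{k=1}^{n-1}\gamma_k^{-1}$ immediately gives $a_{n+1}\gamma_n = a_n$, so $\mathbb{E}[M_{n+1}\mid\scr{F}_n] = M_n$, as required. There is no substantial obstacle: the only mildly delicate point is justifying the conditional expectation under the mixture between innovation and reinforcement, which one handles cleanly by conditioning further on $\epsilon_{n+1}$ and, in the reinforcement case, on the independent uniform choice $U[n]$.
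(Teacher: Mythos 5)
Your proof is correct and is precisely the standard argument for this martingale (the paper itself imports the lemma from \cite{InvariancePrinciplesNRRW} without proof, and the computation there is the same): the one-step conditional expectation $\mathbb{E}[\hat{X}_{n+1}\mid\scr{F}_n]=\frac{p}{n}\hat{S}_n$ combined with the telescoping identity $a_{n+1}\gamma_n=a_n$. The square-integrability step via the identical one-dimensional marginals of the $\hat{X}_k$ and Minkowski is also the usual one, and your handling of the independence of $\epsilon_{n+1}$ and $U[n]$ from $\scr{F}_n$ is the right way to justify the conditional expectation.
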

In order to establish  tightness for our sequence of  reinforced skeletons, we will make use of the explicit form of the predictable quadratic variation  $\langle M,M \rangle$ of this  martingale, which is the process defined as $\langle M,M \rangle_0 = 0$ and 
\begin{equation*}
     \langle M,M \rangle_n 
    = \sum_{k=1}^n \esp{\left( \Delta M_k \right)^2 | \scr{F}_{k-1} }{}, \quad \quad n \geq 1. 
\end{equation*}
In this direction, we introduce:
\begin{equation*}
    \hat{V}_n :=  \hat{X}_1 ^2 + \dots  +  \hat{X}_n ^2, \quad \quad n \geq 1,
\end{equation*}
with $\hat{V}_0 = 0$. The following lemma is also taken from \cite{InvariancePrinciplesNRRW} and was the main tool for establishing the invariance principles proven in that work. 
\begin{lemma} \label{LaVariacionCuad} 
\emph{\cite[Proposition 2.1]{InvariancePrinciplesNRRW}}
The predictable quadratic variation process $\langle M,M \rangle$ is given by $\langle M , M \rangle_0 =0$  and for $n \geq 1$, 
\begin{equation}
    \langle M,M \rangle_n = \sigma^2 +  \sum_{k=2}^n a^2_k \left( (1-p) \sigma^2 - p^2 \frac{\hat{S}_{k-1} ^2}{(k-1)^2} + p \frac{\hat{V}_{k-1}}{k-1} \right),   \label{quadVar}
\end{equation}
where the sum should be considered  null for $n=1$. 
\end{lemma}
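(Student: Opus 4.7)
My plan is to compute $\mathbb{E}[(\Delta M_k)^2 \mid \mathscr{F}_{k-1}]$ directly and then sum. The key preliminary observation is the identity $a_{k-1}/a_k = \gamma_{k-1} = 1 + p/(k-1)$, which follows immediately from the definition \eqref{equation:coefBercuMartingale}. Consequently
\begin{equation*}
  \Delta M_k = a_k \hat{S}_k - a_{k-1}\hat{S}_{k-1} = a_k \hat{X}_k + (a_k - a_{k-1})\hat{S}_{k-1} = a_k\Bigl(\hat{X}_k - \frac{p}{k-1}\hat{S}_{k-1}\Bigr),
\end{equation*}
for every $k\geq 2$, while $\Delta M_1 = M_1 = \hat{X}_1$.

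The second step is to exploit the reinforcement dynamics to compute the first two conditional moments of $\hat{X}_k$. Recall that, conditionally on $\mathscr{F}_{k-1}$, the step $\hat{X}_k$ equals an independent copy of $X$ (with mean $0$ and variance $\sigma^2$) with probability $1-p$, and with complementary probability $p$ it is drawn uniformly from $\{\hat{X}_1,\ldots,\hat{X}_{k-1}\}$. Therefore
\begin{equation*}
  \mathbb{E}[\hat{X}_k \mid \mathscr{F}_{k-1}] = p\,\frac{\hat{S}_{k-1}}{k-1},
  \qquad
  \mathbb{E}[\hat{X}_k^2 \mid \mathscr{F}_{k-1}] = (1-p)\sigma^2 + p\,\frac{\hat{V}_{k-1}}{k-1}.
\end{equation*}
In particular the bracketed quantity $\hat{X}_k - p\hat{S}_{k-1}/(k-1)$ in the formula for $\Delta M_k$ is exactly $\hat{X}_k$ minus its conditional mean, which reconfirms the martingale property and shows that
\begin{equation*}
  \mathbb{E}[(\Delta M_k)^2 \mid \mathscr{F}_{k-1}] = a_k^2\,\mathrm{Var}(\hat{X}_k \mid \mathscr{F}_{k-1}) = a_k^2\Bigl((1-p)\sigma^2 + p\,\frac{\hat{V}_{k-1}}{k-1} - p^2\,\frac{\hat{S}_{k-1}^2}{(k-1)^2}\Bigr).
\end{equation*}

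Finally, summing this identity for $k=2,\ldots,n$ and adding the contribution $\mathbb{E}[(\Delta M_1)^2] = \sigma^2$ from the first step yields precisely \eqref{quadVar}. There is no genuine obstacle here: the only subtle point is the algebraic relation $a_k - a_{k-1} = -a_k\,p/(k-1)$ which converts the martingale increment into a centred multiple of $\hat{X}_k$, after which the result reduces to the elementary computation of a conditional variance under the two-alternative reinforcement rule.
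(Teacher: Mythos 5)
Your computation is correct and complete: the identity $a_k - a_{k-1} = -a_k\,p/(k-1)$ turns $\Delta M_k$ into $a_k$ times the conditionally centred step, and the two conditional moments under the two-alternative reinforcement rule give exactly the variance appearing in \eqref{quadVar}, with the $k=1$ term contributing $\sigma^2$. The paper itself states this lemma without proof, citing \cite[Proposition 2.1]{InvariancePrinciplesNRRW}, and your argument is precisely the standard one underlying that reference, so there is nothing to add.
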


\subsubsection{Proof of tightness}\label{subsection:tightness}
We stress that the f.d.d. convergence of the sequence of reinforced skeletons towards a NRLP $\hat{\xi}$ of characteristics $(a,q^2,\Lambda,p)$ was already established in Theorem 3.1 of \cite{BertoinNRLP}. 
\begin{proposition}\label{theorem:weakConvergenceOfSkeletons} Let $p<1/2$ be an admissible memory parameter for the triplet $(a,q^2,\Lambda)$. Then, 
the sequence of laws associated to the reinforced skeletons 
\begin{equation} \label{weekConvergence1}
    \big\{ \big( \hat{S}^{(n)}_{\lfloor n t \rfloor} \big)_{t \in [0 , 1]} \, : \,  n \in \mathbb{N} \big\} \quad \text{is tight in } D[0,1].
\end{equation}
Therefore, the convergence $\big( \hat{S}^{(n)}_{\lfloor n t \rfloor} \big)_{t \in [0 , 1]} \overset{\scr{L}}{\longrightarrow} (\hat{\xi}_t)_{t \in [0,1]}$ holds in $D[0,1]$. 
\end{proposition}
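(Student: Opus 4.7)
The plan is to invoke Aldous' tightness criterion on $D[0,1]$, exploiting the discrete $L_2$-martingale $M^{(n)}_k := a_k \hat{S}^{(n)}_k$ from Lemma \ref{LaMartingala} to control the increments of the reinforced skeleton over short intervals. Since f.d.d.~convergence to $\hat{\xi}$ was already established in \cite{BertoinNRLP} (and implies tightness of one-dimensional marginals), the remaining task is to verify the Aldous modulus-of-continuity estimate: for every $\epsilon,\eta>0$ there exists $\delta>0$ such that
\begin{equation*}
    \limsup_{n\to\infty}\sup_{\tau_n}\mathbb{P}\bigl(|\hat{S}^{(n)}_{(\tau_n+\lfloor n\delta\rfloor)\wedge n} - \hat{S}^{(n)}_{\tau_n}|>\epsilon\bigr)\leq \eta,
\end{equation*}
the supremum being over discrete stopping times $\tau_n\le n$.

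Since a generic Lévy increment $\xi_{1/n}$ need not be in $L_2(\mathbb{P})$, I would first reduce to the square-integrable regime by a truncation: fix $K>0$, decompose $\xi = \xi^K + \check{\xi}^K$, where $\xi^K$ has Lévy measure $\indic{\{|x|\le K\}}\Lambda$ (hence $\xi^K_{1/n}\in L_r(\mathbb{P})$ for every $r\ge 1$, with variance of order $1/n$) and $\check{\xi}^K$ is a finite-rate compound Poisson process. The reinforcement algorithm acts additively on the steps when driven by a common pair of Bernoulli/uniform sequences, yielding a coupling $\hat{S}^{(n)} = \hat{S}^{(n),K} + \hat{\check{S}}^{(n),K}$, under which it suffices to prove the Aldous modulus estimate for each summand. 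The rare-jump component $\hat{\check{S}}^{(n),K}_{\lfloor n\cdot\rfloor}$ is soft: $\check{\xi}^K$ has only finitely many jumps on $[0,1]$ a.s., and a direct argument based on the representation \eqref{example:reinfPoissonProcess} identifies its weak limit as the reinforced compound Poisson process with characteristics $(0,0,\indic{\{|x|>K\}}\Lambda,p)$, giving the Aldous estimate for that piece.

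The heart of the argument is the bulk term $\hat{S}^{(n),K}$. After subtracting the deterministic drift (which does not affect tightness), Lemma \ref{LaMartingala} provides the $L_2$-martingale $M^{(n)}_k = a_k\hat{S}^{(n),K}_k$ whose predictable bracket is explicitly given by \eqref{quadVar}, with $\sigma^2 = \sigma_n^2 := \mathbb{E}[(\xi^K_{1/n})^2] = O(1/n)$. I would write
\begin{equation*}
    \hat{S}^{(n),K}_{\tau_n+\lfloor n\delta\rfloor}-\hat{S}^{(n),K}_{\tau_n} = \frac{M^{(n)}_{\tau_n+\lfloor n\delta\rfloor}-M^{(n)}_{\tau_n}}{a_{\tau_n+\lfloor n\delta\rfloor}} + \Bigl(\frac{a_{\tau_n}}{a_{\tau_n+\lfloor n\delta\rfloor}}-1\Bigr)\hat{S}^{(n),K}_{\tau_n},
\end{equation*}
and control the principal (martingale) term in $L_2$ via the optional stopping theorem applied to $(M^{(n)})^2-\langle M^{(n)},M^{(n)}\rangle$. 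Using \eqref{quadVar}, the Stirling asymptotic $a_k\sim k^{-p}$, the elementary estimate $\mathbb{E}[\hat{V}^{(n),K}_k]\le k\sigma_n^2$, and the a-priori second-moment bound $\mathbb{E}[(\hat{S}^{(n),K}_k)^2]=O(k/n)$ obtained inductively from the martingale property, one arrives at a uniform $L_2$-bound of order $\delta$ on the principal term. The boundary term is $O(\delta)$ via the smoothness of $a_k$ together with the same second-moment estimate on $\hat{S}^{(n),K}_{\tau_n}$, and Chebyshev's inequality then delivers the Aldous bound.

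The main obstacle is the careful bookkeeping needed to transfer $L_2$ control from the martingale $M^{(n)}$ to the reinforced skeleton $\hat{S}^{(n),K}$: the scaling factor $a_k$ is time-dependent, so the decomposition into a martingale-increment term and a boundary term must be estimated uniformly in both $n$ and the stopping time $\tau_n$. The condition $p<1/2$ is essential here: it guarantees that $\sum_{k=\tau_n+1}^{\tau_n+\lfloor n\delta\rfloor} a_k^2\asymp(n\delta)^{1-2p}$ uniformly in $\tau_n$, so that, after multiplication by $\sigma_n^2\asymp 1/n$ and division by $a_{\tau_n+\lfloor n\delta\rfloor}^2\asymp(\tau_n+n\delta)^{-2p}$, the overall bound remains of order $\delta$.
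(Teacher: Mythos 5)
Your strategy is, in outline, the one the paper follows: truncate the large jumps, use the martingale $M^{(n)}_k=a_k\hat{S}^{(n)}_k$ of Lemma \ref{LaMartingala} together with its explicit bracket \eqref{quadVar}, verify Aldous's criterion, and exploit $p<1/2$ through $\lim_n n^{2p-1}\sum_{k\le n}a_k^2=(1-2p)^{-1}$. (The paper lets the truncation level $K(n)\uparrow\infty$ and shows the large-jump part vanishes uniformly in probability rather than proving tightness for the reinforced compound Poisson skeleton, but that is a minor variation.) The gap is in the central claim: a ``uniform $L_2$-bound of order $\delta$ on the principal term'', valid for arbitrary stopping times $\tau_n\le n$, does not follow from the ingredients you list, and this is exactly where the paper has to work hardest.

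The problematic term is $p\sum_{k=\tau_n+1}^{\tau_n+\lfloor n\delta\rfloor}a_k^2\,\hat{V}^{(n)}_{k-1}/(k-1)$ in the bracket increment. Because the summation window is determined by a stopping time, you cannot replace $\hat{V}^{(n)}_{k-1}$ by its mean $(k-1)\sigma_n^2$: the indicator $\indic{\{k\le\tau_n+\lfloor n\delta\rfloor\}}$ is not $\scr{F}_{k-1}$-measurable and is correlated with $\hat{V}^{(n)}_{k-1}$, so the ``elementary estimate'' $\mathbb{E}[\hat{V}^{(n)}_k]= k\sigma_n^2$ gives nothing for the randomly located window. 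What is actually needed is uniform stochastic control of $\hat{V}^{(n)}_{k}/k$, and the paper obtains it only in two pieces: (a) an $L_2$ maximal inequality for the reinforced walk of \emph{squared} steps (Corollary 4.3 of \cite{BertoinUniversality}, applied to the centred walk $\hat{W}^{(n)}$), yielding $\mathbb{E}[\sup_{k\le n}(\hat{V}^{(n)}_k)^2]=O(1)$, which controls $\hat{V}^{(n)}_{k-1}/(k-1)$ only on the range $k\ge n\epsilon$; and (b) a separate Doob argument on the initial segment $[0,\epsilon(n)]$ combined with a diagonal extraction $\epsilon(n)\downarrow0$. Neither ingredient appears in your outline, and the estimate genuinely degenerates near the time origin if one insists on uniformity over all $\tau_n$: for $\tau_n$ of order $1$ the ratio $a_k^2/a_{\tau_n+\lfloor n\delta\rfloor}^{2}$ inside the window is of order $(n\delta)^{2p}$ rather than $1+O(\delta)$, so the bound is not $O(\delta)$ uniformly. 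Relatedly, the paper passes through Rebolledo's theorem — reducing Aldous for the martingales to Aldous for their brackets, checked \emph{in probability} — precisely because stochastic boundedness of $\sup_k\hat{V}^{(n)}_k$ is available while sharp $L_1$ or $L_2$ bounds over stopping-time windows are not. Your boundary term and compound-Poisson part are fine in spirit, but the control of the $\hat{V}$-term and the behaviour at $t=0$ must be supplied before the argument closes.
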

\noindent 
The reason behind the restriction $p < 1/2$ and why we don't expect our proof to work for $p \geq 1/2$ is explained in Remark \ref{remark:casopgeq2}, at the end of the proof.
\subsubsection*{Proof of Proposition \ref{theorem:weakConvergenceOfSkeletons} for   centred $\xi$ with compactly supported Lévy measure.} \label{subsection:proofPropCasoEspecial}
Until further notice, we   restrict our reasoning to the case when $\xi$ is a centred Lévy process, with Lévy measure $\Lambda$ concentrated in $[-K,K]$ for some $K>0$, and without loss of generality we  suppose that $K=1$. In consequence, $\xi$ has finite moments of any order and we set  $\sigma^2_n := \mathbb{E} [ \xi_{1/n}^2] = \mathbb{E}[(\hat{X}^{(n)}_1)^2]$. Notice that under our standing hypothesis, $\hat{\xi}$ writes 
\begin{equation*}
    t^{p} \int_0^t s^{-p} \dd B_s + \hat{\xi}^{(3)}_t, \quad \quad t \in [0,1], 
\end{equation*}
for  some Brownian motion ${B}$ independent of $\hat{\xi}^{(3)}$.  Further, remark that under our restrictions, the family of discrete skeletons $(\hat{S}^{(n)}_k)$, for $n \in \mathbb{N}$, have typical steps centred  and  in $L_2(\mathbb{P})$.  Consequently, we can make use of Lemma \ref{LaMartingala}. \par
Next,  to establish  Proposition \ref{theorem:weakConvergenceOfSkeletons}  under our current restrictions, we claim that it would be enough  to show that the following convergence holds, 
\begin{equation}
    \left(   n^p a_{\lfloor n t \rfloor } \hat{S}^{(n)}_{\lfloor n t \rfloor} \right)_{t \in [0 , 1]} \overset{\scr{L}}{\longrightarrow} \big( t^{-p} \hat{\xi}_t\big)_{t \in [0, 1]}, 
    \label{weekConvergenciaMartingalas}
\end{equation}
 where now, the sequence on the left-hand side of (\ref{weekConvergenciaMartingalas}) is a sequence of martingales, while the process on the right hand side is  the martingale  introduced in Proposition \ref{proposition:laMartingala}, viz. 
\begin{equation*}
    N_t := \int_0^t s^{-p}\dd B_s +  t^{-p} \hat{\xi}^{(3)}_t, \quad \quad t \in [0,1]. 
\end{equation*}
 Indeed, for each $n$, let $M^n$ be the continuous time version of  martingale of   Lemma \ref{LaMartingala} associated with the $n$-reinforced skeleton  $(\hat{S}^{(n)}_k)_{k \in \mathbb{N}}$, i.e. 
\begin{equation*}
    M^n_{\lfloor nt \rfloor}  = a_{\lfloor nt \rfloor} \hat{S}^{(n)}_{\lfloor nt \rfloor},   \quad t \geq 0, 
\end{equation*}
and remark that by Lemma \ref{LaVariacionCuad}, the predictable quadratic variation of $M^{n}_{\lfloor n \cdot \rfloor}$ is given by
\begin{equation*}
 \langle M^n,M^n \rangle_{\lfloor nt \rfloor} = \indic{\{ 1/n \leq t \}}  \sigma_n^2 + \sum_{k=2}^{\lfloor nt \rfloor} a^2_k \left( (1-p) \sigma^2_n - p^2 \frac{(\hat{S}_{k-1}^{(n)}) ^2}{(k-1)^2} + p \frac{\hat{V}_{k-1}^{(n)} }{k-1} \right).     
\end{equation*}
It follows that for each $n \in \mathbb{N}$, the following process 
\begin{equation*}
    N^n_t :=   n^{p} M^{n}_{\lfloor n t \rfloor} =   n^p a_{\lfloor nt \rfloor} \hat{S}^{(n)}_{\lfloor nt \rfloor}, \quad {t \geq 0}, 
\end{equation*}
is also a martingale, and its  predictable quadratic variation writes: 
\begin{equation}
\label{varquadN}
 \langle N^n,N^n \rangle_{t} =  n^{2p} \langle M^{n}, M^n \rangle_{\lfloor nt \rfloor}  = \indic{\{ 1/n \leq t \}}   n^{2p} \sigma^2_n  +  n^{2p} \sum_{k=2}^{\lfloor nt \rfloor} a^2_k \left( (1-p) \sigma^2_n - p^2 \frac{(\hat{S}_{k-1}^{(n)}) ^2}{(k-1)^2} + p \frac{\hat{V}_{k-1}^{(n)}}{k-1} \right). 
\end{equation}
Moreover, by Stirling's formula, we have 
\begin{equation*}
    a_n = \frac{\Gamma(n)  }{\Gamma(n+p)} \sim n^{-p},  \quad \text{ as } n \uparrow \infty, 
\end{equation*}
which gives:  
\begin{equation*}
      n^p a_{\lfloor n t \rfloor } \sim t^{-p},  \quad \text{ as } n \uparrow \infty.
\end{equation*}
This yields  the claimed equivalence between (\ref{weekConvergence1}) and (\ref{weekConvergenciaMartingalas})  under our current restrictions for $\xi$. For technical reasons, we shall  prove first that  the convergence of the martingales $(N^{n})$ towards $N$ holds in  the interval $[\epsilon , 1]$,  for any $\epsilon > 0$.  This leads us to the following lemma: 
\begin{lemma} \label{lemma:TightnessMartingales} For any $\epsilon >0$, the sequence  $(N^n_t)_{t \in [\epsilon,1]}$ for $n \in \mathbb{N}$ is tight.
\end{lemma}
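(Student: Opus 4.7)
I apply Aldous's tightness criterion on $D[\epsilon,1]$: it suffices to verify (a) the compact containment $\sup_n \esp{\sup_{t\in[\epsilon,1]}(N^n_t)^2}{} < \infty$, and (b) that for every $\eta > 0$ there exists $\delta > 0$ such that $\esp{(N^n_{(\tau+\delta)\wedge 1} - N^n_\tau)^2}{} < \eta$ uniformly in $n$ and in stopping times $\tau\in[\epsilon,1-\delta]$. Because $N^n$ is a square-integrable martingale, Doob's $L^2$-inequality together with the optional-sampling identity $\esp{(N^n_{\tau+\delta}-N^n_\tau)^2}{} = \esp{\langle N^n,N^n\rangle_{\tau+\delta}-\langle N^n,N^n\rangle_\tau}{}$ reduce both (a) and (b) to controlling the predictable quadratic variation given explicitly in (\ref{varquadN}).

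The key input is a uniform estimate of $\esp{\hat{V}_k^{(n)}}{}$ and $\esp{(\hat{S}_k^{(n)})^2}{}$. Since the reinforcement rule selects either a fresh copy of $X^{(n)}$ (with probability $1-p$) or a uniform pick among earlier reinforced steps (with probability $p$), the squared increment is stochastically preserved: an easy induction yields $\esp{(\hat{X}_k^{(n)})^2}{}=\sigma_n^2$ and hence $\esp{\hat{V}_k^{(n)}}{}=k\sigma_n^2=k\sigma_1^2/n$. The bound on $\esp{(\hat{S}_k^{(n)})^2}{}$ then follows from the martingale identity $a_k^2\esp{(\hat{S}_k^{(n)})^2}{} = \esp{\langle M^n,M^n\rangle_k}{}$ combined with (\ref{quadVar}): the induced linear recursion, paired with Stirling's asymptotic $a_k\sim k^{-p}$ and the hypothesis $p<1/2$, gives $\esp{(N^n_t)^2}{} = O(t^{1-2p})$ uniformly in $n$, matching the expected limiting covariance $\esp{N_t^2}{}=\sigma_1^2 t^{1-2p}/(1-2p)$.

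Plugging these moment bounds back into (\ref{varquadN}), the increment of the quadratic variation over $[\tau,\tau+\delta]$ is
\begin{equation*}
n^{2p}\sum_{k=\lfloor n\tau\rfloor+1}^{\lfloor n(\tau+\delta)\rfloor}a_k^2\Big((1-p)\sigma_n^2 + p\,\tfrac{\hat{V}_{k-1}^{(n)}}{k-1} - p^2\tfrac{(\hat{S}_{k-1}^{(n)})^2}{(k-1)^2}\Big).
\end{equation*}
Using $n^{2p}a_k^2 \sim n^{-1}(k/n)^{-2p}$ on the range $k\geq n\epsilon$ and the moment bounds above, the expectation of this sum is a Riemann-type approximation of $\int_\tau^{\tau+\delta}s^{-2p}h(s)\,\dd s$ with a bounded integrand $h$, hence dominated by $C\epsilon^{-2p}\delta$. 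Taking $\tau=\epsilon$ and $\delta=1-\epsilon$ yields (a), while the $\delta$-uniform smallness yields (b), completing Aldous's verification.

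The main technical obstacle is the bookkeeping in the recursion for $\esp{(\hat{S}_k^{(n)})^2}{}$: the negative feedback $-p^2(\hat{S}_{k-1}^{(n)})^2/(k-1)^2$ in (\ref{quadVar}) stabilises the iterates but requires the summability $n^{2p}\sum_{k\leq n}a_k^2 \asymp \int_0^1 s^{-2p}\,\dd s < \infty$, which is precisely where the admissibility condition $p<1/2$ enters --- and the expected boundary of the method, consistent with Remark \ref{remark:casopgeq2}.
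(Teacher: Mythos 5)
Your argument is correct in substance, but it takes a genuinely different route from the paper, and one step needs a patch. The paper verifies Aldous's criterion for $N^n$ indirectly: by Rebolledo's theorem \cite{JoffreMetivier_ConvergenceOfSemimartingales} it suffices that the increments $\langle N^n,N^n\rangle_{\tau_n+h_n}-\langle N^n,N^n\rangle_{\tau_n}$ vanish in probability, and these are controlled pathwise from \eqref{varquadN}: the negative term is dropped, the random-window sum of $a_k^2$ is bounded deterministically via \eqref{asimptot1}, and the factor $\hat V^{(n)}_{k-1}/(k-1)$ is bounded by $\sup_{\lfloor n\epsilon\rfloor\le k\le n}\hat V^{(n)}_k/\lfloor n\epsilon\rfloor$, whose stochastic boundedness is extracted from the maximal inequality of \cite{BertoinUniversality} applied to the reinforced walk of centred squares (this is where $\mathbb{E}[\xi_{1/n}^4]=O(1/n)$ enters). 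You instead work in $L^2$: optional sampling reduces the Aldous increments to expected quadratic-variation increments, and your only inputs are $\mathbb{E}[(\hat X^{(n)}_k)^2]=\sigma_n^2$ (hence $\mathbb{E}[\hat V^{(n)}_k]=k\sigma_n^2$), the martingale identity for $\langle M^n,M^n\rangle$, $a_k\sim k^{-p}$ and $p<1/2$. This buys a clean quantitative bound $\mathbb{E}[(N^n_t)^2]=O(t^{1-2p})$ and dispenses with both Rebolledo and the second-moment maximal inequality for $\hat V^{(n)}$; conversely the paper's route never needs any explicit moment bound on $N^n_t$ beyond square-integrability.

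The step that is too quick is the treatment of the window $\{\lfloor n\tau\rfloor+1,\dots,\lfloor n(\tau+\delta)\rfloor\}$: its endpoints are determined by a stopping time while $\hat V^{(n)}_{k-1}$ and $(\hat S^{(n)}_{k-1})^2$ are adapted, so you may not substitute $\mathbb{E}[\hat V^{(n)}_{k-1}]=(k-1)\sigma_n^2$ termwise and read the expectation as a Riemann sum over $[\tau,\tau+\delta]$. The fix is short and in the spirit of the paper's own bound: after discarding the nonpositive term $-p^2(\hat S^{(n)}_{k-1})^2/(k-1)^2$, use that $\hat V^{(n)}$ is nondecreasing and that $k-1\ge\lfloor n\epsilon\rfloor$ on the window, together with the monotonicity of $(a_k)$ and the fact that the window contains at most $n\delta+1$ indices, to get the pathwise estimate
\begin{equation*}
\langle N^n,N^n\rangle_{(\tau+\delta)\wedge1}-\langle N^n,N^n\rangle_{\tau}\ \le\ n^{2p}\,a_{\lfloor n\epsilon\rfloor}^2\,(n\delta+1)\Big((1-p)\sigma_n^2+p\,\frac{\hat V^{(n)}_n}{\lfloor n\epsilon\rfloor}\Big).
\end{equation*}
Taking expectations and using $\mathbb{E}[\hat V^{(n)}_n]=n\sigma_n^2=O(1)$, $\sigma_n^2=O(1/n)$ and $n^{2p}a^2_{\lfloor n\epsilon\rfloor}=O(\epsilon^{-2p})$, the right-hand side is $O\big(\epsilon^{-1-2p}(\delta+1/n)\big)$ uniformly in $n$ and in the stopping time, which is all that your version of Aldous's criterion requires (the extra factor $\epsilon^{-1}$ compared with your claimed $C\epsilon^{-2p}\delta$ is harmless). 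Incidentally, the same monotonicity observation shows that the stochastic boundedness of $\sup_{k\le n}\hat V^{(n)}_k$ used in the paper already follows from Markov's inequality and $\mathbb{E}[\hat V^{(n)}_n]=n\sigma_n^2$, so your first-moment bookkeeping is indeed sufficient for the whole lemma.
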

\begin{proof}
We denote by $(\scr{F}^n_t)$ the natural filtration of $N^n$. By Aldous's tightness criterion (see for e.g. Kallenberg \cite{Kallenberg_FoundationsOfModernProbability} Theorem 16.11), it is enough to show that for any sequence $(\tau_n)$ of (bounded) $(\scr{F}^n_t)$-stopping times in $[\epsilon, 1]$ and any sequence of positive real numbers $(h_n)$ converging to 0, we have  
\begin{equation*}
    \lim_{n \uparrow \infty} |N^n_{\tau_n + h_n} -  N^n_{\tau_n}| = 0, \quad \text{ in probability.} 
\end{equation*}
By Rebolledo's Theorem (see e.g. Theorem 2.3.2 in Joffre and Metivier \cite{JoffreMetivier_ConvergenceOfSemimartingales} ) it's enough to show that the sequence of associated predictable quadratic variations $(\langle N^n, N^n \rangle )$ satisfies Aldous's tightness criterion, i.e. that 
\begin{equation*}
    \lim_{n \uparrow \infty} \langle N^n, N^n \rangle _{\tau_n + h_n} -  \langle N^n, N^n \rangle _{\tau_n} = 0, \quad \text{ in probability}. 
\end{equation*}
In this direction, by (\ref{varquadN}), we have 
\begin{align}
 \langle N^n, N^n \rangle _{\tau_n + h_n} -  \langle N^n, N^n \rangle _{\tau_n} &=  n^{2p} \sum_{k= \lfloor n\tau_n \rfloor +1 }^{\lfloor n (\tau_n + h_n) \rfloor} a^2_k \left( (1-p) \sigma^2_n - p^2 \frac{(\hat{S}_{k-1}^{(n)}) ^2}{(k-1)^2} + p \frac{\hat{V}_{k-1}^{(n)}}{k-1} \right) \nonumber \\
    & \hspace{-20mm} \leq  (1-p)  n^{2p} \sum_{k= \lfloor n\tau_n \rfloor +1 }^{\lfloor n (\tau_n + h_n) \rfloor} a^2_k  \sigma^2_n  +    p \cdot  n^{2p} \sum_{k= \lfloor n\tau_n \rfloor +1 }^{\lfloor n (\tau_n + h_n) \rfloor} a^2_k \frac{\hat{V}_{k-1}^{(n)}}{k-1}, \label{DiferenciaCorchetes}
\end{align}
and it remains to show that both terms in the right hand side converge to 0 in probability as $n \uparrow \infty$. The key now is in the asymptotic behaviour of the series $\sum_{k=1}^n a^2_k$. As was already pointed out in \cite{Bercu}, for $p \in (0,1/2)$, we have   
\begin{equation}
    \label{asimptot1}
    \lim_{n \uparrow \infty} n^{2p-1} \sum_{k=1}^n a^2_k = \frac{1}{1-2p}. 
\end{equation}
Furthermore,  since the Lévy measure of $\xi$ is compactly supported,  it holds that 
\begin{equation}
    \label{asimptotLevy}
    \sigma^2_n = \esp{(\hat{X}^{(n)}_1 )^2}{} = \esp{\xi_{1/n}^2}{} = O(1/n),  \quad \text{ as } n \uparrow \infty.
\end{equation}
Now,  from (\ref{asimptot1}) and (\ref{asimptotLevy}) it follows that 
\begin{equation*}
   \lim_{n \uparrow \infty}  (1-p)  n^{2p-1} \sum_{k= \lfloor n\tau_n \rfloor +1 }^{\lfloor n (\tau_n + h_n) \rfloor} a^2_k = 0 \quad \text{ a.s.}
\end{equation*}
and a fortiori in probability, which entails that the first term in (\ref{DiferenciaCorchetes}) converges in probability to 0 \\ as $n \uparrow \infty$. In order to show that the second term in (\ref{DiferenciaCorchetes}) also converges in probability to 0, we need to proceed more carefully. First, since $\tau_n \in [\epsilon, 1]$, we can bound the second term in (\ref{DiferenciaCorchetes}) by
\begin{equation*}
    n^{2p} \sum_{k= \lfloor n\tau_n \rfloor +1 }^{\lfloor n (\tau_n + h_n) \rfloor} a^2_k \frac{\hat{V}_{k-1}^{(n)}}{k-1}  
    \leq n^{2p} \frac{ \sup_{ \lfloor n \epsilon \rfloor  \leq  k \leq n   } \hat{V}_k^{(n)} }{\lfloor n \epsilon \rfloor } \sum_{k= \lfloor n\tau_n \rfloor +1 }^{\lfloor n (\tau_n + h_n) \rfloor} a^2_k.
\end{equation*}
Next, since $\frac{n^{2p}}{\lfloor n \epsilon \rfloor} \sim n^{2p-1}\epsilon^{-1}$, in order to proceed as before we need to show that 
\begin{equation*}
    \sup_{ \lfloor n \epsilon \rfloor  \leq  k \leq n} \hat{V}_k^{(n)} = O(1),  \quad \text{in probability as }n \uparrow \infty,  
\end{equation*}
i.e. that the sequence is stochastically bounded. To do so we proceed as follows: for each $n$,  notice that the process 
\begin{equation*}
    \hat{V}^{(n)}_k = (\hat{X}^{(n)}_1)^2 + \dots + (\hat{X}^{(n)}_k)^2, \quad k \geq 1, 
\end{equation*}
is the reinforced version of the random walk
\begin{equation*}
    {V}^{(n)}_k = ({X}^{(n)}_1)^2 + \dots + ({X}^{(n)}_k)^2, \quad k \geq 1,
\end{equation*}
where $(\hat{X}^{(n)}_i)^2$ are i.i.d. variables with law $\xi_{1/n}^2$. In order to have a centred noise reinforced random walk, for $k \geq 1$ set  $ \hat{Y}^{(n)}_k :=(\hat{X}^{(n)}_k)^2 - \mathbb{E}[{\xi_{1/n}^2}]$ and we introduce: 
\begin{align*}
    \hat{W}_k^{(n)} &:= \hat{V}^{(n)}_k - k \esp{\xi_{1/n}^2}{} \\
     &= \left(  (\hat{X}^{(n)}_1)^2 - \esp{\xi_{1/n}^2}{}  \right)  + \dots + \left(  (\hat{X}^{(n)}_k)^2 - \esp{\xi_{1/n}^2}{} \right)
     = \hat{Y}^{(n)}_1 + \dots + \hat{Y}^{(n)}_k, 
\end{align*}
 Now, the process $(\hat{W}^{(n)}_k)_{k \in \mathbb{N}}$ is the noise reinforced version of the centred random walk defined for $k \geq 1$ as 
\begin{align*}
     {W}_k^{(n)} &= {V}^{(n)}_k - k \esp{\xi_{1/n}^2}{} \\
     &= \left(  ({X}^{(n)}_1)^2 - \esp{\xi_{1/n}^2}{}  \right)  + \dots + \left(  ({X}^{(n)}_k)^2 - \esp{\xi_{1/n}^2}{} \right)
     = {Y}^{(n)}_1 + \dots + {Y}^{(n)}_k, 
\end{align*}
where $(Y_i^{(n)})_{i \in \mathbb{N}}$ are i.i.d. with law $\xi_{1/n}^2 - \mathbb{E}[{\xi_{1/n} ^2}]$. We can now apply Corollary 4.3 from \cite{BertoinUniversality}  to $W^{(n)}$:  recalling that $\sigma_n^2 = \mathbb{E}[{\xi_{1/n}^2 }] = O(1/n)$, we have 
\begin{align*}
    \esp{\sup_{k \leq n} \left( \hat{V}^{(n)}_k\right)^2 }{} = \esp{\sup_{k \leq n} \left( \hat{W}^{(n)}_k + k \sigma^2_n\right)^2}{} 
    &\leq C \esp{\sup_{k \leq n} \left( \hat{W}^{(n)}_k \right) ^2  }{} + O(1)\\
    & \leq C' \esp{\left( \xi_{1/n}^2 - \esp{\xi_{1/n}^2 }{}  \right)^2 }{} n + O(1). 
\end{align*}
Once again, since $\xi$ has compactly supported Lévy measure, $\mathbb{E}[{\xi_{1/n}^2 }]$ and $\mathbb{E}[{\xi_{1/n}^4 }]$ are both $O(1/n)$ as $n \uparrow \infty$ and we deduce that 
\begin{equation*}
    \esp{\sup_{k \leq n} \left( \hat{V}^{(n)}_k\right)^2 }{}  = O(1), \quad \text{ as } n \uparrow \infty.
\end{equation*}
Hence, by Markov's inequality the sequence $(\sup_{k \leq n} \hat{V}^{(n)}_k  )_n$ is $O(1)$ in probability and we can conclude as before by bounding as follows for $L > 0$: 
\begin{align*}
  & \proba{   n^{2p} \frac{ \sup_{ \lfloor n \epsilon \rfloor  \leq  k \leq n   } V_k^{(n)} }{\lfloor n \epsilon \rfloor } \sum_{k= \lfloor n\tau_n \rfloor +1 }^{\lfloor n (\tau_n + h_n) \rfloor} a^2_k > \eta}{} 
  \\
  & \hspace{30mm}\leq \proba{\sup_{k \leq n}  \hat{V}^{(n)}_k >L}{} + \proba{L  \frac{ n^{2p}} {\lfloor n \epsilon \rfloor } \sum_{k= \lfloor n\tau_n \rfloor +1 }^{\lfloor n (\tau_n + h_n) \rfloor} a^2_k  > \eta}{}.
\end{align*}
\end{proof}
We shall now conclude the proof of Proportion \ref{theorem:weakConvergenceOfSkeletons} under our standing assumptions, and in this direction recall our discussion prior to Lemma \ref{lemma:TightnessMartingales}. To extend the convergence to the interval $[0,1]$ we shall use a truncation argument similar to the one employed in Section 4.3 of \cite{BertoinUniversality}. For each $\epsilon>0$, we have $(N^{n}_t)_{t \in [\epsilon , 1]} \overset{\scr{L}}{\rightarrow} (N_t)_{t \in [\epsilon , 1]}$   and since $(N_{t+\epsilon})_{t \in [0,1] } \rightarrow (N_{t})_{t \in [0,1]}$ by right continuity (extending $N_{\cdot + \epsilon}$ for $t \in [1-\epsilon, 1]$ identically as the constant $N_1$ ),
we deduce by metrisability of the weak convergence that there exists some sequence $(\epsilon(n))_{n \in \mathbb{N}}$, converging to 0 slowly enough as $n \uparrow \infty$ such that $(N^{n}_t)_{t \in [\epsilon(n) ,1]} \overset{\scr{L}}{\rightarrow} (N_t )_{t\in [0,1]}$ and we only need to show: 
\begin{equation} \label{requirement:terminoNeglegible}
    \sup_{s \leq \epsilon(n)} n^p a_{\lfloor ns \rfloor}\hat{S}^{(n)}_{\lfloor ns \rfloor}  \rightarrow 0, \quad \quad \text{ in probability as } n \uparrow \infty. 
\end{equation}
In this direction, notice the inequality 
\begin{equation*}
    \langle N^n,N^n \rangle_s \leq    n^{2p} \sigma^2_n  +  n^{2p} \sum_{k=2}^{\lfloor ns \rfloor} a^2_k \left( (1-p) \sigma^2_n  + p \frac{\hat{V}_{k-1}^{(n)}}{k-1} \right).
\end{equation*}
Since  $\mathbb{E}[{\hat{V}^{(n)}_k}] = k \sigma^2_n$, an application of Doob's inequality and the previous display yield that, for any $\delta >0$, we have 
\begin{align*}
    \proba{ \sup_{s \leq \epsilon(n)} |n^{p} a_{\lfloor ns \rfloor }\hat{S}^{(n)}_{\lfloor ns \rfloor}| \geq \delta  }{} &\leq \delta^{-2} \esp{\langle N^{(n)},N^{(n)} \rangle_{\epsilon(n)} }{}\\
    &\leq \delta^{-2} n^{2p} \sigma^2_n + \delta^{-2}\esp{  n^{2p} \sum_{k=2}^{\lfloor nt \rfloor} a^2_k \left( (1-p) \sigma^2_n  + p \frac{\hat{V}_{k-1}^{(n)}}{k-1} \right)}{}\\
    &\leq \delta^{-2}  \sigma_n^2 n^{2p} \sum_{n=2}^{\lfloor n \epsilon(n) \rfloor } a^2_k.
\end{align*}
From the asymptotics, 
\begin{equation*}
 \lim_{n \uparrow \infty} n^{2p-1} \sum_{k=1}^{n} a_k^2 =  \frac{1}{1-2p}, \quad \quad \text{and } \quad \quad {\sigma_n^2}{} = O(1/n),    
\end{equation*}
 we deduce that, as $n \uparrow \infty$, the convergence (\ref{requirement:terminoNeglegible}) holds and we can conclude by an application of Lemma  3.31 - VI from Jacod and Shiryaev \cite{JacodShiryaev_LimitTheorems}.
 
\begin{remark}\label{remark:casopgeq2}
\emph{Before proceeding, we point out that our proof no longer works for $p \geq 1/2$: indeed, one might notice that the change in the asymptotic behaviour of the series $\sum^n_{k=1} a^{2}_k$ for $p \geq 1/2$  makes the preceding reasoning unfruitful. Let us be more precise: these series possess three different asymptotic regimes depending on $p$ and  are the reason behind the different regimes appearing in the behaviour of the Elephant random walk, see e.g.  \cite{Bercu}.  More generally, they are behind the three regimes appearing in the invariance principles \cite{BertoinUniversality, InvariancePrinciplesNRRW}. 
 When $p\geq 1/2$, there is no Brownian component and the martingale $t^{-p} \hat{\xi}^{(3)}$ is no longer in $L_2(\mathbb{P})$ because $Y(t) \in L_q(\mathbb{P})$ for $q < 1/p$.  Since $N^n$ is converging weakly towards  $t^{-p}\hat{\xi}^{(3)}_t$ by (\ref{weekConvergenciaMartingalas}), working with the sequence of quadratic variations $\langle N^{n}, N^n \rangle$  might not be the right approach to obtain tightness.}
\end{remark}
\subsubsection*{Proof of Proposition \ref{theorem:weakConvergenceOfSkeletons}, general case.}\label{subsubsection:jointLaw_tightnessGeneral}
Let us start by  introducing some notation. First, if $\hat{\mathcal{N}}$ is the jump measure of $\hat{\xi}$, we will shorten our notation for the compensated integrals  and simply write   $\hat{\xi}_{u,v}^{(3)}(t) := (\indic{\{ u\leq |x| <v \}}x * \hat{\mathcal{N}}^{(sc)})_t$, for $0 \leq u < v$. Hence, for $K>1$, we can write 
\begin{equation*}
    \hat{\xi}_{0,K}^{(3)}(t) =  \hat{\xi}^{(3)}_t + \hat{\xi}_{1,K}^{(3)}(t), \quad t \in [0,1]. 
\end{equation*}
{It will also be  convenient to introduce the following notation for the sums of jumps: for fixed $0<a<b$, we write 
\begin{equation} \label{equation:saltosFiniteVar}
    \Sigma_{a,b} (t) := \sum_{s \leq t} \indic{\{a \leq |\Delta \hat{\xi}_s| < b \}} \Delta \hat{\xi}_s  = \sum_{i} \indic{x_i \in [a,b)} x_i Y_i(t),  \quad \text{ for } t \in [0,1],  
\end{equation}
so that in particular we have $\hat{\xi}^{(2)} = \Sigma_{1,\infty}$. } Next,  if $\xi$   can be decomposed into $\xi = L^{(1)} + L^{(2)} $, for independent Lévy processes $L^{(1)},L^{(2)}$,  we denote  its reinforced skeleton by $\hat{S}^{(n)}(\xi) = (\hat{S}^{(n)}_{ \lfloor nt \rfloor  }(\xi))_{t \in [0,1] }$ and we write: 
\begin{equation*}
    \hat{S}^{(n)}(\xi) =  \hat{S}^{(n)}(L^{(1)}) +  \hat{S}^{(n)}(L^{(2)}) , 
\end{equation*}
for the decomposition that is naturally induced. More precisely,  the two noise reinforced random walks in the right-hand side of the previous display are made with  the same sequence of Bernoulli random variables as $\hat{S}(\xi)$, and just result from decomposing each increment as 
\begin{equation*}
    \Delta^{(n)} \xi_{i} = +\Delta^{(n)} L^{(1)}_{i} + \Delta^{(n)} L^{(2)}_{i}.
\end{equation*}
Now, we proceed by lifting progressively our restriction imposed in \ref{subsection:proofPropCasoEspecial} as follows: \medskip  \\
\textbf{\textit{Step 1:} }First,   if $\xi$ satisfies that  $\xi = M^{ \leq K}$ where $M^{\leq K}$ is the sum of a Brownian motion with diffusion $q$ and a compensated martingale with jumps smaller than $K$, by \ref{subsection:proofPropCasoEspecial}  the following convergence  holds in distribution: 
\begin{equation*}
    \hat{S}^{(n)}\left( M^{\leq K} \right) \overset{\scr{L}}{\rightarrow} q  \hat{B} + \hat{\xi}_{0,K}^{(3)}, \quad \text{as } n \uparrow \infty. {\tag*{(\textit{End of Step 1})}}
\end{equation*}
\textbf{\textit{Step 2:} } If $b$ is a deterministic constant, let $b \cdot Id := (b t : t \geq 0)$ and suppose now that $\xi$ can be written as  $\xi = b \cdot Id + M^{\leq K}$. Then, we can write 
\begin{equation*}
    \hat{S}^{(n)}(\xi) = \hat{S}^{(n)}(b \cdot Id)  +\hat{S}^{(n)}(M^{\leq K}), 
\end{equation*}
where the sequence of processes $(\hat{S}^{(n)}(b \cdot Id) : n \geq 1)$ is deterministic and converges uniformly to the continuous function $b\cdot Id$. Indeed, notice that the reinforcement doesn't affect the drift term since $\hat{S}^{(n)}(b \cdot Id)_t  = b \lfloor nt \rfloor / n$. We deduce from   \cite[Lemma 3.33]{JacodShiryaev_LimitTheorems} that, as $n \uparrow \infty$, we still have 
\begin{equation}\label{equation:step2}
    \hat{S}^{(n)}\left( b \cdot Id + M^{\leq K} \right) = \hat{S}^{(n)}(b \cdot Id)  + \hat{S}^{(n)}\left( M^{\leq K} \right) \overset{\scr{L}}{\rightarrow} b \cdot Id + q  \hat{B} + \hat{\xi}_{0,K}^{(3)}. 
\end{equation}
\[ {\tag*{(\textit{End of Step 2})}} \]
\indent From here, we work with the Lévy process $\xi$ with triplet $(a,q^2,\Lambda)$,   with Lévy-Itô decomposition given by: 
\begin{equation*}
    \xi = a \cdot Id + M^{\leq 1} + \xi^{(3)}, 
\end{equation*}
and we  denote    its jump measure  by $\mathcal{N}$ -- in particular, we have  $\xi^{(3)} = \indic{(-1,1)^c}x * {\mathcal{N}}$.  For any $K > 1$, we can rearrange the triplet by compensating and modifying appropriately the drift coefficient, in such a way that we have: 
\begin{equation*}
    \xi = b_KId + M^{\leq K} + \xi^{\geq K}, 
\end{equation*}
where $\xi^{\geq K}:= \indic{(-K,K)^c}x * {\mathcal{N}}$. Before moving  to \textit{Step 3}, let us make the two following remarks. \par
$\bullet$ First,  notice that for each fixed $n$, $S^{(n)} \left(  \xi^{\geq K} \right) \overset{\mathbb{P}}{\rightarrow} 0$ uniformly in probability as $K \uparrow \infty$. Indeed,  we have 
\begin{align*}
    \mathbb{P} \Big( \sup_{t \in [0,1] }| \hat{S}_{\lfloor nt \rfloor}^{(n)}\left( \xi^{\geq K} \right) |> \epsilon \Big)
    & \leq \proba{ \Delta \xi_t^{\geq K} \neq 0 \text{ for some }t \in [0,1] }{} , 
\end{align*}
where the right-hand side can be written in terms of the jump process $\mathcal{N}$ of $\xi$ as
\begin{equation} \label{equation:step3}
    \mathbb{P} \Big( \mathcal{N}\big(\{ (t,x) \in [0,1]\times \mathbb{R} : |x| \geq K \} \big) \geq 1 \Big) = 1- e^{-\left( (-\infty,K ] \cup [K,\infty) \right) }.
\end{equation}
The right-hand side in the previous display converges  to 0 as $K \uparrow \infty$ and notice that the bound does not depend on $n$. \par 
$\bullet$ Let $\hat{\xi}$ be  the  noise reinforced Lévy process of characteristics $(a,q^2,\Lambda, p)$ and write  its jump measure   by $\hat{\mathcal{N}}$.  Again, we can rewrite $\hat{\xi}$, by compensating appropriately and modifying the drift coefficient, as follows: 
\begin{equation*}
    \hat{\xi} = b_K Id + q  \hat{B} +\hat{\xi}_{0,K}^{(3)} + \Sigma_{K,\infty}. 
\end{equation*}
Arguing as before, we  have the  uniform convergence in probability $b_K Id + q  \hat{B} + \hat{\xi}_{0,K}^{(3)} \overset{\mathbb{P}}{\rightarrow} \hat{\xi}$ as $K \uparrow \infty$,   since, by the description of $\hat{\mathcal{N}}$ given in Definition \ref{proposition:reinforcedPPPbyMarking}, we have  
\begin{equation*}
    \mathbb{P} \Big( \sup_{t \in [0,1]} \left| \Sigma_{K,\infty}(t) \right| \geq \epsilon \Big) \leq   \proba{ \hat{\mathcal{N}}\big(\{ (t,x) \in [0,1]\times \mathbb{R} : |x| \geq K \} \big) \geq 1 }{} = 1-e^{-(1-p) \Lambda \left( (-\infty,K ] \cup [K,\infty) \right) }.
\end{equation*}
\textbf{\textit{Step 3:} } To conclude, for $K> 1$, we write respectively the Lévy process and the corresponding NRLP  without their jumps of size greater than $K$ as 
\begin{equation*}
    \xi^{\leq K} := b_K Id + M^{\leq K}, \hspace{10mm} \text{ and } \hspace{10mm} \hat{\xi}^{\leq K} := b_K Id + q  \hat{B} + \hat{\xi}_{0,K}^{(3)}.
\end{equation*}
  In (\ref{equation:step2}), we already proved that for each fixed $K$, we have 
\begin{equation*}
    \hat{S}^{(n)}(\xi^{\leq K}) \overset{\scr{L}}{\rightarrow} \hat{\xi}^{\leq K}, \text{ as } n \uparrow \infty, \hspace{5mm} \text{  while by our second remark,  it holds that  } \hspace{5mm} \hat{\xi}^{\leq K} \overset{\scr{L}}{\rightarrow} \hat{\xi}, \text{ as } K \uparrow \infty.  
\end{equation*}
Since the convergence in distribution is metrisable, there exists an increasing sequence $(K(n): n \geq 1)$   converging to infinity slowly enough as $n \uparrow \infty$, such that   
\begin{equation*}
    \hat{S}^{(n)} \big( \xi^{ \leq K(n) } \big) \overset{\scr{L}}{\rightarrow} \hat{\xi},  \quad  \quad \text{ as } n \uparrow \infty. 
\end{equation*}
Moreover, we can write  
\begin{equation*}
    \hat{S}^{(n)} \left( \xi \right) = \hat{S}^{(n)} \big( \xi^{\leq K(n)} \big) + \hat{S}^{(n)} \big( \xi^{\geq K(n)} \big), 
\end{equation*} where for each $\epsilon>0$, by \eqref{equation:step3} we have: 
\begin{equation*}
     \lim_{n \uparrow \infty } \mathbb{P} \Big( \sup_{t \in [0,1]}|\hat{S}_{\lfloor nt \rfloor}^{(n)} \left( \xi^{\geq K(n)} \right)  | > \epsilon \Big) \leq \lim_{n \uparrow \infty }  1-e^{-\Lambda((-\infty ,K(n) ] \cup [ K(n) ,\infty )) }  = 0. 
\end{equation*}
We can now apply   \cite[Lemma 3.31, Chapter  VI]{JacodShiryaev_LimitTheorems}  to deduce that the convergence $\hat{S}^{(n)} (\xi)\overset{\scr{L}}{\rightarrow} \hat{\xi}$ holds. 
\[ {\tag*{(\textit{End of Step 3})}} \]
With this last result we conclude the proof of Proposition \ref{theorem:weakConvergenceOfSkeletons}.  
\subsubsection{Convergence of finite-dimensional distributions} \label{subsection:fdds}
We maintain the notation and setting introduced at the beginning of Section \ref{section:jointConvergence}. 
\begin{proposition} \label{thm:convergenciaConjuntaFDD}
Let $\xi$ be a Lévy process of characteristic triplet $(a,q^2,\Lambda)$  and  denote  its characteristic exponent by $\Psi$. Fix  $p \in (0,1)$ an admissible memory parameter, and for each $n$, let 
$(S^{(n)}_k,\hat{S}^{(n)}_k)$ be the sequence of $n$-skeletons and its corresponding reinforced versions as defined in (\ref{definition:LevyReinforcedSkeletton}). 
Then, there is the weak convergence in the sense of finite-dimensional distributions, 
\begin{equation}
\Big( \big( S^{(n)}_{\lfloor n t  \rfloor} \big)_{t \in [0,1]} ,  \big( \hat{S}^{(n)}_{\lfloor n t  \rfloor} \big)_{t \in [0,1]} \Big) \overset{f.d.d.}{ \longrightarrow} \Big(  (\xi_t)_{t \in [0,1]} , (\hat{\xi}_t)_{t \in [0,1]} \Big), 
\end{equation}
where we denoted by  $(\xi, \hat{\xi})$ a pair of processes with law characterised  by (\ref{eq:leyConjunta}).  
\end{proposition}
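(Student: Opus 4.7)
The plan is to establish convergence in distribution of the joint characteristic functions of the finite-dimensional distributions. By the scaling identity \eqref{identity:extension} I reduce to $t = 1$ and fix $0 < t_1 < \dots < t_k \leq 1$ together with coefficients $\lambda_j, \beta_j \in \mathbb{R}$. Let $(\epsilon_i)_{i \geq 2}$ be the independent Bernoulli$(1-p)$ variables governing the reinforcement (with $\epsilon_1 := 1$), let $\mathcal{I}_n := \{i \leq n : \epsilon_i = 1\}$ denote the innovation indices, and let $N^{(n)}_i(m)$ be the counting function from \eqref{definition:countingRepsNRRW}, extended as $N^{(n)}_i \equiv 0$ when $i \notin \mathcal{I}_n$. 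Then \eqref{identity:definition:countingRepsNRRW} gives
\begin{equation*}
\sum_{j=1}^{k} \big( \lambda_j S^{(n)}_{\lfloor nt_j \rfloor} + \beta_j \hat{S}^{(n)}_{\lfloor nt_j \rfloor} \big) = \sum_{i=1}^{n} \theta^{(n)}_i X^{(n)}_i, \qquad \theta^{(n)}_i := \sum_{j=1}^{k}\big( \lambda_j \indic{\{i \leq \lfloor nt_j \rfloor\}} + \beta_j N^{(n)}_i(\lfloor nt_j \rfloor)\big).
\end{equation*}
The $X^{(n)}_i$ are i.i.d.~with characteristic function $\exp(\Psi(\cdot)/n)$ and are independent of the reinforcement randomness; conditioning on the latter yields the key formula
\begin{equation*}
\mathbb{E}\Big[ \exp\Big\{ i \sum_{j=1}^{k} \big( \lambda_j S^{(n)}_{\lfloor nt_j \rfloor} + \beta_j \hat{S}^{(n)}_{\lfloor nt_j \rfloor} \big) \Big\}\Big] = \mathbb{E}\Big[ \exp\Big( \tfrac{1}{n}\sum_{i=1}^{n} \Psi(\theta^{(n)}_i) \Big)\Big].
\end{equation*}

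The remaining task is to show that $\tfrac{1}{n}\sum_i \Psi(\theta^{(n)}_i)$ converges in probability to the exponent appearing in \eqref{eq:leyConjunta}, so that dominated convergence (using $|e^{\Psi/n}| \leq 1$ since $\Re\,\Psi \leq 0$) finishes the argument. Split the sum according to $\epsilon_i$. The non-innovation contribution involves bounded deterministic summands weighted by i.i.d.~Bernoulli variables, and the standard law of large numbers gives
\begin{equation*}
\tfrac{1}{n}\sum_{i : \epsilon_i = 0} \Psi\Big( \sum_{j=1}^{k} \lambda_j \indic{\{i \leq \lfloor nt_j \rfloor\}} \Big) \longrightarrow p\cdot \mathbb{E}\Big[\Psi\Big( \sum_{j=1}^k \lambda_j \indic{\{U \leq t_j\}}\Big)\Big]
\end{equation*}
in probability, with $U$ uniform on $[0,1]$. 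For the innovation part, the key observation is that for each $i \in \mathcal{I}_n$ the process $(N^{(n)}_i(k))_{k \geq i}$ is a Markov chain with transition $\mathbb{P}(\Delta N^{(n)}_i(k+1) = 1 \mid N^{(n)}_i(k)) = p N^{(n)}_i(k)/k$, which rescales (with $i/n \to u \in (0,1]$) to a time-changed Yule process $(Z_{p(\log s - \log u)})_{s \geq u}$ started at $1$ at $s = u$. Combined with the fact that the scaled innovation times $(i/n : i \in \mathcal{I}_n)$ are asymptotically uniform on $[0,1]$ with density $1-p$ and with the representation \eqref{formula:RepresentacionYuleSimon} of the Yule-Simon process $Y$, this gives the limit
\begin{equation*}
\tfrac{1}{n} \sum_{i \in \mathcal{I}_n} \Psi(\theta^{(n)}_i) \longrightarrow (1-p)\cdot\mathbb{E}\Big[\Psi\Big( \sum_{j=1}^{k} \big(\lambda_j \indic{\{Y(t_j) \geq 1\}} + \beta_j Y(t_j)\big)\Big)\Big]
\end{equation*}
in probability, and adding the two limits recovers exactly the right-hand side of \eqref{eq:leyConjunta}.

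The technical heart of the proof, and its main obstacle, lies in this last convergence: the counts $\{N^{(n)}_i : i \in \mathcal{I}_n\}$ are strongly coupled through the shared reinforcement variables, so a direct law of large numbers does not apply. The natural route is a first- and second-moment computation, where exchangeability of the innovations together with the Pólya-urn convergence to a time-changed Yule fixes the mean, and where the variance is shown to vanish via a pairwise covariance estimate for $\Psi(\theta^{(n)}_i)\,\overline{\Psi(\theta^{(n)}_j)}$ at distinct innovations $i \neq j$. Here the admissibility condition $p\beta < 1$ is essential: it guarantees that the summands lie uniformly in $L^r(\mathbb{P})$ for some $r > 1$ (since $|\Psi(\theta)| = O(|\theta|^{\beta + \varepsilon})$ while $Y(t) \in L^r(\mathbb{P})$ for $r < 1/p$ by Lemma \ref{lemma:yuleSimonProcess}), which in turn furnishes the required control on both the covariance and on the uniform integrability needed to pass from convergence in probability of the exponent to convergence of its exponential.
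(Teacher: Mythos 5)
Your architecture coincides with the paper's: reduce to $[0,1]$, write the joint linear combination as $\sum_i \theta^{(n)}_i X^{(n)}_i$, condition on the reinforcement randomness to obtain $\mathbb{E}[\exp(\tfrac{1}{n}\sum_i\Psi(\theta^{(n)}_i))]$, split according to innovations versus repetitions, identify the two limits, and conclude by bounded convergence. The repetition part and the identification of both limiting expressions are handled exactly as in the paper.

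The genuine gap is that you leave unproven the one step that carries all the difficulty, namely the convergence in probability of $\tfrac{1}{n}\sum_{i\in\mathcal{I}_n}\Psi(\theta^{(n)}_i)$, and the route you sketch for it would not go through as stated. A second-moment / pairwise-covariance argument requires the summands $\Psi(\theta^{(n)}_i)$ to be square-integrable uniformly in $n$; but by the Blumenthal--Getoor bound $|\Psi(\theta)| = O(|\theta|^{\beta+\epsilon})$ and the fact that $N^{(n)}_i(\lfloor nt\rfloor)$ conditioned on being positive is (asymptotically) Yule--Simon, hence in $L^r(\mathbb{P})$ only for $r<1/p$, one gets $\Psi(\theta^{(n)}_i)\in L^s(\mathbb{P})$ only for $s<1/(p(\beta+\epsilon))$. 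Admissibility $p\beta<1$ guarantees some $s>1$ but not $s\geq 2$: whenever $p\beta>1/2$ the variance you propose to estimate may be infinite, so the method fails on a nontrivial part of the admissible range (this is the same obstruction flagged in Remark \ref{remark:casopgeq2} for the tightness argument). The paper closes precisely this gap by invoking Corollary 3.7 of \cite{BertoinNRLP}, an $L_1(\mathbb{P})$ law of large numbers for continuous functionals $F$ of the counting processes satisfying $F(0)=0$ and $|F(\omega)|\leq c\,\omega(1)^{\gamma}$ with $\gamma<1/p$; the actual work in the paper's proof is the verification of these hypotheses for $F(\omega)=\Psi(\sum_j\lambda_j\indic{\{\omega(t_j)\geq1\}}+\beta_j\omega(t_j))$ via the growth estimate on $\Psi$. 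To repair your argument you must either import that $L_1$ result, or replace the variance computation by a truncation of $F$ at level $\omega(1)\leq M$ (where second moments exist) together with a uniform-integrability estimate in $L^s$, $1<s<1/(p(\beta+\epsilon))$, for the truncation error.
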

Remark that since the convergence is in the sense of finite dimensional distributions, the restriction $p < 1/2$ is  dropped. Our proof will rely on two results taken respectively from \cite{BertoinNRLP} and   \cite{BlumenthalGetoor_SampleFunctions}. We state them without proof for ease of reading: \medskip \\
\textbf{Corollary 3.7 of \cite{BertoinNRLP} } \textit{ Let $F$ be a continuous functional on counting functions such that $F(0) = 0$ where, with a slight abuse of notation we still write $0$ for the identically $0$ trajectory. Further, suppose that there exists $c>0$ and $1 \leq \gamma < 1/p$ such that $|F(\omega)|\leq c \omega(1)^{\gamma}$ for every counting function $\omega :[0,1] \rightarrow \mathbb{N}$. Then, if $Y$ is a Yule-Simon process with parameter $1/p$, the following convergence holds in $L_1(\mathbb{P})$:}
\begin{equation}
    \lim_{n \rightarrow \infty} \frac{1}{n} \sum_{j=1}^n F\left( N_j(\lfloor n \cdot \rfloor ) \right) = (1-p) \esp{F(Y)}{}.
\end{equation}
The second result concerns the asymptotic behaviour of $\Psi$. \medskip \\
\textbf{Lemma 3.1 of \cite{BlumenthalGetoor_SampleFunctions}} \textit{ The  asymptotic behaviour of the characteristic exponent $\Psi$ as $|z| \uparrow \infty$ is given by:} 
\begin{equation*} 
    |\Psi(z)| = 
    \begin{cases}
    o(|z|^{2 + \eta }) \quad \hspace{4mm} \text{when }q \neq 0\\
    o(|z|^{ \beta(\Lambda) + \eta }) \quad \text{when }q  = 0 \text{ and } \int_{|x|\leq 1}|x| \Lambda(\dd x) = \infty \\
    o(|z|^{ 1 + \eta }) \quad \hspace{4mm} \text{when }q  = 0 \text{ and } \int_{|x|\leq 1}|x| \Lambda(\dd x) < \infty.
    \end{cases}
\end{equation*}
Now we have all the ingredients needed for the proof of Proposition \ref{thm:convergenciaConjuntaFDD}.
\begin{proof}
We fix $k \geq 1$, $0 <\lambda_1 < \dots < \lambda_k \leq 1$, and let  $\beta_1, \dots \beta_k$ be real numbers. In order to establish the  finite dimensional convergence, it suffices to show that
\begin{equation} \label{equation:charFunctionParejaCaminatas}
    \esp{\exp\bigg\{ i \sum_{j=1}^k \big( \lambda_j  S_{\lfloor n t_j\rfloor} + \beta_j  \hat{S}_{\lfloor n t_j\rfloor} \big) \bigg\}  }{},  
\end{equation}
converges as $n \uparrow \infty$ towards (\ref{eq:leyConjunta}). In this direction, for each $n$, we write $(N_\ell^{(n)}(k))_{k \geq 1, \ell \geq 1}$ the counting process of repetitions of $\hat{S}^{(n)}$ introduced in \eqref{definition:countingRepsNRRW}.  Recalling the identity \eqref{identity:definition:countingRepsNRRW}, 
we can write, 
\begin{equation*}
     \hat{S}_{ \lfloor nt \rfloor }^{(n)} 
    =  \sum_{\ell =1}^{n} N_\ell ^{(n)}(\lfloor nt \rfloor ) X_\ell ^{(n)} 
    \hspace{10mm} \text{and} \hspace{10mm}
      {S}_{ \lfloor nt \rfloor }^{(n)} 
    = \sum_{\ell =1}^n \indic{\{ \ell  \leq \lfloor nt \rfloor \}}X_\ell ^{(n)}, 
\end{equation*}
with $\mathbb{E}[{e^{i \lambda X^{(n)}_\ell  }}] = e^{\frac{1}{n} \Psi(\lambda)}$ for every $\ell$. Then, by independence of  the counting processes $(N_\ell ^{(n)}(k))_{k \geq 1, \ell \geq 1}$ from  the  sequence  $(X^{(n)}_\ell)_{\ell \geq 0}$, the characteristic function (\ref{equation:charFunctionParejaCaminatas}) can be written as follows
\begin{align*}
    &\esp{\exp\bigg\{ i \sum_{j=1}^k \big( \lambda_j  S_{\lfloor n t_j\rfloor} + \beta_j  \hat{S}_{\lfloor n t_j\rfloor} \big) \bigg\}  }{} 
    \\
    &=
    \esp{\exp\bigg\{ i  \sum_{\ell =1}^n \left(\sum_{j=1}^k   \left(\lambda_j N_\ell ^{(n)}(\lfloor n t_j\rfloor )  + \beta_j \indic{\{ \ell \leq \lfloor nt_j \rfloor  \}} \right)  \right) X_\ell ^{(n)} \bigg\}  }{} \\
    &= \esp{\exp\bigg\{  -
     \frac{1}{n} \sum_{\ell=1}^n \Psi 
     \left(\sum_{j=1}^k \lambda_j N_\ell ^{(n)}(\lfloor n t_j\rfloor )  + \beta_j \indic{\{ \ell  \leq \lfloor nt_j \rfloor  \}} \right) 
    \bigg\}  
    }{}. 
\end{align*}
 Remark that since the law of $(N_\ell^{(n)}(k))_{k \geq 1, \ell \geq 1}$ doesn't depend on $n$, we can drop the up-script $(n)$ in the last display. Next,  recall that $N_\ell (\lfloor nt \rfloor ) = 0$ for all $t \in [0,1]$ if $\epsilon_\ell = 1$  while on the other hand,  if $\epsilon_\ell =1$,  $N_\ell (\lfloor ns \rfloor) =0$ for $ \lfloor ns \rfloor < l$, and  $N_\ell( \lfloor ns \rfloor ) \geq  1$ if $\lfloor ns \rfloor \geq l$ . Hence,  we have: 
\begin{equation*}
    \indic{ \{ \ell \leq \lfloor ns \rfloor \}} = \indic{ \big\{ N_\ell \lfloor ns \rfloor \geq 1\big\} }, \quad \quad \text{ on }   \{\epsilon_\ell = 0\}.
\end{equation*}
By the previous observations, we can write:  
\begin{align} \label{eq:partirEndosCaracteristica}
    & \frac{1}{n} \sum_{\ell =1}^n \Psi 
     \left(\sum_{j=1}^k \lambda_j N_\ell (\lfloor n t_j\rfloor )  + \beta_j \indic{\{ \ell \leq \lfloor nt_j \rfloor  \}} \right) 
    \\
    &= 
    \frac{1}{n} \sum_{\ell =1}^n \Psi 
     \left(\sum_{j=1}^k \lambda_j N_\ell (\lfloor n t_j\rfloor )  + \beta_j \indic{ \big\{ N_\ell \lfloor nt_j \rfloor \geq 1\big\} } \right) \indic{\{ \epsilon_\ell =0 \}}
     + 
     \frac{1}{n} \sum_{\ell =1}^n \Psi 
     \left(\sum_{j=1}^k \beta_j \indic{\{ \ell  \leq \lfloor nt_j \rfloor  \}} \right) \indic{\{ \epsilon_\ell=1 \}}.  \nonumber
\end{align}
Now, let us  establish the convergence in probability of both terms in the previous display separately. Starting with the first one, we introduce the functional $F : D[0,1] \rightarrow \mathds{C}$ defined as follows: 
\begin{equation}
    F (\omega) := \Psi \Bigg( \sum_{j=1}^k \lambda_j \omega(t_j) + \beta_j \indic{\{ \omega (t_j) \in [1 , \infty] \}}   \Bigg),  
\end{equation}
for $\omega : [0,1] \rightarrow \mathbb{N}$ a generic counting function. This is a $\mathbb{Q}$ - a.s. continuous functional, since $\omega \mapsto \indic{\{ \omega(s) \in [1,\infty] \}}$ can be written as  $\omega \mapsto \omega(s) \wedge 1$, which is a composition of a $\mathbb{Q}$-a.s. continuous functional  with the continuous  mapping $x \mapsto x \wedge 1$. Moreover, we have  $F(0) = 0$, and  notice that we can bound:  
\begin{equation*}
    \left| \sum_{j=1}^k \lambda_j \omega (t_j) + \beta_j \indic{\{ \omega(t_j) \in [1,\infty] \}} \right| \leq w(1) \Bigg(  \sum_{j=1}^k   \left|\lambda_j  \right| + \left|\beta_j  \right| \Bigg),  
\end{equation*}
by monotonicity of $\omega$ and the inequality  $\indic{\{ \omega (s) \in [1,\infty] \}} \leq \omega(s)$.
Now, by Lemma 3.1 of \cite{BlumenthalGetoor_SampleFunctions}, we deduce   that  $F$ satisfies the hypothesis of Corollary 3.7 from \cite{BertoinNRLP}, since 
\begin{equation}
    |F(\omega )| \leq |\omega(1)|^{\gamma} K \Bigg( \sum_{j=1}^k |\lambda_j| + |\beta_j| \Bigg) ^\gamma, \quad \text{with } \quad 
    \begin{cases}
    \gamma \in (2 , 1/p), \quad \text{ if } q \neq 0\\
    \gamma \in (1, 1/p),  \quad \text{ if } q =0, 
    \end{cases}
\end{equation}
for a constant $K$ that only depends on $\beta(\Lambda)$ and $q$. From an application of  Corollary 3.7 of \cite{BertoinNRLP}, we obtain  the following convergence: 
\begin{align} \label{eq:limite1}
     & \lim_{n \rightarrow \infty} \frac{1}{n} \sum_{\ell =1}^n \Psi 
     \Bigg( \sum_{j=1}^k \lambda_j N_\ell (\lfloor n t_j\rfloor )  + \beta_j \indic{ \big\{ N_\ell  \lfloor nt_j \rfloor \geq 1\big\} } \Bigg) \indic{\{ \epsilon_\ell =0 \}} \nonumber  \\
     & = 
     \lim_{n \rightarrow \infty} \frac{1}{n} \sum_{\ell =1}^n \Psi 
     \Bigg( \sum_{j=1}^k \lambda_j N_\ell (\lfloor n t_j\rfloor )  + \beta_j \indic{ \big\{ N_\ell  \lfloor nt_j \rfloor \geq 1\big\} } \Bigg) \nonumber \\
     & = (1-p) \esp{\Psi\Bigg( \sum_{j=1}^k \lambda_j Y( t_j )  + \beta_j \indic{ \big\{ Y(t_j) \geq 1\big\} } \Bigg)  }{}.
\end{align}
Turning our attention to the second term, similarly, we claim that: 
\begin{equation} \label{eq:limite2}
    \lim_{n \rightarrow \infty} \frac{1}{n} \sum_{\ell =1}^n \Psi 
     \Bigg(\sum_{j=1}^k \beta_j \indic{\{ \ell  \leq \lfloor nt_j \rfloor  \}} \Bigg) \indic{\{ \epsilon_\ell =1 \}} = p \esp{\Psi \Bigg( \sum_{j=1}^n \beta_j \indic{ \big\{ U \leq  t_j\big\} } \Bigg)  }{}.
\end{equation}
Indeed, if for each $n$ we denote by $u(n)$ a uniform random variable on $\{1, \dots , n \}$ independent of the i.i.d. sequence  $(\epsilon_n)_n$ of Bernoulli with parameter $p$, we have 
\begin{align} \label{eq:termino2Convergencia}
     \esp{\frac{1}{n} \sum_{\ell =1}^n \Psi 
     \Bigg( \sum_{j=1}^k \beta_j \indic{\{ \ell  \leq \lfloor nt_j \rfloor  \}} \Bigg) \indic{\{ \epsilon_l=1 \}} }{}
     &= \esp{ \Psi 
     \Bigg( \sum_{j=1}^k \beta_j \indic{\{ u(n) \leq \lfloor nt_j \rfloor  \}} \Bigg) \indic{\{ \epsilon_{u(n)}=1 \}} }{} \nonumber \\
     &=
     \esp{ \Psi 
     \Bigg(\sum_{j=1}^k \beta_j \indic{\{ u(n) \leq \lfloor nt_j \rfloor  \}} \Bigg)  }{}p, 
\end{align}
 since  $\epsilon_{u(n)}$ is independent of $u(n)$ for each $n$. Further, since $u(n)/n$ converges in law towards a uniform random variable in $[0,1]$, the sequence of step processes $(\indic{ \{ u(n) \leq \lfloor n \cdot  \rfloor \} })_{n \in \mathbb{N}}$   converges weekly towards $\indic{\{ U \leq \cdot  \}}$. Consequently, as $n \uparrow \infty$, (\ref{eq:termino2Convergencia})  converges towards
 \begin{equation*}
 p \, \esp{ \Psi \Bigg( \sum_{j=1}^k \beta_j \indic{\{ U \leq t_j \}} \Bigg)  }{},  
 \end{equation*}
 where we recall that  $\indic{ \{ U \leq t \}}$  has the same distribution as $\indic{ \{ Y(t) \geq 1 \}}$ by the description \eqref{formula:RepresentacionYuleSimon}. Finally, recall the identity of Proposition \ref{theorem:leyConjunta} for characteristic function of the finite dimensional distributions of the pair $(\xi , \hat{\xi})$. It follows from (\ref{eq:partirEndosCaracteristica}) and the limits (\ref{eq:limite1}), (\ref{eq:limite2})  that as $n \uparrow \infty$,  we have the convergence towards the characteristic function of the finite-dimensional distributions of $(\xi , \hat{\xi})$,  
\begin{align*}
    & \lim_{n \uparrow \infty} \esp{\exp\bigg\{ i \sum_{j=1}^k \big( \lambda_j  S_{\lfloor n t_j\rfloor} + \beta_j  \hat{S}_{\lfloor n t_j\rfloor}\big) \bigg\}  }{} \\
    &=  \exp \bigg\{ p  \esp{ \Psi \Bigg( \sum_{j=1}^k \lambda_j \indic{\{ Y(t_j) \geq 1 \}} \Bigg) }{} +(1-p) \esp{\Psi\Bigg( \sum_{j=1}^k \lambda_j \indic{\{ Y(t_j) \geq 1 \}} + \beta_i Y(s_i) \Bigg) }{} \bigg\}. 
\end{align*}
\end{proof}
\noindent This result paired with the tightness established  in Proposition \ref{theorem:weakConvergenceOfSkeletons} proves Theorem \ref{thm:convergenciaConjunta}.

\begin{toexclude}
{\color{blue}(?) En LevyIto 2 mejor iniciar con nrPPP y construir al correspondiente NRLP?} We will assume from now on that all the Poissonian NRLPs considered in this work have been explicitly constructed by means of a noise reinforced PPP.  In consequence we can now finally state:
\begin{theorem} \label{Theorerm:LevyIto-Part2} \emph{(Lévy-Itô decomposition-Part 2)}\\
Let $\hat{\xi}$ be a NRLP with characteristics $(a,q^2,\Lambda,p)$ and denote by $\hat{\mu}$ its jump measure. Then, $\hat{\mu}$ is a noise reinforced PPP with characteristic $\Lambda$ and parameter $p$ and the following equality holds almost surely for all t, 
\begin{equation} \label{Reinforced-Lévy-Ito}
     \hat{\xi}_t = a t + q\cdot \hat{B}_t + \int_{[0,t]\times (-1,1)^c} x \hat{\mu}({\dd s,\dd x}) +  \int_{[0,t]\times (-1,1)} x \hat{\mu}^{(sc)}({\dd s,\dd x}) \quad \quad t \geq 0.
\end{equation}
\end{theorem}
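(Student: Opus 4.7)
The plan is to observe that this theorem is essentially a repackaging of two results already established in the preceding sections, so the proof reduces to invoking them and reconciling notations.

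First, I would establish the claim that $\hat{\mu}$ is a noise reinforced Poisson point process with characteristic measure $\Lambda$ and reinforcement parameter $p$ by citing Theorem \ref{lemma:PreFormulaCompensacion} directly. That theorem was proved via the explicit representation of NRLPs in $[0,1]$ as series of Yule-Simon processes combined with the characterisation of NRPPPs from Proposition \ref{caractherisation:nrppp}, and extends from $[0,1]$ to $\mathbb{R}^+$ through the identity in distribution \eqref{identity:extension}.

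Second, for the pathwise decomposition \eqref{Reinforced-Lévy-Ito}, I would apply Theorem \ref{corollary:constructionNRLPs}. Now that we know $\hat{\mu}$ is a NRPPP with characteristics $(\Lambda,p)$, Lemma \ref{lemma:NRPPasIntegralsOfNRPPP}(ii) ensures that the compensated integral $\indic{(-1,1)} x * \hat{\mu}^{(sc)}$ is well-defined as the uniform-on-compacts limit (along a subsequence) of the truncated integrals on $\{e^{-r}\leq |x|<1\}$ and yields a NRLP with characteristics $(0,0,\indic{(-1,1)}\Lambda,p)$. Together with the large-jump part $\indic{(-1,1)^c}x*\hat{\mu}$, which by Lemma \ref{lemma:NRPPasIntegralsOfNRPPP}(i) is a reinforced compound Poisson process of characteristics $(0,0,\indic{(-1,1)^c}\Lambda,p)$, and the independent drift/Gaussian part $at + q\hat{B}_t$, this yields an rcll process; the almost-sure equality with $\hat{\xi}$ (not merely equality in law) is exactly what Theorem \ref{corollary:constructionNRLPs} establishes via the truncation/decoupling argument at the end of its proof.

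The only point requiring some care, rather than a genuine obstacle, is the convention for $\hat{\mu}^{(sc)}$: as emphasised in the Remark following Theorem \ref{corollary:constructionNRLPs}, this symbol denotes the \emph{space}-compensated integral from \eqref{equation:compensatedIntegral_} and \emph{not} the time-compensated measure $\hat{\mu}-\hat{\mu}^{\tt{p}}$ of Jacod-Shiryaev. In contrast to the classical Lévy case, the two do not coincide here, since the predictable compensator identified in Proposition \ref{theorem:compensationFormula} contains the extra memory term $p t^{-1}\scr{E}_t(\dd x)\dd t$; accordingly $\hat{\xi}^{(3)}$ is a purely discontinuous semimartingale but not a local martingale. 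Once this is flagged, no new computation is needed and the proof consists simply of combining the two cited theorems.
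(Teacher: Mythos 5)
Your proposal is correct and matches the paper's own treatment: the statement is exactly the conjunction of Theorem \ref{lemma:PreFormulaCompensacion} (the jump measure $\hat{\mu}$ is a NRPPP with characteristic measure $\Lambda$ and parameter $p$) and Theorem \ref{corollary:constructionNRLPs} (the almost-sure synthesis of $\hat{\xi}$ from $\hat{\mu}$ via Lemma \ref{lemma:NRPPasIntegralsOfNRPPP} and the uniform convergence of the space-compensated integrals), and the paper likewise proves it by combining these two results. Your cautionary remark distinguishing the space-compensated measure $\hat{\mu}^{(sc)}$ from the time-compensated measure $\hat{\mu}-\hat{\mu}^{\tt{p}}$ is exactly the point the paper also flags.
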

\begin{remark} Beware of the notation,  $\hat{\mu}^{(sc)}$ stands for the \textit{space}-compensated  jump measure $\hat{\mu}$ and should not be confused with the \textit{time}-compensated measure $(\mu-\mu^p)$  in the sense of II-1.27 of \cite{JacodShiryaev_LimitTheorems}. For instance, the integral with respect to $\mu^{(sc)}$ in (\ref{Reinforced-Lévy-Ito}) is not a local martingale. For Lévy processes, the time and space compensation of its PPP coincide, as the compensating measure is the same.
\end{remark}
We briefly comment on the statements and the connection with the Lévy-Itô decomposition. The Lévy-Itô decomposition for Lévy processes states that any Lévy process $\xi$ with triplet $(a,q^2,\Lambda)$ can be written as $\xi = C + J$, where $C$ is a a Brownian motion with drift while $J$ is a purely discontinuous process that can be explicitly recovered from the jump measure $\mu$ defined in (\ref{ppp-Levy}). More precisely, if we denote as $\mu^{(sc)}$ the compensated PPP of jumps $\mu^{(sc)} = \mu -\dd t\Lambda(\dd x)$,  we can write 
\begin{equation} \label{equation:LevyIto}
    \xi_t = ct + q\cdot B_t + \int_{[0,t]\times (-1,1)^c} x \mu({ \dd s,\dd x}) +  \int_{[0,t]\times (-1,1)} x \mu^{(sc)}({ \dd s,\dd x}) \quad \quad t \geq 0
\end{equation}
where, if we denote as $\xi^{(c)}$  the process corresponding to the compensated integral, 
\begin{equation*}
    {\xi}^{(c)}_t = \lim_{a \downarrow 0} \sum_{s \leq t} \indic{\{a \leq |\Delta \xi_s| < 1\}} \Delta \hat{\xi}_s - t \int_{\{ a\leq |x| < 1\}} x \Lambda(\dd x) \quad \quad  t \geq 0
\end{equation*}
and for any $A \in \scr{B}(\mathbb{R})$ with $\Lambda(A) < \infty$, the process $(\mu([0,t] \times A), t \geq 0)$ is a Poisson process with rate  $\Lambda(A)$. On the other hand, if we consider the reinforced version $\hat{\xi}$ of (\ref{equation:LevyIto}), this is a NRLP with characteristics $(a,q^2,\Lambda,p)$ and we know from (\ref{descomposicionNRLP}) and  Lemma \ref{lemma:NRPPasIntegralsOfNRPPP} that this process can be written as $\hat{\xi} = \hat{C} + \hat{J}$ where $\hat{C}$ is a noise reinforced Brownian motion $\hat{B}$ with drift and $\hat{J}$ is the reinforced version of the Lévy process $J$, obtained by integrating a reinforced PPP. More precisely,  since for $0<a<b$ 
\begin{equation*}
    \sum_{s\leq t} \indic{\{a \leq  |\Delta \xi_s| < b\}} \Delta \hat{\xi}_s  =\int_{\{a \leq |x| <  b\} \times [0,t]} x \hat{\mu}( \dd s , \dd x)
\end{equation*}
 we deduce from Corollary \ref{theorem:laMartingalaNRLP} and Lemma \ref{lemma:NRPPasIntegralsOfNRPPP} (ii) with our current notation that 
\begin{equation*}
    \hat{\xi}^{(c)}_t = \lim_{a \downarrow 0} \sum_{s \leq t} \indic{\{a \leq |\Delta \xi_s| < 1\}} \Delta \hat{\xi}_s - t \int_{\{ a\leq |x| < 1\}} x \Lambda(\dd x) \quad \quad  t \geq 0
\end{equation*}
the  convergence holding uniformly in compact intervals, and we get  (\ref{Reinforced-Lévy-Ito}) by definition of the compensated measure $\hat{\mu}^{(sc)}$ where now, by Theorem \ref{lemma:PreFormulaCompensacion} the process  $(\hat{\mu}([0,t] \times A), t \geq 0)$ is a reinforced Poisson process with rate $\Lambda(A)$. More precisely, it has the law of the noise reinforced version of ${\mu}([0,t] \times A)$ which yields  the equality in distribution, 
\begin{equation}
   \hat{\mu}([0,t] \times A) \overset{\scr{L}}{=}  \reallywidehat{ {\mu}([0,t] \times A)}. 
\end{equation}
From the uniform convergence showed in Corollary \ref{theorem:laMartingalaNRLP} and the structure (\ref{definition:decoratedNRPP}) of the jump process $\hat{\mu}$ we deduce the following decomposition, which comes from splitting the jumps of $\hat{\xi}$ into two groups, the ones occurring for the first time, at times $u \in \scr{P}$ and the ones coming from repetitions, occurring at  times $\{ ue^{s/p} : s \in D_u^Z\}$.   
\end{toexclude}
\addtocontents{toc}{\vspace{\contentsSpacingBefore}}

\section{Applications} \label{section:applications}

We conclude this work with two sections devoted to applications. 

\subsection{Rates of growth at the origin}
\label{section:ratesOfGrowth}
\addtocontents{toc}{\vspace{\contentsSpacing pt}}
In this section we turn our attention to the trajectorial behaviour of noise reinforced Lévy processes at the origin. In this direction let us start by recalling a well known result established by Blumenthal and Getoor \cite{BlumenthalGetoor_SampleFunctions} for Lévy processes.  Let $\xi$ be a Lévy process with characteristic triplet $( a , q^2,  \Lambda)$ with no Gaussian component, viz. $q=0$;  in particular  $\beta(\Lambda) = \beta$. Further,  we make the following hypothesis: \medskip \\
$\bullet$ If $\int_{\{ |x| \leq 1 \}} |x| \Lambda(\dd x) = \infty $, the characteristic exponent can be written as follows: 
\begin{equation*}
    \Psi(\lambda) = \int_{\mathbb{R}} \left( e^{i \lambda x} - 1 - i \lambda x \indic{\{ |x| \leq 1 \}} \right) \Lambda(\dd x). 
\end{equation*}
Observe that in this case,  we have  $\beta(\Lambda) \in [1,2]$. \smallskip \\
$\bullet$ If $\int_{\{ |x| \leq 1 \}} |x| \Lambda(\dd x) < \infty$, which can happen for $\beta(\Lambda) \in [0,1]$, we suppose $\Psi$ takes the following form:
\begin{equation*}
    \Psi(\lambda) = \int_{\mathbb{R}} \left( e^{i \lambda x} - 1 \right) \Lambda(\dd x).
\end{equation*}
This is, when $\int_{[0,1]} |x| \Lambda(\dd x) < \infty$ we are supposing that the Lévy process has no linear drift, the reason being that in that case the behaviour at 0 is dominated by the drift term. We insist in the fact that when $\beta(\Lambda)=1$ the integral $\int_{\{ |x| \leq 1\}} |x| \Lambda(\dd x)$ can be finite or infinite. \par We will be working for the rest of the section under these hypothesis, and we will refer to them as   hypothesis (H). 
It was established by Blumenthal and Getoor in \cite{BlumenthalGetoor_SampleFunctions} that  under (H), the behaviour at zero of a Lévy process is dictated by the Blumenthal-Getoor index of the Lévy measure $\Lambda$. More precisely,  almost surely, we have: 
\begin{equation*}
    \lim_{t \downarrow 0} {t^{-\gamma}} \xi_t = 0 ,  \quad  \text{ if } \beta(\Lambda) < 1/\gamma
  \quad \quad  \quad \text{ and } \quad \quad  \quad 
 \limsup_{t \downarrow 0} {t^{-\gamma}} |\xi_t| = \infty, \quad \text{ if }  \beta(\Lambda) > 1/\gamma.
\end{equation*}
We will show that the same result still holds if we replace the Lévy process $\xi$ by its  noise reinforced version. Concretely, the  main result of the section is  the following:
\begin{proposition}\label{teorema:comportamientoOrigen}
Let $\xi$ be a Lévy process with triplet $(a,q^2,\Lambda)$ satisfying hypothesis \emph{(H)},  and consider   $\hat{\xi}$ its noise reinforced version for an  admissible parameter $p$. Then, 
 almost surely, we have 
\begin{equation} \label{theorem:asimtotOriginlim0}
    \lim_{t \downarrow 0} {t^{-\gamma}} \hat{\xi}_t = 0, \quad \hspace{6mm} \hspace{3mm} \text{ if } \beta(\Lambda) < 1/\gamma, 
\end{equation}
while 
\begin{equation}\label{theorem:asimtotOriginlimsup0}
 \limsup_{t \downarrow 0} {t^{-\gamma}} |\hat{\xi}_t| = \infty, \quad \text{ if }  \beta(\Lambda) > 1/\gamma.
\end{equation}
\end{proposition}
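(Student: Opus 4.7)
The plan is to prove both claims by first reducing to the compensated small-jump part $\hat{\xi}^{(3)}$ via the reinforced Lévy--Itô decomposition of Theorem \ref{corollary:constructionNRLPs}, and then exploiting the key martingale $M_t = t^{-p}\hat{\xi}^{(3)}_t$ of Proposition \ref{proposition:laMartingala} for the limit statement and the NRPPP structure of the jump measure for the limsup statement. Under hypothesis (H) we have $q=0$, and the large-jump part $\hat{\xi}^{(2)}$ is a reinforced compound Poisson process whose first (innovation) jump occurs at a strictly positive time almost surely, so $\hat{\xi}^{(2)}_t=0$ for $t$ small enough, and the drift contribution $at$ is absorbed into the setup. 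Thus it is enough to analyse $\hat{\xi}^{(3)}_t/t^\gamma$ as $t\downarrow 0$.

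For the limit statement \eqref{theorem:asimtotOriginlim0}, the key identity is $\hat{\xi}^{(3)}_t = t^p M_t$, where $M$ is a rcll martingale that vanishes at $0$. I would split into two complementary subcases. When $p\ge\gamma$, the factor $t^{p-\gamma}$ remains bounded as $t\downarrow 0$, and since $M$ is rcll with $M_0=0$, right-continuity gives $\sup_{s\le t}|M_s|\to 0$ a.s., so $|\hat{\xi}^{(3)}_t|/t^\gamma\to 0$ trivially. When $p<\gamma$, I would pick $r\in (\max(1,1/\gamma,\beta(\Lambda)),1/p)$ — this interval is non-empty because $\beta(\Lambda)<1/\gamma$, $1/p>1/\gamma$ and admissibility gives $p\beta(\Lambda)<1$. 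Using the Yule-Simon series representation \eqref{prelim:explicitConstructionCompensatedSums}, Rosi\'{n}ski--Samorodnitsky-type moment inequalities for integrals against Poisson random measures, and the bound $\esp{Y(t)^r}{}\le C_r t$ (which follows from Lemma \ref{lemma:yuleSimonProcess} by conditioning on $\{Y(t)\ge 1\}$), one obtains $\esp{|\hat{\xi}^{(3)}_t|^r}{}\le C\,t\int_{|x|<1}|x|^r\Lambda(\dd x)$. Combining this with Doob's $L_r$-inequality applied to $M$ and the relation $\sup_{s\le t}|\hat{\xi}^{(3)}_s|\le t^p\sup_{s\le t}|M_s|$ yields $\esp{\sup_{s\le t}|\hat{\xi}^{(3)}_s|^r}{}\le C t$. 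A Chebyshev bound at the dyadic times $t_n=2^{-n}$ then gives a summable tail $Ct_n^{1-r\gamma}$ since $r>1/\gamma$, and Borel--Cantelli followed by a monotonicity interpolation between dyadic times concludes the proof.

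For the limsup statement \eqref{theorem:asimtotOriginlimsup0}, I would use the reinforced L\'evy--It\^o decomposition of Theorem \ref{lemma:PreFormulaCompensacion} together with the explicit decoration construction of NRPPPs in Definition \ref{proposition:reinforcedPPPbyMarking}: the jump measure $\hat{\mu}$ of $\hat{\xi}$ naturally splits as $\mathcal{N}'+\mathcal{N}''$ where $\mathcal{N}'$ is a genuine PPP with intensity $(1-p)\dd t\otimes\Lambda$ (the innovations). Pick $\alpha\in(1/\gamma,\beta(\Lambda))$, so that $\int_{|x|<1}|x|^\alpha\Lambda(\dd x)=\infty$, and extract a sequence $r_n\downarrow 0$ with $r_n^\alpha\Lambda(\{|x|\ge r_n\})\to\infty$. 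Set $t_n$ such that $r_n=2c\,t_n^\gamma$ for an arbitrary $c>0$; then passing to a sparse subsequence on disjoint time intervals to ensure independence, the probability that $\mathcal{N}'$ has an atom of size $\ge r_n$ in $[t_n/2,t_n]$ is bounded below by $1-\exp\bigl(-\tfrac12(1-p)c^{1-\alpha}t_n^{1-\alpha\gamma}\bigr)$ and tends to $1$ since $\alpha\gamma>1$. By the Borel 0--1 law, infinitely often there is a jump $\Delta\hat{\xi}_{s_n}$ at some time $s_n\le t_n$ with $|\Delta\hat{\xi}_{s_n}|\ge 2ct_n^\gamma\ge 2cs_n^\gamma$, whence $\max(|\hat{\xi}_{s_n}|,|\hat{\xi}_{s_n-}|)\ge c s_n^\gamma$ and $\limsup_{t\downarrow 0}t^{-\gamma}|\hat{\xi}_t|\ge c$ a.s. Since $c$ is arbitrary, the limsup is infinite a.s.

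The main technical obstacle is the first part: the martingale $M$ is only $L_r$-integrable for $r<1/p$, which limits the exponent available for Borel--Cantelli. This forces the two-case split ($p\ge\gamma$ vs.\ $p<\gamma$), and in the latter case requires carefully checking that the constraints $\beta(\Lambda)<r<1/p$, $r>1/\gamma$ and $r>1$ can be satisfied simultaneously. A secondary issue is converting the moment bound for $\hat{\xi}^{(3)}$ into one for its running supremum; the natural scaling $\hat{\xi}^{(3)}_s=s^p M_s$ combined with Doob applied to $M$ overcomes this, the $t^p$ factor from the scaling cancelling the $t^{-rp}$ in $E[|M_t|^r]$.
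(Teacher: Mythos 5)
Your limsup argument is in substance the paper's: one uses that $\hat{\mu}$ dominates the innovation PPP $\mathcal{N}'$ of intensity $(1-p)\dd t\otimes\Lambda$ and that $\int_{\{|x|\le 1\}}|x|^{1/(\gamma-r)}\Lambda(\dd x)=\infty$ for $1/(\gamma-r)<\beta(\Lambda)$; the paper simply counts the atoms with $|x_i|>2u_i^{\gamma-r}$, which form a Poisson variable of infinite parameter, avoiding your subsequence extraction. (Note that your claim that $\int|x|^{\alpha}\Lambda(\dd x)=\infty$ yields $r_n^{\alpha}\Lambda(|x|\ge r_n)\to\infty$ along a sequence is false as stated -- take $\Lambda(\dd x)=x^{-1-\alpha}\dd x$ -- but it holds after nesting the exponents, so this is only a cosmetic repair.)

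The limit statement, however, contains a genuine gap. First, the Borel--Cantelli exponent is inverted: with $\mathbb{E}[\sup_{s\le t}|\hat{\xi}^{(3)}_s|^r]\le Ct$, Chebyshev at $t_n=2^{-n}$ gives a tail of order $t_n^{1-r\gamma}=2^{n(r\gamma-1)}$, which is summable precisely when $r\gamma<1$, i.e.\ $r<1/\gamma$ -- the opposite of your requirement $r>1/\gamma$. Once corrected, you must satisfy simultaneously $r>1$ (Doob), $r>\beta(\Lambda)$ (so that $\int_{\{|x|<1\}}|x|^r\Lambda(\dd x)<\infty$), $r<1/p$ and $r<1/\gamma$; this is exactly the constraint set of the paper's Lemma \ref{lemma:conv0p1} and it forces $\gamma<1\wedge 1/\beta(\Lambda)$. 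Second, and consequently, the regime $\beta(\Lambda)<1$ with $\gamma\in[1,1/\beta(\Lambda))$ is not covered by your scheme at all: there the interval $(1,1/\gamma)$ is empty, and moreover your reduction to $\hat{\xi}^{(3)}$ reintroduces the compensator drift $t\int_{\{|x|<1\}}x\Lambda(\dd x)$, which is \emph{not} $o(t^{\gamma})$ when $\gamma>1$ (hypothesis (H) removes the linear drift of $\hat{\xi}$, not the compensator of $\hat{\xi}^{(3)}$). The paper treats this case by an entirely different, moment-free argument: one dominates $|\hat{\xi}_t|$ by a reinforced driftless subordinator $\hat{T}_t$ and runs Borel--Cantelli on the Laplace transform $\mathbb{P}(\hat{T}_t>a)\le Ct\,\mathbb{E}[\Phi(\eta/a)]$, using the Blumenthal--Getoor asymptotics $\Phi(\lambda)=o(\lambda^{\beta(\Lambda)+\epsilon})$ and the moments of the Yule--Simon variable $\eta$ (Lemma \ref{proposition:rateGrowthSubordinator}). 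Without an argument of this type your proof of \eqref{theorem:asimtotOriginlim0} is incomplete.
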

The rest of the section is devoted to the proof of Proposition  \ref{teorema:comportamientoOrigen}  and it is achieved in several steps.  We start by proving the second statement (\ref{theorem:asimtotOriginlimsup0}), in Lemma \ref{lemma:conv0p1} we prove (\ref{theorem:asimtotOriginlim0}) for $\beta(\Lambda) \geq 1$, $\int_{|x| \leq 1} |x| \Lambda(\dd x) = \infty$ and the case $\beta(\Lambda) \leq 1$, $\int_{|x| \leq 1} |x| \Lambda(\dd x) < \infty$  is treated separately  in Lemma \ref{proposition:rateGrowthSubordinator}. 

\begin{proof}[Proof of (\ref{theorem:asimtotOriginlim0}).]
It  suffice to prove that for some $r >0$ and $\epsilon > 0$ a.s. there exists a sequence of jumps occurring in $[0, \epsilon]$ at times, that we denote by  $(t_i)$,  satisfying 
\begin{equation*}
    |\Delta \hat{\xi}_{t_i}| > t_i^{\gamma - r}.
\end{equation*}
 Now, recall from the discussion following (\ref{definition:decoratedNRPP}) that the jump measure $\hat{\mathcal{N}}$ of $\hat{\xi}$ dominates a Poisson point process with intensity $(1-p) ( \dd  u \otimes \Lambda )$, say $\mathcal{N}'$. If we denote the atoms of $\mathcal{N}'$ by  $(u_i,x_i)$, we deduce that
\begin{equation*}
    \# \{ (u_i, x_i) \in \mathcal{N}' : u_i \in [0,\epsilon] \text{ and } |x_i| > 2 u_i^{\gamma - r} \}, 
\end{equation*}
is distributed Poisson  with parameter  
\begin{equation} \label{eq:intensidadPoisson}
   (1-p) \dd u \otimes  \Lambda   \left( (u,x) \in [0,\epsilon]\times \mathbb{R} : |x|^{1/(\gamma - r)}>2 \cdot u  \right) = \int_{\mathbb{R}}\left( 2^{-1}|x|^{1/(\gamma-r)} \wedge \epsilon  \right) \Lambda(\dd x) (1-p) .
\end{equation}
Now, take $r>0$ small enough such that the inequality $1/(\gamma - r) < \beta(\Lambda)$ still holds. For such a choice of $r$, the integral  (\ref{eq:intensidadPoisson}) is infinite by definition of the index $\beta(\Lambda)$ and the claim follows.
\end{proof}
Now we focus on showing that $\lim_{t \downarrow 0} t^{-\gamma} |\hat{\xi}_t|=0$ for $\gamma \in (0,1/\beta(\Lambda))$. In this direction, let us start introducing some notation and with some preliminary remarks. First, notice that since we are interested in the behaviour of $\hat{\xi}$ at the origin, we can   rely   on the original construction in \cite{BertoinNRLP} in terms of Poissonian sums of Yule-Simon processes that we recalled in Section \ref{subsection:preliminaries,examplesNRLPs}. Next, under (H),  $\hat{\xi}$ can be written either as a sum of a compensated  integral $\hat{\xi}^{(3)}$ and a reinforced compound Poisson process $\hat{\xi}^{(2)}$ viz. 
\begin{equation} \label{equation:caso1_comportamientoOrigen}
  \hat{\xi}=  \hat{\xi}^{(3)} + \hat{\xi}^{(2)},   \hspace{6mm}\hspace{15mm} \text{if } \beta(\Lambda) >1,  
\end{equation}
or as an absolutely convergent series of jumps, 
\begin{equation} \label{equation:caso2_comportamientoOrigen}
    \hat{\xi}= \sum_{s \leq t}  \Delta \hat{\xi}_s, \, \, t \in [0,1],  \hspace{10mm} \text{if } \beta(\Lambda) <1. 
\end{equation}
We stress that if $\beta(\Lambda) = 1$, $\hat{\xi}$ takes the form (\ref{equation:caso1_comportamientoOrigen})  or  (\ref{equation:caso2_comportamientoOrigen}) depending respectively on if $\int_{\{|x| \leq 1\}}|x| \Lambda(\dd x)$ is infinite or not, and remark that $\gamma$  can be strictly larger than one only when $\beta(\Lambda) < 1$.  Since $\hat{\xi}^{(2)}$ is a finite sum of weighted Yule processes and $\hat{\xi}^{(2)}_0 = 0$, independently of the value of $\beta(\Lambda)$ it holds that   $\lim_{t \downarrow 0} t^{-\gamma} \hat{\xi}^{(2)}_t =0$ and we can consequently restrict  our study of  (\ref{equation:caso1_comportamientoOrigen}) resp.  (\ref{equation:caso2_comportamientoOrigen})   to the case where   $\hat{\xi} = \hat{\xi}^{(3)}$ resp.  $\hat{\xi}$ has  Lévy measure concentrated in $[0,1]$ -- and hence is a reinforced, driftless subordinator. 
\begin{lemma} \label{lemma:conv0p1} Suppose that $\beta(\Lambda) \geq 1$ and  let $\gamma \in (0,1 \wedge 1/\beta(\Lambda))$.  Then, 
\begin{equation*}
    \lim_{T \downarrow 0} \esp{\sup_{s \leq T} s^{-\gamma} |\hat{\xi}^{(3)}_s| }{} = 0.
\end{equation*}
In particular, if $\beta(\Lambda) \geq 1$ with $\int_{\{|x| \leq 1 \}} |x| \Lambda(\dd x) = \infty$,  we have   $\lim_{t \downarrow 0} t^{-\gamma}|\hat{\xi}^{(3)}_t|=0$  a.s.
\end{lemma}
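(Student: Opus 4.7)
The plan is to exploit the martingale $M_t = t^{-p}\hat{\xi}^{(3)}_t$ from Proposition \ref{proposition:laMartingala} together with Doob's $L^r$ maximal inequality and a linear-in-$t$ moment bound on $\hat{\xi}^{(3)}_t$. The key algebraic identity driving everything is $s^{-\gamma}|\hat{\xi}^{(3)}_s| = s^{p-\gamma}|M_s|$, which reduces the question to controlling a weighted maximum of a martingale.

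First I would establish that, for any $r \in (\beta(\Lambda) \vee 1, 1/p) \cap (1,2]$, there is a constant $C_r < \infty$ such that $\mathbb{E}|\hat{\xi}^{(3)}_t|^r \leq C_r t$ for all $t \in (0,1]$. Starting from the Yule--Simon series construction \eqref{prelim:explicitConstructionCompensatedSums}, the process $\hat{\xi}^{(3)}_{a,1}(t)$ is a compensated Poisson integral, so the standard von Bahr--Esseen / Marinelli--Röckner moment inequality for jump martingales yields
\[
\mathbb{E}|\hat{\xi}^{(3)}_{a,1}(t)|^r \;\leq\; C_r (1-p)\, \mathbb{E}[Y(t)^r] \int_{\{a \leq |x|<1\}} |x|^r \Lambda(\dd x).
\]
Since $r > \beta(\Lambda)$ the spatial integral is finite uniformly in $a$, and Lemma \ref{lemma:yuleSimonProcess} together with $\mathbb{P}(Y(t)\geq 1)=t$ gives $\mathbb{E}[Y(t)^r] = t\, \mathbb{E}[Y(1)^r]$; letting $a \downarrow 0$ (and using the $L_r$-convergence recalled after \eqref{condition:choiceLq.}) yields the stated bound. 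Next I would observe that the hypotheses $\beta(\Lambda) \geq 1$, $p\beta(\Lambda)<1$ and $\gamma \beta(\Lambda) < 1$ imply $\beta(\Lambda) < \min(1/p, 1/\gamma, 2)$ (the boundary case $\beta(\Lambda)=2$ forces $p<1/2$ and $\gamma<1/2$, so one takes $r=2$), so one may fix $r \in (\beta(\Lambda) \vee 1, \min(1/p,1/\gamma,2)]$ with $1/r - \gamma > 0$.

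The heart of the proof is then a dyadic decomposition combined with Doob's inequality. Writing $(0,T] = \bigsqcup_{n \geq 0} I_n$ with $I_n = (T 2^{-(n+1)}, T 2^{-n}]$, on $I_n$ one has $s^{p-\gamma} \leq K\, T^{p-\gamma} 2^{n(\gamma-p)_+}$ (the two cases $\gamma \leq p$ and $\gamma > p$ being absorbed by a fixed constant $K$). Applying Doob's $L^r$ inequality to $M$ on $[0, T2^{-n}]$ and invoking Step 1,
\[
\mathbb{E}\Big[\sup_{s \leq T 2^{-n}} |M_s|^r\Big] \;\leq\; C'_r\, \mathbb{E}|M_{T2^{-n}}|^r \;=\; C'_r (T2^{-n})^{-rp}\, \mathbb{E}|\hat{\xi}^{(3)}_{T2^{-n}}|^r \;\leq\; C''_r (T2^{-n})^{1-rp}.
\]
Combining these with Jensen's inequality to get out of the power $r$ and summing the resulting geometric series over $n$ (which converges precisely because $1/r - \gamma > 0$) produces
\[
\mathbb{E}\Big[\sup_{s \leq T} s^{-\gamma}|\hat{\xi}^{(3)}_s|\Big] \;\leq\; C\, T^{1/r - \gamma},
\]
which tends to $0$ as $T \downarrow 0$. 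The a.s.\ statement $\lim_{t \downarrow 0} t^{-\gamma}|\hat{\xi}^{(3)}_t| = 0$ then follows by applying Markov's inequality along the geometric sequence $T_k = 2^{-k}$ and a routine Borel--Cantelli argument, since $\sum_k T_k^{1/r - \gamma} < \infty$.

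The main delicate point, and the one I expect to require the most care, is the dyadic control when $\gamma > p$: the weight $s^{p-\gamma}$ blows up at $0$, so the naive bound $s^{p-\gamma} \leq T^{p-\gamma}$ fails and one really needs the layer-by-layer application of Doob, with the exponent $1/r - \gamma$ appearing as the net gain that must be made strictly positive by the choice of $r$. The secondary subtlety is the boundary regime $\beta(\Lambda) = 2$, where one is forced to take $r = 2$; here it is the admissibility $p\beta(\Lambda)<1$ (forcing $p<1/2$, hence $\mathbb{E}[Y(1)^2]<\infty$) that saves the moment bound and keeps $1/r - \gamma = 1/2 - \gamma > 0$.
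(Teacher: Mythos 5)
Your proposal is correct and follows essentially the same route as the paper: the martingale $M_t = t^{-p}\hat{\xi}^{(3)}_t$, the linear-in-$t$ moment bound $\mathbb{E}|\hat{\xi}^{(3)}_t|^r \leq C_r t$ obtained from the compensated Yule--Simon series and Campbell's formula, Jensen plus Doob's $L_r$ inequality on each dyadic interval, and the choice of $r$ slightly above $\beta(\Lambda)$ so that $\gamma - 1/r < 0$ makes the geometric series converge. The only cosmetic difference is at the very end: the paper deduces the a.s.\ statement directly from the monotone decay of $\sup_{s\leq T}s^{-\gamma}|\hat{\xi}^{(3)}_s|$ in $T$, whereas you invoke Markov and Borel--Cantelli along $T_k=2^{-k}$, which also works.
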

\begin{proof}
Recall from Proposition \ref{proposition:laMartingala} that $(t^{-p} \hat{\xi}^{(3)}_t)_{t \in [0,1]}$ is a martingale. We start by fixing $s<u$ two times in $[0,1]$ and notice that for any $r \in (\beta(\Lambda)  , 1/p \wedge 2)$ (or $r=2$ if $\beta(\Lambda) = 2$ ), by Doob's inequality in $L_r(\mathbb{P})$ we have  
\begin{align} \label{eq:boundInIntervalSU}
    \esp{\sup_{t \in [s,u]} t^{-\gamma} |\hat{\xi}^{(3)}_t| }{} 
    &\leq s^{-(\gamma-p)} \esp{\sup_{t \in [s,u]} t^{-p} |\hat{\xi}^{(3)}_t| }{} \nonumber \\
    &\leq  s^{-(\gamma-p)} \esp{\sup_{t \in [s,u]} t^{-p \cdot r} |\hat{\xi}_t^{(3)}|^r }{}^{1/r}  
    \leq c \cdot  s^{-(\gamma-p)} u^{-p} \esp{  |\hat{\xi}_u^{(3)}|^r }{}^{1/r}  , 
\end{align}
for some constant $c$. In order to bound the expectation on the right hand side, we recall from the proof of Lemma 2.6 in Bertoin \cite{BertoinNRLP} that the following bound holds
\footnote{The bound was first established for non-atomic Lévy measures $\Lambda$, but  it was later shown that a similar bound holds if $\Lambda$ has atoms by an approximation argument.}
for some constant $C$ large enough:
\begin{equation} \label{equation:cotaCompensacion1}
    \esp{|\hat{\xi}^{(3)}_u |^r}{}^{1/r}  \leq C \esp{\sum_j Y_j(u)^r|x_j|^r }{}^{1/r}. 
\end{equation}
Next,  by Campbell's formula we have 
\begin{equation} \label{equation:cotaCompensacion2}
    \esp{ \sum_j Y_j(u)^r|x_j|^r }{} = \esp{Y(u)^r}{} \int_{\{ |x|<1 \}} |x|^r \Lambda(\dd x) < \infty, 
\end{equation}
and remark that  $\esp{Y(u)^r}{} = u \cdot  \esp{\eta^r}{}$ where $\eta$ stands for a Yule-Simon random variable with parameter $1/p$. It now follows that we can bound 
$\esp{|  \hat{\xi}^{(3)}_u |^r}{}^{1/r} \leq K \cdot u^{1/r}$ for a positive constant $K$ depending only on $r$. This observation  paired with the bound we obtained in (\ref{eq:boundInIntervalSU}), yields:  
\begin{equation} \label{eq:boundInterval2}
    \esp{\sup_{t \in [s,u]} t^{-\gamma} |\hat{\xi}^{(3)}_t| }{} \leq  s^{-(\gamma-p)} u^{-p + 1/r} \cdot K,  
\end{equation}
for a finite constant $K$ that only depends on the choice of $r$.
Now, set   $t_0:=1$, $t_n := 2^{-n}, \text{ for } n \geq 1$ and fix $N\in \mathbb{N}$. Applying  the bound  (\ref{eq:boundInterval2})  to each interval $[2^{-(n+1)},2^{-n} ]$, we get: 
\begin{equation} \label{condition:seriesConvergentes}
    \esp{\sup_{t \leq t_N} t^{-\gamma} |\hat{\xi}^{(3)}_t| }{} \leq \sum_{n \geq N}\esp{\sup_{t \in [t_{n+1},t_n]} t^{-\gamma} |\hat{\xi}^{(3)}_t| }{} \leq 2^{\gamma - p} \sum_{n \geq N} 2^{n(\gamma - 1/r)},  
\end{equation}
and to conclude it suffices to show that, for an appropriate choice of  $r$, the inequality  $\gamma-1/r < 0$ is satisfied. Since  $r \in  (\beta(\Lambda) , 1/p \wedge 2)$, we can  always choose  $\epsilon$ small enough such that  $r:= \beta(\Lambda) + \epsilon$ belongs to $(\beta(\Lambda) , 1/p \wedge 2)$ and $\gamma < 1/ (\beta(\Lambda) + \epsilon)$,   since  we recall that $\gamma < 1/\beta(\Lambda)$. For such a particular choice of $r$, the series (\ref{condition:seriesConvergentes}) converge and we obtain the desired result. In particular, this proves the statement of Proposition  \ref{teorema:comportamientoOrigen}  when $\int_{\{ |x| \leq 1 \}} |x|\Lambda(\dd x) = \infty$,  which is when $\hat{\xi} = \hat{\xi}^{(3)} $.   
\end{proof}
The statement (\ref{theorem:asimtotOriginlim0}) of Proposition \ref{teorema:comportamientoOrigen}  is incomplete only when the Lévy measure fulfils the integrability condition $\int_{\{|x| \leq 1\}} |x| \Lambda(\dd x) < \infty$. Recalling the discussion prior to Lemma \ref{lemma:conv0p1}, we  henceforth assume that the Lévy process is a driftless subordinator with jumps smaller than one, say $(T_t)$,  and we  denote by $(\hat{T}_t)$  the corresponding reinforced version for a memory parameter $p \in (0,1)$. It is then convenient to work with its Laplace transform at time $t \in [0,1]$,
\begin{equation*}
    \esp{  e^{ -\lambda \hat{T}_t}  }{} =  \exp \Big( -  \esp{ \Phi (Y(t) \lambda)  }{} \Big),  \quad \quad \text{ for } \lambda \geq 0, 
\end{equation*}
for  $\Phi(\lambda) := (1-p) \int_{\mathbb{R}^+} \left( 1- e^{-x \lambda } \right) \Lambda(\dd x)$ and $Y$ is a Yule-Simon process with parameter $1/p$.  The following result from  \cite{BlumenthalGetoor_SampleFunctions} will be needed   and  we state it for the reader's convenience: 
\begin{theorem} \emph{[Blumenthal, Getoor]\cite{BlumenthalGetoor_SampleFunctions}}  \label{Teorema:BandGetoorLaplaceAsymptotics}
If $\Phi(\lambda )$ is the Laplace exponent of a driftless subordinator with Lévy measure $\Lambda$, then for any $\epsilon>0$,
\begin{equation*}
    \Phi(\lambda) = o (\lambda ^{\beta(\Lambda) + \epsilon}), \quad \text{ as } \lambda \uparrow \infty.
\end{equation*}
\end{theorem}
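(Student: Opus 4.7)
The plan is to obtain the asymptotic directly from the definition of $\Phi$ and the Blumenthal–Getoor index, via an elementary splitting of the integral and a convexity-type bound for $1-e^{-u}$. The upgrade from big-$O$ to little-$o$ will just be a matter of shrinking $\epsilon$.

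First, fix $\epsilon>0$ and choose $r\in(\beta(\Lambda),\beta(\Lambda)+\epsilon)$; since $\Phi$ is the Laplace exponent of a driftless subordinator we have $\int_0^1 x\,\Lambda(\dd x)<\infty$, so $\beta(\Lambda)\le 1$, and by taking $\epsilon$ small we may assume $r\in(0,1]$. The elementary inequality
\begin{equation*}
    1-e^{-u}\le u^r,\qquad u\ge 0,\ r\in(0,1],
\end{equation*}
holds because on $\{u\le 1\}$ one has $u^r\ge u\ge 1-e^{-u}$, while on $\{u\ge 1\}$ one has $u^r\ge 1\ge 1-e^{-u}$. Applying this at the level of the integrand, I split
\begin{equation*}
    \Phi(\lambda)=\int_{(0,1]}(1-e^{-x\lambda})\,\Lambda(\dd x)+\int_{(1,\infty)}(1-e^{-x\lambda})\,\Lambda(\dd x),
\end{equation*}
bound the tail piece by $\Lambda((1,\infty))<\infty$, and bound the head piece by $\lambda^r\int_{(0,1]}x^r\,\Lambda(\dd x)$, the latter integral being finite by the choice $r>\beta(\Lambda)$ and the definition \eqref{index:BGIndexMeasure} of the Blumenthal–Getoor index. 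Combining gives the quantitative estimate
\begin{equation*}
    \Phi(\lambda)\;\le\;\lambda^{r}\int_{(0,1]}x^{r}\,\Lambda(\dd x)+\Lambda((1,\infty)),\qquad \lambda\ge 1,
\end{equation*}
so $\Phi(\lambda)=O(\lambda^{r})$ as $\lambda\uparrow\infty$. Since $r<\beta(\Lambda)+\epsilon$, this yields $\Phi(\lambda)=o(\lambda^{\beta(\Lambda)+\epsilon})$.

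The only case the preceding argument does not directly cover is the boundary case $\beta(\Lambda)=1$, where one cannot pick $r\in(\beta(\Lambda),1]$. In that situation I would use the cruder (and sharper here) bound $1-e^{-u}\le u$ together with the subordinator assumption $\int_0^1 x\,\Lambda(\dd x)<\infty$ to obtain $\Phi(\lambda)\le \lambda\int_0^1 x\,\Lambda(\dd x)+\Lambda((1,\infty))=O(\lambda)$, which is trivially $o(\lambda^{1+\epsilon})$ for every $\epsilon>0$. No delicate step is really required: the main (and only) point of care is making sure the exponent $r$ is admissible both for the inequality $1-e^{-u}\le u^{r}$ (demanding $r\le 1$) and for the finiteness of $\int_{(0,1]}x^{r}\,\Lambda(\dd x)$ (demanding $r>\beta(\Lambda)$), which is exactly why one splits off the edge case $\beta(\Lambda)=1$.
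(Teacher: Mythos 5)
Your proof is correct. Note that the paper does not actually prove this statement: it is imported verbatim from Blumenthal and Getoor and stated ``for the reader's convenience,'' so there is no in-paper argument to compare against. Your elementary derivation --- splitting the integral at $x=1$, bounding the tail by $\Lambda((1,\infty))<\infty$, and bounding the head via the inequality $1-e^{-u}\le u^{r}$ for $r\in(0,1]$ combined with the definition \eqref{index:BGIndexMeasure} of $\beta(\Lambda)$ --- is the standard route and is complete. The reduction to small $\epsilon$ is legitimate because the conclusion for a smaller $\epsilon$ implies it for every larger one; the passage from $O(\lambda^{r})$ with $r<\beta(\Lambda)+\epsilon$ to $o(\lambda^{\beta(\Lambda)+\epsilon})$ is immediate (and the additive constant $\Lambda((1,\infty))$ is harmless since $\beta(\Lambda)+\epsilon>0$); and the boundary case $\beta(\Lambda)=1$ is correctly dispatched with the linear bound $1-e^{-u}\le u$ together with the driftless-subordinator integrability $\int_0^1 x\,\Lambda(\dd x)<\infty$, which also justifies your observation that $\beta(\Lambda)\le 1$ in this setting.
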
 
\noindent Let $\epsilon>0$, fix $\lambda >0$ and observe from Theorem \ref{Teorema:BandGetoorLaplaceAsymptotics} that for $t \in (0,1)$, there exists positive constants $K$ and $R$ such that 
\begin{equation*}
    \Phi( \eta \lambda t^{-\gamma} ) \leq 
    \begin{cases}
    K  \quad \hspace{18.5mm} \text{ if } \eta\lambda t^{-\gamma} \leq R, \\
     \left( \lambda \eta t^{-\gamma} \right)^{\beta(\Lambda) + \epsilon}   \quad \text{ if } \eta\lambda t^{-\gamma} > R.\\
    \end{cases}
\end{equation*}
Consequently,  for $t \in (0,1)$ the following bound holds: 
\begin{align} \label{desigualdad:cotaExponenteLaplace}
    t \Phi(\eta \lambda t^{-\gamma} ) \leq t \left( K +  \left( \lambda \eta t^{-\gamma} \right)^{\beta(\Lambda) + \epsilon}   \right)  = t K +  (\lambda  \eta  )^{\beta(\Lambda)+ \epsilon} t^{1 - \gamma \beta(\Lambda) - \gamma \epsilon}.
\end{align}

\begin{lemma} \label{proposition:rateGrowthSubordinator}
Let $\hat{T}$ be a reinforced subordinator of  memory parameter $p$ and Lévy measure $\Lambda$. Then, for any $\gamma \in  \mathbb{R}^+$ such that  $\gamma < 1/\beta(\Lambda)$, 
\begin{equation*}
    \lim_{t \downarrow 0} t^{-\gamma} \hat{T}_t = 0 \quad \text{a.s.}
\end{equation*}
\end{lemma}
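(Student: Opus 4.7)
The plan is to exploit the monotonicity of $\hat{T}$ (which is clear from the representation $\hat{T}_t = \sum_i x_i Y_i(t)$ with $x_i \geq 0$ and the $Y_i$ non-decreasing) to reduce the almost sure convergence to a Borel--Cantelli estimate along a dyadic subsequence. More precisely, since $s \mapsto \hat{T}_s$ is non-decreasing, for $s \in [2^{-(n+1)}, 2^{-n}]$ we have $s^{-\gamma} \hat{T}_s \leq 2^{\gamma(n+1)} \hat{T}_{2^{-n}}$, so it suffices to prove that $2^{n\gamma} \hat{T}_{2^{-n}} \to 0$ almost surely as $n \to \infty$.

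For this I will use a tail estimate obtained from the Laplace transform. The elementary inequality $(1-e^{-1}) \mathbf{1}_{\{X > a\}} \leq 1 - e^{-X/a}$, valid for non-negative $X$ and $a > 0$, combined with $1 - e^{-x} \leq x$, gives for any $\delta > 0$
\[
\mathbb{P}\bigl(\hat{T}_t > \delta t^\gamma\bigr) \leq \frac{1 - \mathbb{E}\bigl[e^{-\hat{T}_t/(\delta t^\gamma)}\bigr]}{1-e^{-1}} \leq \frac{t\,\mathbb{E}\bigl[\Phi(\eta \delta^{-1} t^{-\gamma})\bigr]}{1-e^{-1}},
\]
where $\eta$ is Yule--Simon distributed with parameter $1/p$. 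Here we used the Laplace transform formula $\mathbb{E}[e^{-\lambda \hat{T}_t}] = \exp(-t\,\mathbb{E}[\Phi(\lambda \eta)])$ stated just before the lemma, which comes from conditioning on $\{Y(t) \geq 1\}$ exactly as in the proof of Proposition~\ref{lemma:fddsExtendidas}.

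Applying the estimate~\eqref{desigualdad:cotaExponenteLaplace} with $\lambda = 1/\delta$ and taking expectations yields, for any $\epsilon > 0$,
\[
t\,\mathbb{E}\bigl[\Phi(\eta \delta^{-1} t^{-\gamma})\bigr] \leq K t + \delta^{-(\beta(\Lambda)+\epsilon)}\, \mathbb{E}\bigl[\eta^{\beta(\Lambda)+\epsilon}\bigr]\, t^{1-\gamma(\beta(\Lambda)+\epsilon)}.
\]
The admissibility condition $p\beta(\Lambda) < 1$ together with the hypothesis $\gamma \beta(\Lambda) < 1$ allow us to pick $\epsilon > 0$ small enough so that simultaneously $\beta(\Lambda)+\epsilon < 1/p$, which guarantees $\mathbb{E}[\eta^{\beta(\Lambda)+\epsilon}] < \infty$ by the moment discussion following Lemma~\ref{lemma:yuleSimonProcess}, and $1 - \gamma(\beta(\Lambda)+\epsilon) > 0$. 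For such $\epsilon$, the right-hand side evaluated at $t = 2^{-n}$ is summable in $n$, so the Borel--Cantelli lemma gives $\limsup_n 2^{n\gamma}\hat{T}_{2^{-n}} \leq \delta$ almost surely. Since $\delta > 0$ is arbitrary, this $\limsup$ equals zero almost surely, and combined with the monotonicity reduction from the first paragraph, this yields $\lim_{t \downarrow 0} t^{-\gamma} \hat{T}_t = 0$ a.s.

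The delicate point in the argument is the joint choice of $\epsilon$: one must balance the integrability of $\eta^{\beta(\Lambda)+\epsilon}$ (controlled by admissibility of $p$) against the summability of $\sum_n 2^{-n(1-\gamma(\beta(\Lambda)+\epsilon))}$ (controlled by the standing assumption $\gamma \beta(\Lambda) < 1$). It is precisely this compatibility, which encapsulates both hypotheses of the lemma, that makes the Borel--Cantelli step work.
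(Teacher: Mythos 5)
Your proof is correct and follows essentially the same route as the paper's: the Markov-type bound $(1-e^{-1})\mathbb{P}(\hat T_t>a)\leq 1-\mathbb{E}[e^{-\hat T_t/a}]\leq t\,\mathbb{E}[\Phi(\eta/a)]$, the estimate \eqref{desigualdad:cotaExponenteLaplace} with a joint choice of $\epsilon$ balancing $\beta(\Lambda)+\epsilon<1/p$ against $1-\gamma(\beta(\Lambda)+\epsilon)>0$, Borel--Cantelli along dyadic times, and monotonicity to upgrade to the full limit. The only cosmetic difference is that you place $\delta$ in the Laplace variable $\lambda=1/\delta$ whereas the paper absorbs it into the threshold function $h(t)=\delta t^{\gamma}$; the two are equivalent.
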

\noindent The proof relies on the same techniques  used for  subordinators, see   \cite[Proposition 10 -  III.4]{Bertoin_LevyProcessesBook}.
\begin{proof}
 Consider $t \in [0,1]$ and  fix $a>0$. An application of Markov's inequality  for $g(r) = 1-e^{-r}$ and the inequality $g(r) \leq r$ for $r \geq 0$ yield 
\begin{align*}
    \mathbb{P} \big(  \hat{T}_t > a \big) 
    &\leq (1-e^{-1})^{-1} \left( 1- \exp \left\{ - \esp{ \Phi\left( a^{-1} Y(t) \right) }{}  \right\} \right) \\
    & \leq (1-e^{-1})^{-1} \esp{ \Phi\left( a^{-1} Y(t) \right) }{}.
\end{align*}
Since $\Phi(0) = 0$ and  $Y(t)$ conditioned to $Y(t) \geq 1$ follows the Yule-Simon distribution with parameter $1/p$, for a constant $C$ we deduce the bound: 
\begin{equation} \label{equation:desigualdadSubordinador}
    \mathbb{P} \big(  \hat{T}_t > a \big)  \leq C t \esp{ \Phi\left( \eta/a \right)  }{}, 
\end{equation}
where we denoted  by $\eta$ a Yule Simon random variable with parameter $1/p$. Now, let $h$ be an  increasing   function with $\lim_{t \downarrow 0} h(t) =0$, and consider  $a = h(2^{-n})$, $t = 2^{-(n-1)}$. Then, by  (\ref{equation:desigualdadSubordinador}) and from summing over $n \in \mathbb{N}$, we deduce 
\begin{equation} \label{equation:desigualdadSubordinador2}
    \sum_{n=1}^\infty \proba{\hat{T}_{2^{-(n-1)}} > h(2^{-n}) }{} 
    \leq 2C  \esp{ \sum_{n=1}^\infty 2^{-n}\Phi\left( \eta/h(2^{-n}) \right)  }{}.
\end{equation}
In order to apply a Borel-Cantelli argument, we specialise in our case of interest: we set $h(t) := t^{\gamma}$ and we show that the right hand side of  (\ref{equation:desigualdadSubordinador2}) is finite. From the first inequality in (\ref{desigualdad:cotaExponenteLaplace}) with $\lambda = 1$, we get 
\begin{equation*}
    \sum_{n=1}^\infty 2^{-n}\Phi\left( \eta/h(2^{-n}) \right) \leq K \sum_{n=1}^\infty 2^{-n} +  \eta^{\beta(\Lambda) + \epsilon} \sum_{n=1}^\infty (2^{-n})^{1-\gamma\beta(\Lambda) - \gamma \epsilon}. 
\end{equation*}
For $\epsilon$ small enough, we have both $\eta^{\beta(\Lambda) + \epsilon} \in L_1(\mathbb{P})$ (since $\eta$ is in $L_q(\mathbb{P})$ for any $q < 1/p$ and $\beta(\Lambda) < 1/p$) and $1-\gamma\beta(\Lambda) - \gamma \epsilon >0$, by our standing assumption  $1> \gamma \beta(\Lambda)$. Consequently, we have  
\begin{equation*}
     \sum_{n=1}^\infty \proba{\hat{T}_{2^{-(n-1)}} > (2^{-n})^{\gamma} }{}  < \infty, 
\end{equation*}
which entails by Borel-Cantelli that  $\hat{T}_{2^{-(n-1)}}< (2^{-n})^{\gamma}$ holds for all $n$ large enough, a.s. From a monotony argument, it follows that a.s. $\hat{T}_{t}< t^{\gamma}$ for all $t$ small enough and in consequence $\limsup_{t \downarrow 0} t^{-\gamma} \hat{T}_t \leq 1$. If we now take $h(t) = \delta t^{\gamma}$ for $\delta \in (0,1)$, by the same reasoning we obtain  $\limsup_{t \downarrow 0} t^{-\gamma} \hat{T}_t \leq \delta$ which leads to the desired result. 
\end{proof}
Finally,  our proof of Proposition \ref{teorema:comportamientoOrigen} is complete. 

\subsection{Noise reinforced Lévy processes as infinitely divisible processes} \label{section:IDprocesses}
\addtocontents{toc}{\vspace{\contentsSpacing pt}}
As was already mentioned in Section \ref{subection:jump_LevyIto-Part1}, NRLPs are   \textit{infinitely divisible processes} --  abbreviated ID processes. In this final  section, we study their properties under this new scope. In this direction, we  start by giving a brief overview of the theory;  our exposition   mainly  follows Rosinksi \cite{RosinskiID} and Chapter 3 of Samorodnitsky  \cite{Samorodnitsky_StochProcesses_and_LongRangeDependance(+IDprocesses)}. Then, we   identify the features of NRLPs in this setting and more precisely, we identify the functional triplet of NRLPs, in the sense of ID processes. The objective here is hence  to put Lévy processes and their NRLPs counterparts  in the context of ID  processes and compare then through this new lens.   As an application, making use of the Isomorphism Theorem for ID processes \cite[Theorem 4.4]{RosinskiID}  we establish the following result: 
\begin{proposition} \label{proposition:convOrigenNRLP}
Let $\hat{\xi}$ be a noise reinforced Lévy process with characteristics $(a,0,\Lambda, p)$. Let $f: \mathbb{R} \rightarrow \mathbb{R}^+$ be a bounded, continuous  function with { $f(x) = O(x^2)$ at $0$}. Then,  we have 
\begin{equation*}
    \lim_{h \downarrow 0} h^{-1} \mathbb{E}\big[ {f \big( \hat{\xi}_h \big) } \big] = 
    p^{-1} {(1-p)}
    \int_{\mathbb{R}}  \Lambda(\dd x) \sum_{k=1}^\infty f(kx)  \emph{B}(k,1/p+1). 
\end{equation*}
\end{proposition}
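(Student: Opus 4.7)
The strategy is to identify the one-dimensional distributions of $\hat{\xi}$ with those of an auxiliary L\'evy process and then invoke a classical short-time asymptotic. Applying \eqref{equation:fddsNRLPs} with $k=1$, $s_1=h$ and $t=1$, and using that $\Psi(0)=0$ together with the fact that, conditional on $\{Y(h)\geq 1\}$ (an event of probability $h$), $Y(h)$ follows the Yule--Simon law with parameter $1/p$, one obtains
\[
\esp{e^{i\lambda\hat{\xi}_h}}{} \;=\; \exp\{h\,\Phi(\lambda)\}, \qquad \Phi(\lambda):=(1-p)\esp{\Psi(\lambda\eta)}{},
\]
where $\eta$ denotes a Yule--Simon random variable with parameter $1/p$. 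Since the exponent is linear in $h$, this identity yields $\hat{\xi}_h\overset{\scr{L}}{=}X_h$ for every $h\geq 0$, where $X$ is a L\'evy process with characteristic exponent $\Phi$.

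The next step is to identify the L\'evy measure of $X$. Writing the L\'evy--Khintchine expansion of $\Psi(k\lambda)$ and substituting $y=kx$ replaces $\Lambda$ by its pushforward $\Lambda_k$ under $x\mapsto kx$; averaging over $\eta$ and using that $q=0$ shows that $X$ has no Gaussian component and that its L\'evy measure is
\[
\tilde{\Lambda} \;=\; (1-p)\,p^{-1}\sum_{k\geq 1}\text{B}(k,1/p+1)\,\Lambda_k.
\]
The admissibility condition $p\,\beta(\Lambda)<1$ allows one to pick $r\in(\beta(\Lambda),1/p)$, and then $\esp{\eta^r}{}<\infty$ combined with the definition of $\beta(\Lambda)$ gives $\int(1\wedge|y|^2)\,\tilde{\Lambda}(\dd y)<\infty$, so $\tilde{\Lambda}$ is a genuine L\'evy measure. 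For any bounded measurable $g$ vanishing in a neighbourhood of $0$, Fubini yields
\[
\int g\,\dd\tilde{\Lambda} \;=\; (1-p)\,p^{-1}\sum_{k\geq 1}\text{B}(k,1/p+1)\int g(kx)\,\Lambda(\dd x),
\]
which coincides with the right-hand side of the target formula for $g=f$.

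It then remains to invoke the classical short-time asymptotic $\lim_{h\downarrow 0}h^{-1}\esp{g(X_h)}{}=\int g\,\dd\tilde{\Lambda}$, valid for any L\'evy process without Gaussian component and any bounded continuous $g$ vanishing near the origin. To extend it to our $f$ (which merely satisfies $f(x)=O(x^2)$ at $0$), I would split $f=f\phi_\delta+f(1-\phi_\delta)$ using a smooth cutoff $\phi_\delta$ that vanishes on $\{|x|\leq\delta/2\}$ and equals $1$ on $\{|x|\geq\delta\}$, apply the classical result to $f\phi_\delta$, and bound the remainder by $C\esp{X_h^2\,\indic{\{|X_h|\leq\delta\}}}{}$. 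Decomposing $X_h=ch+M_h^{\leq\delta}+J_h^{>\delta}$ via the L\'evy--It\^o decomposition of $X$ at level $\delta$, one obtains
\[
\limsup_{h\downarrow 0}h^{-1}\esp{X_h^2\,\indic{\{|X_h|\leq\delta\}}}{} \;\leq\; \int_{|y|\leq\delta}y^2\,\tilde{\Lambda}(\dd y)+\delta^2\tilde{\Lambda}(\{|y|>\delta\}),
\]
which tends to $0$ as $\delta\downarrow 0$ by dominated convergence, thanks to $\int(1\wedge y^2)\,\tilde{\Lambda}(\dd y)<\infty$. Combining the three steps yields the proposition.

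The main conceptual content is the identification $\hat{\xi}_h\overset{\scr{L}}{=}X_h$, which may be viewed as an isomorphism-type statement and reduces the claim to a purely infinitely divisible question; the main technical hurdle is the uniform-in-$h$ small-jump estimate above, which is what allows one to pass from $g$ vanishing near the origin to $f$ that is merely $O(x^2)$ there.
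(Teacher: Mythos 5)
Your argument is correct, but it follows a genuinely different route from the paper. You reduce everything to one-dimensional marginals: formula \eqref{equation:fddsNRLPs} with $k=1$ and $t=h$ gives $\esp{e^{i\lambda\hat{\xi}_h}}{}=\exp\{h(1-p)\esp{\Psi(\lambda\eta)}{}\}$ directly (no conditioning needed, since $Y(1)=\eta$ a.s.), so $\hat{\xi}_h$ has the law of $X_h$ for an auxiliary L\'evy process $X$ with L\'evy measure $\tilde{\Lambda}=(1-p)p^{-1}\sum_k \mathrm{B}(k,1/p+1)\Lambda_k$, and the proposition becomes a purely classical small-time statement for L\'evy processes (vague convergence of $h^{-1}\mathcal{L}(X_h)$ away from the origin, plus your truncated second-moment estimate to upgrade from functions vanishing near $0$ to $f=O(x^2)$; both steps check out, and your limsup bound $\int_{|y|\le\delta}y^2\tilde{\Lambda}(\dd y)+\delta^2\tilde{\Lambda}(|y|>\delta)$ does vanish as $\delta\downarrow 0$). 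The paper instead works with the full Poissonian (Yule--Simon series) representation and applies the Isomorphism Theorem \ref{theorem:Isomorphism} with $\mathrm{q}$ selecting atoms having a jump of size $\geq\delta$ before time $h$, controlling the small-jump remainder through the $L_q$ bound \eqref{inequality:IDboundOrigin}; that choice is deliberate, since the section's stated purpose is to illustrate the ID-process machinery, whereas your proof is more elementary and bypasses it entirely. One spot in your write-up is too quick: the assertion that ``averaging over $\eta$'' identifies the triplet of $X$ hides the mismatch between the cutoffs $\llbracket \eta x\rrbracket$ and $\eta\llbracket x\rrbracket$ in the L\'evy--Khintchine rewriting; to conclude that $X$ has L\'evy measure $\tilde{\Lambda}$, no Gaussian part, and only a finite drift correction, you need
\begin{equation*}
  \esp{\int_{\mathbb{R}}\big|\llbracket \eta x\rrbracket-\eta\llbracket x\rrbracket\big|\,\Lambda(\dd x)}{}<\infty,
\end{equation*}
which is exactly Lemma \ref{lema:integrabilidadCharFun} (equivalently, the one-point projection of the path L\'evy measure identified in Lemma \ref{teorema:InfDivNRLP}); it is proved by the same Yule--Simon tail asymptotics you already invoke for the $\int(1\wedge y^2)\,\dd\tilde{\Lambda}<\infty$ check, so this is a fixable omission rather than a gap, but it should be stated and justified.
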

Note that the probability distribution  appearing in the previous display is  the  Yule-Simon distribution \eqref{formula:densityYuleSimon}. For an analogous result in the setting of Lévy processes, we refer to  \cite[Proposition 4.13]{RosinskiID} and we shall use in our proof similar type of arguments. 
To simplify notation, for the rest of the section we  work with NRLPs in $[0,1]$,  but our exposition can be adapted to $\mathbb{R}^+$ with some slight changes. Hence, we can make use of the construction of NRLPs from \cite{BertoinNRLP} in terms of Poissonian Yule-Simon series that we recalled at the end of Section \ref{section:extensionAndRegularity}. This construction will be used for the rest of the section. 
\subsubsection{Preliminaries on infinitely divisible processes}
Let us introduce some standard notation mostly taken from \cite{RosinskiID}.  For $T$ a nonempty set, we  denote by $\mathbb{R}^T$ the set of $\mathbb{R}$-valued functions indexed by $t \in T$. If $S \subset T$ is an arbitrary subset and $e = (e(t))_{t \in T} \in \mathbb{R}^T$, we write $e_S$  for the restriction of $e$ to $S$. Further, let   $\pi_S$ be  the canonical projection $\pi_{S} : \mathbb{R}^T \rightarrow \mathbb{R}^{S}$ from $\mathbb{R}^T$ into $\mathbb{R}^S$, viz. the function defined as $\pi_S(e) := e_S$. For finite subsets of $T$ of the form  $I:= \{t_1, \dots , t_k \} \subset  T$, the space $\mathbb{R}^I$ is identified with  $\mathbb{R}^k$ and we write:  
\begin{equation*}
    e_I  = (e({t_1}), \dots , e({t_k})) \in \mathbb{R}^{I}.
\end{equation*}
As usual, the space $\mathbb{R}^T$ is equipped with the cylindrical sigma field  $\scr{B}^T := \sigma (\pi_t \, : \, t \in T)$ generated by the projection mappings. For any arbitrary $S \subset T$, we denote by $0_S$ the $0$ element of $\mathbb{R}^S$ and we write  $\scr{B}_0^S := \{ A \in \scr{B}^S : 0_S \notin A \}$.  Consequently, 
\begin{equation*}
    \pi^{-1}_S( {0}_S) = \{e \in \mathbb{R}^T : e(t) = 0 \text{ for all } t \in S \} =: \mbm{0}_S \subset  \mathbb{R}^T. 
\end{equation*}
Notice however that this subset is not $\scr{B}^T$ measurable when $S$ is uncountable.    Finally,  for $x \in \mathbb{R}$ we set $\llbracket x \rrbracket := x \indic{\{|x| \leq 1 \}}$ and  if $x=(x_1, \dots , x_k) \in \mathbb{R}^k$, the term  $\llbracket x \rrbracket$ should be interpreted component-wise, viz.  $\llbracket x \rrbracket := \left( \llbracket x_1 \rrbracket, \dots , \llbracket x_k \rrbracket \right)$. Now let us start with the following definition: 
\begin{definition} \label{definition:IDprocess}
An $\mathbb{R}$-valued stochastic process $X = (X_t)_{t \in T}$ is said to be infinitely divisible (in law) if for any $n \in \mathbb{N}$, there exist independent and identically distributed processes  $Y^{(n,1)}, \dots Y^{(n,n)}$ such that 
\begin{equation*}
    X \overset{\scr{L}}{=} Y^{(n,1)} + \dots Y^{(n,n)}.
\end{equation*}
\end{definition}
When $T=\{ 1 \}$ is a singleton, this is just the definition of a real valued  infinitely-divisible random variable, in which case,  the characteristic function of $X_1$ takes the Lévy-Khintchine form: 
\begin{equation*}
    \esp{e^{i \theta X_1 }}{} = \exp \Big\{ i \theta b -   \frac{q^2}{2} \theta^2 + \int_{\mathbb{R}} \left( e^{i \theta x} -1 - i  \theta  \llbracket x \rrbracket  \right)\nu(\dd x)  \Big\}, 
\end{equation*}
for $q, b \in \mathbb{R}$, $\nu$ a Lévy measure. Further, it is well known that the set of infinitely divisible random variables and distributions of Lévy processes  are in bijection and it is clear that if  $X$ is a Lévy process with characteristic  exponent as in the previous display, we have 
\begin{equation*}
    X \overset{\scr{L}}{=} Y^{(n,1)} + \dots + Y^{(n,n)}, 
\end{equation*}
where for each $i \in \{1, \dots , n \}$,  $Y^{(n,i)}$ is an independent copy of a  Lévy process with  characteristic triplet $(b/n,q/n , \nu/n )$. Said otherwise,  Lévy processes are infinitely divisible processes. Moreover, from the formula for the characteristic function of Proposition \ref{lemma:fddsExtendidas}, it is clear that NRLPs  are in turn infinitely divisible.  \par 
Now, recall that a Gaussian process $X = (X_t)_{t \in T}$  is a $T$-indexed process satisfying that, for any $I = \{ {t_1}, \dots , {t_k} \} \subset  T$, the vector
$
   X_I =  (X_{t_1}, \dots , X_{t_k})
$
is Gaussian. In the sequel we  also assume that the Gaussian processes we  work with are centred.  Gaussian  processes are characterised by their covariance function, in the sense that the law of $X$ is completely determined by the semi-definite positive function $\Gamma : T \times T \rightarrow  \mathbb{R}$ defined by  
\begin{equation}
    {\Gamma}(t,s) := \esp{X_tX_s}{}, \quad \quad \text{ for } t, s \in T.
\end{equation}
The following characterisation of infinitely divisible stochastic processes shows that they are the natural generalisation of Gaussian processes: 
\begin{proposition} \emph{[Proposition 3.1.3]\cite{Samorodnitsky_StochProcesses_and_LongRangeDependance(+IDprocesses)}}
An $\mathbb{R}$-valued stochastic process $X = (X_t)_{t \in T}$ is infinitely divisible  if and only if for any finite collection of indices $I=\{t_1, \dots t_k \} \subset T $,  the random vector
$
X_I =  (X_{t_1}, \dots X_{t_k})
$
is infinitely divisible. 
\end{proposition}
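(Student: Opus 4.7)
The forward implication is almost immediate. If $X$ is infinitely divisible in the sense of Definition \ref{definition:IDprocess}, then for each $n$ there are iid processes $Y^{(n,1)},\dots,Y^{(n,n)}$ with $X \stackrel{\scr{L}}{=} Y^{(n,1)}+\dots+Y^{(n,n)}$. Fixing any finite $I=\{t_1,\dots,t_k\}\subset T$ and applying the projection $\pi_I : \mathbb{R}^T \to \mathbb{R}^k$, which is measurable and linear, yields
\begin{equation*}
    X_I \stackrel{\scr{L}}{=} Y^{(n,1)}_I + \dots + Y^{(n,n)}_I,
\end{equation*}
a sum of $n$ iid $\mathbb{R}^k$-valued vectors, so $X_I$ is infinitely divisible as a random vector.

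The converse direction requires more work. Fix $n\ge 1$. For every finite $I\subset T$, the random vector $X_I$ is infinitely divisible on $\mathbb{R}^{|I|}$, so its characteristic function takes the form $\exp(\Psi_I)$ for a Lévy--Khintchine exponent $\Psi_I$, and there exists a probability measure $\mu^{(n)}_I$ on $\mathbb{R}^{|I|}$ with characteristic function $\exp(\Psi_I/n)$. The key point is that for any $\mathbb{R}^k$-valued infinitely divisible distribution, the distribution of each summand in an iid decomposition into $n$ pieces is uniquely determined, as its characteristic function must equal $\exp(\Psi_I/n)$. Therefore $\mu^{(n)}_I$ is canonical.

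The crucial step is to verify the consistency of the family $\{\mu^{(n)}_I : I \subset T \text{ finite}\}$. Given finite $I\subset J\subset T$, the projection $\pi_{I,J}:\mathbb{R}^J \to \mathbb{R}^I$ pushes $X_J$ forward to $X_I$, which at the level of characteristic exponents reads $\Psi_J\circ \pi_{I,J}^* = \Psi_I$, where $\pi_{I,J}^*$ is the adjoint embedding of test-vectors. Dividing by $n$ and applying uniqueness of the $n$-th root above, we conclude $(\pi_{I,J})_*\mu^{(n)}_J = \mu^{(n)}_I$, i.e.\ the measures are Kolmogorov-consistent.

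By Kolmogorov's extension theorem there exists a stochastic process $Y^{(n)}=(Y^{(n)}_t)_{t\in T}$, defined on some probability space, whose finite-dimensional distribution on $I$ is exactly $\mu^{(n)}_I$. Taking $n$ independent copies $Y^{(n,1)},\dots,Y^{(n,n)}$ of $Y^{(n)}$ (e.g.\ on a product space) and noting that the characteristic function of $(Y^{(n,1)}+\dots+Y^{(n,n)})_I$ is $\exp(n\cdot \Psi_I/n)=\exp(\Psi_I)$ for every finite $I$, we deduce $X \stackrel{\scr{L}}{=}Y^{(n,1)}+\dots+Y^{(n,n)}$ in the sense of equality of all finite-dimensional distributions, which is equality in law on $(\mathbb{R}^T,\scr{B}^T)$. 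Since $n$ was arbitrary, $X$ is infinitely divisible. The principal obstacle is the consistency verification together with the uniqueness of the $n$-th root; both rest on the injectivity of the Lévy--Khintchine correspondence in finite dimension and are what makes the reverse implication nontrivial.
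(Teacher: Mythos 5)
The paper does not prove this statement; it is quoted verbatim from Samorodnitsky's book as a known result. Your argument is the standard proof of that result and it is correct: the forward implication by projection, and the converse by taking the canonical $n$-th convolution root $\mu^{(n)}_I$ of each finite-dimensional marginal, checking Kolmogorov consistency, and extending. The only point worth making fully explicit is the uniqueness of the $n$-th convolution root among \emph{all} probability measures (not merely among infinitely divisible ones): since $\hat{\mu}_I=e^{\Psi_I}$ never vanishes, any probability measure $\nu$ with $\hat{\nu}^{\,n}=\hat{\mu}_I$ satisfies $\hat{\nu}=e^{\Psi_I/n}\,\omega$ with $\omega$ continuous, $\omega(0)=1$ and $\omega^n\equiv 1$; connectedness of $\mathbb{R}^{|I|}$ forces $\omega\equiv 1$. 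This is slightly more than ``injectivity of the L\'evy--Khintchine correspondence,'' which is how you phrase it, but it is exactly the fact your consistency step needs, and with it the proof is complete.
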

Hence, if $X$ is an infinitely divisible process, by Lévy-Kintchine representation and the previous proposition,  for every $I=\{t_1, \dots, t_k \}$  there exists: an $\mathbb{R}^k$-valued  measure  $\nu_{I} (\dd x)$ verifying  
\begin{equation*}
    \int_{\mathbb{R}^k} 1 \wedge |x|^2 \nu_{I} (\dd x) < \infty, \hspace{15mm} \text{ and } \hspace{15mm}  \nu_{I}\left( \{ 0_I \} \right)=0,
\end{equation*}
a semi-definite positive $I \times I$  matrix  $\Gamma_{I}$ and an $\mathbb{R}^k$ vector, that we denote as ${b}({I})$, satisfying for every $\theta \in \mathbb{R}^I$ the identity: 
\begin{align} \label{formula:fddsID}
    \esp{ \exp \left\{ \sum_{t \in I} \theta_t X_{t} \right\} }{} = 
    \exp \left\{ i \langle {b}({I}) , {\theta} \rangle -\frac{1}{2} \langle {\theta} \, \Gamma_{I} , {\theta} \rangle  + \int_{\mathbb{R}^I} \left( e^{i \langle {\theta} , x \rangle} - 1 - i \langle {\theta} , \llbracket x \rrbracket \rangle  \right) \nu_{I}(\dd x) \right\}.
\end{align}
 It is possible to show that one can recover the collection of  triplets $((b(I) , \Gamma_I , \nu_I) : I \subset T, \, |I| < \infty )$ from a so called functional triplet $(b,\Gamma, \bar{\nu})$,  consisting in a path $b \in \mathbb{R}^T$, a covariance function $\Gamma : T \times T \rightarrow \mathbb{R}$ and a path-valued measure $\bar{\nu}$ defined in $\mathbb{R}^T$, satisfying for any finite  $I \subset {T}$, that 
\begin{equation*}
    b(I) = \pi_I(b) \hspace{20mm} \Gamma_I =  \restr{\Gamma}{I} \hspace{20mm} \nu_I (\dd x) = \bar{\nu} \circ \pi_I^{-1}(\dd x)  \quad \text{ in } \scr{B}_0^I, 
\end{equation*}
where $\bar{\nu}$ satisfies some regularity and integrability conditions that we now introduce:
\begin{definition} \label{definicion:condicionesPathLevy}
A measure $\bar{\nu}$ on $\mathbb{R}^T$ is called a path Lévy measure if it satisfies the following two conditions:

\begin{enumerate}
    \item[\emph{(i)}] $\int_{\mathbb{R}^T}  |e(t)|^2 \wedge 1 \, \bar{\nu}(\dd e) < \infty \quad \text{ for all }t \in T. $
    \item[\emph{(ii)}]  $\text{ For every } A \in \scr{B}^T, \text{ there exists a countable subset } T_A \subset T \text{ such that } \bar{\nu}(A) = \bar{\nu}(A \setminus  \pi_{T_A}^{-1}( 0_{T_A})  )$. 
\end{enumerate}
Moreover, we consider the following third condition: 
\begin{enumerate}
    \item[\emph{(iii)}] $\text{There exists a countable subset } T_0 \subset T \text{ such that } \bar{\nu} (  \pi_{T_0}^{-1} ( {0}_{T_0} ) ) = 0. $
\end{enumerate}
\end{definition}
Then, (iii) is a stronger statement than (ii) and it has been shown  that a path Lévy measure is $\sigma$-finite if and only if (iii) holds -- see  e.g. \cite{RosinskiID}.  Condition (ii) states roughly speaking  that $\bar{\nu}$ "does not charge the origin". As we already mentioned, in general $0_T$ is not measurable and hence we can not state this condition as in the finite-dimensional case of Lévy measures.  One of the main results of the theory states that infinitely divisible processes are in bijection with functional triplets $(b,\Gamma , \bar{\nu})$, we refer to  \cite{RosinskiID} for the proof:
\begin{theorem} \label{theorem:IDprocessesTriplets}
For every infinitely divisible stochastic process $X=(X_t)_{t \in T}$ there exists a unique generating  triplet $({b}, {\Gamma} , \bar{\nu})$  consisting of a path $b \in \mathbb{R}^T$, a covariance function  $\Gamma$ in $T \times T$ and  a path Lévy measure  $\bar{\nu}$  in $\mathbb{R}^T$  such that for any finite  $I \subset \hat{T}$,
 \begin{equation} \label{formula:LevyKintchingID}
   \esp{\exp \left\{ i  \sum_{t \in I} \theta_t X_t \right\} }{} = \exp \left\{ i \langle b_I  , \theta \rangle -\frac{1}{2} \langle \theta \, \Gamma_I , \theta \rangle  + \int_{\mathbb{R}^T} \left( e^{i \langle \theta , e_I \rangle} - 1 - i \langle \theta , \llbracket e_I \rrbracket \rangle \right) \bar{\nu}(\dd e) \right\}.
\end{equation}
Conversely, for every generating triplet $({b}, {\Gamma} , \bar{\nu})$ there exists an infinitely divisible process satisfying (\ref{formula:LevyKintchingID}). 
\end{theorem}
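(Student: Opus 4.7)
My plan is to split the argument into the forward direction (existence and uniqueness of the triplet from a given infinitely divisible process $X$) and the converse direction (constructing a process from a given triplet), and to handle the path Lévy measure $\bar{\nu}$ via a consistent family of finite-dimensional Lévy measures indexed by the finite subsets of $T$.

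For the forward direction, the first step will be to exploit the finite-dimensional Lévy--Khintchine representation: for every finite $I \subset T$, the vector $X_I$ is $\mathbb{R}^I$-valued and infinitely divisible, so it admits a unique triplet $(b(I), \Gamma_I, \nu_I)$ as in \eqref{formula:fddsID}. The pointwise data $b$ and $\Gamma$ are then defined by $b(t) := b(\{t\})$ and $\Gamma(s,t) :=$ the corresponding entry of $\Gamma_{\{s,t\}}$; pointwise uniqueness is immediate from the uniqueness of Lévy--Khintchine. The next step is to check consistency of $(\nu_I)$: if $J \subset I$, the projection $\pi_{I,J}:\mathbb{R}^I \to \mathbb{R}^J$ satisfies $(X_J) = \pi_{I,J}(X_I)$, so equating the two Lévy--Khintchine formulas gives $\nu_J = (\pi_{I,J})_* \nu_I$ on $\mathcal{B}^J_0$ and an analogous consistency relation on $\Gamma$ and on the drifts (modulo the compensator correction coming from $\llbracket\cdot\rrbracket$).

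The heart of the proof—and where I expect the main obstacle—will be to assemble the consistent family $(\nu_I)$ into a single path Lévy measure $\bar{\nu}$ on $(\mathbb{R}^T, \mathcal{B}^T)$ satisfying Definition \ref{definicion:condicionesPathLevy}. Since the $\nu_I$ are in general infinite, Kolmogorov's extension theorem does not apply directly. The workaround is the standard truncation/exhaustion strategy: for each finite $I$ and each $\epsilon>0$, the restriction $\nu_I^{\epsilon} := \nu_I(\cdot \cap \{|x|>\epsilon\})$ is a finite measure on $\mathbb{R}^I$; after renormalising to a probability measure, consistency of the family $(\nu_I^{\epsilon})_I$ (inherited from step above, and with the appropriate handling of atoms at projections that land in the ball $\{|x|\le\epsilon\}$) allows an application of Kolmogorov's extension theorem to obtain a measure $\bar{\nu}^{\epsilon}$ on $\mathbb{R}^T$ supported on $\bigcup_I \pi_I^{-1}(\{|x|>\epsilon\})$. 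Letting $\epsilon\downarrow 0$ along a countable sequence yields $\bar{\nu} := \sup_{\epsilon} \bar{\nu}^{\epsilon}$, and the integrability \textit{(i)} follows from $\int(|x|^2\wedge 1)\nu_{\{t\}}(\mathrm{d}x)<\infty$. Property \textit{(ii)} requires care: for $A \in \mathcal{B}^T$, the cylindrical structure of $\mathcal{B}^T$ guarantees that $A$ depends only on coordinates in some countable $T_A \subset T$, so $\bar{\nu}(A\cap \pi_{T_A}^{-1}(0_{T_A})) = 0$ by construction (a path that vanishes on all of $T_A$ carries no $\bar{\nu}$-mass because of the integrability condition). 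Uniqueness of $\bar{\nu}$ then follows because any other such measure must project to the same $\nu_I$ on each $\mathcal{B}^I_0$, and together with \textit{(ii)} this determines $\bar{\nu}$ on all of $\mathcal{B}^T$.

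For the converse direction, given $(b, \Gamma, \bar{\nu})$ I will define candidate finite-dimensional distributions on each finite $I \subset T$ through the right-hand side of \eqref{formula:LevyKintchingID}. Using \textit{(i)} one checks that the integral converges and defines a genuine Lévy--Khintchine exponent on $\mathbb{R}^I$; hence one obtains a bona fide infinitely divisible probability measure $\mu_I$ on $\mathbb{R}^I$. Consistency under projection is then immediate from the algebraic structure of the formula: plugging $\theta_t = 0$ for $t \in I\setminus J$ collapses the Gaussian and integral pieces precisely to the corresponding expression for $J$. Kolmogorov's extension theorem finally produces a process $X = (X_t)_{t\in T}$ with these finite-dimensional distributions, and $X$ is infinitely divisible coordinatewise by construction. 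The main obstacle throughout, as mentioned, is the construction of $\bar{\nu}$ as a sigma-finite (or at least path Lévy) measure on the huge space $\mathbb{R}^T$; everything else is essentially bookkeeping on top of the classical Lévy--Khintchine theorem in $\mathbb{R}^k$ combined with Kolmogorov extension.
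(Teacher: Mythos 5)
This theorem is not proved in the paper at all: it is quoted verbatim from Rosinski \cite{RosinskiID}, and the text explicitly defers to that reference for the proof. So there is no in-paper argument to compare yours against; I can only assess your sketch on its own terms. Its overall architecture (finite-dimensional Lévy--Khintchine plus projective consistency plus an extension argument for $\bar{\nu}$, and Kolmogorov extension for the converse) is the standard route and is essentially what the cited source does.

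There is, however, a genuine gap at exactly the point you flag as the heart of the proof. The truncated family $\nu_I^{\epsilon} := \nu_I(\,\cdot \cap \{|x|>\epsilon\})$, with $|x|$ the norm of the whole vector in $\mathbb{R}^I$, is \emph{not} projectively consistent: for $J \subset I$ the pushforward $(\pi_{I,J})_*\nu_I^{\epsilon}$ strictly dominates $\nu_J^{\epsilon}$ in general, because a point of $\mathbb{R}^I$ can have $|x|>\epsilon$ thanks to a coordinate in $I\setminus J$ while its projection to $\mathbb{R}^J$ lies inside the ball $\{|x|\le\epsilon\}$. Renormalising to probability measures makes this worse, since the total masses $\nu_I^{\epsilon}(\mathbb{R}^I)$ grow with $I$. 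So Kolmogorov's extension theorem cannot be applied to this family as written, and the parenthetical ``appropriate handling of atoms that land in the ball'' is precisely the missing idea, not a detail. The standard repair is to truncate in a \emph{fixed single coordinate}: for $t_0 \in T$ and $\epsilon>0$, the measures $\nu_I(\,\cdot \cap \{|x_{t_0}|>\epsilon\})$ for finite $I \ni t_0$ are finite with a common total mass $\nu_{\{t_0\}}(|x|>\epsilon)$ and genuinely consistent, so Kolmogorov extension produces a finite measure on $\pi_{t_0}^{-1}(\{|x|>\epsilon\})$; letting $\epsilon\downarrow 0$ gives a $\sigma$-finite measure on $\{e : e(t_0)\neq 0\}$, and for a general $A\in\scr{B}^T$ with countable support $T_A$ one defines $\bar{\nu}(A)$ by exhausting $A\setminus \pi_{T_A}^{-1}(0_{T_A})$ with the countably many sets $A\cap\{|e(t)|>1/n\}$, $t\in T_A$, checking that the patched definitions agree on overlaps. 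Condition (ii) of Definition \ref{definicion:condicionesPathLevy} is then built into the construction rather than deduced from the integrability condition (i), as your sketch suggests; (i) alone does not force $\bar{\nu}(\pi_{T_A}^{-1}(0_{T_A}))$ to carry no mass of $A$. With that substitution the forward direction closes, and your converse direction is correct as stated.
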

Maintaining the notation of Theorem \ref{theorem:IDprocessesTriplets}, it follows in particular that the law of any ID process $X$ can be written as a sum of two independent processes 
$
     X \overset{\scr{L}}{=} G + P, 
 $
 where $G$ is  Gaussian with covariance $\Gamma$ and $P$ is a so-called  Poissonian ID process. When the equality $X = G+P$ holds almost surely, we call respectively $G$ and $P$ the Gaussian part and the Poissonian part of $X$. Let us conclude our presentation with the following notion that will be of use:  
\begin{definition} \label{definition:representantLevyMeasure}
A process $V = (V_t)_{t \in T}$ defined in a measure space $(S , \scr{S}, n)$ is called a representant of a path Lévy measure $\bar{ \nu}$ if for any finite $I \subset {T}$, we have 
\begin{equation*}
    n \left( s \in S \, : \, V_I (s) \in B \right) = \bar{\nu}_I(B), \quad \quad \text{ for every } B \in \scr{B}_0^I. 
\end{equation*}
The representation is called exact if $n \circ V^{-1} = \bar{\nu}$.
\end{definition}
This is, if $V$ is only a representant, the measure $\nu \circ V^{-1}$ might not be a Lévy measure since it might "charge the origin".
In the situations we will be interested in the representations will always be exact, and we only enunciate the weaker definition to write the results we need in their full generality. Representants allow to build explicitly Poissonian ID processes in terms of Poisson random measure, for more details we refer to \cite{RosinskiID}, see also our brief discussion before the proof of Proposition \ref{proposition:convOrigenNRLP} below. 

\subsubsection{The characteristic triplet of a  NRLP}

We can now start investigating Lévy processes and their reinforced counterparts as ID processes, and we start with a basic analysis of the former. More precisely, we  identify the path Lévy measure of Lévy processes as well as an exact representant. These results are known \cite[Example 2.23 ]{RosinskiID} and the statements are only included to contrast with the analogous results for NRLPs -- see Lemma \ref{teorema:InfDivNRLP} below.

\begin{lemma} \label{proposition:LevyProcessesID} The following assertions hold: 
\begin{enumerate}
    \item[\emph{(i)}]  Let $\xi$ be a  Lévy process with characteristic triplet $(a, q, \Lambda)$.  The path Lévy measure $\bar{\nu}$ of $\xi$ is given by, 
\begin{equation*}
     \bar{\nu} (\dd e) := ( \dd t \otimes \Lambda) \circ V^{-1} (\dd e),  
\end{equation*}
where we denoted by $V$ the mapping $V : \mathbb{R}^+ \times \mathbb{R} \mapsto \mathbb{R}^{ \mathbb{R}^+}$ defined as $V(s,x) := x \indic{\{ s \leq \cdot \}}$. 
    \item[\emph{(ii)}]  Consider a measure $\Lambda$ on $\mathbb{R}$ with $\Lambda(0) =0$ and let  $V$ be defined as in \emph{(i)}. Then, the condition $\int 1 \wedge |x|^2 \Lambda(\dd x) < \infty$ holds  if and only if $\bar{\nu} := ( \dd t \otimes \Lambda ) \circ V^{-1}$ is a path Lévy measure in $\mathbb{R}^{\mathbb{R}^+}$. Moreover, if the later holds, the path Lévy measure $\bar{\nu}$ is $\sigma$-finite. 
\end{enumerate}
\end{lemma}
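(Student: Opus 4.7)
The natural strategy is to invoke the uniqueness part of Theorem \ref{theorem:IDprocessesTriplets}: it suffices to verify, for every finite $I=\{t_1<\dots<t_k\}\subset\mathbb{R}^+$ and every $\theta=(\theta_1,\dots,\theta_k)\in\mathbb{R}^I$, that the joint characteristic function of $(\xi_{t_1},\dots,\xi_{t_k})$ coincides with the right-hand side of \eqref{formula:LevyKintchingID} when one plugs in $b=(at_j)_j$, $\Gamma(s,t)=q^{2}(s\wedge t)$ and $\bar{\nu}=(\dd t\otimes \Lambda)\circ V^{-1}$. On the probabilistic side, I would decompose $\sum_j\theta_j\xi_{t_j}=\sum_{i=1}^{k}\beta_i(\xi_{t_i}-\xi_{t_{i-1}})$ with $\beta_i:=\sum_{j\geq i}\theta_j$ and $t_0=0$, and use independence of increments together with the Lévy-Khintchine formula \eqref{formula:charExponentLevy} to obtain
\begin{equation*}
\mathbb{E}\!\left[\exp\!\Big\{i\sum_j\theta_j\xi_{t_j}\Big\}\right]=\exp\!\Big\{\sum_{i=1}^k(t_i-t_{i-1})\Psi(\beta_i)\Big\}.
\end{equation*}
On the analytic side, I would apply the change of variables $e=V(s,x)$ to rewrite $\int_{\mathbb{R}^{\mathbb{R}^+}}(e^{i\langle\theta,e_I\rangle}-1-i\langle\theta,\llbracket e_I\rrbracket\rangle)\bar{\nu}(\dd e)$ as $\int_0^\infty\dd s\int_{\mathbb{R}}\Lambda(\dd x)(e^{ix\alpha(s)}-1-ix\alpha(s)\indic{\{|x|\leq1\}})$, where $\alpha(s):=\sum_j\theta_j\indic{\{s\leq t_j\}}$; here the simplification of the component-wise truncation $\llbracket V(s,x)_I\rrbracket$ uses that $|x\indic{\{s\leq t_j\}}|\leq1\iff|x|\leq1$ on the support. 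The identity $\alpha=\beta_i$ on $(t_{i-1},t_i]$ then aligns the two expressions term by term (drift, Gaussian, and jump parts), which proves (i).

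\textbf{Proof of (ii), forward direction.} Assume $\int 1\wedge|x|^2\Lambda(\dd x)<\infty$. For condition (i) of Definition \ref{definicion:condicionesPathLevy}, a direct change of variables gives
\begin{equation*}
\int_{\mathbb{R}^{\mathbb{R}^+}}1\wedge |e(t)|^2\,\bar{\nu}(\dd e)=\int_0^\infty\dd s\int_\mathbb{R}(1\wedge|x\indic{\{s\leq t\}}|^2)\Lambda(\dd x)=t\int_\mathbb{R}(1\wedge |x|^2)\Lambda(\dd x)<\infty.
\end{equation*}
For condition (iii), I would take any countable unbounded subset $T_0\subset\mathbb{R}^+$ (say $T_0=\mathbb{N}$) and observe that $V(s,x)\in\pi_{T_0}^{-1}(0_{T_0})$ forces $x\indic{\{s\leq t\}}=0$ for all $t\in T_0$; since $\sup T_0=\infty$, this requires $x=0$, and $\Lambda(\{0\})=0$ yields $\bar{\nu}(\pi_{T_0}^{-1}(0_{T_0}))=0$. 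Since (iii) is strictly stronger than (ii) (as recalled after Definition \ref{definicion:condicionesPathLevy}), this shows that $\bar{\nu}$ is a $\sigma$-finite path Lévy measure.

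\textbf{Proof of (ii), converse.} If $\bar{\nu}$ is a path Lévy measure, then applying condition (i) at $t=1$ and redoing the change of variables above produces exactly $\int 1\wedge|x|^2\Lambda(\dd x)<\infty$, closing the equivalence.

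\textbf{Main obstacle.} The only delicate point is the bookkeeping in (i): one must be careful that the cutoff $\llbracket\cdot\rrbracket$ in \eqref{formula:LevyKintchingID} is component-wise while the cutoff in the scalar Lévy-Khintchine formula \eqref{formula:charExponentLevy} is on $|x|$. The reduction above works precisely because $V(s,x)$ has components of common absolute value $|x|$ on their support; checking this reduction, together with matching the drift and Gaussian terms across the telescoping in $\beta_i$, is the only computational step that requires care. Everything else is a routine application of uniqueness in Theorem \ref{theorem:IDprocessesTriplets}, change of variables and $\sigma$-finiteness of $\dd t\otimes\Lambda$ on $\mathbb{R}^+\times(\mathbb{R}\setminus\{0\})$.
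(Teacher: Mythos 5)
Your proposal is correct and follows essentially the same route as the paper's argument: compute the finite-dimensional characteristic functions by independence of increments and the telescoping into increments, change variables through $V$ to identify $(\dd t \otimes \Lambda)\circ V^{-1}$, verify condition (iii) of Definition \ref{definicion:condicionesPathLevy} with $T_0=\mathbb{N}$ (hence $\sigma$-finiteness), and note that the integrability condition for $\bar{\nu}$ at a fixed $t$ is exactly $t\int 1\wedge|x|^2\,\Lambda(\dd x)<\infty$. The only cosmetic difference is that the paper sets $q=b=0$ at the outset, whereas you carry the drift and Gaussian terms through the telescoping and handle the component-wise truncation explicitly, which is a harmless (and slightly more complete) bookkeeping choice.
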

In particular, from (i) we get that $V$ is an exact representant of $\bar{\nu}$, on
$(S , \scr{S}, n) := (\mathbb{R}^+ \times \mathbb{R} , \scr{B}(\mathbb{R}^+)\otimes \scr{B}(\mathbb{R}),\dd t\otimes \Lambda )$. 
We now turn our attention to noise reinforced Lévy processes and we start with the following technical lemma: 
\begin{lemma}\label{lema:integrabilidadCharFun}
Let $\hat{\xi}$ be  an NRLP of characteristic triplet $(a,0,\Lambda,p)$ and let $T = [0,1]$. Then, for any $t \in T$, we have 
\begin{equation} \label{equation:cotaDrift}
    \esp{ \int_{\mathbb{R}} \big|    \llbracket Y(t) x \rrbracket - Y(t) \llbracket x \rrbracket \big| \Lambda(dx)  }{} < \infty.
\end{equation}
\end{lemma}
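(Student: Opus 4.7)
\emph{Proof plan.} My strategy is to first pinpoint exactly where the integrand
\[
g(t,x) := \big|\llbracket Y(t)x\rrbracket - Y(t)\llbracket x\rrbracket\big|
\]
is nonzero. Since $Y(t)$ takes values in $\mathbb{N}$, on the event $\{Y(t)=0\}$ both bracketed expressions vanish, so $g(t,x)=0$. On $\{Y(t)\geq 1\}$, the difference can only be nonzero when exactly one of $|x|\leq 1$ and $|Y(t)x|\leq 1$ holds. The case $|x|>1$ and $|Y(t)x|\leq 1$ would force $Y(t)\leq 1/|x|<1$, contradicting $Y(t)\geq 1$; hence only the complementary case survives, namely $|x|\leq 1$ and $|Y(t)x|>1$, i.e.\ $1/Y(t) < |x|\leq 1$. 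On this set $\llbracket Y(t)x\rrbracket = 0$ and $Y(t)\llbracket x\rrbracket = Y(t)x$, so $g(t,x) = Y(t)|x|$.

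Having isolated this single case, I would apply Fubini-Tonelli (all quantities are non-negative) to write
\begin{equation*}
    \esp{\int_{\mathbb{R}} g(t,x)\,\Lambda(dx)}{} = \int_{\{|x|\leq 1\}} |x|\cdot \esp{Y(t)\,\indic{\{Y(t)\geq 1/|x|\}}}{}\,\Lambda(dx).
\end{equation*}
Now I would fix an exponent $r \in (\beta(\Lambda)\vee 1,\, 1/p)$, which is a nonempty interval because $p$ is admissible (so $p\beta(\Lambda)<1$, i.e.\ $\beta(\Lambda)<1/p$) and $p<1$ (so $1<1/p$). On the event $\{Y(t)\geq 1/|x|\}$ one has $Y(t)|x|\geq 1$, hence, using $r-1\geq 0$,
\begin{equation*}
    Y(t)\,\indic{\{Y(t)\geq 1/|x|\}} \leq Y(t)\bigl(Y(t)|x|\bigr)^{r-1} = |x|^{r-1}\,Y(t)^{r}.
\end{equation*}

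Plugging this in gives the uniform bound
\begin{equation*}
    \esp{\int_{\mathbb{R}} g(t,x)\,\Lambda(dx)}{} \leq \esp{Y(t)^r}{}\cdot \int_{\{|x|\leq 1\}} |x|^{r}\,\Lambda(dx).
\end{equation*}
The second factor is finite by the very definition of the Blumenthal-Getoor index, since $r>\beta(\Lambda)$; the first factor is finite because, as recalled after Lemma \ref{lemma:yuleSimonProcess}, $Y(t)\in L_q(\mathbb{P})$ for every $q<1/p$, and we chose $r<1/p$. Combining these observations yields (\ref{equation:cotaDrift}).

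I do not foresee any serious obstacle: the only delicate step is the clean case analysis that reduces the integrand to $Y(t)|x|\indic{\{1/Y(t)<|x|\leq 1\}}$, after which the moment bound for $Y(t)$ together with admissibility of $p$ does all the work. The admissibility hypothesis $p\beta(\Lambda)<1$ is used precisely to ensure the existence of the exponent $r$ that balances the two integrability requirements.
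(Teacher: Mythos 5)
Your proof is correct. The reduction is the same as the paper's: both arguments observe that the integrand vanishes except on $\{Y(t)\geq 1,\ |x|\leq 1,\ |Y(t)x|>1\}$, where it equals $Y(t)|x|$, so everything comes down to bounding $\int_{\{|x|\leq 1\}}|x|\,\esp{Y(t)\indic{\{Y(t)>1/|x|\}}}{}\,\Lambda(\dd x)$. Where you diverge is in how this last integral is controlled. The paper applies H\"older's inequality with exponent $q\in(\beta(\Lambda)\vee 1,1/p)$ to get the factor $\esp{Y(t)^q}{}^{1/q}\proba{Y(t)>1/|x|}{}^{(q-1)/q}$, and then invokes the precise tail asymptotic $\proba{Y(t)>1/|x|}{}\sim t\,\Gamma(1/p+1)|x|^{1/p}$ of the Yule--Simon law to see that the resulting exponent of $|x|$ exceeds $\beta(\Lambda)$ for $q$ close to $1/p$. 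You instead use the elementary pointwise interpolation $Y(t)\indic{\{Y(t)|x|\geq 1\}}\leq |x|^{r-1}Y(t)^r$ (valid since $r\geq 1$ and $Y(t)|x|\geq 1$ on that event), which immediately factorizes the bound as $\esp{Y(t)^r}{}\int_{\{|x|\leq 1\}}|x|^r\Lambda(\dd x)$. Your route needs only the moment bound $Y(t)\in L_r(\mathbb{P})$ for $r<1/p$ and the definition of $\beta(\Lambda)$, avoiding both H\"older and the tail estimate; it is arguably cleaner and yields an explicit bound linear in $\esp{Y(t)^r}{}=t\,\esp{\eta^r}{}$, hence $O(t)$, which the paper's version also gives but less directly. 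Both arguments use the admissibility $p\beta(\Lambda)<1$ in exactly the same place, namely to guarantee that the interval $(\beta(\Lambda)\vee 1,1/p)$ of usable exponents is nonempty.
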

\begin{proof}
First,  recalling that  $ \llbracket x \rrbracket = x \indic{\{ |x| \leq 1 \}}$,  we can write 
\begin{equation} \label{equation:IDdosTerminos}
    \esp{ \big|    \llbracket Y(t) x \rrbracket - Y(t) \llbracket x \rrbracket \big|   }{} =  \esp{ \big| Y(t) \llbracket x \rrbracket \big|  \indic{\{ |x Y(t)| >1 \}}  }{}
    +
     \esp{ \big|  Y(t) x - Y(t) \llbracket x \rrbracket \big|  \indic{\{ |x Y(t)| \leq 1 \}}  }{}.
\end{equation}
Remark that since  $Y$ takes values in $\{ 0,1,2, \dots \}$, the second term in the right-hand side vanishes.  On the other hand,  for any $q \in (\beta(\Lambda) \vee 1 , 1/p )$, we have 
\begin{align} \label{equation:IDunaestimacion}
    \int_{\mathbb{R}} \esp{ \big| Y(t) \llbracket x \rrbracket \big|  \indic{\{ |x| Y(t) >1 \}}  }{} \Lambda(dx) &=   \int_{\{ |x| \leq 1 \}} \esp{  Y(t)   \indic{\{ |x| Y(t) >1 \}}  }{} |x| \Lambda(dx) \nonumber \\
   &\leq \esp{  Y^q(t) }{}^{1/q} \int_{\{ |x| \leq 1 \}}  \proba{ Y(t) >1/|x|}{}^{(q-1)/q}  |x| \Lambda(dx), 
\end{align}
where we recall that  $Y \in L^q(\mathbb{P})$ for any $q < 1/p$. To conclude,   recall the asymptotic behaviour from (10) in \cite{BertoinNRLP},  
\begin{equation*}
    \proba{Y(t) > 1/|x|}{} \sim t \Gamma(1/p +1) |x|^{1/p},  \quad  \text{as } x \downarrow 0. 
\end{equation*}
It now follows that we can take $q$ close enough to $1/p$ such that the integral in \eqref{equation:IDunaestimacion} is finite and we deduce (\ref{equation:cotaDrift}).  
 \end{proof}

Now, we identify the path Lévy measure of NRLPs.

\begin{lemma} \label{teorema:InfDivNRLP} The following assertions hold: 
\begin{enumerate}
    \item[\emph{(i)}]  Let $\hat{\xi}$ be a NRLP with characteristic triplet $(a, q^2, \Lambda,p)$.  The path Lévy measure $\bar{\nu}$ of $\hat{\xi}$ is given by, 
\begin{equation*}
    \bar{\nu}:=(1-p) (\Lambda \otimes \mathbb{Q}) \circ V^{-1},
\end{equation*}
where    $V:D[0,1] \times \mathbb{R} \rightarrow \mathbb{R}^{[0,1]}$ is defined by  $V(x,y) := xy$. 
    \item[\emph{(ii)}]  Let $(a, 0 , \Lambda)$ be the characteristic triplet of a Lévy Process and let $V$ be defined as in \emph{(i)}. Then, if a memory parameter $p \in (0,1)$ is admissible for the triplet $(a, 0 , \Lambda)$, the measure  $\bar{\nu}:= (\Lambda \otimes \mathbb{Q}) \circ V^{-1}$ is a $\sigma$-finite path Lévy measure in $\mathbb{R}^{[0,1]}$.
    On the other hand, if $1/p < \beta(\Lambda)$, then the integrability condition \ref{definicion:condicionesPathLevy} - \emph{ (i)} fails.  
\end{enumerate}
\end{lemma}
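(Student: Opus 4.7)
For part (i), the strategy is to bring the characteristic function of the finite-dimensional distributions \eqref{equation:fddsNRLPs01} into the Lévy-Khintchine form \eqref{formula:LevyKintchingID}, and then read off the triplet by uniqueness in Theorem \ref{theorem:IDprocessesTriplets}. Substituting the Lévy-Khintchine expression for $\Psi$ with $\lambda = \sum_j \theta_{s_j}Y(s_j)$ into \eqref{equation:fddsNRLPs01} gives, for any finite $I=\{s_1,\dots,s_k\}\subset[0,1]$ and $\theta\in\mathbb{R}^I$,
\[
\mathbb{E}\left[\exp\left\{i\sum_{j} \theta_{s_j}\hat{\xi}_{s_j}\right\}\right] = \exp\left\{(1-p)\,\mathbb{E}\left[ia\lambda - \tfrac{q^2}{2}\lambda^2 + \int_\mathbb{R}\bigl(e^{i\lambda x} - 1 - i\lambda\llbracket x\rrbracket\bigr)\Lambda(dx)\right]\right\}.
\]
The Gaussian piece produces the covariance $\Gamma(s,t) = q^2(1-p)\mathbb{E}[Y(s)Y(t)]$ via \eqref{formula:covarianceNRBM}. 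The key algebraic step is a truncation swap in the jump integral: replace $\lambda\llbracket x\rrbracket = \sum_j\theta_{s_j}Y(s_j)\llbracket x\rrbracket$ by $\sum_j\theta_{s_j}\llbracket xY(s_j)\rrbracket$, which is exactly the truncation required by \eqref{formula:LevyKintchingID} after the change of variable $e = V(Y,x)=xY$. The difference $\sum_j\theta_{s_j}\bigl(Y(s_j)\llbracket x\rrbracket - \llbracket xY(s_j)\rrbracket\bigr)$ is integrable against $(1-p)\,\Lambda\otimes\mathbb{Q}$ by Lemma \ref{lema:integrabilidadCharFun}, is linear in $\theta$, and is therefore absorbed into the drift $b\in\mathbb{R}^{[0,1]}$. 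Fubini then identifies the resulting jump contribution as $(1-p)\int_{\mathbb{R}^{[0,1]}}\bigl(e^{i\langle\theta,e_I\rangle}-1-i\langle\theta,\llbracket e_I\rrbracket\rangle\bigr)(\Lambda\otimes\mathbb{Q})\circ V^{-1}(de)$, and the uniqueness statement in Theorem \ref{theorem:IDprocessesTriplets} yields $\bar\nu=(1-p)(\Lambda\otimes\mathbb{Q})\circ V^{-1}$.

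For the $\sigma$-finiteness assertion in (ii), I will verify condition (iii) of Definition \ref{definicion:condicionesPathLevy} with any countable dense set $T_0\subset[0,1]$. By right-continuity of Yule-Simon paths, any $e = xY$ vanishing on $T_0$ vanishes identically, so it forces $x=0$ or $Y\equiv 0$; since $\Lambda(\{0\})=\mathbb{Q}(Y\equiv 0)=0$, we conclude $\bar\nu(\pi_{T_0}^{-1}({0}_{T_0}))=0$. For the integrability requirement (i) of the same definition, split at $|x|=1$: the piece $\int_{|x|\geq 1}\mathbb{E}[1\wedge(xY(t))^2]\Lambda(dx)$ is bounded by $\Lambda(\{|x|\geq 1\})<\infty$, and on $|x|\leq 1$ the elementary inequality $1\wedge a^2\leq a^r$ ($a\geq 0$, $r\in(0,2]$) yields
\[
\int_{|x|\leq 1}\mathbb{E}\bigl[1\wedge (xY(t))^2\bigr]\Lambda(dx)\leq \mathbb{E}[Y(t)^r]\int_{|x|\leq 1}|x|^r\Lambda(dx).
\]
Admissibility $p\beta(\Lambda)<1$ permits a choice of $r\in(\beta(\Lambda),2]\cap(0,1/p)$, so both factors are finite since $Y(t)\in L_r(\mathbb{P})$ for every $r<1/p$; the borderline case $\beta(\Lambda)=2$ forces $p<1/2$ via admissibility and $r=2$ works directly against $\int_{|x|\leq 1}|x|^2\Lambda(dx)<\infty$.

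For the converse, suppose $1/p<\beta(\Lambda)$. Using $1\wedge(xY(t))^2\geq\indic{\{|xY(t)|\geq 1\}}$ and Fubini,
\[
\int_\mathbb{R}\mathbb{E}\bigl[1\wedge(xY(t))^2\bigr]\Lambda(dx)\geq \int_\mathbb{R}\mathbb{P}(Y(t)\geq 1/|x|)\,\Lambda(dx).
\]
The asymptotic $\mathbb{P}(Y(t)\geq 1/|x|)\sim t\,\Gamma(1/p+1)|x|^{1/p}$ as $|x|\downarrow 0$, invoked in the proof of Lemma \ref{proposition:rateGrowthSubordinator}, shows that the right-hand side dominates a multiple of $\int_{|x|\leq 1}|x|^{1/p}\Lambda(dx)$, which diverges by definition of $\beta(\Lambda)$ whenever $1/p<\beta(\Lambda)$. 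The principal obstacle throughout is the truncation mismatch in (i): the compensating linear term built into $\Psi$ uses $\llbracket x\rrbracket$, whereas the path-space Lévy-Khintchine formula requires $\llbracket xY(t)\rrbracket$; Lemma \ref{lema:integrabilidadCharFun} is the precise technical input that controls the discrepancy and allows it to be reabsorbed into the drift, after which the remaining steps are routine bookkeeping around the Yule-Simon moment bounds.
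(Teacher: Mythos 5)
Your proposal is correct, and part (i) is essentially the paper's own argument: substitute the Lévy--Khintchine form of $\Psi$ into \eqref{equation:fddsNRLPs01}, swap the truncation $Y(s_j)\llbracket x\rrbracket \leftrightarrow \llbracket xY(s_j)\rrbracket$ using Lemma \ref{lema:integrabilidadCharFun} to absorb the discrepancy into the drift, and identify $\bar\nu=(1-p)(\Lambda\otimes\mathbb{Q})\circ V^{-1}$ by uniqueness of the functional triplet. In part (ii) you deviate in two small but worthwhile ways. For $\sigma$-finiteness the paper takes $T_0=\{1\}$ and uses $Y(1)\geq 1$ a.s., which is slightly slicker than your dense-countable-set-plus-right-continuity argument, though yours is also valid. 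For the integrability condition the paper splits $\mathbb{E}[|xY(t)|^2\wedge 1]$ into $|x|^2\,\mathbb{E}[Y(t)^2\indic{\{Y(t)\leq 1/|x|\}}]+\mathbb{P}(Y(t)>1/|x|)$ and controls each term via the Beta-function asymptotics of the Yule--Simon distribution, which simultaneously yields finiteness when $\beta(\Lambda)<1/p$ and divergence when $\beta(\Lambda)>1/p$; your route is more economical, using $1\wedge a^2\leq a^r$ together with the moment bound $Y(t)\in L_r(\mathbb{P})$ for $r<1/p$ to get sufficiency in one line, and then the tail asymptotic $\mathbb{P}(Y(t)>1/|x|)\sim t\,\Gamma(1/p+1)|x|^{1/p}$ only for the divergence direction. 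Both are sound; the paper's computation gives finer quantitative information about which of the two terms blows up, while yours isolates the minimal inputs needed for each implication.
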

In particular, from (i) we get that $V$ is an exact representant of $\bar{\nu}$, in
$(S, \scr{S}, n) = (D[0,1] \times \mathbb{R} , \scr{B}(D[0,1])\otimes \scr{B}(\mathbb{R}), \mathbb{Q} \otimes \Lambda (1-p) )$. On other hand, (ii) gives a natural interpretation in  the terminology of ID processes for the admissibility of $p$ for $\Lambda$. 

\begin{proof}
{To identify the Lévy measure, let us  write the characteristic function of the finite dimensional distributions of $\hat{\xi}$ in the form \eqref{formula:LevyKintchingID} and  to simplify notation, we suppose that $a,q = 0$.  In this direction, consider a finite   $I \subset  T$,  $\theta = (\theta_{t_1}, \dots , \theta_{t_k}) \in \mathbb{R}^I$,   and denote by $y = (y(t))_{t \in [0,1]}$ an arbitrary counting function. Recall the formula for the finite dimensional distributions of $\hat{\xi}$ from Proposition \ref{lemma:fddsExtendidas}, for   $t = 1$. It now follows  by Lemma \ref{lema:integrabilidadCharFun}  and the triangle inequality that we have:  
\begin{equation} \label{equation:section8Integrabilidad}
     \int_{\mathbb{R}}  \Lambda(\dd x)  \esp{ \left| e^{i \langle \theta , Y_I \rangle x} -1- i \langle \theta, \llbracket x Y_I \rrbracket \rangle \right| }{} < \infty.
\end{equation} Now, we can write 
\begin{align*} 
    \esp{ \exp \left\{ i \sum_{t \in I} \theta_{t}\hat{\xi}_{t} \right\} }{} 
    &= \exp \left\{ (1-p) \int_{\mathbb{R}} \Lambda(\dd x)  
    \esp{e^{ \langle \theta , Y_{I} \rangle x } -1- i \langle \theta , Y_{I} \rangle  \llbracket x  \rrbracket }{}
       \right\} \\
    &= \exp \bigg\{ (1-p) \int_{\mathbb{R} \times D[0,1]} \left(  e^{ \langle \theta , (xy)_{I} \rangle  } -1- i \langle \theta ,  \llbracket x y_{I}   \rrbracket \rangle \right) \Lambda \otimes \mathbb{Q} (\dd x , \dd y)   \\
    &\hspace{25mm}+
     i \int_{\mathbb{R} \times D[0,1]} \langle \theta   , \llbracket x y_I  \rrbracket \rangle -  \langle \theta , y_{I} \rangle  \llbracket x  \rrbracket  \Lambda \otimes \mathbb{Q} (\dd x , \dd y) (1-p) \bigg\}, 
\end{align*}
where all the terms in the previous expression   are well defined by Lemma \ref{lema:integrabilidadCharFun} and \eqref{equation:section8Integrabilidad}. Since  $(xy)_I = \pi_I(V(x,y) )$ and  $\bar{\nu} = (1-p) (\Lambda \otimes Q) \circ V^{-1}$, we obtain for a clear choice of  $b_I$ that  
\begin{equation*}
 \esp{ \exp \left\{ \sum_{t \in I} \theta_{t}\hat{\xi}_{t} \right\} }{} =  \exp \left\{ \int_{\mathbb{R}^T} \left(  e^{ \langle {\theta} ,  e_I \rangle } -1- i \langle {{\theta} } , \llbracket e_I  \rrbracket \rangle \right) \bar{\nu}(\dd e)  + \langle \theta , b_I \rangle \right\}.
\end{equation*}
}
Next, notice  that condition (iii) of Definition \ref{definicion:condicionesPathLevy}  is satisfied by $ (\Lambda \otimes \mathbb{Q}) \circ V^{-1}$. Indeed,  if we let $T_0 := \{ 1 \}$ and $\bm{0}_{T_0} := \{ e \in \mathbb{R}^T \, : \, e(1) =0 \}$,  recalling that $Y(1) \geq 1$ a.s., we deduce that  
\begin{equation*}
    \Lambda \otimes \mathbb{Q} \left( (x, y) : xy \in \{ \bm{0}_{T_0} \} \right) = \Lambda(\{ 0 \}) = 0.
\end{equation*}
To conclude, let us  show that $\bar{\nu}$ satisfies the integrability condition (i)  of Definition \ref{definicion:condicionesPathLevy} if   $p$ is an admissible memory parameter for $\Lambda$, viz. if $\beta(\Lambda) < 1/p$, while when  $\beta(\Lambda) > 1/p$, the condition fails.  By definition of $\bar{\nu}$, we have 
\begin{equation}\label{equacion:integrabilidadNecesaria}
    \int_{\mathbb{R}^T}\left( |e(t)|^2 \wedge 1 \right) \bar{\nu}(\dd e) = \int_\mathbb{R} \esp{|x Y_t|^2\wedge 1}{} \Lambda(\dd x),   
\end{equation}
and write:   
\begin{equation}
    \esp{|x Y_t|^2\wedge 1}{} = |x|^2 \esp{|Y_t|^2 \indic{\{ Y_t \leq 1/|x| \}}}{} + \proba{Y_t > 1/|x|}{}.
\end{equation}
Now, recalling  from (10) of \cite{BertoinNRLP} the asymptotic behaviour, 
\begin{equation*}
    \proba{Y_t > 1/|x|}{} \sim t \Gamma(p^{-1} +1) |x|^{1/p},  \quad \text{ as } |x| \downarrow 0, 
\end{equation*}
it follows that if  $\beta(\Lambda) < 1/p$,  the term  $\proba{Y_t > 1/|x|}{}$ is integrable with respect to $\Lambda$ and infinite if $\beta(\Lambda) > 1/p$. Let us  now show that the same holds for 
\begin{equation} \label{equation:unaeqdsa}
    \int_{[0,1]} |x|^2 \esp{|Y_t|^2 \indic{\{ Y_t \leq 1/|x| \}}}{} \Lambda(\dd x).
\end{equation}
Recalling Lemma \ref{lemma:yuleSimonProcess}, we get:  
\begin{equation*}
    \esp{|Y_t|^2 \indic{\{ Y_t \leq 1/|x| \}}}{}  = \sum_{n=1}^{\lfloor 1/|x| \rfloor } n^2 \proba{Y_t = n}{} = t p^{-1} \sum_{n=1}^{\lfloor 1/|x| \rfloor} n^2 \text{B} \left( n, p^{-1} +1 \right),    
\end{equation*}  
where we denoted by B the Beta function. Now, from  the asymptotic behaviour
\begin{equation*}
    \text{B}(n, p^{-1} +1) \sim n^{-(1+p)/p } \Gamma ( p^{-1} +1 ),  \quad  \text{ as } n \uparrow \infty, 
\end{equation*}
it follows that (\ref{equation:unaeqdsa}) is finite if  $\beta(\Lambda) < 1/p$ and infinite if $\beta(\Lambda) > 1/p$.
\end{proof}

Let us state the two last result that we  need for the proof of Proposition \ref{proposition:convOrigenNRLP}. {First, the Poissonian part of ID processes consists,  roughly speaking, in Poissonian sums of i.i.d. trajectories -- for instance, remark that  for  NRLPs those trajectories are the weighted Yule-Simon processes --  for more examples see e.g.  \cite[Section 3]{RosinskiID}.  More precisely, let  $X = (X_t)_{t \in T}$ be  an  infinitely divisible process with characteristic triplet $(b, \Sigma, \bar{\nu})$ and  suppose that $V = (V_t)_{t \in T}$ is a representant of $\bar{\nu}$ defined on a $\sigma$-finite measure space $(S, \scr{S}, n)$. To simplify notation, set  $\chi(u) := \indic{\{ |u| \leq 1 \}}$ and consider $\mathcal{M}$  a  Poisson random measure in $(S,\scr{S})$  with intensity $n$. Then, the following process has the same distribution as $X$, 
\begin{equation} \label{equation:representacionIDProcesses}
    b_t + G_t + \int_S V_t(s) \big( \mathcal{M}( \dd s ) - \chi(V_t(s)) n( \dd s ) \big),  \quad t \in T, 
\end{equation}
 where  $G = (G_t)_{t \in T}$ is an independent Gaussian process with covariance $\Sigma$. The integration in the previous display should read as a compensated integral, and  for a detailed statement we refer to \cite[ Proposition 3.1]{RosinskiID}. 
} For example, notice that if $X$ is a Lévy process, $\mathcal{M}$ is Poisson with intensity $\dd t \otimes  \Lambda(\dd x )$ and replacing $V$ by $x\indic{\{ s \leq \cdot \}}$   yields a Lévy-Itô representation.  Finally, we give one of the statements that we  use of the  Isomorphism Theorem of infinitely divisible processes needed for our proof. 

\begin{theorem} \emph{[Isomorphism Theorem]\cite[4.4]{RosinskiID}}
\label{theorem:Isomorphism}
Let $X = (X_t)_{t \in T}$ be an infinitely divisible process given by \eqref{equation:representacionIDProcesses}. Choose an arbitrary measurable function $\emph{q}: S \mapsto \mathbb{R}^+$ such that $\int_S \emph{q}(s) n( \dd s ) = 1$ and set $N(\emph{q}) := \int_S \emph{q}(s) \mathcal{M} ( \dd s )$. Then, for any measurable functional $F: \mathbb{R}^T \mapsto \mathbb{R}$, we have 
\begin{align*}
    \esp{F((X_t)_{t \in T}) \indic{\{ N(\emph{q}) >0 \}} }{} = \int_S \esp{F \left( (X_t + V_t(s))_{t \in T} \right) \left( N(\emph{q}) + \emph{q}(s) \right)^{-1} }{} \emph{q}(s) n( \dd s ). 
\end{align*}
\end{theorem}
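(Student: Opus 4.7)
The plan is to deduce the identity from the Mecke (refined Campbell) formula for Poisson random measures, combined with the essentially linear dependence of the representation \eqref{equation:representacionIDProcesses} on the Poisson atoms. Since the statement is explicitly quoted from \cite{RosinskiID}, my proposal is to record a self-contained sketch rather than redo the full measure-theoretic bookkeeping.

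First I would exploit the tautology that on $\{N(\mathrm{q})>0\}$ one has
\[
1 \;=\; \frac{1}{N(\mathrm{q})}\int_S \mathrm{q}(s)\,\mathcal{M}(\dd s),
\]
so that $\indic{\{N(\mathrm{q})>0\}} = \int_S \frac{\mathrm{q}(s)}{N(\mathrm{q})}\indic{\{N(\mathrm{q})>0\}}\,\mathcal{M}(\dd s)$. Multiplying by $F(X)$, taking expectations and swapping the order of integration (by non-negativity if $F\ge 0$, otherwise by the integrability assumptions implicit in the statement), I obtain
\[
\esp{F(X)\indic{\{N(\mathrm{q})>0\}}}{}\;=\;\esp{\int_S F(X)\,\frac{\mathrm{q}(s)}{N(\mathrm{q})}\indic{\{N(\mathrm{q})>0\}}\,\mathcal{M}(\dd s)}{}.
\]

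The second step is to apply the Mecke (Palm) formula for the Poisson random measure $\mathcal{M}$ with intensity $n$, which states that for any measurable $H\ge 0$ of a point $s\in S$ and a configuration $\mu$,
\[
\esp{\int_S H(s,\mathcal{M})\,\mathcal{M}(\dd s)}{}\;=\;\int_S\esp{H(s,\mathcal{M}+\delta_s)}{}\,n(\dd s).
\]
Taking $H(s,\mathcal{M})=F(X(\mathcal{M}))\,\mathrm{q}(s)/N(\mathrm{q})(\mathcal{M})\,\indic{\{N(\mathrm{q})(\mathcal{M})>0\}}$ converts the left-hand side into $\int_S\esp{F(X^{+s})\,\mathrm{q}(s)/N^{+s}(\mathrm{q})}{}\,n(\dd s)$, where $X^{+s}$ and $N^{+s}(\mathrm{q})$ are computed from $\mathcal{M}+\delta_s$ in place of $\mathcal{M}$. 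The indicator is now automatic because $N^{+s}(\mathrm{q})\ge \mathrm{q}(s)>0$.

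The third step is to identify $X^{+s}$ and $N^{+s}(\mathrm{q})$ explicitly. For the linear functional $N(\mathrm{q})=\int\mathrm{q}(s')\,\mathcal{M}(\dd s')$ one trivially has $N^{+s}(\mathrm{q})=N(\mathrm{q})+\mathrm{q}(s)$. For $X$, the Gaussian component $G$ and the drift $b$ are independent of $\mathcal{M}$, while the Poissonian part $\int_S V_t(s')(\mathcal{M}(\dd s')-\chi(V_t(s'))n(\dd s'))$ picks up an additive contribution $V_t(s)$ under $\mathcal{M}\mapsto\mathcal{M}+\delta_s$: the compensator, being deterministic, is unaffected. Hence $X^{+s}=X+V(s)$ coordinatewise, and the plugging this in gives exactly the right-hand side of the claim.

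The main obstacle is the rigorous justification of the shift property $X^{+s}=X+V(s)$: when $|V_t(s)|>1$ the atom is not compensated, while for $|V_t(s)|\le 1$ the integral is only defined as a limit of compensated sums, so one has to check carefully that adding a single atom to $\mathcal{M}$ really produces the pointwise translation $X+V(s)$ on the full path space indexed by $T$. This is handled in \cite{RosinskiID} by truncating at small jumps, using the $L^2$-isometry of the compensated integral on the truncated piece and then passing to the limit. Given the size of $T$ (uncountable in our setting), one also needs the $\sigma$-finiteness of $\bar\nu$ established in Lemma~\ref{teorema:InfDivNRLP} to make the Fubini and Mecke arguments rigorous; but once these technicalities are in place, the three-line computation above yields the theorem.
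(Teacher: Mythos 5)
This statement is quoted verbatim from \cite[Theorem 4.4]{RosinskiID} and the paper offers no proof of its own, so there is nothing internal to compare against; judged on its own merits, your sketch is correct and is in fact the standard route to such isomorphism identities. The three steps — writing $\indic{\{N(\mathrm{q})>0\}}=N(\mathrm{q})^{-1}\int_S \mathrm{q}(s)\,\mathcal{M}(\dd s)$ on the event $\{N(\mathrm{q})>0\}$, applying the Mecke equation (conditionally on the independent Gaussian part $G$ and the drift, which are untouched), and identifying $N^{+s}(\mathrm{q})=N(\mathrm{q})+\mathrm{q}(s)$ together with $X^{+s}=X+V(s)$ — reproduce the argument in \cite{RosinskiID}, and your handling of the indicator (automatic when $\mathrm{q}(s)>0$, irrelevant when $\mathrm{q}(s)=0$ since the integrand carries a factor $\mathrm{q}(s)$) is right. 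One remark that simplifies the obstacle you flag at the end: a functional $F$ that is measurable with respect to the cylindrical $\sigma$-field $\scr{B}^T$ depends only on countably many coordinates, so it suffices to verify the shift identity $X_t(\mathcal{M}+\delta_s)=X_t(\mathcal{M})+V_t(s)$ almost surely for each fixed $t$ and then intersect countably many full-measure events; the uncountability of $T$ therefore causes no additional difficulty beyond the per-coordinate limit argument (truncation at small jumps plus the $L^2$-isometry) that you already describe. With the usual caveat that the identity is first proved for $F\ge 0$ and then extended by linearity under integrability, your proposal is complete in outline.
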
 This allows for instance to study the law of $X$ under different conditionings, for appropriate elections of $\text{q}$. This will be used in our reasoning below.
Now, let us conclude  the proof of Proposition \ref{proposition:convOrigenNRLP}. 
\begin{proof}[Proof of Proposition \ref{proposition:convOrigenNRLP}.]
To simplify notation, we will perform a slight  abuse of notation by writing   ${\Lambda}$ instead of $(1-p)\Lambda$. We start by fixing $\delta \in (0,1)$ small enough such that $m= \Lambda(|x| > \delta )>0 $. Now, let $h>0$  and as usual, write  $y = (y(t))_{t \in [0,1]}$ for a generic counting trajectory in $D[0,1]$.  Recall the result of Lemma \ref{teorema:InfDivNRLP} and consider  a Poisson random measure $\mathcal{M}= \sum \delta_{(x_i, Y_i)}$ with intensity $\Lambda \otimes \mathbb{Q}$. Next,  we set 
\begin{equation*}
    \text{q}(y,x) := \frac{1}{mh} \indic{\{ |x| \geq \delta \}} \indic{\{ y(h) \geq 1 \}}, 
\end{equation*}
and take  
\begin{equation*}
    N(\text{q}) = \frac{1}{mh} \int_{D \times \{ |x| \geq \delta \}} \indic{ \{ y(h) \geq 1  \}} {\mathcal{M}} ( \dd x, \dd y ) =: \frac{1}{mh} S_h.
\end{equation*}
Then,   from the definition of $S_h$ we have 
\begin{equation*}
    S_h = \# \{ (x_i , Y_i) \, : \, |x_i| \geq \delta \text{ and } Y_i(h) \geq 1 \}  \leq  \# \{ |\Delta \hat{\xi}_s| \geq \delta \text{ for } s \leq h \}, 
\end{equation*}
where the inequality stands  since a jump $x_i$ is repeated at each jump time of its respective $Y_i$, and consequently might be repeated multiple times in $[0,h]$. However, we do have  $\# \{ |\Delta \hat{\xi}_s| \geq \delta \text{ for } s \leq h \} = 0$ when  $S_h = 0$. Finally, we consider  the functional $F(e ) := f(e(h))$ for $e \in D[0,1]$. An application of Theorem \ref{theorem:Isomorphism} yields: 
\begin{align*}
    \esp{f(\hat{\xi}_h) \indic{\{ S_h >0 \}} }{} &= \int_{D \times \{ |x| \geq \delta \}} \esp{f \big(\hat{\xi}_h + xy(h) \big) \frac{1}{S_h + 1} }{} \indic{\{ y(h) \geq 1 \}} \mathbb{Q}(dy) \Lambda(\dd x)\\
    &= \int_{D \times \{ |x| \geq \delta \}} G_h(xy(h)) \indic{\{ y(h) \geq 1 \}} \mathbb{Q}(\dd y) \Lambda(\dd x), 
\end{align*}
for $G_h(z) = \esp{f (\hat{\xi}_h + z) \frac{1}{S_h + 1} }{}$ and notice that $\lim_{h \downarrow 0} G_h(z) = f(z)$ by right-continuity --  remark that the previous display can be interpreted as the law of $\hat{\xi}_h$ conditioned at having at least one jump before time $h$ of size  greater than $\delta$. If we let $\eta$ be a random variable distributed Yule-Simon with parameter $1/p$ under $\mathbb{P}$, this entails that we can write: 
\begin{align*}
    \frac{1}{h} \esp{ f(\hat{\xi} _h)}{} 
    &= \frac{1}{h} \esp{ f(\hat{\xi} _h) \indic{\{S_h =0 \}} }{} 
    + \frac{1}{h} \esp{ f(\hat{\xi} _h) \indic{\{S_h > 0 \}} }{} \\
    &=  \frac{1}{h} \esp{ f(\hat{\xi} _h) \indic{\{S_h =0 \}} }{} +
    \int_{\{ |x| \geq \delta \}} \esp{ G_h(x \eta ) }{} \Lambda(\dd x), 
\end{align*}
where in the last equality we used that  the law of $y(h)$  under
\begin{equation*}
     \frac{ \indic{\{y(h) \geq 1\}}}{ \mathbb{Q}(y(h) \geq 1)} \mathbb{Q}(\dd y ),  
\end{equation*}
is the  Yule-Simon distribution with parameter $1/p$ by Lemma \ref{lemma:yuleSimonProcess} and that  $\mathbb{Q}(y(h) \geq 1) = h$.  Consequently, we deduce that 
\begin{align*}
    &\Big| h^{-1}  \mathbb{E}\big[  {f(\hat{\xi}_h )}{} \big] - \int_\mathbb{R} \esp{f(x\eta)}{} \Lambda(\dd x) \Big| \\
    &\leq h^{-1} \esp{ |f(\hat{\xi} _h)| \indic{\{S_h =0 \}} }{} + \int_{\{ |x| \leq \delta \}} \mathbb{E} \big[ |f(x \eta)| \big] \Lambda(\dd x) 
    + \Big| \int_{\{ |x| \geq \delta \}} \esp{G_h(x\eta)}{}  -  \esp{f(x \eta)}{} \Lambda(\dd x) \Big|\\
    &=: K_1(h , \delta) + K_2(\delta) + K_3(h,\delta). 
\end{align*}
 Now, we  study the  limit as $h \downarrow 0$ of these three terms separately and we start with $K_1(h,\delta)$. Recall the notation introduced in \ref{subsubsection:jointLaw_tightnessGeneral} for the compensated integrals as well as  $\Sigma_{\delta,\infty} := \indic{(-\delta, \delta)^c} x * \hat{\mathcal{N}}$ for the process obtained by adding jumps of size greater that $\delta > 0$. Recall that  on $\{ S_h =0 \}$, the process $\hat{\xi}$ doesn't have jumps of seize greater than $\delta$ before time $h$. It now follows that,  restricted to  $\{ S_h = 0 \}$, the following equality holds: 
\begin{align} \label{equation:NRLPsinsaltosdelta}
    \hat{\xi}_h = \hat{\xi}_h - \sum_{i} x_i Y_i(h) \indic{\{ |x_i| \geq \delta \}} 
    = a \cdot h +  \hat{\xi}_{0,1}^{(3)} (h) + \Sigma_{1,\infty} (h) - \Sigma_{\delta , \infty}(h)
    = \hat{\xi}_{0,\delta}^{(3)}(h) -  c_\delta \cdot h,     
\end{align}
for  $c_\delta := -a +  (1-p)^{-1} \int_{\{ \delta \leq |x| \leq 1 \}} x \Lambda(\dd x)$ and  denote  the right hand side of (\ref{equation:NRLPsinsaltosdelta}) by $\hat{\xi}^\delta_h$. Now let us  first consider the case $\beta(\Lambda) < 2$. Since $f$ is bounded and $O(|x|^2)$ at the origin,   for any $q \in (\beta(\Lambda) \vee 1 , 1/p \wedge 2)$ satisfying $q < r$ we can bound $|f(x)| \leq C |x|^q$ for all $x \in \mathbb{R}$, for some constant $C$ large enough. Then, for a constant  $C'$ that only depends on $q$ we have 
\begin{align*}
    K_1(h , \delta) &=  h^{-1} \esp{ |f(\hat{\xi} _h)| \indic{\{S_h =0 \}} }{} \\
    &\leq C h^{-1} \esp{ | \hat{\xi}_h^\delta |^q }{} \leq C'h^{-1} \esp{| \hat{\xi}_{0,\delta}^{(3)}(h) |^q}{} + C'h^{q-1}|c^\delta|^q.
\end{align*}
Now, arguing as in  (\ref{equation:cotaCompensacion1}),  (\ref{equation:cotaCompensacion2}),  recall that for $q \in (\beta(\Lambda) \vee 1 , 1/p \wedge 2 )$ we have the following bound for the compensated sum of Yule-Simon processes:
\begin{equation} \label{inequality:IDboundOrigin}
    \esp{ |\hat{\xi}_{0,\delta}^{(3)}(h) |^q }{} \leq \esp{Y(h)^q}{} \int_{\{ |x| \leq \delta \}} |x|^q \Lambda(\dd x) = h \cdot \esp{\eta^q}{} \int_{\{ |x| \leq \delta \}} |x|^q \Lambda(\dd x) < \infty. 
\end{equation}
Since $q-1>0$, we have   $\limsup_{h \downarrow 0} K_1(h,\delta) \leq \esp{\eta^q}{} \int_{\{ |x| \leq \delta \}} |x|^q \Lambda(\dd x)$ which can be made arbitrarily small for an appropriate choice of $\delta$. Remark that  the same reasoning applies for $K_2(\delta)$, by making use once again of the bound $|f(x)| \leq C |x|^q$. Finally, since for any choice of $\delta$, $K_3(h , \delta) \downarrow 0$ as $h \downarrow 0$, we obtain the desired result. \par 
If $\beta(\Lambda) = 2$, we set $q=2$ and once again recall  from page 9 of Bertoin \cite{BertoinNRLP} that the inequality (\ref{inequality:IDboundOrigin}) still holds. In this case, since $p \beta(\Lambda) < 1$, $p$ must be smaller than $1/2$ and consequently $\esp{\eta^2}{} < \infty$, while of course  $\int_{\{|x|\leq \delta\}}|x|^2\Lambda(\dd x) < \infty$ by definition of a Lévy measure. We can then proceed as before. 
\end{proof}
{ 
\addtocontents{toc}{\vspace{\contentsSpacingBefore}}
\section{Appendix}
This short section is devoted to proving a technical identity needed for the proof of  Lemma \ref{lemma:nrpp_jumpProcess}. The proof was omitted from the main discussion for readability purposes. 
\medskip \\Fix a Lévy measure $\Lambda$ in $\mathbb{R}$,  $p \in (0,1)$  and  denote the law of the standard Yule process $Z = (Z(t))_{t \in \mathbb{R}^+}$ started at $Z_0=1$  by  $\mbm{Z}$. We write $D[0,\infty)$ the space of $\mathbb{R}^+$ indexed,  $\mathbb{R}$-valued rcll functions. Since  $\mbm{Z}$ is supported on the subset of $D[0,\infty)$ of counting functions, $z=(z_t)_{t \in \mathbb{R}^+}$ in the sequel stands for a generic counting function. Moreover,   if  $F:\mathbb{R}^+ \times D[0,\infty) \mapsto \mathbb{R}^+$ is a  measurable function, we write $\mbm{Z}^\bullet$ for the measure in $\mathbb{R}^+ \times D[0,\infty)$ defined as  $\mbm{Z}^\bullet (F) := \int_\mathbb{R^+} \dd u \, \mathbb{E}[  F(u,Z)]$. 
Roughly speaking, the objective is to describe the law of the following "process": 
\begin{equation} \label{formula:extensionYS}
    (u,z) \mapsto \left(  \indic{\{u \leq t\}} z_{ p(\ln(t) - \ln(u)) } \, : \, t \in \mathbb{R}^+ \right)  \quad  \in D[0,\infty)
\end{equation}
defined on the measure space $(\mathbb{R} \times D[0,\infty), \mbm{Z}^\bullet)$, under different  restrictions of the measure $\mbm{Z}^\bullet$. In this direction, for $T > 0$ we write 
\begin{equation*}
     \mbm{Z}^\bullet (\cdot \, | u \leq T) := \frac{\indic{\{ u \leq T \} }}{T} \dd u \, \mbm{Z}(\dd z ), 
\end{equation*}
which is now a probability measure on $\mathbb{R}^+ \times D[0,\infty)$. The main properties of interest are stated in the following lemma, and shares obvious similarities with Lemma  \ref{lemma:yuleSimonProcess}. 
\begin{lemma}\label{lemma:restriccionesRepresentante}
The following properties hold: 
\begin{enumerate}
    \item[\emph{(i)}] For each fixed $t>0$, the random variable 
\begin{equation*}
 (u,z) \mapsto  \indic{\{u \leq t \}}z_{ p \ln(t/u) } \quad \text{ under } \quad \mbm{Z}^\bullet (\cdot \, | u \leq t), 
\end{equation*}
has the same distribution as the Yule Simon random variable $\eta$ with parameter $1/p$.
    \item[\emph{(ii)}] For every $T>0$, the process
\begin{equation*}
  (u,z) \mapsto  \left( \indic{\{ u \leq T t \}} z_{p \ln(Tt/u)} \,  : \, t \in [0,1] \right) \quad \text{ under} \quad   \mbm{Z}^\bullet (\cdot \, | u \leq T), 
\end{equation*}
has the same law as the Yule-Simon process $(Y(t))_{t \in [0,1]}$ with parameter $1/p$. 
\end{enumerate}
\end{lemma}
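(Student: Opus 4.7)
The plan is to observe that both statements follow from an elementary change of variables that reduces $\mbm{Z}^\bullet(\cdot \mid u \leq T)$ to the law of an independent pair $(U,Z)$, where $U$ is uniform on $[0,1]$ and $Z$ is a standard Yule process; combined with the explicit representation \eqref{formula:RepresentacionYuleSimon} of the Yule-Simon process, both assertions become essentially tautological.

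More precisely, for part (ii), fix $T>0$ and apply the change of variables $u = Tv$ on $\mathbb{R}^+$. Under $\mbm{Z}^\bullet(\cdot \mid u \leq T)$, the variable $v := u/T$ is uniformly distributed on $[0,1]$ and is independent of $z$, whose law remains $\mbm{Z}$. For each $t \in [0,1]$, the substitution gives
\begin{equation*}
\indic{\{u \leq Tt\}}\, z_{p\ln(Tt/u)} \;=\; \indic{\{v \leq t\}}\, z_{p(\ln(t)-\ln(v))},
\end{equation*}
so that the entire process transforms into $(\indic{\{v \leq t\}} z_{p(\ln(t)-\ln(v))})_{t \in [0,1]}$, which is precisely the representation \eqref{formula:RepresentacionYuleSimon} of a Yule-Simon process with parameter $1/p$. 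Statement (i) is then obtained as the special case $T=t$ evaluated at the terminal time: after the analogous change of variables $v = u/t$, the random variable becomes $z_{-p \ln v}$ with $v$ uniform on $[0,1]$ and $z$ an independent standard Yule process, which by the remark following \eqref{formula:RepresentacionYuleSimon} is a Yule-Simon random variable with parameter $1/p$.

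Since the argument is a direct computation, there is no real obstacle; the only point requiring care is checking that the normalisation $1/T$ in the definition of $\mbm{Z}^\bullet(\cdot\mid u \leq T)$ matches the Jacobian of the rescaling $u = Tv$, which it does trivially. I would present statement (ii) first and deduce (i) as an immediate corollary by evaluating the Yule-Simon process at $t=1$ and taking $T=t$ in the role of the horizon.
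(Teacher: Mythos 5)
Your proposal is correct and follows essentially the same route as the paper: the change of variables $u=Tv$ turns $\mbm{Z}^\bullet(\cdot\mid u\leq T)$ into the law of an independent pair (uniform on $[0,1]$, standard Yule), after which the pointwise identity with the representation \eqref{formula:RepresentacionYuleSimon} gives (ii) at the level of finite-dimensional distributions. The only difference is in (i): you invoke the stated fact that $Y(1)$ is Yule--Simon distributed, whereas the paper re-derives it directly from the geometric law of $Z_r$ and the Beta integral \eqref{formula:densityYuleSimon}; both are valid.
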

 Notice that the conditioning $\{ u \leq t \}$ is playing the exact same role as the conditioning on $\{ Y(t) \geq 1 \}$ in Lemma \ref{lemma:yuleSimonProcess}.  Heuristically, \eqref{formula:extensionYS} is then a Yule-Simon process started at a time chosen according to  $\dd u$ in $\mathbb{R}^+$.
\begin{proof}
(i) Since for each fixed $t$, $\dd u\otimes \mbm{Z} (u \leq t) =t$, for every bounded measurable function $f:\mathbb{R} \rightarrow \mathbb{R}$, we have 
\begin{align}\label{equation:eqYS}
   \mbm{Z}^\bullet \left(  f\left( \indic{\{ u \leq t \}} z_{ p \ln(t/u)} \right) | u \leq t \right) 
    = t^{-1} \int_0^t \dd u \, \mathbb{E} \Big[ f \Big(Z \big(p( \ln(t) - \ln(u) )\big) \Big) \Big],  
\end{align}
where we denoted by $Z$ a standard Yule process. Since $Z_r$ is distributed geometric with parameter $e^{-r}$, it follows from the change of variable $y = (u/t)^p$ and  (\ref{formula:densityYuleSimon}) that (\ref{equation:eqYS}) equals 
\begin{align*}
     p^{-1} \sum_{k \geq 1}f(k) \text{B}(k,1+1/p),  
\end{align*}
and the claim follows from (\ref{formula:densityYuleSimon}). \par 
(ii) In order to show the second claim, we fix an arbitrary collection of bounded measurable functions $(f_i)_{i \leq k}$ with $f_i: \mathbb{R} \mapsto \mathbb{R}$, and an increasing sequence of times $0\leq t_1 <  \dots < t_k \leq 1$, and notice that 
\begin{align*}
    \mbm{Z}^\bullet \left( \prod_{i=1}^k f_i\left( \indic{\{ u \leq T t_i \}}z_{p \ln(Tt_i/u)} \right)   | u \leq T \right) 
    &= \int_0^1  \dd u \,  \esp{\prod_{i=1}^k f_i\left( \indic{\{ u \leq t_i \}} Z \left( p (\ln(t_i) - \ln(u)) \right) \right)}{}. 
\end{align*}
The claim now follows from the description (\ref{formula:RepresentacionYuleSimon}) by independence between $U$ and $Z$.
\end{proof}
}

 \textit{Acknowledgement: I warmly thank Jean Bertoin for the discussions and attention provided  through the making of this work, as well as for introducing me to noise reinforced Lévy processes. }

\end{document}